\documentclass[12pt,final]{amsart}

\usepackage{euscript,nicefrac}
\usepackage{xspace}
\usepackage{amscd}
\usepackage[notref,notcite]{showkeys}
\usepackage{latexsym,todonotes}
\usepackage{xr,xr-hyper}
\usepackage{epsfig,pxfonts}
\usepackage{expl3, xparse}
\usepackage{ifpdf,fp}

\ifpdf
  \usepackage{pdfsync,hyperref}
\fi
\externaldocument[m-]{merged}[http://arxiv.org/pdf/1309.3796.pdf]
\usepackage[margin=3cm,footskip=25pt,headheight=21pt]{geometry}
\usepackage{fancyhdr}
\pagestyle{fancy}
\chead[\thetitle]{Marco Mackaay \& Ben Webster}
\cfoot[\thepage]{\thepage}

\DeclareMathAlphabet{\mathbbm}{U}{bbm}{m}{n}
\DeclareMathAlphabet{\mathscr}{U}{mathrsfs}{m}{n}
\newtheorem{theorem}{Theorem}[section]
\newtheorem{lemma}[theorem]{Lemma}
\newtheorem{proposition}[theorem]{Proposition}
\newtheorem{corollary}[theorem]{Corollary}

\newtheorem{itheorem}{Theorem}

{
%\theorembodyfont{\rmfamily}
\theoremstyle{plain}
\newtheorem{definition}[theorem]{Definition}
\newtheorem{example}[theorem]{Example}

\newtheorem{remark}[theorem]{Remark}

}

\renewcommand{\theequation}{\arabic{section}.\arabic{equation}}
\newcounter{subeqn}
\renewcommand{\thesubeqn}{\theequation\alph{subeqn}}
\newcommand{\subeqn}{%
  \refstepcounter{subeqn}% Step subequation number
  \tag{\thesubeqn}% Label equation
}
\makeatletter
\@addtoreset{subeqn}{equation}
\newcommand{\newseq}{%
  \refstepcounter{equation}% Step subequation number
}

%\newcommand{\qed}{\hfill \mbox{$\Box$}\medskip\newline}
%\newenvironment{proof}{\noindent {\bf Proof:}}{\qed \par}
%\newenvironment{proofass}{\noindent {\bf Proof of Theorem \ref{ass}:}}{\qed \par}

%\input xy
%\xyoption{all}

%\renewcommand{\theequation}{\fnsymbol{equation}}

\newcommand{\nc}{\newcommand}
\newcommand{\doubletilde}[1]{
  \tilde{{\tilde{#1}}}}
\newcommand{\ttalg}{{\it \doubletilde{T}}\xspace}
\nc{\PR}{D^i\!R}
\nc{\rola}{X}
\nc{\wela}{Y}

\nc{\cat}{\mathcal{V}}
\nc{\func}{\EuScript{T}}
\nc{\res}{\operatorname{res}}

\newcommand{\id}{\operatorname{id}}

\renewcommand{\dim}{\operatorname{dim}}

\nc{\bQ}{\mathbb{N}}
\nc{\op}{\operatorname{op}}

\newcommand*\circled[1]{\tikz[baseline=(char.base)]{
            \node[shape=circle,draw,inner sep=2pt] (char) {#1};}}

\newcommand{\Bi}{\mathbf{i}}
\newcommand{\Bj}{\mathbf{j}}

\newcommand{\By}{\mathbf{y}}

\nc{\dF}{\mathsf{F}}
\nc{\dE}{\mathsf{E}}
\nc{\lle}{\Gamma}

\nc{\slehat}{\mathfrak{\widehat{sl}}_e}
\nc{\sllhat}{\mathfrak{\widehat{sl}}_\ell}
\nc{\glehat}{\mathfrak{\widehat{gl}}_e}
\nc{\slnhat}{\mathfrak{\widehat{sl}}_n}
\nc{\glnhat}{\mathfrak{\widehat{gl}}_n}
\nc{\eE}{\EuScript{E}}
\newcommand{\arxiv}[1]{\href{http://arxiv.org/abs/#1}{\tt arXiv:\nolinkurl{#1}}}
\nc{\ep}{\epsilon}
\nc{\RHom}{\mathbb{R}\operatorname{Hom}}
\nc{\eF}{\EuScript{F}}
\nc{\fF}{\mathfrak{F}}
\nc{\fE}{\mathfrak{E}}
\nc{\fI}{\mathfrak{I}}
\nc{\ssy}{\mathsf{y}}

\nc{\fp}{\mathfrak{p}}
\nc{\fq}{\mathfrak{q}}

\newcommand{\K}{\mathbbm{k}}

\newcommand{\Z}{\mathbb{Z}}
\newcommand{\Q}{\mathbb{Q}}
\nc{\Qlb}{\mathbb{\bar \Q}_\ell}
\nc{\Fq}{\mathbb{F}_q}
\nc{\Fqb}{\mathbb{\bar F}_q}

\nc{\walg}{W}
\newcommand{\om}{\omega}

\newcommand{\Bq}{\mathbf{q}}
\newcommand{\N}{\mathbb{N}}
\newcommand{\R}{\mathbb{R}}

\newcommand{\C}{\mathbb{C}}

\nc{\KZ}{\mathsf{KZ}}

\newcommand{\la}{\leftarrow}

 % \nc{\Bi}{\mathbf{i}}

\newcommand{\sS}{\mathsf{S}} 
\newcommand{\sR}{\mathsf{R}}
 
\newcommand{\sT}{\mathsf{T}}

\nc{\Bv}{\mathbf{v}}
  \nc{\Bw}{\mathbf{w}}
  \nc{\Bp}{\mathbf{p}}
  \nc{\Bb}{\mathbf{b}}
\nc{\tU}{\mathcal{U}}
\nc{\Bu}{\mathbf{u}}
 \nc{\Fl}{\mathscr{F}\!\ell}
\nc{\Tr}{\operatorname{Tr}}
\nc{\cata}{\mathfrak{V}}
\nc{\tcat}{\tilde{\mathcal{V}}}
\nc{\tcata}{\tilde{\mathfrak{V}}}
\nc{\sheafK}{\EuScript{K}}
\nc{\bmu}{{\boldsymbol{\mu}}}
\nc{\bpi}{\boldsymbol{\pi}}
\nc{\dwalg}{\mathbb{W}}
\nc{\dalg}{\mathbb{T}}
\nc{\aalg}{\mathbb{A}}
\nc{\alm}{\mathscr{A}}
\nc{\bra}{\mathfrak{B}}
\nc{\bO}{\mathbb{O}}

\nc{\Kos}{\EuScript{K}}
\nc{\tilt}{\EuScript{T}}
%{\rotatebox[origin=c]{180}{T}}

\renewcommand{\la}{\lambda}

\newcommand{\al}{\alpha}
\newcommand{\be}{\beta}

\renewcommand{\d}{\delta}

\newcommand{\Hom}{\operatorname{Hom}}

\newcommand{\cO}{\mathcal{O}}

\nc{\WB}{\EuScript{LB}}
\nc{\tWB}{\widetilde{\WB}}

\newcommand{\Ext}{\operatorname{Ext}}
\newcommand{\Tor}{\operatorname{Tor}}

\newcommand{\excise}[1]{}

\newcommand{\End}{\operatorname{End}}

\newcommand{\Ind}{\operatorname{Ind}}

\newcommand{\fg}{\mathfrak{g}}
\newcommand{\fl}{\mathfrak{l}}
\newcommand{\mmod}{\operatorname{-mod}}

\newcommand{\alg}{T}

\newcommand{\bla}{{\underline{\boldsymbol{\la}}}}

\newcommand{\Lotimes}{\overset{L}{\otimes}}
\newcommand{\bentodo}{\todo[inline,color=blue!20]}

\newcommand{\thetitle}{Categorified skew Howe duality and comparison of knot homologies}
\nc{\iwedge}[1]{\bigwedge\nolimits^{\! #1}}
%\nc{\bra}{\mathfrak{B}}
\nc{\wedgep}[1]{\iwedge{#1}\C^n}
\nc{\wedgepq}[1]{\iwedge{#1}_q\C^n_q}
\nc{\fsl}{\mathfrak{sl}}
\nc{\sln}{\mathfrak{sl}_n}
\nc{\gln}{\mathfrak{gl}_n}

\nc{\wF}{\mathsf{f}}
\nc{\wE}{\mathsf{e}}

\usetikzlibrary{external} 
\tikzexternalize%[prefix=tikz/]
\usetikzlibrary{decorations.pathreplacing,backgrounds,decorations.markings,calc,
shapes.geometric}
\tikzset{wei/.style={draw=red,double=red!40!white,double distance=1.5pt,thin}}
\tikzset{bdot/.style={fill,circle,color=blue,inner sep=3pt,outer sep=0}}
\tikzset{dir/.style={postaction={decorate,decoration={markings,
    mark=at position .8 with {\arrow[scale=1.3]{<}}}}}}
\tikzset{rdir/.style={postaction={decorate,decoration={markings,
    mark=at position .8 with {\arrow[scale=1.3]{>}}}}}}
\tikzset{edir/.style={postaction={decorate,decoration={markings,
    mark=at position .2 with {\arrow[scale=1.3]{<}}}}}}
\tikzset{external/export=false}

\begin{document}

\begin{center}
  \noindent {\Large \bf \thetitle}
  \bigskip\\

  \begin{tabular}{c@{\hspace{15mm}}c}
    {\sc\large Marco Mackaay}
    &{\sc\large Ben Webster}\\
    \it Center for
    Mathematical Analysis, &   \it Department of Mathematics,\\
    \it Geometry and Dynamical Systems,&  \it 
                                         University of Virginia\\ 
    \it Instituto
    Superior T\'{e}cnico \& \\
    \it Mathematics Department,\\ \it Universidade do Algarve\\
    \email{mmackaay@ualg.pt}&\email{bwebster@virginia.edu}
  \end{tabular}
  \vspace{3mm}
\end{center}

{\small
\begin{quote}
\noindent {\em Abstract.}
In this paper, we show an isomorphism of homological knot invariants
categorifying the Reshetikhin-Turaev invariants for
$\mathfrak{sl}_n$. Over the past decade, such invariants have been
constructed in a variety of different ways, using matrix
factorizations, category $\cO$, affine Grassmannians, and diagrammatic
categorifications of tensor products. 

While the definitions of these theories are quite different, there is
a key commonality between them which makes it possible to prove that
they are all isomorphic: they arise from a skew Howe dual action of
$\mathfrak{gl}_\ell$ for some $\ell$.  In this paper, we show that the
construction of knot homology based on categorifying tensor products
(from earlier work of the second author) fits into this framework, and
thus agrees with other such homologies, such as Khovanov-Rozansky
homology.  We accomplish this by categorifying the action of
$\mathfrak{gl}_\ell\times \mathfrak{gl}_n$ on
$\bigwedge\nolimits^{\!p}(\C^\ell\otimes \C^n)$ using diagrammatic
bimodules.  In this action, the functors corresponding to
$\mathfrak{gl}_\ell$ and $\mathfrak{gl}_n$ are quite different in
nature, but they will switch roles under Koszul duality.
\end{quote}
}
\bigskip

\section{Introduction}
\label{sec:introduction}

The field of knot homology has seen remarkable development over the
past 15 years, since Khovanov's groundbreaking categorification of the
Jones polynomial \cite{Kho00}. This paper, and the important work
that came after it, led naturally to the question of which
 other knot invariants had similar categorifications.  One natural
generalization is provided by the Reshetikhin-Turaev invariants of
type A quantum groups.  These seem to be in an accessible middle
ground, with a considerably more complex structure than Khovanov
homology, but using more familiar tools and technology than the
categorifications of general Reshetikhin-Turaev invariants defined by
the second author \cite{Webmerged}.  In fact, the program of
categorifying these invariants has been arguably {\it too} successful,
in that a great variety of approaches led to defining different
homology theories, but it has proved easier to give definitions than to
prove equivalence with the other definitions.
\begin{itemize}
\item One theory was defined using matrix factorizations by Khovanov and Rozansky \cite{KR04}
  for the vector representation, with an extension to arbitrary
  fundamentals by Wu \cite{Wu} and Yonezawa \cite{Yon}.
\item One theory was defined using foams by \cite{MSVsln} for the vector representation,
  building on work in the case of $\mathfrak{sl}_3$ by Khovanov
  \cite{Kh-sl3}; this was shown to match Khovanov-Rozansky.  Much later, a version of foams using only local
  relations and allowing the use of arbitrary fundamentals was given
  by Queffelec and Rose \cite{QR}.
\item One theory was defined using the categories of coherent sheaves on convolution
  varieties for the affine Grassmannian by Cautis and Kamnitzer
  \cite{CK,CKII}.
\item One theory was defined using category $\cO$ in type A by
  Mazorchuk-Stroppel \cite{MS09} for the vector representation and by
  Sussan for arbitrary fundamentals \cite{Sussan2007}.  
\item One theory $\mathcal{K}_n$ was defined using diagrammatic categorifications of
  tensor products by the second author \cite{Webmerged}
\end{itemize}
Luckily, there has been excellent progress on resolving this
quandary in recent years.  Cautis (building on his previous work with
Kamnitzer) gave a "universal" construction of type A knot homologies
based purely on the structure of the action of the categorified quantum group
$\mathfrak{sl}_\infty$ on a categorification of its Fock space
representation.  \excise{While this seems like it is making the problem
worse, it actually holds the key to resolving it:} 
All the
disparate theories mentioned above can be rephrased in terms of
such a categorical action of $\mathfrak{sl}_\infty$.   

Our aim in this paper is to explain how this is done for the last theory
mentioned.  This gives us the result:
\begin{itheorem}\label{thmA}
  The knot homology $\mathcal{K}_n$ for framed links labeled with
  representations of $\fsl_n$ coincides with that defined by
  Cautis in \cite[9.3]{Cauclasp}, and thus with all other invariants
  mentioned above.
\end{itheorem}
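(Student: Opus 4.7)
The plan is to exhibit Webster's diagrammatic category $\mathcal{K}_n$ as a target of a categorified skew Howe action and then invoke the uniqueness established in \cite{Cauclasp}. Concretely, I fix $p$ and for each composition $\bmu = (k_1, \dots, k_\ell)$ of $p$ identify the $\bmu$-weight space of $\bigwedge\nolimits^{\!p}(\C^\ell \otimes \C^n)$ with the tensor product $\bigwedge\nolimits^{\!k_1}\C^n \otimes \cdots \otimes \bigwedge\nolimits^{\!k_\ell}\C^n$ by classical skew Howe duality. Under this identification the $\mathfrak{sl}_n$ side is precisely the representation whose Reshetikhin-Turaev invariant is being categorified by $\mathcal{K}_n$, while the $\mathfrak{gl}_\ell$ side is the auxiliary symmetry that Cautis uses to pin down the invariant uniquely. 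So the task reduces to categorifying this $\mathfrak{gl}_\ell$ action on the collection of Webster tensor product algebras $T^{\bmu}$ as $\bmu$ varies.

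The core construction is then to write down, for each simple root of $\mathfrak{gl}_\ell$, bimodules $\dE_i$ and $\dF_i$ between $T^{\bmu}\text{-mod}$ and $T^{s_i\bmu}\text{-mod}$ implementing "transfer a box between adjacent strands" in the diagrammatic calculus, together with natural transformations realizing the KLR dots and crossings. The main step, and I expect the main obstacle, is to verify that these bimodules together with the chosen $2$-morphisms satisfy the defining relations of $\tU(\mathfrak{sl}_\ell)$ in the sense of Khovanov-Lauda and Rouquier. This is a purely local diagrammatic computation inside the algebras $T^{\bmu}$, but it is substantial: the nilHecke relations on dots and crossings come directly from the local dot and crossing relations of the Webster calculus, whereas the mixed $\dE\dF$ versus $\dF\dE$ decomposition requires a Mackey-type filtration on the induction/restriction bimodules together with a careful count of how strands can split or merge. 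Cyclotomic-style vanishing of the highest weight guarantees this action factors through the relevant cyclotomic quotient, matching Cautis's Fock space categorification.

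With the categorified $\mathfrak{gl}_\ell$-action in hand, the next step is to recognize Webster's tangle functors as the image, under skew Howe, of the standard tangle functors built from the $\tU(\mathfrak{gl}_\ell)$-action. For a crossing one has to show that the braiding complex in $\mathcal{K}_n$ coincides with the Chuang-Rouquier-Rickard complex $\mathrm{T}_i$ built from the $\dE_i, \dF_i$ above; for cups and caps one has to identify Webster's evaluation/coevaluation functors with Cautis's categorified clasps/projectors. Both identifications are direct computations once the categorical action is set up, because on both sides the maps are determined by their action on specified bimodules corresponding to standard tensor generators, and the generating relations of the tangle calculus are checked locally.

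Finally, one invokes the universal property from \cite[\S9.3]{Cauclasp}: Cautis shows that any collection of categories together with a categorical $\mathfrak{sl}_\infty$ action, compatible duality data and Rickard complexes produces a unique tangle invariant, so once $\mathcal{K}_n$ has been shown to fit this framework with the correct generators, the resulting homology must agree with Cautis's. Since all the other theories listed in the introduction have already been shown to arise this way, Theorem~\ref{thmA} follows immediately.
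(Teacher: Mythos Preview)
Your high-level strategy matches the paper: build a categorical $\mathfrak{gl}_\ell$-action on the Webster categories, identify the Rickard complex with Webster's braiding bimodule, identify projectors, and invoke Cautis's uniqueness. But two of your technical descriptions misidentify what actually has to be done, and these are precisely where the real work lies.

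First, the $\mathfrak{gl}_\ell$-functors $\dE_i,\dF_i$ are \emph{not} induction/restriction bimodules in any sense that would admit a ``Mackey-type filtration'' argument. Those functors change the sequence of red labels $\Bp$, not the black strands; the bimodule for a basic ladder rung is built from the simple highest-weight module $L_{a,b}$ over $T^{a,b}$, which is far from projective. Consequently the functors are not exact and are only defined on derived categories --- this is the asymmetry with the $\mathfrak{sl}_n$ side that the paper flags explicitly. Your description reads as though you are thinking of the $\mathfrak{sl}_n$ action (adding a black strand), where Mackey-type arguments do apply, but that action is already known and is not what needs to be constructed here.

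Second, verifying the $\tU_\ell$-relations is not ``a purely local diagrammatic computation inside the algebras $T^{\bmu}$.'' The dot is defined as a Hochschild cohomology class coming from a deformation of the red-strand relation, and the crossing is built from adjunction (co)units between the ladder bimodules; neither lives in the Webster algebra in any local way. The paper verifies the nilHecke relation by an explicit computation on a projective resolution of $L_{1,1}$, but for the remaining relations it does \emph{not} compute directly: it uses the equivalence with parabolic category~$\cO$ (Koszul duality, $t$-exactness of Zuckerman functors in the linear-complex $t$-structure, and a Struktursatz-type argument) to reduce to checking the relations on tree objects, and then identifies the Hom spaces between trees with Khovanov--Lauda's action on cohomology of partial flag varieties, where the relations are already known. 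Without this reduction, checking the mixed $\dE\dF$/$\dF\dE$ relations directly in the derived category would be intractable.

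Finally, for non-fundamental labels the paper does not merely identify cups and caps: it shows that Cautis's idempotent complex $\mathsf{P}^{\pm}$ acts, in the dual action, as the explicit projection $\iota^{\pm}\iota^{\pm}_*$ onto the subcategory generated by projectives with all red strands grouped together, and then uses this to match cabled braidings. This is a separate argument (limits of full-twist functors on standard filtrations) that your proposal does not anticipate.
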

We show this theorem by understanding the relationship between
Cautis's construction (and thus other constructions based on skew Howe
duality, such as the recent work of Queffelec and Rose \cite{QR}) and
the approach of \cite{Webmerged}, based on the direct categorification
of tensor products and the maps between them.  In the latter paper,
the second author shows that given a sequence $p_i\in [1,n]$, we can
define a categorical representation of $\mathfrak{sl}_n$ sending the
weight $\mu$ to the finite dimensional representations of an algebra
$T^{p_1,\dots, p_\ell}_\mu$ with the Grothendieck group of the sum being $\wedgep{p_1}\otimes \cdots \otimes
\wedgep{p_\ell}$.  

The skew Howe duality approach requires thinking
about several of these tensor products at once.   Its most basic
observation is
that the vector space $\bigwedge\nolimits^{\!p}(\C^\ell\otimes \C^n)$
has a natural $\fsl_\ell\times \fsl_n$-action, and as an $\fsl_n$-module, we
have an isomorphism
\[\bigoplus_{p_1+\cdots+p_\ell=p}\wedgep{p_1}\otimes \cdots \otimes
\wedgep{p_\ell}\cong \bigwedge\nolimits^{\!p}(\C^\ell\otimes \C^n).\] 
%This statement has a quantized version, and the braiding map between
%two orderings of the same tensor factors is the image of a special
%element of $U_q(\fsl_\ell)$, which has a categorification due to
%Chuang-Rouquier \cite{CR04}.  
Defining the $\mathfrak{sl}_\ell$-action is highly non-trivial, and
requires making one very serious sacrifice: we can no longer consider just
the abelian category of $T^{p_1,\dots, p_\ell}_\mu$-modules, but have to work 
with its bounded derived category.

\begin{itheorem}\label{dualaction}
  There is a categorical $\fsl_\ell\times \fsl_n$-action which associates $D^b(T^{p_1,\dots, p_\ell}_\mu\mmod)$ to the weight
  $(\Bp,\mu)$ if $p_1+\cdots+p_\ell=p$ and the zero
  category to all other weights.  This action categorifies the skew
  Howe dual action on $\bigwedge\nolimits^{\!p}(\C^\ell\otimes \C^n)$,
\end{itheorem}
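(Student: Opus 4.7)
My plan is to retain the $\fsl_n$-part of the action from the tensor product categorification of \cite{Webmerged}, and to construct the new $\fsl_\ell$-part as an explicit family of bimodule complexes relating the algebras $T^{\Bp}_\mu$ as the composition $\Bp=(p_1,\dots,p_\ell)$ of $p$ varies; the two actions then commute for essentially transversal reasons.

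First I would recall that for each fixed $\Bp$, the algebra $T^{\Bp}_\mu$ already carries the categorical $\fsl_n$-action of \cite{Webmerged}, whose generators $\dF_i,\dE_i$ act by bimodule tensor product, and the Grothendieck group of $\bigoplus_\mu D^b(T^{\Bp}_\mu\mmod)$ is $\wedgep{p_1}\otimes\cdots\otimes\wedgep{p_\ell}$, so the $\fsl_n$-side of skew Howe is correct on $K_0$ from the outset. For each $j\in\{1,\dots,\ell-1\}$, I would then introduce a bimodule complex
\[\wF_j\in D^b\bigl(T^{\dots,p_j+1,p_{j+1}-1,\dots}_\mu\otimes (T^{\dots,p_j,p_{j+1},\dots}_\mu)^{\op}\bigr)\]
built diagrammatically from the "splitter/merger" webs that transfer one strand from the $(j{+}1)$-st red label to the $j$-th, and define $\wE_j$ as its biadjoint. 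These bimodules should implement, on $K_0$, the standard Chevalley action of $\fsl_\ell$ on the weight decomposition $\bigoplus_\Bp\wedgep{p_1}\otimes\cdots\otimes\wedgep{p_\ell}\cong\bigwedge\nolimits^{\!p}(\C^\ell\otimes\C^n)$; this is a direct combinatorial check once the bimodule is fixed.

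The work then splits into (i) verifying the KLR-style Serre and $\wE\wF{-}\wF\wE$ relations for the collection $\{\wE_j,\wF_j\}$ acting on $\bigoplus_\Bp D^b(T^\Bp_\mu\mmod)$, and (ii) producing natural isomorphisms $\wF_j\dF_i\simeq\dF_i\wF_j$ (and the analogous ones for $\wE_j,\dE_i$) together with the higher coherences of a categorical product action. Part (ii) is essentially formal from the bimodule construction, since the $\dF_i$ act on labels along a fixed strand arrangement while the $\wF_j$ redistribute the strand labels themselves; at the diagrammatic level the two operations are supported on disjoint parts of a picture and commute up to isotopy, and the isomorphisms can be written down explicitly.

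The main obstacle is part (i), particularly the identity $\wE_j\wF_j-\wF_j\wE_j\cong [h_j]\cdot\id$ in the appropriate weight, because it genuinely requires the derived setting: the relevant decomposition of composed bimodules is only a distinguished triangle, not a split short exact sequence. I would attack this either by a direct filtration argument on the composed bimodules in the spirit of Chuang--Rouquier, reducing to an $\fsl_2$ calculation inside $T^\Bp_\mu$, or, as hinted in the introduction, by exploiting the Koszul self-duality of $T^\Bp_\mu$ that swaps the roles of the two actions and so transports the known abelian $\fsl_n$-relations of \cite{Webmerged} into the required derived $\fsl_\ell$-relations. Once (i) and (ii) hold, the resulting action on $K_0$ must coincide with skew Howe duality by rigidity of the $(\fsl_\ell,\fsl_n)$-bimodule $\bigwedge\nolimits^{\!p}(\C^\ell\otimes\C^n)$, which finishes the theorem.
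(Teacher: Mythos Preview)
Your high-level plan matches the paper: keep the $\fsl_n$-action from \cite{Webmerged}, build the $\fsl_\ell$-action from splitter/merger (ladder) bimodules, and then verify the $\tU_\ell$ relations on the resulting 2-morphisms. But the mechanism you propose for the hard step~(i) is not what works, and it is where the real difficulty lies.

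First, a smaller point: the commutativity (ii) is not ``essentially formal.'' The ladder bimodules and the $\fsl_n$ induction functors do interact, and Proposition~\ref{prop:trivalent-commute} is a genuine diagrammatic computation; one has to show that pushing a black strand through a trivalent vertex reproduces the correct relations, including the sign $\sigma(m,a,b)$ in (\ref{through-box1}--\ref{through-box2}). It is not hard once the bimodules are defined correctly, but it is not just ``disjoint supports.''

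The serious gap is in (i). Neither of your two proposed attacks is the one that actually goes through. A direct filtration/Chuang--Rouquier argument would establish the existence of \emph{some} $\fsl_2$-categorification, but what you need is the full $\tU_\ell$ 2-category action with the specific 2-morphisms $y,\psi,\iota,\epsilon,\iota',\epsilon'$ you have written down; there are many more relations than the commutator identity, and they involve bubbles, curls, and mixed crossings. As for Koszul duality: the introduction mentions it as conceptual motivation, but using it as the proof mechanism is circular---to know that Koszul duality carries the abelian $\fsl_n$ 2-morphisms to your derived $\fsl_\ell$ 2-morphisms is essentially the content of the theorem, not an input to it.

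What the paper actually does is the ``reduction to trees'' technique of Section~\ref{sec:reduction-trees}. One first checks the nilHecke relations \eqref{nilHecke-a}--\eqref{nilHecke-b} by an explicit Hochschild-cohomology computation using the resolution of Corollary~\ref{W-res}. Then Lemma~\ref{tree-enough} (which imports the Soergel ``Struktursatz'' via the category $\cO$ equivalence) says that any pure 2-morphism in $\WB$ is zero iff it vanishes after composing with the tree $\tau_{\Bp}$. Finally, Proposition~\ref{tree-end} and Lemma~\ref{lem:bim-iso} identify the endomorphisms of trees with cohomology of partial flag varieties, and one checks that the candidate 2-morphisms $y,\psi,\iota,\epsilon,\iota',\epsilon'$ act on trees exactly as in Khovanov and Lauda's flag-variety 2-representation from \cite{KLIII}. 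All the $\tU_\ell$ relations then follow because they already hold in that representation. This two-step reduction---trees plus comparison to a known 2-representation---is the missing idea in your proposal.
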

This theorem has an obvious asymmetry: the $\fsl_\ell$ and $\fsl_n$
action are quite different.  For example, the latter action is by exact
functors and the former not. However, there is a non-obvious symmetry
exchanging these two actions: the algebras $T^{p_1,\dots,
  p_\ell}_{(m_1,\dots, m_n)}$ and $T^{m_1,\dots,
  m_n}_{(p_n,\dots, p_1)}$ 
are Koszul dual since it is shown in \cite[Prop. 9.11]{Webmerged} 
that these are equivalent to blocks of parabolic category $\cO$ for $\mathfrak{gl}_{n}$ whose duality is proven by a 
result of Backelin \cite{Back99}. The corresponding
equivalence of derived categories interchanges these two actions.  This
fact will not be surprising to readers familiar with the theory of
category $\cO$, since the equivalence to blocks of $\cO$ sends the $\fsl_\ell$-functors
to Zuckerman functors and the $\fsl_n$-functors to translation
functors.  The interchange of these under Koszul duality is proven
by Ryom-Hansen \cite{RH}.  We hope that the diagrammatic action of
$\fsl_\ell$ we have described will be of some interest to
representation theorists as a ``hands-on'' understanding of Zuckerman functors. 
We also intend it to point the way to understanding foam categories attached to other Lie algebras with well-understood spiders.  The strategy it suggests is to define bimodules over tensor product algebras  in other types attached to diagrams in the spider, and foams which correspond to morphisms between these bimodules (in the derived category), and relations between these foams that match the relations of these morphisms.

 In general, we have striven to write this
paper dealing solely with diagrammatics and not appealing too often to
facts about category $\cO$, but using a couple of facts (the
$t$-exactness of Zuckerman functors in the linear complex
$t$-structure, and the ``Struktursatz'' of Soergel) considerably reduces
the number of equalities we have to check, and we gave in to
temptation on these points.

Now let us summarize the content of the paper.  In Section
\ref{sec:background}, we discuss the necessary background for the
paper from higher representation theory.  Then we turn in Section
\ref{sec:ladder-bimodules} to constructing the bimodules needed for the
$\fsl_\ell$-action, and probing their basic properties, including the
connection to category $\cO$.  In Section \ref{sec:dual-categ-acti},
we construct the action of Theorem \ref{dualaction}, using the results
from category $\cO$ mentioned above to reduce to checking the relations in a
particular representation. In fact, this representation matches that
defined by Khovanov and Lauda on the cohomology of flag varieties.
Finally, in Section \ref{sec:knot-homology}, we apply these results to
knot homology to prove Theorem \ref{thmA}.

\section*{Acknowledgements}

We thank Sabin Cautis, Aaron Lauda, Hoel Queffelec and David Rose for explaining
their work on related topics to us; Mikhail Khovanov for
simultaneously welcoming us at Columbia where this work started; and
Christian Blanchet and Eric Vasserot for facilitating trips to
Paris, where it continued.  MM
is partially funded by FCT/Portugal through the projects
PEst-OE/EEI/LA0009/2013 and EXCL/MAT-GEO/0222/2012. BW is partially
funded by the NSF under Grant DMS-1151473 and by the Alfred P. Sloan
Foundation.

\excise{\begin{itheorem}
  The categories $T^{\mathbf{p}}_{\mu}\mmod$ and
  $T^{\mathbf{p}^!}_{\mu^!}\mmod$ categorifying tensor products over
  $\fsl_n$ and $\fsl_\ell$ are Koszul dual and this duality
  interchanges the standard categorical $\fsl_n$-action with the
  action of Theorem \ref{dualaction}, and similarly for the
  $\fsl_\ell$ actions.
\end{itheorem}}
%%% Local Variables:
%%% mode: latex
%%% TeX-master: "skewhowe"
%%% End:

\section{Background}
\label{sec:background}

\subsection{Skew Howe duality and webs}
\label{sec:skew-howe-duality}

One of the key ingredients in this paper will be a $q$-deformed
version of skew Howe duality, which we discuss in this section. Let
$n\in\Z_{>1}$ be arbitrary but fixed for the rest of this subsection
and $\dot{\mathbf{U}}_q(\mathfrak{gl}_n)$ denote the idempotented version
of the quantum algebra over $\Q(q)$ associated to
$\mathfrak{gl}_n$.  Let
$\C_q^n$ be the $q$-deformation of the vector representation of
$\mathfrak{gl}_n$ and $\wedgepq{a}$ the $a$-fold $q$-deformed wedge
product. Fix another integer $\ell\in\Z_{>1}$. For any $n$-bounded
$\mathfrak{gl}_\ell$ weight $\Bp=(p_1,\ldots,p_\ell)$, i.e.
$p_i\in [0,n]$ for all $i\in [1,\ell]$, we let
$\wedgepq{\Bp}:=\wedgepq{p_1}\otimes \cdots \otimes \wedgepq{p_\ell}.$
Denote the set of all such $\Bp$ by $\Gamma^n_\ell$ and let
$\vert \Bp \vert:=p_1+\ldots +p_\ell$.  By quantum skew Howe duality,
there is a surjective homomorphism
\begin{equation}
\label{eq:qskewhom}
\dot{\mathbf{U}}_q(\mathfrak{gl}_\ell)\to \bigoplus_{\Bp,\Bp'\in\Gamma^n_\ell, \vert\Bp\vert =\vert\Bp'\vert}\Hom_{\dot{\mathbf{U}}_q(\mathfrak{gl}_n)} \left(\wedgepq{\Bp},\wedgepq{\Bp'}\right).
\end{equation}
The kernel of this map is the ideal generated by all $\mathfrak{gl}_\ell$ weights which are not $n$-bounded. The 
quotient of $\dot{\mathbf{U}}_q(\mathfrak{gl}_\ell)$ by this kernel is denoted $\dot{\mathbf{U}}_q^n(\mathfrak{gl}_\ell)$. 
This is all proved in~\cite [Thm. 4.4.1]{CKM}. 
Recall that both algebras above can be seen as categories whose objects 
are the weights, and the homomorphism above as a linear functor. 

The elements of 
$\Hom_{\dot{\mathbf{U}}_q(\mathfrak{gl}_n)} \left(\wedgepq{\Bp},\wedgepq{\Bp'}\right)$
can be represented by $\mathfrak{gl}_{n}$ {\bf webs}, whose definition
we now briefly recall. We only need a restricted class of webs for our
purposes, called {\bf $n$-ladders}.  The basic building blocks for these 
are the webs denoted $Y$ and $Y^*$:
\[\tikz[baseline,very thick]{\draw[wei] (0,0) -- node [at end, below]{$a+b$} (0,-.5) ; \draw[wei] (0,0)-- node [at end, above]{$b$}  (.4,.5);\draw[wei]
  (0,0) --node [at end, above]{$a$} (-.4,.5); \node at (-1.2,0){$Y:=$};}\qquad \tikz[baseline,very thick]{\draw[wei] (0,0) -- node [at end, above]{$a+b$} (0,.5) ; \draw[wei] (0,0)-- node [at end, below]{$b$}  (.4,-.5);\draw[wei]
  (0,0) --node [at end, below]{$a$} (-.4,-.5);\node at (-1.2,0){$Y^\star:=$};}\]
with $a,b,a+b\in \{0,\ldots,n\}$. Let $\Bp\in\Gamma^n_\ell$. We use $Y$ and $Y^*$ to define two basic $n$-ladders with $\ell$ uprights: 
   \begin{center}
\tikz[xscale=.8, yscale=.6]
{
\node at (-2.2,2) {$\wF^{(c)}_i1_{\Bp}:=$};
\node at (-.7,2){$\dots$};
\node at (2.7,2){$\dots$};
\draw[wei] (0,0)--(0,4); \draw[wei] (2,0)--(2,4); \draw[wei] (0,1.5)--(2,2.5);
\node at (0,-0.5) {$p_i$};
\node at (2,-0.5) {$p_{i+1}$};
\node at (1,2.5) {$c$};
\node at (0,4.5) {$p_i-c$};
\node at (2,4.5) {$p_{i+1}+c$};
\node at (5.3,2) {$\wE^{(c)}_i1_{\Bp}:=$};
\draw[wei] (7.5,0)--(7.5,4); \draw[wei] (9.5,0)--(9.5,4); \draw[wei] (7.5,2.5)--(9.5,1.5);
\node at (7.5,-0.5) {$p_i$};
\node at (9.5,-0.5) {$p_{i+1}$};
\node at (8.5,2.5) {$c$};
\node at (7.5,4.5) {$p_i+c$};
\node at (9.5,4.5) {$p_{i+1}-c$};
\node at (6.8,2){$\dots$};
\node at (10.2,2){$\dots$};
}
\end{center}
In the figure we have suppressed $\ell-2$ uprights (vertical strands from bottom to top). 
The subscript $i$ indicates that the rung
is between the $i$th and the $i+1$st upright, with $i\in [1,\ell-1]$. Again, we require all labels to be in the range from $0$ to $n$. 
%In practice, we remove the $0$-strands. 
In practice, strands with label 0 act as a placeholder for when we
keep the number of strands fixed, and for most purposes they can be
ignored. In this way we can recover $Y$ and $Y^*$ as special
cases of $\wE^{(c)}_i1_{\Bp} $ and $\wF^{(c)}_j1_{\Bp'}$ with one of the labels being equal to $0$. 

An arbitrary $n$-ladder with $\ell$ uprights is by definition a labeled
trivalent graph obtained by vertically glueing basic ladders of type
$\wE^{(c)}_i1_{\Bp}$ and $\wF^{(c)}_j1_{\Bp'}$, for any combination of
$i,j\in [1, \ell-1]$, $c\in [0,n]$ and $\Bp,\Bp'\in\Gamma^n_\ell$. 

\begin{definition}
Let $\mathcal{L}ad^n_\ell$ denote the category of all $n$-ladders with $\ell$ uprights. The objects are all $\Bp\in\Gamma^n_\ell$ and $\Hom(\Bp,\Bp')$ is the $Q(q)$-vector space freely generated by the isotopy classes of all $n$-ladders with $\ell$ uprights whose lower and upper boundary are labeled $\Bp$ and $\Bp'$ respectively. Composition is defined by vertically glueing and the unit of $\Bp$, denoted $1_{\Bp}$, is the ladder without rungs whose uprights are labeled $\Bp$.      
\end{definition}

\noindent In our notation, we will usually suppress the labels of the
uprights. They will only be specified when needed.

Cautis, Kamnitzer and Morrison~\cite [Prop. 5.1.2]{CKM} defined an ideal generated by certain relations in $\mathcal{L}ad^n_\ell$, which we denote by $\mathcal{I}^n_\ell$. Their main results (Thm. 3.3.1 and Prop. 5.1.2) can then be paraphrased as follows:
\begin{theorem}[Cautis-Kamnitzer-Morrison]\label{thm:CKMmain}
There exists an equivalence of categories  
$$\mathcal{L}ad^n_\ell/\mathcal{I}^n_\ell \cong \bigoplus_{\Bp,\Bp'\in\Gamma^n_\ell, |\Bp|=|\Bp'|}\Hom_{\dot{\mathbf{U}}_q(\mathfrak{gl}_n)} \left(\wedgepq{\Bp},\wedgepq{\Bp'}\right)
$$
such that composition with the functor in~\eqref{eq:qskewhom} gives an essentially surjective full functor  
$$
\dot{\mathbf{U}}_q(\mathfrak{gl}_\ell)\to \mathcal{L}ad^n_\ell/\mathcal{I}^n_\ell 
$$
defined by $E_i^{(c)}1_{\Bp}\to \wE^{(c)}_i1_{\Bp}$ and $F_i^{(c)}1_{\Bp}\to \wF^{(c)}_i1_{\Bp}$ 
for any $i\in [1,\ell-1]$, $c\in [0,n]$ and $\Bp\in\Gamma^n_\ell$.
\end{theorem}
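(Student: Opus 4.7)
The plan is to construct the functor $\Phi:\mathcal{L}ad^n_\ell \to \bigoplus_{\Bp,\Bp'}\Hom_{\dot{\mathbf{U}}_q(\mathfrak{gl}_n)}(\wedgepq{\Bp},\wedgepq{\Bp'})$ that realizes the claimed equivalence, send the $\fsl_\ell$-generators $E_i^{(c)}1_{\Bp},F_i^{(c)}1_{\Bp}$ of $\dot{\mathbf{U}}_q(\mathfrak{gl}_\ell)$ to the basic ladders $\wE^{(c)}_i1_{\Bp},\wF^{(c)}_i1_{\Bp}$, and then verify the three standard requirements: well-definedness, essential surjectivity, and full faithfulness.

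First I would \emph{define} $\Phi$ on basic ladders by using the skew Howe homomorphism~\eqref{eq:qskewhom}: send $\wE^{(c)}_i1_{\Bp}$ to the image of $E_i^{(c)}1_{\Bp}$ and $\wF^{(c)}_i1_{\Bp}$ to the image of $F_i^{(c)}1_{\Bp}$, then extend to all ladders by vertical composition. To show $\Phi$ descends to $\mathcal{L}ad^n_\ell/\mathcal{I}^n_\ell$, I would check that each generator of $\mathcal{I}^n_\ell$ (isotopy relations, square-switch, associativity, bigon and digon reductions, etc.) maps to a genuine identity between intertwiners of wedge powers. Some of these, such as square-switch, are precisely repackagings of the $[E_i,F_j]$ relations in $\dot{\mathbf{U}}_q(\mathfrak{gl}_\ell)$ and follow from the fact that the skew Howe map is a well-defined algebra morphism; the rest are direct computations in the explicit wedge basis.

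Essential surjectivity is automatic since the objects on both sides are indexed by the same set $\Gamma^n_\ell$. Fullness of the composite $\dot{\mathbf{U}}_q(\mathfrak{gl}_\ell)\to \mathcal{L}ad^n_\ell/\mathcal{I}^n_\ell\to \bigoplus \Hom$ is the already-stated surjectivity of~\eqref{eq:qskewhom}; combined with the fact that every $n$-ladder is a vertical composition of basic ladders $\wE^{(c)}_i,\wF^{(c)}_i$, this implies fullness of $\Phi$ itself.

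The main obstacle will be \emph{faithfulness}, i.e.\ showing that $\mathcal{I}^n_\ell$ really captures every relation. My strategy would be to identify a ``reduced'' or ``double-ladder'' normal form on the ladder side: show that the square-switch and digon-reduction relations suffice to rewrite any ladder as a $\Q(q)$-linear combination of some distinguished spanning set of double ladders between fixed boundaries $\Bp$ and $\Bp'$. One would then count these normal forms and compare with the dimension of the corresponding Hom space, which by quantum skew Howe duality equals the dimension of the $(\Bp,\Bp')$ matrix block of $\dot{\mathbf{U}}_q^n(\mathfrak{gl}_\ell)$; an explicit known basis there (e.g.\ a Lusztig-type canonical or dual-canonical basis, or the Cautis--Kamnitzer--Morrison ``flow'' basis on webs) would match the normal forms one-to-one, proving that the spanning set descends to a basis and hence that $\Phi$ is faithful. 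Once the equivalence is established, the final assertion—that composition with~\eqref{eq:qskewhom} is essentially surjective and full, sending $E_i^{(c)}1_{\Bp}\mapsto \wE^{(c)}_i1_{\Bp}$ and $F_i^{(c)}1_{\Bp}\mapsto \wF^{(c)}_i1_{\Bp}$—follows directly from the construction of $\Phi$ on generators.
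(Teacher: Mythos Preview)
The paper does not prove this theorem; it is stated as a paraphrase of external results of Cautis, Kamnitzer and Morrison (their Theorem~3.3.1 and Proposition~5.1.2), with the attribution ``[Cautis--Kamnitzer--Morrison]'' in the theorem header. There is therefore no proof in this paper to compare your proposal against.

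That said, your outline is a reasonable sketch of the strategy actually used in the cited work: define $\Phi$ on generating ladders via the skew Howe map, check that the ladder relations in $\mathcal{I}^n_\ell$ are consequences of the $\dot{\mathbf{U}}_q(\mathfrak{gl}_\ell)$ relations (so $\Phi$ descends), deduce fullness from the surjectivity of~\eqref{eq:qskewhom}, and prove faithfulness by a dimension count against a known basis. The genuinely hard step is faithfulness, and your description of it (``identify a reduced normal form and match cardinalities'') is accurate in spirit but hides the real work; in the CKM paper this is where most of the effort goes, via an explicit confluence/reduction argument together with a comparison to a basis coming from the representation theory of $\dot{\mathbf{U}}_q(\mathfrak{gl}_\ell)$. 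Your proposal does not contain a gap so much as it defers the substantive content to ``one would then count'' without indicating how that count is actually carried out.
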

Khovanov and Lauda~\cite{KLIII} gave a categorification $\tU_\ell$ of  $\dot{\mathbf{U}}_q(\mathfrak{sl}_\ell)$, which can easily be 
extended to $\dot{\mathbf{U}}_q(\mathfrak{gl}_\ell)$ and
$\dot{\mathbf{U}}_q^n(\mathfrak{gl}_\ell)$. In this paper, we give
a categorification $\WB^n_\ell$ of $\mathcal{L}ad^n_\ell/\mathcal{I}^n_\ell$
(introduced in Section \ref{sec:2-categories}).
 In
this categorification we replace:
\begin{itemize}
\item $\Bp$ with the derived category $D^b(T^\Bp\mmod)$ of
  modules over an algebra defined by the second author in
  \cite{Webmerged}.  
\item An $n$-ladder $L\in \Hom_{\mathcal{L}ad^n_\ell}(\Bp,\Bp')$ with
  an associated $T^{\Bp'}\operatorname{-}T^{\Bp}$-bimodule $W_L$.
  Tensor product with this bimodule induces a functor between derived
  categories. The space of 2-morphisms between two ladders is the morphism space between the
  bimodules in the derived category which are $\mathfrak{gl}_n$
  invariant in an appropriate sense. 
\item Each generating relation in $\mathcal{I}^n_\ell$ with a natural
  isomorphism between the associated derived functors. 
\end{itemize}
In Corollary~\ref{cor:catskewfunc}, we give the categorification of the
second functor in Theorem~\ref{thm:CKMmain}, that is, a 2-functor 
$\tU_\ell\to \WB^n_\ell$ 
which is one of the main results of this paper. Our proof of the well-definedness 
of this 2-functor is a bit roundabout: we analyze the structure of $\WB^n_\ell$ enough to define candidate 2-morphisms, and use results from category $\cO$ to show it suffices to check the relations between these 2-morphisms on certain objects.  We then show that the Hom spaces between these objects exactly match Khovanov and Lauda's 2-representation 
of $\tU$ using the cohomology rings of partial flag varieties~\cite{KLIII}. 
The remaining relations, which are required for the well-definedness of our 2-functor, then follow 
from Khovanov and Lauda's work.  

\subsection{Categorifications of tensor products}
\label{sec:categ-tens-prod}

\subsubsection{The 2-category $\tU_n$}
\label{sec:2-category-tu}

We fix an arbitrary $n\in\N_{>1}$ in this section. The object of interest for this subsection is a strict 2-category
$\tU_n$, due to Khovanov and Lauda~\cite{KLIII}. We will give a more compact definition of this category, shown to be
equivalent to that of earlier literature such as \cite{KLIII,CaLa,Webmerged} in a recent paper of
Brundan \cite{Brundandef}.  In order to define it, we will need to
define a class of diagrams.  Consider the set of diagrams in the
horizontal strip $\R\times [0,1]$ composed of embedded oriented
curves, whose endpoints lie on distinct points of $\R\times\{0\}$ and
$\R\times \{1\}$.  At each point, projection to the $y$-axis must
locally be a diffeomorphism, unless at that point it looks like one of
the diagrams:
%\renewcommand{\labelitemii}{$*$} 
%\begin{itemize}
%\item cups $\iota':\emptyset \to  \eE_i\eF_i,\iota:
 % \emptyset \to\eF_i\eE_i$ and caps $\ep:\eE_i\eF_i\to \emptyset,\ep':
 % \eF_i\eE_i \to \emptyset$
\[
\iota=\tikz[baseline,very thick,scale=3]{\draw[->] (.25,.3) to
  [out=-100, in=60]% node[at start,above,scale=.8]{$i$} 
(.2,.1)
  to[out=-120,in=-60] (-.2,.1) to [out=120,in=-80] 
%  node[at end,above,scale=.8]{$i$} 
(-.25,.3);
%\node[scale=.8] at (0,.18){$\la$}; \node[scale=.8] at (0,-.1){$\la-\al_i$};
  \draw[thin,dashed] (.33,.3) -- (-.33,.3); \draw[thin,dashed] (.33,-.1) -- (-.33,-.1);}
\qquad 
\ep=\tikz[baseline,very thick,scale=3]{\draw[->] (.25,-.1) to
  [out=100, in=-60]   
%node[at start,below,scale=.8]{$i$}
 (.2,.1)
  to[out=120,in=60]
  (-.2,.1)  to [out=-120,in=80] 
%node[at end,below,scale=.8]{$i$}
  (-.25,-.1);
%\node[scale=.8] at  (0,.3){$\la$}; \node[scale=.8] at
%(0,.02){$\la-\al_i$}; 
\draw[thin,dashed] (.33,-.1) -- (-.33,-.1); \draw[thin,dashed] (.33,.3) -- (-.33,.3); }
\]
%\item a crossing $\psi:\eF_j\eF_i\to \eF_i\eF_j$ and a dot $y:\eF_i\to \eF_i$
\[\psi=
\tikz[baseline=-2pt,very thick,scale=4]{\draw[->] (.2,.2)  to [out=-90,in=90]
%  node[at end,below,scale=.8]{$i$} node[at start,above,scale=.8]{$i$} 
(-.2,-.2) ; \draw[<-]
  (.2,-.2) to[out=90,in=-90] 
%node[at start,below,scale=.8]{$j$} node[at end,above,scale=.8]{$j$} 
(-.2,.2); 
%\node[scale=.8] at (-.24,0)  {$\la$}; \node[scale=.8] at (0,-.13)
%{$\la-\al_i$}; \node[scale=.8] at (0,.13) {$\la-\al_j$};
%\node[scale=.8] at (.37,0) {$\la-\al_i-\al_j$};
\draw[thin,dashed] (.3,-.2) -- (-.3,-.2); \draw[thin,dashed] (.3,.2) -- (-.3,.2);}
\qquad \qquad 
 y=\tikz[baseline=-2pt,very thick,scale=2.5]{\draw[->]
  (0,.2) -- (0,-.2) 
%node[at end,below,scale=.8]{$i$} node[at start,above,scale=.8]{$i$}
  node[midway,circle,fill=black,inner
  sep=2pt]{}; 
%\node[scale=.8] at (-.2,0) {$\la$}; \node[scale=.8] at
 % (.3,0) {$\la-\al_i$}; 
\draw[thin,dashed] (.2,-.2) -- (-.2,-.2); \draw[thin,dashed] (.2,.2) -- (-.2,.2);}\]
 
We'll consider
labelings of the components of these diagrams
by elements of $[1,n-1]$.  The {\bf top} of such a diagram is the sequence
where we read off the label of each of the endpoints on $\R\times \{1\}$ in
order  from left to right, taking them with positive sign if the
curve is oriented upward there, and a negative sign if it is oriented
downward.  The {\bf bottom} is defined similarly with the endpoints on
$\R\times \{0\}$.  The {\bf vertical composition} $ab$ of two diagrams where
the bottom of $a$ matches the top of $b$ is the stacking of $a$ on top
of $b$ and then scaling the $y$-coordinate by $1/2$ to lie again in
$\R\times [0,1]$.  The horizontal composition of two diagrams $a\circ
b$ places $a$ to the {\it right} of $b$ in the plane, and thus has the
effect of concatenating their tops and bottoms in the opposite of the usual order.

We think of elements of $\Z^n$ as weights of $\mathfrak{gl}_n$ in the
usual way, and let $\al_i=-\al_{-i}=(0,\dots, 0,1,-1,0,\dots,0)$ with
the non-zero entries in the $i$th and $i+1$st positions.  We let
$\la^i=\al_i^\vee(\la)=\la_i-\la_{i-1}$ for any weight $\la$.
\begin{definition}
  Let $\doubletilde{\tU}_n$ be the strict 2-category where
 \begin{itemize}
  \item the set of objects is $\Z^n$
\item 1-morphisms $\mu\to \nu$ are sequences
  $\Bi=(i_1,\dots, i_m)$ with each $i_j \in \pm
  [1,n-1]$, which we interpret as a list of simple roots and their
  negatives such that $\mu +\sum_{j=1}^m\al_{i_j}=\nu$.  Composition
  is given by concatenation.
\item 2-morphisms $h\to h'$ between sequences are $\K$-linear combinations
  of diagrams of the type defined above with $h$ as bottom
  and $h'$ as top.    
\end{itemize}
\end{definition}
Since the underlying objects in $\doubletilde{\tU}_n$ are fixed for
any 2-morphism, we incorporate them into the diagram by labeling each
region of the place with $\mu$ at the far left, $\nu$ at the far
right, and intermediate regions are labeled by the rule \[  \tikz[baseline,very thick]{
\draw[postaction={decorate,decoration={markings,
    mark=at position .5 with {\arrow[scale=1.3]{<}}}}] (0,-.5) -- node[below,at start]{$i$}  (0,.5);
\node at (-1,0) {$\mu$};
\node at (1,.05) {$\mu-\al_i$};.
}.\]
We'll typically use $\eE_i$ to denote the 1-morphism $(i)$ (leaving
the labeling of regions implicit) and $\eF_i$ to denote $(-i)$.

We can define a {\bf degree} function on diagrams.  The degrees are
given on elementary diagrams by \[
  \deg\tikz[baseline,very thick,scale=1.5]{\draw[->] (.2,.3) --
    (-.2,-.1) node[at end,below, scale=.8]{$i$}; \draw[<-] (.2,-.1) --
    (-.2,.3) node[at start,below,scale=.8]{$j$};}
  =
  \begin{cases}
    -2 & i=j\\
    1 & |i-j|=1\\
    0 & |i-j|>1\\
  \end{cases}
\qquad \deg\tikz[baseline,very
  thick,->,scale=1.5]{\draw (0,.3) -- (0,-.1) node[at
    end,below,scale=.8]{$i$} node[midway,circle,fill=black,inner
    sep=2pt]{};}=2 \]
  \[
  \deg\tikz[baseline,very thick,scale=1.5]{\draw[->] (.2,.1)
    to[out=-120,in=-60] node[at end,above left,scale=.8]{$i$} (-.2,.1)
    ;\node[scale=.8] at (0,.3){$\la$};} =\langle\la,\al_i\rangle-1
  \qquad \deg\tikz[baseline,very
  thick,scale=1.5]{\draw[->] (.2,.1) to[out=120,in=60] node[at
    end,below left,scale=.8]{$i$} (-.2,.1);\node[scale=.8] at
    (0,-.1){$\la$};} =-\langle\la,\al_i\rangle-1.
  \]
For a general diagram, we sum together the degrees of the elementary
diagrams it is constructed from.  This defines a grading on the 2-morphism spaces of $\doubletilde{\tU}$.
\begin{definition}
Let $\tU_n$ be the quotient of $\doubletilde{\tU}_n$ by the following
relations on 2-morphisms:
\begin{itemize}
\item $\ep$ and $\iota$  are the units and counits of an adjunction,
  i.e. critical points can cancel.
\excise{\item the cups and caps are the units and counits of a biadjunction.
  The morphism $y$ is cyclic.  The cyclicity for crossings can be
  derived from the pitchfork relation:
\newseq
  \begin{equation*}\subeqn\label{pitch1}
   \tikz[baseline,very thick]{\draw[dir] (-.5,.5) to [out=-90,in=-90] node[above, at start]{$i$} node[above, at end]{$i$} (.5,.5); \draw[edir]
     (.5,-.5) to[out=90,in=-90] node[below, at start]{$j$} node[above, at end]{$j$} (0,.5);}= \tikz[baseline,very thick]{\draw[dir] (-.5,.5) to [out=-90,in=-90] node[above, at start]{$i$} node[above, at end]{$i$} (.5,.5); \draw[edir]
  (-.5,-.5) to[out=90,in=-90] node[below, at start]{$j$} node[above, at
   end]{$j$}(0,.5) ;}    \qquad  \tikz[baseline,very thick]{\draw[dir] (-.5,-.5) to [out=90,in=90] node[below, at start]{$i$} node[below, at end]{$i$} (.5,-.5); \draw[dir]
     (0,-.5) to[out=90,in=-90] node[below, at start]{$j$} node[above, at end]{$j$} (-.5,.5);}= \tikz[baseline,very thick]{\draw[dir] (-.5,-.5) to [out=90,in=90] node[below, at start]{$i$} node[below, at end]{$i$} (.5,-.5); \draw[dir]
    (0,-.5)  to[out=90,in=-90] node[below, at start]{$j$} node[above, at end]{$j$} (.5,.5);}
  \end{equation*}
\begin{equation*}\subeqn\label{pitch2}
 \tikz[baseline,very thick]{\draw[dir] (-.5,.5) to [out=-90,in=-90] node[above, at start]{$i$} node[above, at end]{$i$} (.5,.5); \draw[dir]
      (0,.5) to[out=-90,in=90] node[above, at start]{$j$} node[below, at end]{$j$}(.5,-.5);}= \tikz[baseline,very thick]{\draw[dir] (-.5,.5) to [out=-90,in=-90] node[above, at start]{$i$} node[above, at end]{$i$} (.5,.5); \draw[dir]
      (0,.5) to[out=-90,in=90] node[above, at start]{$j$} node[below, at end]{$j$}(-.5,-.5);}\qquad  \tikz[baseline,very thick]{\draw[dir] (-.5,-.5) to [out=90,in=90] node[below, at start]{$i$} node[below, at end]{$i$} (.5,-.5); \draw[edir]
      (-.5,.5) to[out=-90,in=90] node[above, at start]{$j$} node[below, at end]{$j$}(0,-.5);}= \tikz[baseline,very thick]{\draw[dir] (-.5,-.5) to [out=90,in=90] node[below, at start]{$i$} node[below, at end]{$i$} (.5,-.5); \draw[edir]
      (.5,.5) to[out=-90,in=90] node[above, at start]{$j$} node[below, at end]{$j$}(0,-.5);}.
  \end{equation*}
The mirror images of these relations through a vertical axis also hold.
\item Recall that a {\bf bubble} is a morphism given by a closed
  circle, endowed with some number of dots.  Any bubble of negative degree is zero,
  any bubble of degree 0 is equal to 1.  We must add formal symbols
  called ``fake bubbles'' which are bubbles labelled with a negative
  number of dots (these are explained in \cite[\S 3.1.1]{KLIII});
  given these, we have the inversion formula for bubbles:
%\begin{figure}[h!]
  \begin{equation}\label{inv}
\begin{tikzpicture}[baseline]
\node at (-1,0) {$\displaystyle \sum_{k=\la^i-1}^{j+\la^i+1}$};
\draw[postaction={decorate,decoration={markings,
    mark=at position .5 with {\arrow[scale=1.3]{<}}}},very thick] (.5,0) circle (15pt);
\node [fill,circle,inner sep=2.5pt,label=right:{$k$},right=11pt] at (.5,0) {};
\node[scale=1.5] at (1.375,-.6){$\la$};
\draw[postaction={decorate,decoration={markings,
    mark=at position .5 with {\arrow[scale=1.3]{>}}}},very thick] (2.25,0) circle (15pt);
\node [fill,circle,inner sep=2.5pt,label=right:{$j-k$},right=11pt] at (2.25,0) {};
\node at (5.7,0) {$
  =\begin{cases}
    1 & j=-2\\
    0 & j>-2
  \end{cases}
$};
\end{tikzpicture}
\end{equation}

%\caption{Bubble inversion relations; all strands are colored with $\al_i$.}
%\label{inv-rels}
%\end{figure}
\item 2 relations connecting the crossing with cups and caps, shown in (\ref{lollipop1}-\ref{switch-2}).
\newseq
%\begin{figure}[t!]
 \begin{equation*}\subeqn\label{lollipop1}
\begin{tikzpicture} [scale=1.3, baseline=35pt] 
\node[scale=1.5] at (-.7,1){$\la$};
\draw[postaction={decorate,decoration={markings,
    mark=at position .5 with {\arrow[scale=1.3]{>}}}},very thick] (0,0) to[out=90,in=-90]  (1,1) to[out=90,in=0]  (.5,1.5) to[out=180,in=90]  (0,1) to[out=-90,in=90]  (1,0);  
\node at (1.5,1.15) {$= \,-$}; 
\node at (2.2,1) {$\displaystyle\sum_{a+b=-1}$};
\draw[postaction={decorate,decoration={markings,
    mark=at position .5 with {\arrow[scale=1.3]{>}}}},very thick]
(3,0) to[out=90,in=180]  (3.5,.5) to[out=0,in=90]  node
[pos=.7,fill=black,circle,label={[label distance=3.5pt]right:{$a$}},inner sep=2.5pt]{} (4,0); 

\draw[postaction={decorate,decoration={markings,
    mark=at position .5 with {\arrow[scale=1.3]{>}}}},very thick] (3.5,1.3) circle (10pt);
\node [fill,circle,inner sep=2.5pt,label=right:{$b$},right=10.5pt] at (3.5,1.3) {};
\node[scale=1.5] at (4.7,1){$\la$};
\end{tikzpicture}
\end{equation*}
\begin{equation*}\subeqn \label{eq:1}
\begin{tikzpicture} [scale=1.3, baseline=35pt] 
\node[scale=1.5] at (-.7,1){$\la$};
\draw[postaction={decorate,decoration={markings,
    mark=at position .5 with {\arrow[scale=1.3]{<}}}},very thick] (0,0) to[out=90,in=-90]  (1,1) to[out=90,in=0]  (.5,1.5) to[out=180,in=90]  (0,1) to[out=-90,in=90]  (1,0);  
\node at (1.5,1.15) {$=$}; 
\node at (2.2,1) {$\displaystyle\sum_{a+b=-1}$};
\draw[postaction={decorate,decoration={markings,
    mark=at position .5 with {\arrow[scale=1.3]{<}}}},very thick]
(3,0) to[out=90,in=180]  (3.5,.5) to[out=0,in=90]  node
[pos=.7,fill=black,circle,label={[label distance=3.5pt]right:{$a$}},inner sep=2.5pt]{} (4,0); 

\draw[postaction={decorate,decoration={markings,
    mark=at position .5 with {\arrow[scale=1.3]{<}}}},very thick] (3.5,1.3) circle (10pt);
\node [fill,circle,inner sep=2.5pt,label=right:{$b$},right=10.5pt] at (3.5,1.3) {};
\node[scale=1.5] at (4.7,1){$\la$};
\end{tikzpicture}
\end{equation*}
\begin{equation*}\subeqn\label{switch-1}
\begin{tikzpicture}[baseline,scale=1.3]
\node at (0,0){
\begin{tikzpicture} [scale=1.3]
\node[scale=1.5] at (-.7,1){$\la$};
\draw[postaction={decorate,decoration={markings,
    mark=at position .5 with {\arrow[scale=1.3]{<}}}},very thick] (0,0) to[out=90,in=-90] (1,1) to[out=90,in=-90] (0,2);  
\draw[postaction={decorate,decoration={markings,
    mark=at position .5 with {\arrow[scale=1.3]{>}}}},very thick] (1,0) to[out=90,in=-90] (0,1) to[out=90,in=-90] (1,2);
\end{tikzpicture}
};
\node at (1.5,0) {$=$};
\node at (5.4,0){
\begin{tikzpicture} [scale=1.3, baseline=35pt]

\node[scale=1.5] at (.3,1){$\la$};

\node at (.7,1) {$-$};
\draw[postaction={decorate,decoration={markings,
    mark=at position .5 with {\arrow[scale=1.3]{<}}}},very thick] (1,0) to[out=90,in=-90]  (1,2);  
\draw[postaction={decorate,decoration={markings,
    mark=at position .5 with {\arrow[scale=1.3]{>}}}},very thick] (1.7,0) to[out=90,in=-90]  (1.7,2);

\node at (2.5,1.15) {$+$}; 
\node at (3,1) {$\displaystyle\sum_{a+b+c=-2}$};
\draw[postaction={decorate,decoration={markings,
    mark=at position .5 with {\arrow[scale=1.3]{<}}}},very thick] (4,0) to[out=90,in=-180] (4.5,.5) to[out=0,in=90] node [pos=.6, fill,circle,inner sep=2.5pt,label=above:{$a$}] {} (5,0);  
\draw[postaction={decorate,decoration={markings,
    mark=at position .5 with {\arrow[scale=1.3]{>}}}},very thick] (4,2) to[out=-90,in=-180] (4.5,1.5) to[out=0,in=-90] node [pos=.6, fill,circle,inner sep=2.5pt,label=below:{$c$}] {} (5,2);
\draw[postaction={decorate,decoration={markings,
    mark=at position .5 with {\arrow[scale=1.3]{<}}}},very thick] (5.5,1) circle (10pt);
\node [fill,circle,inner sep=2.5pt,label=right:{$b$},right=10.5pt] at (5.5,1) {};
\node[scale=1.5] at (6.5,1){$\la$};
\end{tikzpicture}
};
\end{tikzpicture}
\end{equation*}

\begin{equation*}\subeqn\label{switch-2}
\begin{tikzpicture}[scale=.9,baseline]
\node at (-3,0){
\scalebox{.95}{\begin{tikzpicture} [scale=1.3]
\node at (0,0){\begin{tikzpicture} [scale=1.3]
\node[scale=1.5] at (-.7,1){$\la$};
\draw[postaction={decorate,decoration={markings,
    mark=at position .5 with {\arrow[scale=1.3]{>}}}},very thick] (0,0) to[out=90,in=-90] (1,1) to[out=90,in=-90] (0,2);  
\draw[postaction={decorate,decoration={markings,
    mark=at position .5 with {\arrow[scale=1.3]{<}}}},very thick] (1,0) to[out=90,in=-90] (0,1) to[out=90,in=-90] (1,2);
\end{tikzpicture}};

\node at (1.5,0) {$=$};
\node at (5.4,0){
  {
\begin{tikzpicture} [scale=1.3, baseline=35pt]

\node[scale=1.5] at (.3,1){$\la$};

\node at (.7,1) {$-$};
\draw[postaction={decorate,decoration={markings,
    mark=at position .5 with {\arrow[scale=1.3]{>}}}},very thick] (1,0) to[out=90,in=-90]  (1,2);  
\draw[postaction={decorate,decoration={markings,
    mark=at position .5 with {\arrow[scale=1.3]{<}}}},very thick] (1.7,0) to[out=90,in=-90]  (1.7,2);

\node at (2.5,1.15) {$+$}; 
\node at (3,1) {$\displaystyle\sum_{a+b+c=-2}$};
\draw[postaction={decorate,decoration={markings,
    mark=at position .5 with {\arrow[scale=1.3]{>}}}},very thick] (4,0) to[out=90,in=-180] (4.5,.5) to[out=0,in=90] node [pos=.6, fill,circle,inner sep=2.5pt,label=above:{$a$}] {} (5,0);  
\draw[postaction={decorate,decoration={markings,
    mark=at position .5 with {\arrow[scale=1.3]{<}}}},very thick] (4,2) to[out=-90,in=-180] (4.5,1.5) to[out=0,in=-90] node [pos=.6, fill,circle,inner sep=2.5pt,label=below:{$c$}] {} (5,2);
\draw[postaction={decorate,decoration={markings,
    mark=at position .5 with {\arrow[scale=1.3]{>}}}},very thick] (5.5,1) circle (10pt);
\node [fill,circle,inner sep=2.5pt,label=right:{$b$},right=10.5pt] at (5.5,1) {};
\node[scale=1.5] at (6.5,1){$\la$};
\end{tikzpicture}
}
};
\end{tikzpicture}}};
\end{tikzpicture}
\end{equation*}
%\caption{``Cross and cap'' relations; all strands are colored with $\al_i$.  By convention, a negative number of dots on a strand which is not closed into a bubble is 0.}
%\label{pop-rels}
%\end{figure}

\item Oppositely oriented crossings of differently colored strands
  simply cancel up to a scalar.\newseq
%\begin{figure}[h!] 
\begin{equation*}\subeqn\label{opp-cancel1}
    \begin{tikzpicture}[baseline]
      \node at (0,0){ \begin{tikzpicture} [scale=1.3] \node[scale=1.5]
        at (-.7,1){$\la$};
        \draw[postaction={decorate,decoration={markings, mark=at
            position .5 with {\arrow[scale=1.3]{<}}}},very thick]
        (0,0) to[out=90,in=-90] node[at start,below]{$i$} (1,1)
        to[out=90,in=-90] (0,2) ;
        \draw[postaction={decorate,decoration={markings, mark=at
            position .5 with {\arrow[scale=1.3]{>}}}},very thick]
        (1,0) to[out=90,in=-90] node[at start,below]{$j$} (0,1)
        to[out=90,in=-90] (1,2);
      \end{tikzpicture}};
    \node at (1.7,0) {$=$}; 
    \node at (3.9,0){
      \begin{tikzpicture} [scale=1.3,baseline=35pt]

        \node[scale=1.5] at (2.4,1){$\la$};

        \draw[postaction={decorate,decoration={markings, mark=at
            position .5 with {\arrow[scale=1.3]{<}}}},very thick]
        (1,0) to[out=90,in=-90] node[at start,below]{$i$} (1,2);
        \draw[postaction={decorate,decoration={markings, mark=at
            position .5 with {\arrow[scale=1.3]{>}}}},very thick]
        (1.7,0) to[out=90,in=-90] node[at start,below]{$j$} (1.7,2);
      \end{tikzpicture}
    };
  \end{tikzpicture}
\label{opp-cancel2}
\begin{tikzpicture}[baseline]
\node at (0,0){\begin{tikzpicture} [scale=1.3]
\node[scale=1.5] at (-.7,1){$\la$};
\draw[postaction={decorate,decoration={markings,
    mark=at position .5 with {\arrow[scale=1.3]{>}}}},very thick] (0,0) to[out=90,in=-90] node[at start,below]{$i$} (1,1) to[out=90,in=-90] (0,2) ;  
\draw[postaction={decorate,decoration={markings,
    mark=at position .5 with {\arrow[scale=1.3]{<}}}},very thick] (1,0) to[out=90,in=-90] node[at start,below]{$j$} (0,1) to[out=90,in=-90] (1,2);
\end{tikzpicture}};

\node at (1.7,0) {$=$};
\node at (3.9,0){
\begin{tikzpicture} [scale=1.3,baseline=35pt]

\node[scale=1.5] at (2.4,1){$\la$};

\draw[postaction={decorate,decoration={markings,
    mark=at position .5 with {\arrow[scale=1.3]{>}}}},very thick] (1,0) to[out=90,in=-90]  node[at start,below]{$i$ }(1,2) ;  
\draw[postaction={decorate,decoration={markings,
    mark=at position .5 with {\arrow[scale=1.3]{<}}}},very thick] (1.7,0) to[out=90,in=-90]  node[at start,below]{$j$} (1.7,2);
\end{tikzpicture}
};

\end{tikzpicture}
\end{equation*}
%\caption{The cancellation of oppositely oriented crossings with different labels.}
%\label{opp-cancel}
%\end{figure}
}

\item the endomorphisms of words only using $\eF_i$ (or by duality only $\eE_i$'s) satisfy the relations of the {\bf quiver Hecke algebra} $R$.\newseq

%\begin{figure}[h!]
\begin{equation*}\subeqn\label{first-QH}
    \begin{tikzpicture}[scale=.7,baseline]
      \draw[very thick,postaction={decorate,decoration={markings,
    mark=at position .2 with {\arrow[scale=1.3]{<}}}}](-4,0) +(-1,-1) -- +(1,1) node[below,at start]
      {$i$}; \draw[very thick,postaction={decorate,decoration={markings,
    mark=at position .2 with {\arrow[scale=1.3]{<}}}}](-4,0) +(1,-1) -- +(-1,1) node[below,at
      start] {$j$}; \fill (-4.5,.5) circle (3pt);
      % \draw[very thick] (0,0) +(0,-1) -- +(0,1) node[below, at
      % start]{$i$}; \fill (0,0) circle (5pt);
      \node at (-2,0){=}; \draw[very thick,postaction={decorate,decoration={markings,
    mark=at position .8 with {\arrow[scale=1.3]{<}}}}](0,0) +(-1,-1) -- +(1,1)
      node[below,at start] {$i$}; \draw[very thick,postaction={decorate,decoration={markings,
    mark=at position .8 with {\arrow[scale=1.3]{<}}}}](0,0) +(1,-1) --
      +(-1,1) node[below,at start] {$j$}; \fill (.5,-.5) circle (3pt);
   \end{tikzpicture}
 \qquad 
    \begin{tikzpicture}[scale=.7,baseline]
      \draw[very thick,postaction={decorate,decoration={markings,
    mark=at position .2 with {\arrow[scale=1.3]{<}}}}](-4,0) +(-1,-1) -- +(1,1) node[below,at start]
      {$i$}; \draw[very thick,postaction={decorate,decoration={markings,
    mark=at position .2 with {\arrow[scale=1.3]{<}}}}](-4,0) +(1,-1) -- +(-1,1) node[below,at
      start] {$j$}; \fill (-3.5,.5) circle (3pt);
      % \draw[very thick] (0,0) +(0,-1) -- +(0,1) node[below, at
      % start]{$i$}; \fill (0,0) circle (5pt);
      \node at (-2,0){=}; \draw[very thick,postaction={decorate,decoration={markings,
    mark=at position .8 with {\arrow[scale=1.3]{<}}}}](0,0) +(-1,-1) -- +(1,1)
      node[below,at start] {$i$}; \draw[very thick,postaction={decorate,decoration={markings,
    mark=at position .8 with {\arrow[scale=1.3]{<}}}}](0,0) +(1,-1) --
      +(-1,1) node[below,at start] {$j$}; \fill (-.5,-.5) circle (3pt);
      \node at (3.5,0){unless $i=j$};
    \end{tikzpicture}
  \end{equation*}
\begin{equation*}\subeqn\label{nilHecke-1}
    \begin{tikzpicture}[scale=.8,baseline]
      \draw[very thick,postaction={decorate,decoration={markings,
    mark=at position .2 with {\arrow[scale=1.3]{<}}}}](-4,0) +(-1,-1) -- +(1,1) node[below,at start]
      {$i$}; \draw[very thick,postaction={decorate,decoration={markings,
    mark=at position .2 with {\arrow[scale=1.3]{<}}}}](-4,0) +(1,-1) -- +(-1,1) node[below,at
      start] {$i$}; \fill (-4.5,.5) circle (3pt);
      % \draw[very thick] (0,0) +(0,-1) -- +(0,1) node[below, at
      % start]{$i$}; \fill (0,0) circle (5pt);
      \node at (-2,0){$-$}; \draw[very thick,postaction={decorate,decoration={markings,
    mark=at position .8 with {\arrow[scale=1.3]{<}}}}](0,0) +(-1,-1) -- +(1,1)
      node[below,at start] {$i$}; \draw[very thick,postaction={decorate,decoration={markings,
    mark=at position .8 with {\arrow[scale=1.3]{<}}}}](0,0) +(1,-1) --
      +(-1,1) node[below,at start] {$i$}; \fill (.5,-.5) circle (3pt);
      \node at (1.8,0){$=$}; 
    \end{tikzpicture}\,\,
%\label{nilHecke-2}
    \begin{tikzpicture}[scale=.8,baseline]
      \draw[very thick,postaction={decorate,decoration={markings,
    mark=at position .8 with {\arrow[scale=1.3]{<}}}}](-4,0) +(-1,-1) -- +(1,1) node[below,at start]
      {$i$}; \draw[very thick,postaction={decorate,decoration={markings,
    mark=at position .8 with {\arrow[scale=1.3]{<}}}}](-4,0) +(1,-1) -- +(-1,1) node[below,at
      start] {$i$}; \fill (-4.5,-.5) circle (3pt);
      % \draw[very thick] (0,0) +(0,-1) -- +(0,1) node[below, at
      % start]{$i$}; \fill (0,0) circle (5pt);
      \node at (-2,0){$-$}; \draw[very thick,postaction={decorate,decoration={markings,
    mark=at position .2 with {\arrow[scale=1.3]{<}}}}](0,0) +(-1,-1) -- +(1,1)
      node[below,at start] {$i$}; \draw[very thick,postaction={decorate,decoration={markings,
    mark=at position .2 with {\arrow[scale=1.3]{<}}}}](0,0) +(1,-1) --
      +(-1,1) node[below,at start] {$i$}; \fill (.5,.5) circle (3pt);
      \node at (2,0){$=$}; \draw[very thick,postaction={decorate,decoration={markings,
    mark=at position .5 with {\arrow[scale=1.3]{<}}}}](4,0) +(-1,-1) -- +(-1,1)
      node[below,at start] {$i$}; \draw[very thick,postaction={decorate,decoration={markings,
    mark=at position .5 with {\arrow[scale=1.3]{<}}}}](4,0) +(0,-1) --
      +(0,1) node[below,at start] {$i$};
    \end{tikzpicture}
  \end{equation*}
\begin{equation*}\subeqn\label{black-bigon}
    \begin{tikzpicture}[very thick,scale=.8,baseline]
      \draw[postaction={decorate,decoration={markings,
    mark=at position .55 with {\arrow[scale=1.3]{<}}}}] (-2.8,0) +(0,-1) .. controls (-1.2,0) ..  +(0,1)
      node[below,at start]{$i$}; \draw[postaction={decorate,decoration={markings,
    mark=at position .55 with {\arrow[scale=1.3]{<}}}}] (-1.2,0) +(0,-1) .. controls
      (-2.8,0) ..  +(0,1) node[below,at start]{$j$}; 
   \end{tikzpicture}=\quad
   \begin{cases}
0 & i=j\\
     \begin{tikzpicture}[very thick,yscale=.6,xscale=.8,baseline=-3pt]
       \draw[postaction={decorate,decoration={markings,
    mark=at position .5 with {\arrow[scale=1.3]{<}}}}] (2,0) +(0,-1) -- +(0,1) node[below,at start]{$j$};
       \draw[postaction={decorate,decoration={markings,
    mark=at position .5 with {\arrow[scale=1.3]{<}}}}] (1,0) +(0,-1) -- +(0,1) node[below,at start]{$i$};
     \end{tikzpicture} & i\neq j,j\pm 1\\
   \begin{tikzpicture}[very thick,yscale=.6,xscale=.8,baseline=-3pt]
       \draw[postaction={decorate,decoration={markings,
    mark=at position .8 with {\arrow[scale=1.3]{<}}}}] (2,0) +(0,-1) -- +(0,1) node[below,at start]{$j$};
       \draw[postaction={decorate,decoration={markings,
    mark=at position .8 with {\arrow[scale=1.3]{<}}}}] (1,0) +(0,-1) -- +(0,1) node[below,at start]{$i$};\fill (2,0) circle (4pt);
     \end{tikzpicture}-\begin{tikzpicture}[very thick,yscale=.6,xscale=.8,baseline=-3pt]
       \draw[postaction={decorate,decoration={markings,
    mark=at position .8 with {\arrow[scale=1.3]{<}}}}] (2,0) +(0,-1) -- +(0,1) node[below,at start]{$j$};
       \draw[postaction={decorate,decoration={markings,
    mark=at position .8 with {\arrow[scale=1.3]{<}}}}] (1,0) +(0,-1) -- +(0,1) node[below,at start]{$i$};\fill (1,0) circle (4pt);
     \end{tikzpicture}& i=j-1\\
  \begin{tikzpicture}[very thick,baseline=-3pt,yscale=.6,xscale=.8]
       \draw[postaction={decorate,decoration={markings,
    mark=at position .8 with {\arrow[scale=1.3]{<}}}}] (2,0) +(0,-1) -- +(0,1) node[below,at start]{$j$};
       \draw[postaction={decorate,decoration={markings,
    mark=at position .8 with {\arrow[scale=1.3]{<}}}}] (1,0) +(0,-1) -- +(0,1) node[below,at start]{$i$};\fill (1,0) circle (4pt);
     \end{tikzpicture}-\begin{tikzpicture}[very thick,yscale=.6,xscale=.8,baseline=-3pt]
       \draw[postaction={decorate,decoration={markings,
    mark=at position .8 with {\arrow[scale=1.3]{<}}}}] (2,0) +(0,-1) -- +(0,1) node[below,at start]{$j$};
       \draw[postaction={decorate,decoration={markings,
    mark=at position .8 with {\arrow[scale=1.3]{<}}}}] (1,0) +(0,-1) -- +(0,1) node[below,at start]{$i$};\fill (2,0) circle (4pt);
     \end{tikzpicture}& i=j+1
   \end{cases}
  \end{equation*}
 \begin{equation*}\subeqn\label{triple-dumb}
    \begin{tikzpicture}[very thick,scale=.8,baseline=-3pt]
      \draw[postaction={decorate,decoration={markings,
    mark=at position .2 with {\arrow[scale=1.3]{<}}}}] (-2,0) +(1,-1) -- +(-1,1) node[below,at start]{$k$}; \draw[postaction={decorate,decoration={markings,
    mark=at position .8 with {\arrow[scale=1.3]{<}}}}]
      (-2,0) +(-1,-1) -- +(1,1) node[below,at start]{$i$}; \draw[postaction={decorate,decoration={markings,
    mark=at position .5 with {\arrow[scale=1.3]{<}}}}]
      (-2,0) +(0,-1) .. controls (-3,0) ..  +(0,1) node[below,at
      start]{$j$}; \node at (-.5,0) {$-$}; \draw[postaction={decorate,decoration={markings,
    mark=at position .8 with {\arrow[scale=1.3]{<}}}}] (1,0) +(1,-1) -- +(-1,1)
      node[below,at start]{$k$}; \draw[postaction={decorate,decoration={markings,
    mark=at position .2 with {\arrow[scale=1.3]{<}}}}] (1,0) +(-1,-1) -- +(1,1)
      node[below,at start]{$i$}; \draw[postaction={decorate,decoration={markings,
    mark=at position .5 with {\arrow[scale=1.3]{<}}}}] (1,0) +(0,-1) .. controls
      (2,0) ..  +(0,1) node[below,at start]{$j$}; \end{tikzpicture}=\quad
      \begin{cases} 
    \begin{tikzpicture}[very thick,yscale=.6,xscale=.8,baseline=-3pt]
     \draw[postaction={decorate,decoration={markings,
    mark=at position .5 with {\arrow[scale=1.3]{<}}}}] (6.2,0)
      +(1,-1) -- +(1,1) node[below,at start]{$k$}; \draw[postaction={decorate,decoration={markings,
    mark=at position .5 with {\arrow[scale=1.3]{<}}}}] (6.2,0)
      +(-1,-1) -- +(-1,1) node[below,at start]{$i$}; \draw[postaction={decorate,decoration={markings,
    mark=at position .5 with {\arrow[scale=1.3]{<}}}}] (6.2,0)
      +(0,-1) -- +(0,1) node[below,at
      start]{$j$};     \end{tikzpicture}& i=k=j+1\\
    -\begin{tikzpicture}[very thick,yscale=.6,xscale=.8,baseline]
     \draw[postaction={decorate,decoration={markings,
    mark=at position .5 with {\arrow[scale=1.3]{<}}}}] (6.2,0)
      +(1,-1) -- +(1,1) node[below,at start]{$k$}; \draw[postaction={decorate,decoration={markings,
    mark=at position .5 with {\arrow[scale=1.3]{<}}}}] (6.2,0)
      +(-1,-1) -- +(-1,1) node[below,at start]{$i$}; \draw[postaction={decorate,decoration={markings,
    mark=at position .5 with {\arrow[scale=1.3]{<}}}}] (6.2,0)
      +(0,-1) -- +(0,1) node[below,at
      start]{$j$};     \end{tikzpicture}& i=k=j-1\\
     0& \text{otherwise}
      \end{cases}
  \end{equation*}
%\caption{The relations of the KLR algebra.  These relations are insensitive to labeling of the plane.}
%\label{quiver-hecke}
%\end{figure}
\item the composition \[
\tikz[very thick]{
\draw[postaction={decorate,decoration={markings,
    mark=at position .8 with {\arrow[scale=1.3]{>}}}}] (0,0) to[out=90,in=-90] node[below,at start]{$j$} node[above,at end]{$j$} (1,1);\draw[postaction={decorate,decoration={markings,
    mark=at position .8 with {\arrow[scale=1.3]{<}}}},postaction={decorate,decoration={markings,
    mark=at position .2 with {\arrow[scale=1.3]{<}}}}] (-.5,0)
  to[out=90,  in=180] node[below,at start]{$i$} (0,.7) to[out=0, in=180] (1,.3) to[out=0,in=-90] node[above,at end]{$i$} (1.5,1); }
\] possesses an inverse.
\item if $\la^i\geq 0$, then the map   $\sigma_{\la,i}\colon \eE_i\eF_i\to \eF_i \eE_i\oplus
  \id_\la^{\oplus\la^i}$ given by  
\[
\tikz[very thick,scale=1.4]{\node[scale=1.5] at (-1,.5){$\la$};
\draw[postaction={decorate,decoration={markings,
    mark=at position .8 with {\arrow[scale=1.3]{>}}}}] (0,0) to[out=90,in=-90] node[below,at start]{$i$} node[above,at end]{$i$} (1,1);\draw[postaction={decorate,decoration={markings,
    mark=at position .8 with {\arrow[scale=1.3]{<}}}},postaction={decorate,decoration={markings,
    mark=at position .2 with {\arrow[scale=1.3]{<}}}}] (-.5,0)
  to[out=90,  in=180] node[below,at start]{$i$} (0,.7) to[out=0,
  in=180] (1,.3) to[out=0,in=-90] node[above,at end]{$i$} (1.5,1); 
\node at (2,.5){$\oplus$};
\draw[postaction={decorate,decoration={markings,
    mark=at position .8 with {\arrow[scale=1.3]{<}}}},postaction={decorate,decoration={markings,
    mark=at position .3 with {\arrow[scale=1.3]{<}}}}] (2.5,0)
  to[out=90,  in=180] node[below,at start]{$i$} (2.75,.5)
  to[out=0,in=90] node[below,at end]{$i$} (3,0); 
\node at (3.5,.5){$\oplus$};
\draw[postaction={decorate,decoration={markings,
    mark=at position .8 with {\arrow[scale=1.3]{<}}}},postaction={decorate,decoration={markings,
    mark=at position .3 with {\arrow[scale=1.3]{<}}}}] (4,0)
  to[out=90,  in=180] node[below,at start]{$i$} node[at
  end,circle,fill=black,inner sep=2pt]{} (4.25,.5)
  to[out=0,in=90] node[below,at end]{$i$} (4.5,0); 
\node at (5,.5){$\oplus$};
\node at (5.5,.5){$\dots$};
\node at (6,.5){$\oplus$};
\draw[postaction={decorate,decoration={markings,
    mark=at position .8 with {\arrow[scale=1.3]{<}}}},postaction={decorate,decoration={markings,
    mark=at position .3 with {\arrow[scale=1.3]{<}}}}] (6.5,0)
  to[out=90,  in=180] node[below,at start]{$i$} node[at
  end,circle,fill=black,inner sep=2pt]{} (6.75,.5)
  to[out=0,in=90] node[below,at end]{$i$} (7,0);\node at (6.75,.75){$\lambda^i-1$};  }
\] possesses an inverse.
\item if $\la^i\leq 0$, then the map   $\sigma_{\la,i}\colon \eE_i\eF_i\oplus
  \id_\la^{\oplus-\la^i}\to \eF_i \eE_i$ given by  
\[
\tikz[very thick,scale=1.4]{\node[scale=1.5] at (-1,.5){$\la$};
\draw[postaction={decorate,decoration={markings,
    mark=at position .8 with {\arrow[scale=1.3]{>}}}}] (0,0) to[out=90,in=-90] node[below,at start]{$i$} node[above,at end]{$i$} (1,1);\draw[postaction={decorate,decoration={markings,
    mark=at position .8 with {\arrow[scale=1.3]{<}}}},postaction={decorate,decoration={markings,
    mark=at position .2 with {\arrow[scale=1.3]{<}}}}] (-.5,0)
  to[out=90,  in=180] node[below,at start]{$i$} (0,.7) to[out=0,
  in=180] (1,.3) to[out=0,in=-90] node[above,at end]{$i$} (1.5,1); 
\node at (2,.5){$\oplus$};
\draw[postaction={decorate,decoration={markings,
    mark=at position .8 with {\arrow[scale=1.3]{<}}}},postaction={decorate,decoration={markings,
    mark=at position .3 with {\arrow[scale=1.3]{<}}}}] (2.5,1)
  to[out=-90,  in=180] node[above,at start]{$i$} (2.75,.5)
  to[out=0,in=-90] node[above,at end]{$i$} (3,1); 
\node at (3.5,.5){$\oplus$};
\draw[postaction={decorate,decoration={markings,
    mark=at position .8 with {\arrow[scale=1.3]{<}}}},postaction={decorate,decoration={markings,
    mark=at position .3 with {\arrow[scale=1.3]{<}}}}] (4,1)
  to[out=-90,  in=180] node[above,at start]{$i$} node[at
  end,circle,fill=black,inner sep=2pt]{} (4.25,.5)
  to[out=0,in=-90] node[above,at end]{$i$} (4.5,1); 
\node at (5,.5){$\oplus$};
\node at (5.5,.5){$\dots$};
\node at (6,.5){$\oplus$};
\draw[postaction={decorate,decoration={markings,
    mark=at position .8 with {\arrow[scale=1.3]{<}}}},postaction={decorate,decoration={markings,
    mark=at position .3 with {\arrow[scale=1.3]{<}}}}] (6.5,1)
  to[out=-90,  in=180] node[above,at start]{$i$} node[at
  end,circle,fill=black,inner sep=2pt]{} (6.75,.5)
  to[out=0,in=-90] node[above,at end]{$i$} (7,1); \node at (6.75,.25){$-\lambda^i-1$}; }
\] possesses an inverse.
\end{itemize}
\end{definition}
We can define an adjunction of $\eF_i$ and $\eE_i$ in the opposite
order by defining §§
\[\iota'=\tikz[baseline,very thick,scale=3]{\draw[<-]   (.25,.3) to
  [out=-100, in=60] node[at start,above,scale=.8]{$i$} (.2,.1)
  to[out=-120,in=-60] (-.2,.1) to [out=120,in=-80] 
  node[at end,above,scale=.8]{$i$} (-.25,.3);\node[scale=.8] at
  (0,.18){$\la$}; \node[scale=.8] at (0,-.1){$\la+\al_i$};
  \draw[thin,dashed] (.33,.3) -- (-.33,.3);\draw[thin,dashed] (.33,-.2) -- (-.33,-.2);}\qquad\qquad 
\ep'=\tikz[baseline,very thick,scale=3]{\draw[<-] (.25,-.1) to
  [out=100, in=-60]   node[at start,below,scale=.8]{$i$} (.2,.1)
  to[out=120,in=60]
  (-.2,.1)  to [out=-120,in=80] node[at end,below,scale=.8]{$i$} (-.25,-.1);\node[scale=.8] at
  (0,.3){$\la$};\node[scale=.8] at (0,.02){$\la+\al_i$};
  \draw[thin,dashed] (.33,-.1) -- (-.33,-.1); \draw[thin,dashed]
  (.33,.4) -- (-.33,.4);}\] according to the rule of \cite[(1.14-18)]{Brundandef}.

\subsubsection{The algebras $T$ and $\tilde{T}$}
\label{sec:algebras-t-tildet}

We first recall the definition of the $\mathfrak{gl}_n$ tensor
algebras $T$ and $\tilde{T}$ from~\cite{Webmerged}.  In order to prove our results, we have
also needed to add some new material, especially concerning the
relationship between  $T$ and $\tilde{T}$ and Hochschild homology.

\begin{definition}
  A {\bf Stendhal diagram} is a collection of finitely many oriented curves in
  $\R\times [0,1]$. Each curve is either
  \begin{itemize}
  \item colored red and labeled with a dominant weight of $\mathfrak{gl}_n$, or
  \item colored black and labeled with $i\in [1,n-1]$ and decorated with finitely many dots.
  \end{itemize}
 The diagram must be locally of the form \begin{equation*}
\begin{tikzpicture}
%\tikzstyle wei=[red,dashed];
  \draw[very thick,postaction={decorate,decoration={markings,
    mark=at position .75 with {\arrow[scale=1.3]{<}}}}] (-4,0) +(-1,-1) -- +(1,1);
   % node[below,at start] {$i$};
  \draw[very thick,postaction={decorate,decoration={markings,
    mark=at position .75 with {\arrow[scale=1.3]{<}}}}](-4,0) +(1,-1) -- +(-1,1);
  %  node[below,at start] {$j$};

%\draw[very thick] (0,0) +(0,-1) -- +(0,1)
%node[below, at start]{$i$};
%\fill (0,0) circle (5pt);

  \draw[very thick,postaction={decorate,decoration={markings,
    mark=at position .75 with {\arrow[scale=1.3]{<}}}}](0,0) +(-1,-1) -- +(1,1);
   % node[below,at start] {$i$};
  \draw[wei, very thick,postaction={decorate,decoration={markings,
    mark=at position .75 with {\arrow[scale=1.3]{<}}}}](0,0) +(1,-1) -- +(-1,1);
   % node[below,at start] {$\lambda$};

  \draw[wei,very thick,postaction={decorate,decoration={markings,
    mark=at position .75 with {\arrow[scale=1.3]{<}}}}](4,0) +(-1,-1) -- +(1,1);
   % node[below,at start] {$i$};
  \draw [very thick,postaction={decorate,decoration={markings,
    mark=at position .75 with {\arrow[scale=1.3]{<}}}}](4,0) +(1,-1) -- +(-1,1);
   % node[below,at start] {$\lambda$};

  \draw[very thick,postaction={decorate,decoration={markings,
    mark=at position .75 with {\arrow[scale=1.3]{<}}}}](8,0) +(0,-1) --  node
  [midway,circle,fill=black,inner sep=2pt]{}
  +(0,1);
\end{tikzpicture}
\end{equation*}
with each curve oriented in the negative direction.  In
particular, no red strands can ever cross.  Each curve must
meet both $y=0$ and $y=1$ at distinct points from the other curves.
\end{definition}
We'll typically only consider Stendhal diagrams up to isotopy. Since the orientation on a diagram is clear, we typically won't draw
it.  

We call the lines $y=0,1$ the {\bf  bottom} and {\bf top} of the
diagram.  Reading across the bottom and top from left to right, we
obtain a sequence of dominant weights and elements of $[1,n-1]$. We
record this data as
\begin{itemize}
\item the list $\Bi=(i_1, \dots, i_n)$ of elements of $[1,n-1]$, read
  from the left;
\item the  list $\bla=(\la_1,\dots,\la_\ell)$ of fundamental $\mathfrak{gl}_n$ weights, read
  from the left;
\item  the weakly increasing function
  $\kappa\colon [1,\ell]\to [0,n]$ such that $\kappa(m)$ is the position of
  the rightmost black strand which is left of $m$th red strand (both counted
  from the left). By
convention, we write $\kappa(i)=0$ if the $i$th red strand is left of all
black strands. 
\end{itemize}
We call such a triple of data a {\bf Stendhal triple}.  We will often
want to partition the sequence $\Bi$ in the groups of black strands
between two consecutive reds, that is, the groups \[\Bi_0=(i_1,\dots,
i_{\kappa(1)}), \Bi_1=(i_{\kappa(1)+1},\dots, i_{\kappa(2)}),\dots,
\Bi_\ell=(i_{\kappa(\ell)+1},\dots,i_n).\]  We call these {\bf black blocks}.

%\begin{example}
  Here are two examples of Stendhal diagrams:
\[a=
\begin{tikzpicture}[baseline,very thick]
  \draw (-.5,-1) to[out=90,in=-90] node[below,at start]{$i$} (-1,0) to[out=90,in=-90](.5,1);
  \draw (.5,-1) to[out=90,in=-90] node[below,at start]{$j$} (1,1);
  \draw  (1,-1) to[out=90,in=-90] node[below,at start]{$i$}
  node[midway,circle,fill=black,inner sep=2pt]{} (0,1);
  \draw[wei] (-1, -1) to[out=90,in=-90]node[below,at start]{$\la_1$} (-.5,0) to[out=90,in=-90] (-1,1);
  \draw[wei] (0,-1) to[out=90,in=-90]node[below,at start]{$\la_2$} (-.5,1);
\end{tikzpicture}\qquad \qquad 
b=
\begin{tikzpicture}[baseline,very thick]
  \draw (-.5,-1) to[out=90,in=-90] node[below,at start]{$i$} (-1,0) to[out=90,in=-90](1,1);
  \draw (.5,-1) to[out=90,in=-90] node[below,at start]{$j$} (.5,1);
  \draw  (1,-1) to[out=90,in=-90] node[below,at start]{$i$} (-.5 ,1);
  \draw[wei] (0,-1) to[out=90,in=-90] node[below,at start]{$\la_2$} (0,1);
  \draw[wei] (-1, -1) to[out=90,in=-90] node[below,at start]{$\la_1$} (-.5,0) to[out=90,in=-90] (-1,1);
\end{tikzpicture}
\]
\begin{itemize}
\item At the top of $a$, we have $\Bi=(i,i,j)$, $\bla=(\la_1,\la_2)$
  and $\kappa=(1\mapsto 0,2\mapsto 0)$.
\item At the top of $b$ and bottom of $a$ and $b$, $\Bi=(i,j,i)$,
  $\bla=(\la_1,\la_2)$ and $\kappa=(1\mapsto 0,2\mapsto 1)$.
\end{itemize}

%\end{example}

\begin{definition}
  Given Stendhal diagrams $a$ and $b$, their {\bf composition} $ab$ is
  given by stacking $a$ on top of $b$ and attempting to join the
  bottom of $a$ and top of $b$. If the Stendhal triples
  from the bottom of $a$ and top of $b$ don't match, then the
  composition is not defined and by convention is 0, which is not a
  Stendhal diagram, just a formal symbol.
\[
ab=
\begin{tikzpicture}[baseline,very thick,yscale=.5]
  \draw (-.5,-2) to[out=90,in=-90] node[below,at start]{$i$} (-1,-.8)
  to[out=90,in=-90](1,.2) to[out=90,in=-90] node[midway,circle,fill=black,inner sep=2pt]{}  (0,2);
  \draw (.5,-2) to[out=90,in=-90] node[below,at start]{$j$} (.5,0) to[out=90,in=-90] (1,2);
  \draw  (1,-2) to[out=90,in=-90] node[below,at start]{$i$} (-1,.8) to[out=90,in=-90] (.5,2);
  \draw[wei] (0,-2) to[out=90,in=-90] node[below,at start]{$\la_2$}
  (0,0) to[out=90,in=-90]
  (-.5,2);
  \draw[wei] (-1, -2) to[out=90,in=-90] node[below,at start]{$\la_1$}
  (-.5,-1) to[out=90,in=-90] (-1,0) to[out=90,in=-90] (-.5,1)
  to[out=90,in=-90]   (-1,2);
\end{tikzpicture}\qquad \qquad ba=0
\]
Fix a field $\K$ and let $\ttalg$ be the formal span over
$\K$ of Stendhal diagrams (up to isotopy).  The composition law
induces an algebra structure on $\ttalg$.
\end{definition}

 Let $e(\Bi,\bla,\kappa)$ be the unique
crossingless, dotless  diagram where the triple read off from both top and
bottom is $(\Bi,\bla,\kappa)$.
Composition on the left/right with $e(\Bi,\bla,\kappa)$ is an idempotent
operation; it sends a diagram $a$ to itself if the top/bottom of $a$
matches $(\Bi,\bla,\kappa)$ and to 0 otherwise.  We'll often fix $\bla$, and thus leave it out from the notation, just
writing $e(\Bi,\kappa)$ for this diagram.  Let $P^\kappa_\Bi=T^\bla
e(\Bi,\kappa)$ and $\tilde{P}^\kappa_\Bi=\tilde{T}^\bla e(\Bi,\kappa)$.

Considered as
elements of $\ttalg$, the diagrams $e(\Bi,\bla,\kappa)$ are
orthogonal idempotents. 
The algebra $\ttalg$ is not unital, but it is {\bf locally
  unital}, that is for any element $a$, there is an idempotent such
that $ea=ae=a$.  This can be taken to be the sum of $e(\Bi,\bla,\kappa)$ for all
triples that occur at the top or bottom of one of the diagrams in $a$.

Alternatively, we can organize these diagrams into a category whose objects are Stendhal triples
$(\Bi,\bla,\kappa)$ and whose morphisms are Stendhal diagrams read
from bottom to top.  In this perspective, the idempotents
$e(\Bi,\bla,\kappa)$ are the identity morphisms of different objects.

\begin{definition}
  We call a black strand in a Stendhal diagram {\bf violating} if at some
  horizontal slice $y=c$ for $c\in [0,1]$, it is the leftmost strand.  A Stendhal
  diagram which possesses a violating strand is called {\bf violated}.
\end{definition}
Both the diagrams $a$ and $b$ above are violated.  The diagrams
\[c=
\begin{tikzpicture}[baseline,very thick]
  \draw (-.5,-1) to[out=90,in=-90] node[below,at start]{$i$} (.5,1);
  \draw (.5,-1) to[out=90,in=-90] node[below,at start]{$j$} (1,1);
  \draw  (1,-1) to[out=90,in=-90] node[below,at start]{$i$} (0,1);
  \draw[wei] (-1, -1) to[out=90,in=-90]node[below,at start]{$\la_1$} (-1,1);
  \draw[wei] (0,-1) to[out=90,in=-90]node[below,at start]{$\la_2$} (-.5,1);
\end{tikzpicture}\qquad \qquad 
d=
\begin{tikzpicture}[baseline,very thick]
  \draw (-.5,-1) to[out=90,in=-90] node[below,at start]{$i$} (1,1);
  \draw (.5,-1) to[out=90,in=-90] node[below,at start]{$j$} (.5,1);
  \draw  (1,-1) to[out=90,in=-90] node[below,at start]{$i$} (-.5 ,1);
  \draw[wei] (0,-1) to[out=90,in=-90] node[below,at start]{$\la_2$} (0,1);
  \draw[wei] (-1, -1) to[out=90,in=-90] node[below,at start]{$\la_1$} (-1,1);
\end{tikzpicture}
\]
are not violated. The diagram $e({\Bi},\bla,\kappa)$ is violated if
and only if $\kappa(1) >0$.  

\begin{definition}  The {\bf degree} of a Stendhal diagram is the sum over
  crossings and dots in the diagram of 
  \begin{itemize}
  \item$-\langle\al_i,\al_j\rangle$ for each crossing of a black strand
    labeled $i$ with one labeled $j$;
  \item $\langle\al_i,\al_i\rangle=2$ for each dot on a black
    strand labeled $i$;
  \item $\langle\al_i,\la\rangle=\la^i$ for each crossing of a
    black strand labeled $i$ with a red strand labeled $\la$.
  \end{itemize}
The degree of diagrams is additive under composition.  Thus, the
algebra $\ttalg$ inherits a grading from this degree function.
\end{definition}

The reflection through the horizontal axis of a Stendhal diagram $a$
is again a Stendhal diagram, which we denote $\dot{a}$.  Note that
$\dot{(ab)}=\dot b\dot a$, so $\dot{a}$ induces an anti-automorphism of $\ttalg$.

%\subsection{Definition and basic properties}
%\label{sec:definition}
\begin{definition}
  Let $\tilde{T}$ be the quotient  of $\ttalg$ by
  the following local
  relations between Stendhal diagrams:
  \begin{itemize}
  \item the KLR relations  (\ref{first-QH}--\ref{triple-dumb}) 
\item  All black crossings and dots can pass through red lines, with a
  correction term similar to Khovanov and Lauda's (for the latter two
  relations, we also include their mirror images). By convention, the term with the summation below is taken to 
be zero if $\lambda^i=0$ :
\newseq
  \begin{equation*}\subeqn \label{red-triple-correction}
    \begin{tikzpicture}[very thick,baseline]
      \draw (-3,0)  +(1,-1) -- +(-1,1) node[at start,below]{$i$};
      \draw (-3,0) +(-1,-1) -- +(1,1)node [at start,below]{$j$};
      \draw[wei] (-3,0)  +(0,-1) .. controls (-4,0) .. node[below, at start]{$\la$}  +(0,1);
      \node at (-1,0) {=};
      \draw (1,0)  +(1,-1) -- +(-1,1) node[at start,below]{$i$};
      \draw (1,0) +(-1,-1) -- +(1,1) node [at start,below]{$j$};
      \draw[wei] (1,0) +(0,-1) .. controls (2,0) ..  node[below, at start]{$\la$} +(0,1);   
\node at (2.6,0) {$+ $};
      \draw (6.5,0)  +(1,-1) -- +(1,1) node[midway,circle,fill,inner sep=2.5pt,label=right:{$a$}]{} node[at start,below]{$i$};
      \draw (6.5,0) +(-1,-1) -- +(-1,1) node[midway,circle,fill,inner sep=2.5pt,label=left:{$b$}]{} node [at start,below]{$j$};
      \draw[wei] (6.5,0) +(0,-1) -- node[below, at start]{$\la$} +(0,1);
\node at (3.8,-.2){$\displaystyle \delta_{i,j}\sum_{a+b+1=\la^i} $}  ;
 \end{tikzpicture}
  \end{equation*}
\begin{equation*}\subeqn\label{dumb}
    \begin{tikzpicture}[very thick,baseline=2.85cm]
      \draw[wei] (-3,3)  +(1,-1) -- +(-1,1);
      \draw (-3,3)  +(0,-1) .. controls (-4,3) ..  +(0,1);
      \draw (-3,3) +(-1,-1) -- +(1,1);
      \node at (-1,3) {=};
      \draw[wei] (1,3)  +(1,-1) -- +(-1,1);
  \draw (1,3)  +(0,-1) .. controls (2,3) ..  +(0,1);
      \draw (1,3) +(-1,-1) -- +(1,1);    \end{tikzpicture}
  \end{equation*}
\begin{equation*}\subeqn\label{red-dot}
    \begin{tikzpicture}[very thick,baseline]
  \draw(-3,0) +(-1,-1) -- +(1,1);
  \draw[wei](-3,0) +(1,-1) -- +(-1,1);
\fill (-3.5,-.5) circle (3pt);
\node at (-1,0) {=};
 \draw(1,0) +(-1,-1) -- +(1,1);
  \draw[wei](1,0) +(1,-1) -- +(-1,1);
\fill (1.5,.5) circle (3pt);
    \end{tikzpicture}
  \end{equation*}
\item  The ``cost'' of separating a red and a black line is adding $\la^i=\al_i^\vee(\la)$ dots to the black strand.
  \begin{equation}\label{cost}
  \begin{tikzpicture}[very thick,baseline=1.6cm]
    \draw (-2.8,0)  +(0,-1) .. controls (-1.2,0) ..  +(0,1) node[below,at start]{$i$};
       \draw[wei] (-1.2,0)  +(0,-1) .. controls (-2.8,0) ..  +(0,1) node[below,at start]{$\la$};
           \node at (-.3,0) {$=$};
    \draw[wei] (2.8,0)  +(0,-1) -- +(0,1) node[below,at start]{$\la$};
       \draw (1.2,0)  +(0,-1) -- +(0,1) node[below,at start]{$i$};
       \fill (1.2,0) circle (3pt) node[left=3pt]{$\la^i$};
          \draw[wei] (-2.8,3)  +(0,-1) .. controls (-1.2,3) ..  +(0,1) node[below,at start]{$\la$};
  \draw (-1.2,3)  +(0,-1) .. controls (-2.8,3) ..  +(0,1) node[below,at start]{$i$};
           \node at (-.3,3) {$=$};
    \draw (2.8,3)  +(0,-1) -- +(0,1) node[below,at start]{$i$};
       \draw[wei] (1.2,3)  +(0,-1) -- +(0,1) node[below,at start]{$\la$};
       \fill (2.8,3) circle (3pt) node[right=3pt]{$\la^i$};
  \end{tikzpicture}
\end{equation}
  \end{itemize}
\end{definition}

\begin{definition}
  Let $T$ be the quotient of $\tilde{T}$ by the 2-sided ideal $K$
  generated by all violated diagrams.
\end{definition}
 
Now, as before, fix a sequence of dominant weights
$\bla=(\la_1,\dots,\la_\ell)$ and let $\la=\sum_{i=1}^\ell \la_i$.
\begin{definition}
 We let
  $\alg^\bla$  (resp. $\tilde{\alg}^\bla$) be the subalgebra of $\alg$
  (resp. $\tilde{\alg}$) where
  the red lines are labeled, from left to right, with the elements of
  $\bla$.  Let $\alg^\bla_\al$, for any element of the $\gln$ weight lattice $\al$, be the subalgebra of
  $\alg^\bla$ where the sum of the roots associated to the black strands
  is $\la-\al$, and let $\alg^\bla_m$ (resp. $\tilde{\alg}^\bla_m$) be the subalgebra of diagrams
  with $m$ black strands.
\end{definition}

To give a simple illustration of the behavior of our algebra, let us
consider $\gln=\mathfrak{gl}_2$, and $\bla=(\om_1,\om_1)$ where $\om_1=(1,0)$.  Thus, our diagrams have 2 red lines, both labeled with $\om_1$'s. In this case, the algebras $\alg^\bla_\al$ are  easily described as follows:
\begin{itemize}
\item $\alg^{\bla}_{(2,0)}\cong \K$: it is spanned by the diagram
  $\tikz[baseline=-1pt,xscale=.8, yscale=.6]{\draw[wei]
    (0,-.5)--(0,.5); \draw[wei] (.5,-.5)--(.5,.5); }$ .
\item $\alg^{\bla}_{(1,1)}$ is spanned by \begin{center}
\tikz[xscale=.8, yscale=.6]{\draw[wei] (0,0)--(0,1); \draw[thick] (.5,0) --(.5,1);\draw[wei] (1,0)--(1,1); },\quad \tikz[xscale=.8,yscale=.6]{\draw[wei] (0,0)--(0,1); \draw[wei] (.5,0) --(.5,1);\draw[thick] (1,0)--(1,1);},\quad \tikz[xscale=.8,yscale=.6]{\draw[wei] (0,0)--(0,1); \draw[wei] (.5,0) --(.5,1);\draw[thick] (1,0)--(1,1) node[midway,fill, circle,inner sep=1.5pt]{};},\quad \tikz[xscale=.8,yscale=.6]{\draw[wei] (0,0)--(0,1); \draw[wei] (1,0) --(.5,1);\draw[thick] (.5,0)--(1,1);},\quad \tikz[xscale=.8,yscale=.6]{\draw[wei] (0,0)--(0,1); \draw[wei] (.5,0) --(1,1);\draw[thick] (1,0)--(.5,1);}.
\end{center}
One can easily check that this is the standard presentation of a regular block of category $\cO$ for $\mathfrak{gl}_2$ as a quotient of the path algebra of a quiver (see, for example, \cite{Str03}).
\item $\alg^{\bla}_{(0,2)}\cong \operatorname{End}(\K^3)$: The algebra
  is spanned by the diagrams
\begin{center}
\begin{tikzpicture}[yscale=1.2,xscale=2]
\node at (0,0){ \tikz[xscale=.8, yscale=.6]{\draw[wei] (0,0)--(0,1); \draw[thick] (.5,0) --(.5,1);\draw[wei] (1,0)--(1,1);\draw[thick] (1.5,0) --(1.5,1); }};

\node at (0,-1) {\tikz[xscale=.8,yscale=.6]{\draw[wei] (0,0)--(0,1); \draw[wei] (1,0) --(.5,1);\draw[thick] (1.5,0)--(1,1) node[pos=.85,fill, circle,inner sep=1.5pt]{}; \draw[thick] (.5,0) --(1.5,1) ;}};

\node at (1,-1) {\tikz[xscale=.8,yscale=.6]{\draw[wei] (0,0)--(0,1); \draw[wei] (.5,0) --(.5,1);\draw[thick] (1.5,0)--(1,1) node[pos=.8,fill, circle,inner sep=1.5pt]{}; \draw[thick] (1,0) --(1.5,1) ;}};
\node at (1,0) {\tikz[xscale=.8,yscale=.6]{\draw[wei] (0,0)--(0,1); \draw[wei] (.5,0) --(1,1);\draw[thick] (1.5,0)--(.5,1); \draw[thick] (1,0) --(1.5,1) ;}};

\node at (0,-2) {\tikz[xscale=.8,yscale=.6]{\draw[wei] (0,0)--(0,1); \draw[wei] (1,0) --(.5,1);\draw[thick] (1.5,0)--(1,1); \draw[thick] (.5,0) --(1.5,1) ;}};

\node at (2,-2) {\tikz[xscale=.8,yscale=.6]{\draw[wei] (0,0)--(0,1); \draw[wei] (.5,0) --(.5,1);\draw[thick] (1.5,0)--(1,1) node[pos=.2,fill, circle,inner sep=1.5pt]{}; \draw[thick] (1,0) --(1.5,1);}};

\node at (2,0) {\tikz[xscale=.8,yscale=.6]{\draw[wei] (0,0)--(0,1); \draw[wei] (.5,0) --(1,1);\draw[thick] (1.5,0)--(.5,1) node[pos=.1,fill, circle,inner sep=1.5pt]{}; \draw[thick] (1,0) --(1.5,1);}};

\node at (2,-1) {\tikz[xscale=.8,yscale=.6]{\draw[wei] (0,0)--(0,1); \draw[wei] (.5,0) --(.5,1);\draw[thick] (1.5,0)--(1,1) node[pos=.8,fill, circle,inner sep=1.5pt]{} node[pos=.2,fill, circle,inner sep=1.5pt]{}; \draw[thick] (1,0) --(1.5,1) ;}};

\node at (1,-2) {\tikz[xscale=.8,yscale=.6]{\draw[wei] (0,0)--(0,1); \draw[wei] (.5,0) --(.5,1); \draw[thick] (1.5,0)--(1,1) ; \draw[thick] (1,0) --(1.5,1);}};

\excise{
\node at (1,2){ \tikz[xscale=.8,yscale=.6]{\draw[wei] (0,0)--(0,1); \draw[wei] (1,0) --(.5,1);\draw[thick] (.5,0)--(1,1);}};

 \tikz[xscale=.8,yscale=.6]{\draw[wei] (0,0)--(0,1); \draw[wei] (.5,0) --(1,1);\draw[thick] (1,0)--(.5,1);}
}
\end{tikzpicture}
\end{center}
\end{itemize}
which one can easily check multiply (up
  to sign) as the elementary generators of $\operatorname{End}(\K^3)$.

Perhaps a more interesting example is the case of
$\mathfrak{gl}_3$ with $\bla=(\om_1,\om_2)$ and $\mu=(1,1,1)$.  Based on the
construction of a cellular basis in \cite{SWschur}, we can calculate
that this algebra is 19 dimensional, with a basis given by 
 \begin{center}
\tikz[xscale=.8, yscale=.6,baseline]{\draw[wei] (0,0)-- node[below, at
  start]{$1$}(0,1); \draw[thick] (.5,0) -- node[below, at start] {$1$}(.5,1);\draw[thick] (1,0) -- node[below, at start] {$2$} (1,1);\draw[wei] (1.5,0)--node[below, at start]{$2$} (1.5,1); },\quad \tikz[xscale=.8, yscale=.6,baseline]{\draw[wei] (0,0)-- node[below, at
  start]{$1$}(0,1); \draw[thick] (.5,0) -- node[below, at start]
  {$1$}(.5,1);\draw[thick] (1.5,0) -- node[below, at start] {$2$}
  (1.5,1);\draw[wei] (1,0)--node[below, at start]{$2$} (1,1); },\quad \tikz[xscale=.8, yscale=.6,baseline]{\draw[wei] (0,0)-- node[below, at
  start]{$1$}(0,1); \draw[thick] (.5,0) -- node[below, at start]
  {$1$}(.5,1);\draw[thick] (1.5,0) -- node[below, at start] {$2$} node[midway,circle,fill=black,inner sep=2pt] {}
  (1.5,1);\draw[wei] (1,0)--node[below, at start]{$2$}
  (1,1); },\quad \tikz[xscale=.8, yscale=.6,baseline]{\draw[wei] (0,0)-- node[below, at
  start]{$1$}(0,1); \draw[thick] (1,0) -- node[below, at start]
  {$1$}(1,1);\draw[thick] (1.5,0) -- node[below, at start] {$2$}
  (1.5,1);\draw[wei] (.5,0)--node[below, at start]{$2$} (.5,1); },\quad \tikz[xscale=.8, yscale=.6,baseline]{\draw[wei] (0,0)-- node[below, at
  start]{$1$}(0,1); \draw[thick] (1,0) -- node[below, at start]
  {$1$}(1,1);\draw[thick] (1.5,0) -- node[below, at start] {$2$} node[midway,circle,fill=black,inner sep=2pt] {}
  (1.5,1);\draw[wei] (.5,0)--node[below, at start]{$2$}
  (.5,1); }, \quad \tikz[xscale=.8, yscale=.6,baseline]{\draw[wei] (0,0)-- node[below, at
  start]{$1$}(0,1); \draw[thick] (1,0) -- node[below, at start]
  {$2$}(1,1);\draw[thick] (1.5,0) -- node[below, at start] {$1$}
  (1.5,1);\draw[wei] (.5,0)--node[below, at start]{$2$} (.5,1); },\quad \tikz[xscale=.8, yscale=.6,baseline]{\draw[wei] (0,0)-- node[below, at
  start]{$1$}(0,1); \draw[thick] (1,0) -- node[below, at start]
  {$2$}(1,1);\draw[thick] (1.5,0) -- node[below, at start] {$1$} node[midway,circle,fill=black,inner sep=2pt] {}
  (1.5,1);\draw[wei] (.5,0)--node[below, at start]{$2$}
  (.5,1); },

\tikz[xscale=.8, yscale=.6,baseline]{\draw[wei] (0,0)-- node[below, at
  start]{$1$}(0,1); \draw[thick] (.5,0) -- node[below, at start] {$1$}(.5,1);\draw[thick] (1,0) -- node[below, at start] {$2$} (1.5,1);\draw[wei] (1.5,0)--node[below, at start]{$2$} (1,1); },\quad \tikz[xscale=.8, yscale=.6,baseline]{\draw[wei] (0,0)-- node[below, at
  start]{$1$}(0,1); \draw[thick] (.5,0) -- node[below, at start] {$1$}(1,1);\draw[thick] (1,0) -- node[below, at start] {$2$} (1.5,1);\draw[wei] (1.5,0)--node[below, at start]{$2$} (.5,1); },\quad \tikz[xscale=.8, yscale=.6,baseline]{\draw[wei] (0,0)-- node[below, at
  start]{$1$}(0,1); \draw[thick] (.5,0) -- node[below, at start] {$1$}(.5,1);\draw[thick] (1.5,0) -- node[below, at start] {$2$} (1,1);\draw[wei] (1,0)--node[below, at start]{$2$} (1.5,1); },\quad \tikz[xscale=.8, yscale=.6,baseline]{\draw[wei] (0,0)-- node[below, at
  start]{$1$}(0,1); \draw[thick] (1,0) -- node[below, at start] {$1$}(.5,1);\draw[thick] (1.5,0) -- node[below, at start] {$2$} (1,1);\draw[wei] (.5,0)--node[below, at start]{$2$} (1.5,1); },\quad \tikz[xscale=.8, yscale=.6,baseline]{\draw[wei] (0,0)-- node[below, at
  start]{$1$}(0,1); \draw[thick] (1,0) -- node[below, at start]
  {$1$}(.5,1);\draw[thick] (1.5,0) -- node[below, at start] {$2$}
  (1.5,1);\draw[wei] (.5,0)--node[below, at start]{$2$} (1,1); },\quad \tikz[xscale=.8, yscale=.6,baseline]{\draw[wei] (0,0)-- node[below, at
  start]{$1$}(0,1); \draw[thick] (1,0) -- node[below, at start]
  {$1$}(.5,1);\draw[thick] (1.5,0) -- node[below, at start] {$2$} node[midway,circle,fill=black,inner sep=2pt] {}
  (1.5,1);\draw[wei] (.5,0)--node[below, at start]{$2$}
  (1,1); }, \quad \tikz[xscale=.8, yscale=.6,baseline]{\draw[wei] (0,0)-- node[below, at
  start]{$1$}(0,1); \draw[thick] (.5,0) -- node[below, at start]
  {$1$}(1,1);\draw[thick] (1.5,0) -- node[below, at start] {$2$}
  (1.5,1);\draw[wei] (1,0)--node[below, at start]{$2$} (.5,1); },\quad \tikz[xscale=.8, yscale=.6,baseline]{\draw[wei] (0,0)-- node[below, at
  start]{$1$}(0,1); \draw[thick] (.5,0) -- node[below, at start]
  {$1$}(1,1);\draw[thick] (1.5,0) -- node[below, at start] {$2$} node[midway,circle,fill=black,inner sep=2pt] {}
  (1.5,1);\draw[wei] (1,0)--node[below, at start]{$2$}
  (.5,1); }, \quad \tikz[xscale=.8, yscale=.6,baseline]{\draw[wei] (0,0)-- node[below, at
  start]{$1$}(0,1); \draw[thick] (1.5,0) -- node[below, at start]
  {$2$}(1,1);\draw[thick] (1,0) -- node[below, at start] {$1$}
  (1.5,1);\draw[wei] (.5,0)--node[below, at start]{$2$} (.5,1); },\quad  \tikz[xscale=.8, yscale=.6,baseline]{\draw[wei] (0,0)-- node[below, at
  start]{$1$}(0,1); \draw[thick] (1,0) -- node[below, at start]
  {$2$}(1.5,1);\draw[thick] (1.5,0) -- node[below, at start] {$1$}
  (1,1);\draw[wei] (.5,0)--node[below, at start]{$2$} (.5,1); },\quad \tikz[xscale=.8, yscale=.6,baseline]{\draw[wei] (0,0)-- node[below, at
  start]{$1$}(0,1); \draw[thick] (1.5,0) -- node[below, at start]
  {$2$}(1,1);\draw[thick] (.5,0) -- node[below, at start] {$1$}
  (1.5,1);\draw[wei] (1,0)--node[below, at start]{$2$} (.5,1); },\quad  \tikz[xscale=.8, yscale=.6,baseline]{\draw[wei] (0,0)-- node[below, at
  start]{$1$}(0,1); \draw[thick] (1,0) -- node[below, at start]
  {$2$}(1.5,1);\draw[thick] (1.5,0) -- node[below, at start] {$1$}
  (.5,1);\draw[wei] (.5,0)--node[below, at start]{$2$} (1,1); }.
\end{center}
We leave the calculation of the multiplication in this basis to the
reader; it is a useful exercise for those wishing to become more 
comfortable with these sorts of calculations.  For example, when we multiply the last two vectors in
the basis above, we get that (for $Q_{21}(u,v)=u-v$)
\begin{equation*}
  \tikz[xscale=.8, yscale=.6,baseline=4pt]{\draw[wei] (0,0)-- node[below, at
  start]{$1$}(0,1); \draw[thick] (1.5,0) -- node[below, at start]
  {$2$}(1,1);\draw[thick] (.5,0) -- node[below, at start] {$1$}
  (1.5,1);\draw[wei] (1,0)--node[below, at start]{$2$} (.5,1); } \cdot
\tikz[xscale=.8, yscale=.6,baseline=4pt]{\draw[wei] (0,0)-- node[below, at
  start]{$1$}(0,1); \draw[thick] (1,0) -- node[below, at start]
  {$2$}(1.5,1);\draw[thick] (1.5,0) -- node[below, at start] {$1$}
  (.5,1);\draw[wei] (.5,0)--node[below, at start]{$2$} (1,1); } =
\tikz[xscale=.8, yscale=.6,baseline=4pt]{\draw[wei] (0,0) to[out=90,in=-90] node[below, at
  start]{$1$}(0,1); \draw[thick] (1.5,0) to[out=90,in=-90]  node[below, at start]
  {$2$} (1,.5) to[out=90,in=-90] (1.5,1);\draw[thick] (.5,0) to[out=90,in=-90]  node[below, at start] {$1$}
  (1.5,.5) to[out=90,in=-90] (.5,1);\draw[wei] (1,0) to[out=90,in=-90]
  node[below, at start]{$2$} (.5,.5) to[out=90,in=-90] (1,1); }  =\tikz[xscale=.8, yscale=.6,baseline=4pt]{\draw[wei] (0,0) to[out=90,in=-90] node[below, at
  start]{$1$}(0,1); \draw[thick] (1.5,0) to[out=90,in=-90]  node[below, at start]
  {$2$}  node[midway,circle,fill=black,inner sep=2pt] {}(1.5,1);\draw[thick] (.5,0) to[out=90,in=-90]  node[below, at start] {$1$}
  (1,.5) to[out=90,in=-90] (.5,1);\draw[wei] (1,0) to[out=90,in=-90]
  node[below, at start]{$2$} (.5,.5) to[out=90,in=-90] (1,1); } - \tikz[xscale=.8, yscale=.6,baseline=4pt]{\draw[wei] (0,0) to[out=90,in=-90] node[below, at
  start]{$1$}(0,1); \draw[thick] (1.5,0) to[out=90,in=-90]  node[below, at start]
  {$2$}  (1.5,1);\draw[thick] (.5,0) to[out=90,in=-90]  node[below,
at start] {$1$} node[at end,circle,fill=black,inner sep=2pt] {}
  (1,.5) to[out=90,in=-90] (.5,1);\draw[wei] (1,0) to[out=90,in=-90]
  node[below, at start]{$2$} (.5,.5) to[out=90,in=-90] (1,1); }=\tikz[xscale=.8, yscale=.6,baseline=4pt]{\draw[wei] (0,0) to[out=90,in=-90] node[below, at
  start]{$1$}(0,1); \draw[thick] (1.5,0) to[out=90,in=-90]  node[below, at start]
  {$2$}  node[midway,circle,fill=black,inner sep=2pt] {}(1.5,1);\draw[thick] (.5,0) to[out=90,in=-90]  node[below, at start] {$1$}
  (.5,1);\draw[wei] (1,0) to[out=90,in=-90]
  node[below, at start]{$2$} (1,1); } - \tikz[xscale=.8, yscale=.6,baseline=4pt]{\draw[wei] (0,0) to[out=90,in=-90] node[below, at
  start]{$1$} (.5,.5) to[out=90,in=-90] (0,1); \draw[thick] (1.5,0) to[out=90,in=-90]  node[below, at start]
  {$2$}  (1.5,1);\draw[thick] (.5,0) to[out=90,in=-90]  node[below,
at start] {$1$} 
  (0,.5) to[out=90,in=-90] (.5,1);\draw[wei] (1,0) to[out=90,in=-90]
  node[below, at start]{$2$} (1,1); }=\tikz[xscale=.8, yscale=.6,baseline=4pt]{\draw[wei] (0,0) to[out=90,in=-90] node[below, at
  start]{$1$}(0,1); \draw[thick] (1.5,0) to[out=90,in=-90]  node[below, at start]
  {$2$}  node[midway,circle,fill=black,inner sep=2pt] {}(1.5,1);\draw[thick] (.5,0) to[out=90,in=-90]  node[below, at start] {$1$}
  (.5,1);\draw[wei] (1,0) to[out=90,in=-90]
  node[below, at start]{$2$} (1,1); }. 
\end{equation*}
\begin{definition}
  Let $\cata^\bla$ (resp. $\tcata^\bla$) be the abelian category of finitely generated
  graded $T^\bla$ (resp. $\tcata^\bla$) modules.  Let
  $\cat^\bla=D^b(\cata^\bla)$ and  $\tcat^\bla=D^b(\tcata^\bla)$ be
  the bounded derived categories of these categories of modules.
\end{definition}
We let $[n]$ be the homological shift in the derived category, reindexing a complex so that the $k$th term of
$C^\bullet[n]$ is $C^{n+k}$, that is, decreasing the homological
degree of each term by $n$.  

Let $(n)$ be the shift of internal grading on $\cata^\bla$ and its
related categories.  As with the homological shift, this {\it decreases}
the grading by $n$.  

Finally, we let $\langle n\rangle $ be the ``Tate twist'' which is the
composition $[n](-n)$.

\subsubsection{Induction and restriction functors}
\label{sec:induct-restr-funct}

There are maps $\iota^k_L,\iota_R^k\colon \tilde{T}^\bla\to
\tilde{T}^\bla$ which act by adding a black strand at the left
(resp. right) side of the diagram.
\begin{definition}
  We'll let $\tilde{\fF}_k^*,\tilde{\fF}_k\colon
  \tcata^\bla\to\tcata^\bla$ denote the extension of scalars (that is,
  induction) by the maps $\iota^k_L,\iota_R^k$ (respectively).  After
  imposing the violating relation, the map induced by $ \iota^k_L$ is
  0, but we can still consider the map induced by $\iota^k_R$, and let
  ${\fF}_k\colon \cata^\bla\to \cata^\bla$ be the extension of scalars
  along this induced map.  We let $\fE_k\colon \cata^\bla\to
  \cata^\bla$ be the adjoint of $\fF_k$.  
\end{definition}

These functors give the compatibility of the categories $\cata^\bla$
and $\tcata^\bla$ with our constructions from the previous
sections as follows:  the category of $\cata^\bla$ carries a
representation of $\tU_n$ via the functors $\fE_i$ and $\fF_i$, as shown in
\cite[\ref{m-full-action}]{Webmerged}, whereas  $\tcata^\bla$ 
carries an action of two commuting copies of the lower half of this
category $\tU^-_n\times \tU_n^-$ via $\tilde{\fF}_k^*,\tilde{\fF}_k$. 
While these functors possess left and right
adjoints, these adjoints are not isomorphic and do not preserve the
category of finitely generated modules, so there is no hope of
extending this to a $\tU_n\times \tU_n$ action on this category.

For any highest weight $\mu$, we also have a map $\tilde{I}_\mu\colon
\tilde{T}^{\bla}\to \tilde{T}^{(\la_1,\dots,\la_\ell,\mu)}$ which adds
a red line at the far right.  This preserves violated diagrams, and
thus induces a map $I_\mu \colon
{T}^{\bla}\to {T}^{(\la_1,\dots,\la_\ell,\mu)}$.
\begin{definition}\label{I-def}
Let $\tilde{\fI},\fI$ denote extension of scalars along the maps
$\tilde{I}_\mu,I_\mu$.  That is:  \[\tilde{\fI}_{\mu}(M):=
\tilde{T}^{(\la_1,\dots,\la_\ell,\mu)}\otimes_{\tilde{T}^{\bla}}M\qquad
\fI_{\mu}(M):=  {T}^{(\la_1,\dots,\la_\ell,\mu)}\otimes_{T^{\bla}}M.\]  
%Let
%$\fI_{\la_\ell}^*(N):= N e_\ell$ be its right adjoint.
\end{definition}

These functors have several important homological properties:
\begin{proposition}\label{exactfunctors}
  The functors $\tilde{\fF}_k^*,\tilde{\fF}_k,\fF_k,\fE_k,
  \tilde{\fI},\fI$ are all exact and send projectives to projectives.
\end{proposition}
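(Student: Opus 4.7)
The plan is to exhibit each of the six functors as tensoring with a bimodule that is projective (in fact free) on the relevant side, from which both exactness and preservation of projectives will follow immediately. The functors $\tilde{\fF}_k^*,\tilde{\fF}_k,\fF_k,\tilde{\fI},\fI$ are by construction extensions of scalars along ring homomorphisms, hence have the form $M\mapsto B\otimes_A M$ with $B$ the target algebra and its right $A$-action twisted by the relevant map. The sixth functor $\fE_k$ is defined only as the adjoint of $\fF_k$; once $\fF_k$ is realized as tensoring with a bimodule of the form $T^\bla e$ for a suitable idempotent $e$ cutting out diagrams whose newly added rightmost black strand is of color $k$ in standard position, the adjoint is identified with tensoring with $e T^\bla$, and the same analysis applies.

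Granted this reformulation, preservation of projectives is automatic: extension of scalars is the left adjoint of restriction, restriction is exact, and hence the left adjoint sends projectives to projectives. Exactness reduces to showing that in each case the target algebra is flat, and in fact projective, as a right module over the source under the given map.

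The core of the proof is therefore to produce, in each case, an explicit right basis of the target algebra over the source. Using the cellular basis theorems for $T^\bla$ and $\tilde{T}^\bla$ of \cite{Webmerged}, one takes as basis the Stendhal diagrams in which the newly introduced strand has been combed into its standard position (rightmost black for $\iota_R^k$, leftmost black for $\iota_L^k$, far-right red for $I_\mu$ and $\tilde{I}_\mu$). That every element of the target reduces to such a normal form modulo the right action of the source follows from the KLR relations (\ref{first-QH})--(\ref{triple-dumb}) together with the red-correction relations (\ref{red-triple-correction})--(\ref{cost}): each such relation lets one push a crossing or dot involving the new strand across either a black or red neighbor, possibly producing correction terms with strictly fewer crossings involving the new strand, so an induction on crossing number closes.

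The main obstacle is making the combing rigorous for the black-strand maps $\iota_L^k, \iota_R^k$, and in particular controlling the correction terms generated by (\ref{red-triple-correction}): one must verify that these corrections themselves factor as (diagram-in-standard-form)$\cdot$(element of the source). For the red-strand case $\fI,\tilde{\fI}$ the argument is nearly immediate, since no strand can ever cross the newly adjoined far-right red. In outline the argument is modeled on the standard fact that induction between KLR algebras of different black-strand counts is exact, adapted here to accommodate the presence of red strands, and is structurally routine once the combing lemma is in place.
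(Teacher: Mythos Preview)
Your overall strategy matches the paper's: both reduce to showing the relevant bimodules are projective on each side (``sweet''), and both invoke the basis theorem for $\tilde T^\bla$ to handle $\tilde\fF_k^*,\tilde\fF_k,\tilde\fI$. There are, however, two genuine problems and one notable difference in execution.

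First, your treatment of $\fI$ contains a false claim. You write that ``no strand can ever cross the newly adjoined far-right red,'' but this is not so: in $T^{\bla'} e'$ (with $e'$ the idempotent placing the new red at the far right along the bottom), black strands at the \emph{top} may lie to the right of the new red and must therefore cross it. Right $T^\bla$-projectivity of this bimodule amounts to an isomorphism $e(\Bi,\kappa)T^{\bla'}e'\cong e(\Bi,\kappa|_{[1,\ell]})T^\bla$, and the paper obtains it by citing \cite[Prop.~\ref{m-FI-IF}]{Webmerged} rather than by a one-line remark. Your combing heuristic does not immediately supply this: the interaction of the new red with black strands via \eqref{red-triple-correction} and \eqref{cost} must actually be controlled.

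Second, your treatment of $\fE_k$ is underspecified. Identifying the (right) adjoint of $\fF_k$ with $eT^\bla\otimes_{T^\bla}-$ is fine, and exactness is then immediate since $eT^\bla$ is a summand of $T^\bla$ on the right; but ``the same analysis applies'' for preservation of projectives is not literally true, since $\fE_k$ is not an extension of scalars and your left-adjoint-to-restriction argument does not apply. What you need is left projectivity of $eT_2$ over $T_1$, which can indeed be deduced from your combing argument for $T_2e$ via the horizontal-reflection anti-automorphism---but this step should be made explicit.

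Finally, where you aim to prove right $T^\bla$-projectivity of the bimodule for $\fF_k$ directly by combing in the cyclotomic setting, the paper takes a shorter route: it first shows that both $\fF_k$ and $\fE_k$ preserve projectives (the latter via the Morita equivalence of $T^\bla$ with the double Stendhal algebra $DT^\bla$, under which $\fE_k$ also takes the form $Te'$ for an idempotent $e'$), and then invokes their \emph{biadjointness}: the right adjoint of a projective-preserving functor is exact, giving exactness of both at once. This sidesteps the delicate cyclotomic combing entirely.
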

\begin{proof}
  Tensor with a bimodule will be exact and send projectives to
  projectives if and only if the bimodule is sweet, that is projective
  as a left module and as a right module.  For
  $\tilde{\fF}_k^*,\tilde{\fF}_k,\fF_k,\tilde{\fI},\fI$, the bimodule
  under consideration is of the form ${T}^{\bla}e$ or
  $\tilde{T}^{\bla}e$ for some idempotent $e$, and thus is obviously
  projective as a left module.  
After applying the Morita equivalence of $T^\bla$ with the double
  Stendhal algebra $DT^\bla$ \cite[\ref{m-Morita}]{Webmerged} the bimodule
  $\fE_k$ has the same property.
For
  $\tilde{\fF}_k^*,\tilde{\fF}_k,\tilde{\fI}$, projectivity as a right
  module follows immediately from \cite[Prop.~\ref{m-basis}]{Webmerged}.  For $\fI$, projectivity on the right
  follows from \cite[Prop. \ref{m-FI-IF}]{Webmerged}, since it shows
  that $e(\Bi,\kappa) \tilde{\fI}$ is isomorphic to $e(\Bi,\kappa')T$
  where $\kappa'$ is $\kappa$ restricted to
  $[1,\ell-1]$. Finally, for $\fE_k,\fF_k$, this follows since these functors are biadjoint, and the right
  adjoint of a functor sending projectives to projectives is exact.
\end{proof}

\subsubsection{Derived pushforward and pullback}
\label{sec:deriv-pushf-pullb}

When we have a $T^\bla$-module $M$, we can abuse notation and use $M$
to also denote the pullback $\tilde{T}^\bla$-module. In this case, it is worth noting
that the pullback of ${\fF}_kM$ is not the same as the
$\tilde{T}^\bla$-module $\tilde{\fF}_kM$, but we have a surjective map
$\tilde{\fF}_kM\to {\fF}_kM$.  The difference is that in
$\tilde{\fF}_kM $, we have not
killed the submodule of elements where the new strand we've added passes through 
the region at the far left, whereas in ${\fF}_kM$, we have.

Given a sequence $\Bi$, we have a surjective map 
$\tilde{P}^\kappa_{\Bi}\to P^\kappa_\Bi$, whose kernel is the
submodule generated by the elements $c_m$
\[\tikz[very thick,xscale=1.5]{\draw[wei] (0,-.5) -- node[at start,below]{$\la_1$}(.5,.5); \draw (.5,-.5)
  -- node[at start,below]{$i_1$} (1,.5);\draw (1.5,-.5)
  -- node[at start,below]{$i_{m-1}$}(2,.5); \draw (2,-.5)
  -- node[at start,below]{$i_m$}(0,.5); \draw (2.5,-.5)
  -- node[at start,below]{$i_{m+1}$}(2.5,.5);\draw (3.5,-.5)
  -- node[at start,below]{$i_{p}$} (3.5,.5); \node at
  (1.1,-.4){$\cdots$};\node at (1.4,.4){$\cdots$};\node at
  (3,-.4){$\cdots$};\node at (3,.4){$\cdots$};}  \]
Let $P^{\kappa}_{\Bi}(s)$ for $s\in  [1,p]$ be the quotient of
$\tilde{P}^\kappa_{\Bi}$ by the submodule generated by $c_m$ for
$m\leq s$; in particular,
$P^{\kappa}_{\Bi}(0)=\tilde{P}^\kappa_{\Bi}$ and
$P^{\kappa}_{\Bi}(p)={P}^\kappa_{\Bi}$.  Note that
\begin{equation}
\tilde{\fF}_i
P^{\kappa}_{\Bi}(s)\cong P^{\kappa}_{\Bi\cup\{i\}}(s)\qquad \fI_\la
P^{\kappa}_{\Bi}(s)\cong P^{\kappa\cup \{\la\}}_{\Bi}(s).\label{eq:3}
\end{equation}
Fix $s\in [1,p]$.  Given a Stendhal triple $(\bla,\Bi,\kappa)$, let $\Bi^-$ be the sequence given by removing
$i_s$, $\kappa^-$ the corresponding function 
$\kappa^-(q)=\kappa(q)$ if $\kappa(q)\leq s$ and
$\kappa^-(q)=\kappa(q)-1$ if $\kappa(q) >s$. 

\begin{lemma}\label{change-S}
  If $s\in S$, then we have a surjective map $P^{\kappa}_{\Bi}(s-1)\to
  P^{\kappa}_{\Bi}(s)$ with kernel isomorphic to
  $\tilde{\fF}^*_{i_s}P^{\kappa}_{\Bi^-}(s-1)$, via the map attaching
  the diagram $c_s$ to the bottom of the diagram.  
\end{lemma}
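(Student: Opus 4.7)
The plan is to define a map $\phi\colon \tilde{\fF}^*_{i_s} P^{\kappa}_{\Bi^-}(s-1)\to P^{\kappa}_{\Bi}(s-1)$ by exactly the prescription in the statement: a representative of the source is a Stendhal diagram whose bottom carries the sequence $(i_s,i_1,\dots,i_{s-1},i_{s+1},\dots,i_p)$ with the red strands placed according to $\kappa$ interpreted in the $(p-1)$-strand setting with one extra black strand prepended at the left, and $\phi$ attaches $c_s$ at the bottom. Since the top of $c_s$ carries precisely this Stendhal triple, $\phi$ yields an element of $\tilde T^\bla e(\Bi,\kappa)$, whose image in $P^{\kappa}_{\Bi}(s-1)$ lies in the submodule generated by $c_s$, that is, in the kernel of the surjection onto $P^{\kappa}_{\Bi}(s)$.

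The next step is to verify that $\phi$ descends to the source quotient. The relations defining $\tilde{\fF}^*_{i_s} P^{\kappa}_{\Bi^-}(s-1)$ assert that the violating elements $c_1,\dots,c_{s-1}$ of the $(p-1)$-strand sequence $\Bi^-$ vanish; after applying $\tilde{\fF}^*_{i_s}$ these become violating elements moving the strands in positions $2,\dots,s$ of the extended sequence past the prepended $i_s$-strand. Composing such a $c_m$ with $c_s$ at the bottom produces, after straightening using the KLR relations (\ref{first-QH})--(\ref{triple-dumb}) together with the red-strand relations (\ref{red-triple-correction})--(\ref{cost}), a scalar multiple of $c_m^{\Bi,\kappa}$ (the violating element for the original $m$-th strand of $\Bi$) plus lower-order correction terms, each of which is again of the form $c_{m'}\cdot(\text{diagram})$ for some $m'<s$. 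All such terms vanish in $P^{\kappa}_{\Bi}(s-1)$, so $\phi$ is well-defined; surjectivity onto the kernel is then immediate, since any element of the kernel has the form $d\cdot c_s$.

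The main obstacle is injectivity. Here I would invoke the basis theorem of \cite[Prop.~\ref{m-basis}]{Webmerged} to exhibit an explicit basis of $\tilde{P}^{\kappa}_{\Bi}$ organized by the set and order of strands that sweep to become leftmost; modulo the relations killing $c_1,\dots,c_{s-1}$, the basis elements of the kernel of $P^{\kappa}_{\Bi}(s-1)\to P^{\kappa}_{\Bi}(s)$ should be exactly those in which the $s$-th strand is the first to sweep to the leftmost position. A canonical factorization then writes each such basis element as $d'\cdot c_s$ with $d'$ a diagram only involving the prepended $i_s$-strand and the strands of $\Bi^-$, providing an inverse to $\phi$ on bases. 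The delicate point is that reordering first-sweep events introduces correction terms via the KLR relations (and the red-strand commutations), and one must check that these corrections lie in the already-killed submodule; I would handle this by downward induction on the total number of strands that sweep, so that each correction strictly reduces this count and lands in a previously established case.
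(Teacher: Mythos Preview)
Your construction of $\phi$ and the check that it descends to the quotient match the paper's argument; the paper phrases the straightening slightly differently (sliding the crossing of the $m$th and $s$th strands to the far left so that the composite of $c_m$ above $c_s$ becomes $c_s'$ above $c_m'$ plus corrections already in the span of $c_q$ for $q<s$), but the content is the same, and surjectivity onto the kernel is immediate in both.

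For injectivity the approaches diverge, and your sketch is incomplete. You want the basis theorem to supply a basis of $P^\kappa_{\Bi}(s-1)$ organized by which strand first sweeps left, so that the kernel is visibly spanned by those with the $s$-th strand sweeping first. But the basis theorem gives a basis of $\tilde P^\kappa_{\Bi}$, not of its quotients: to know that a particular subset of that basis spans the submodule generated by $c_1,\dots,c_{s-1}$ (and hence that the complementary subset descends to a basis of $P^\kappa_{\Bi}(s-1)$) is exactly a dimension statement equivalent to the lemma for smaller $s$, and your inductive scheme on ``number of strands that sweep'' does not supply this. The paper bypasses the combinatorics entirely. Using \eqref{eq:3} together with the exactness of $\tilde\fF_i$ and $\fI_\la$ (Proposition~\ref{exactfunctors}), it strips off all black and red strands to the right of position $s$ and reduces to the case $s=p$. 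There $P^{\kappa}_{\Bi^-}(p-1)$ is a module over the finite-dimensional cyclotomic quotient, so both $\tilde\fF_{i_p}P^{\kappa}_{\Bi^-}(p-1)\cong P^\kappa_{\Bi}(p-1)$ and $\tilde\fF^*_{i_p}P^{\kappa}_{\Bi^-}(p-1)$ are free $\K[e_1(\By)]$-modules of the same finite rank; since the cokernel $P^\kappa_{\Bi}(p)$ is finite-dimensional, a map of free modules of equal rank over a polynomial ring with finite-dimensional cokernel is automatically injective.
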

\begin{proof}
  First note that this attachment map is in fact a map of modules:
  that if we take $c_m$ for $m<s$, add a black strand at the far left, and
  then attach $c_s$ to its bottom, we obtain an element of the
  submodule generated by $c_q$ for $q<s$.  

 In this diagram, we slide the crossing between the $m$th and $s$th
 strands (currently between the $m+1$st and $m+2$nd strands) to the
 far left, so that it is the crossing closest to the top.  In reduced
 words for permutations (reading from bottom), we go from $s_2\cdots
 s_{m+1}s_1\cdots s_s$ to $s_1\cdots s_ss_1\cdots s_m$.

This may have correction terms, but these will all lie in the
submodule generated by $c_q$ for $q<s$, as does this new leading term
diagram.  Thus the map
$\tilde{\fF}^*_{i_s}\tilde{P}^{\kappa^-}_{\Bi^-}\to
\tilde{P}^{\kappa}_{\Bi}$ given by attaching $c_s$ at the bottom
induces a map $\tilde{\fF}^*_{i_s}P^{\kappa}_{\Bi^-}(s-1) \to
P^{\kappa}_{\Bi}(s-1)$ whose image is clearly the submodule generated
by $c_s$, and thus has cokernel $ P^{\kappa}_{\Bi}(s)$.  Thus, we need
only prove that this map is injective.  

Since the functors $\tilde{\fF}_i$ and $\fI_\la$ are both exact, using
\eqref{eq:3}, we
can reduce to the case where $s=p$.  In this case, both
$P^{\kappa}_{\Bi}(p-1)\cong \tilde{\fF}_{i_p}P^{\kappa}_{\Bi^-}(p-1)$ and 
  $\tilde{\fF}^*_{i_p}P^{\kappa}_{\Bi^-}(p-1)$ are free modules over
  $\K[e_1(\By)]$ of rank
  $(p-1+\ell)\dim(P^{\kappa}_{\Bi^-}(p-1))$. Here $e_1(\By)=y_1+\ldots+y_p$, with $y_k$ being given 
by the sum of all $e(\Bj,\kappa')$ with one dot on the $k$th black
strand, where $\Bj$ is a permutation of $\Bi$ and $\kappa'$ is
arbitrary. Note that $p-1+\ell$ is the number of possible endpoints at
the top of the diagram for the $p$th black strand (i.e. the number of
shuffles). The claim follows from the basis
theorem~\cite[\ref{m-basis}]{Webmerged} by the same argument as
in~\cite[Prop. 2.16-8]{KLI}. 

A map between free modules of the same rank of a polynomial ring is
  injective if and only if the cokernel is finite dimensional.
 So the map is injective by the finite dimensionality of $P^{\kappa}_{\Bi}(s)$.
\end{proof}
\excise{\begin{corollary}
  The module $P^{\kappa}_{\Bi}(s)$ is free of finite rank over
  the central subalgebra $\K[e_1(\By), e_2(\By),\dots,
  e_{p-s}(\By)]$.  
\end{corollary}

\begin{proof}
  We'll prove this by induction on the number of black strands.  If
  $p\leq s$, this is just the fact that $T^\bla$ is finite
  dimensional.  Thus, we can assume that  $P^{\kappa}_{\Bi}(s)$ is
  free of finite rank over $\K[e_1(\By), e_2(\By),\dots,
  e_{p-s}(\By)]$, and we wish to show that $\tilde{\fF}_i
  P^{\kappa}_{\Bi}(s)$ is free of finite rank over $\K[e_1(\By), e_2(\By),\dots,
  e_{p-s+1}(\By)]$ and $\tilde{\fI}_\la
  P^{\kappa}_{\Bi}(s)$ over $\K[e_1(\By), e_2(\By),\dots,
  e_{p-s}(\By)]$.

  In the former case, note that by \cite{KLI} \bentodo{I'll look up
    reference later} we have that $\tilde{\fF}_i P^{\kappa}_{\Bi}(s)$
  has a filtration with $p+\ell+1$ steps given by acting on the bottom
  of the diagram where the new strand crosses only the $q$ leftmost
  for $q=0,\dots, p+\ell$.  Each successive quotient is isomorphic to
  $P^{\kappa}_{\Bi}(s)\otimes \K[y_{p+1}]$.  In particular, as a
  module over $\K[e_1(\By), e_2(\By),\dots, e_{p-s}(\By)]\otimes
  \K[y_{p+1}]$, each successive quotient is free of finite rank.  That
  is $e_1(\By), e_2(\By),\dots, e_{p-s}(\By), y_{p+1}$ is a regular
  sequence for $\tilde{\fF}_i P^{\kappa}_{\Bi}(s)$; standard
  manipulations then show that $e_1(\By,y_{p+1}), e_2(\By,y_{p+1}),\dots,
  e_{p-s}(\By,y_{p+1}), e_{p-s+1}(\By,y_{p+1})$ is a regular sequence
  as well.  If a homogeneous module over a graded polynomial has a
  regular sequence given by the generators, it is free.  
\end{proof}}

\begin{lemma}\label{tensor-same}
  We have an isomorphism of bimodules $T^\bla\Lotimes_{\tilde{T}^\bla}T^\bla\cong T^\bla$.
\end{lemma}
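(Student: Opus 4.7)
The plan is to show $\operatorname{Tor}^i_{\tilde T^\bla}(T^\bla,T^\bla)=0$ for all $i\geq 1$; the underived statement is immediate, since $T^\bla = \tilde T^\bla/K$ forces $T^\bla\otimes_{\tilde T^\bla}T^\bla \cong \tilde T^\bla/K = T^\bla$ as bimodules. Decomposing the right-hand factor as a left $\tilde T^\bla$-module, $T^\bla=\bigoplus_{(\Bi,\kappa)}P^\kappa_\Bi$, and using that $\mathrm{Tor}$ commutes with direct sums, the task reduces to proving $\operatorname{Tor}^i_{\tilde T^\bla}(T^\bla,P^\kappa_\Bi)=0$ for each $(\Bi,\kappa)$ and each $i\geq 1$.

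The key vanishing I will establish is: for \emph{any} left $\tilde T^\bla$-module $M$ and any $k$,
\[
T^\bla \Lotimes_{\tilde T^\bla} \tilde\fF^*_k M \;=\; 0.
\]
This is because $\tilde\fF^*_k$ is exact and sends projectives to projectives (Proposition \ref{exactfunctors}), so it carries any projective resolution $P_\bullet\to M$ to another projective resolution $\tilde\fF^*_k P_\bullet\to \tilde\fF^*_k M$. On a projective generator one has $\tilde\fF^*_k\tilde P^\kappa_\Bi \cong \tilde P^{\kappa+1}_{k\cdot\Bi}$, whose idempotent $e(k\cdot\Bi,\kappa+1)$ is visibly violated (there is a black strand to the left of every red), so $T^\bla\otimes_{\tilde T^\bla}\tilde\fF^*_k\tilde P^\kappa_\Bi = T^\bla e(k\cdot\Bi,\kappa+1)=0$. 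Hence each term of $T^\bla\otimes_{\tilde T^\bla}\tilde\fF^*_k P_\bullet$ vanishes, proving the claim.

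Now iterate the short exact sequence from Lemma \ref{change-S}:
\[
0\to \tilde\fF^*_{i_s} P^\kappa_{\Bi^-}(s-1)\to P^\kappa_\Bi(s-1)\to P^\kappa_\Bi(s)\to 0.
\]
Applying $T^\bla\Lotimes_{\tilde T^\bla}(-)$ and using the vanishing above on the leftmost term yields a quasi-isomorphism $T^\bla\Lotimes P^\kappa_\Bi(s-1)\simeq T^\bla\Lotimes P^\kappa_\Bi(s)$. Starting from $s=0$, where $P^\kappa_\Bi(0)=\tilde P^\kappa_\Bi$ is projective and $T^\bla\Lotimes\tilde P^\kappa_\Bi = T^\bla e(\Bi,\kappa)=P^\kappa_\Bi$ sits in degree zero, induction through $s=p$ gives $T^\bla\Lotimes P^\kappa_\Bi \simeq P^\kappa_\Bi$. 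Summing over idempotents and observing that the comparison map is a bimodule morphism whose target is concentrated in homological degree zero upgrades this to the desired isomorphism of $(T^\bla,T^\bla)$-bimodules.

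The main obstacle is the vanishing step $T^\bla\Lotimes\tilde\fF^*_k M=0$; everything afterwards is a routine long exact sequence argument driven by Lemma \ref{change-S}. A small point worth verifying, but not delicate, is that the whole derived calculation goes through in the locally unital (as opposed to unital) setting, which follows because all tensor products of interest are taken over the idempotents $e(\Bi,\bla,\kappa)$.
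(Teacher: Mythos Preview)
Your proof is correct and follows essentially the same strategy as the paper: reduce to vanishing of higher $\operatorname{Tor}$, decompose into the summands $P^\kappa_{\Bi}$, and induct through the filtration $P^\kappa_{\Bi}(s)$ using Lemma~\ref{change-S}. The only difference is cosmetic: for the key vanishing $T^\bla \Lotimes_{\tilde T^\bla} \tilde\fF^*_k M = 0$, the paper passes to the adjoint and observes $\tilde\fE^*_k T^\bla = 0$, whereas you argue directly that $\tilde\fF^*_k$ carries projective resolutions to projective resolutions landing in violated idempotents. These are two phrasings of the same fact, and your version has the mild advantage of not requiring the reader to unpack what $\tilde\fE^*_k$ means on right modules.
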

\begin{proof}
  This is clear for the naive tensor product, so we only need to check that
  higher Tor's vanish.  That is, we need to check that
  $\Tor^i_{\tilde{T}^\bla}(T^\bla,P^{\kappa}_{\Bi})=0$ for
  $i>0$.  We'll prove the stronger statement that
  $\Tor^i_{\tilde{T}^\bla}(T^\bla,P^{\kappa}_{\Bi}(s))=0$ for all $s$
  by induction on increasing $s$.  Thus, the base case is when $s=0$;
  in this case $P^{\kappa}_{\Bi}(0)$ is projective over
  $\tilde{T}^\bla$, so the statement is clear.  Otherwise, we have a
  long exact sequence on $\Tor$ given by 
\[\cdots \to \Tor^i_{\tilde{T}^\bla}(T^\bla,\tilde{\fF}^*_{i_s}P^{\kappa}_{\Bi^-}(s-1))\to  \Tor^i_{\tilde{T}^\bla}(T^\bla, P^{\kappa}_{\Bi}(s-1))\to
  \Tor^i_{\tilde{T}^\bla}(T^\bla, P^{\kappa}_{\Bi}(s))\to \cdots\] 
By the inductive assumption, the middle term vanishes when $i>0$.  On
the other
hand, for all $i\geq 0$, we have \[\Tor^i_{\tilde{T}^\bla}(T^\bla,\tilde{\fF}^*_{i_s}P^{\kappa}_{\Bi^-}(s-1))\cong
\Tor^i_{\tilde{T}^\bla}(\tilde{\fE}^*_{i_s}T^\bla,P^{\kappa}_{\Bi^-}(s-1))\cong
0\] since $\tilde{\fE}^*_{i_s}T^\bla=0$.

Applied to the long exact sequence, this shows that
$\Tor^i_{\tilde{T}^\bla}(T^\bla, P^{\kappa}_{\Bi}(s))$ for $i>0$ is
trapped between two 0's and thus itself 0.  
\excise{We either have that: (1) $e_{\Bi}T^\bla\cong
\fF_{i_n}(e_{\Bi^-}T^\bla)$ for some Stendal triple $\Bi^-$ or (2) $e_{\Bi}T^\bla\cong \fI_{\lambda_\ell}(e_{\Bi}T^{\bla^-})$.
In the former case, we have an exact sequence \[0\to \tilde{\fF}^*_{i_n}(e_{\Bi^-}T^\bla)\to
\tilde{\fF}_{i_n}(e_{\Bi^-}T^\bla)\to \fF_{i_n}(e_{\Bi^-}T^\bla)\cong e_{\Bi}T^\bla\to 0.\]

We have a vanishing tensor
product $\tilde{\fF}^*_{i_n}(e_{\Bi^-}T^\bla)\Lotimes_{\tilde{T}^\bla} T^\bla
e_{\Bj}=0$
so we have an induced isomorphism $ e_{\Bi}T^\bla \Lotimes_{\tilde{T}^\bla} T^\bla
e_{\Bj}\cong\tilde{\fF}_{i_n}(e_{\Bi^-}T^\bla) \Lotimes_{\tilde{T}^\bla} T^\bla
e_{\Bj}\cong e_{\Bi^-}T^\bla  \Lotimes_{\tilde{T}^\bla}\eE_{i_n}(T^\bla
e_{\Bj})$.  The last term in this isomorphism is a tensor product of
projective modules over a $T^\bla_\mu$ with lower rank, and thus is 0
by inductive assumption.

In the case (2), a similar argument holds reducing to the case of
fewer black strands, though there is no need to
use an exact sequence, since adding a new red line has the same effect
over $T^\bla$ or $\tilde{T}^\bla$.  }
\end{proof}
In particular, this shows that if we have a left $T^\bla$ module and
right $T^\bla$ module, their derived tensor product over $T^\bla$ is
the same as that of their pullbacks over $\tilde{T}^\bla$.

\subsubsection{Grothendieck groups}
\label{sec:grothendieck-groups-1}

We can understand the Grothendieck groups of the categories
$\cata^\Bp$ and $\tcata^\Bp$.  

The vector space $U_q^-\otimes
  V_{\la_1}\otimes \cdots \otimes V_{\la_\ell}$ has left and right
  actions of $U_q^-$  given by
  \begin{align*}
    F_i\cdot (u\otimes w_1\otimes \cdots \otimes w_\ell)&=F_iu\otimes
    w_1\otimes \cdots \otimes w_\ell \\
    (u\otimes w_1\otimes \cdots \otimes w_\ell)\cdot F_i&=uF_i\otimes
    K^{-1}_i(w_1\otimes \cdots \otimes w_\ell)+u\otimes F_i(w_1\otimes
    \cdots \otimes w_\ell)
  \end{align*}
The results 
\cite[\ref{m-Uq-action}, \ref{m-tilde-iso}]{Webmerged} show that:
\begin{proposition}\label{prop:GG-iso}
  We have isomorphisms \[K^0_q(\tcata^\bla)\cong U_q^-\otimes
  V_{\la_1}\otimes \cdots \otimes V_{\la_\ell}\qquad K^0_q(\cata^\bla)\cong 
  V_{\la_1}\otimes \cdots \otimes V_{\la_\ell}.\]
This isomorphism sends 
\[[\tilde{\fF}_i^*](u\otimes w)\mapsto F_i\cdot(u\otimes w) \quad
[\tilde{\fF}_i](u\otimes w)\mapsto (u\otimes w)\cdot
F_i  \]
\[[\tilde{\fI}_\la^*](u\otimes w)\mapsto u_{(1)}\otimes (u_{(2)}v_{\la}\otimes
w) \quad [\tilde{\fI}_\la](u\otimes w)\mapsto  u\otimes w\otimes
v_{\la} \]
\end{proposition}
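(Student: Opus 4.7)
The plan is to derive both isomorphisms from the basis theorem \cite[\ref{m-basis}]{Webmerged} by identifying the idempotent modules $\tilde{P}^\kappa_\Bi$ and $P^\kappa_\Bi$ with explicit elements in the target vector spaces, and then checking the formulas for the induction functors on generators. For $K^0_q(\cata^\bla)$, the plan is to note that the functors $\fE_i, \fF_i$ categorify the $U_q(\fsl_n)$-action on $\cata^\bla$ (which is \cite[\ref{m-full-action}]{Webmerged}), so that the class of the ``empty'' projective $P^\emptyset$ plays the role of $v_{\la_1} \otimes \cdots \otimes v_{\la_\ell}$ and generates under the $\fF_i$. Surjectivity onto $V_{\la_1} \otimes \cdots \otimes V_{\la_\ell}$ is then automatic, and injectivity follows by comparing graded dimensions in each weight space via the basis theorem.

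For $K^0_q(\tcata^\bla)$ the key observation is that the basis theorem gives a PBW-type description of $\tilde{P}^\emptyset$: a basis consists of diagrams whose black strands lie entirely to the left of every red strand, which naturally match up with a basis of $U_q^-$. Thus I would identify $\tilde{P}^\kappa_\Bi$ with an element of $U_q^- \otimes V_{\la_1} \otimes \cdots \otimes V_{\la_\ell}$ by splitting the black strands of $\Bi$ into those recorded to the left of the first red (the $U_q^-$ factor) and those interleaved with the reds (the tensor factor). A second dimension count via the basis theorem then confirms this is a $\Q(q)$-linear isomorphism.

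Next I would verify the four functor formulas on generators. For $\tilde{\fF}_i^*$ the added strand stays to the far left and never crosses a red, so it freely appends to the $U_q^-$ factor, yielding left multiplication by $F_i$. For $\tilde{\fI}_\la$ the new red is added to the right of everything, so the underlying diagram is unchanged and $v_\la$ simply tensors on the right. For $\tilde{\fF}_i$ the new black strand is added on the right and then must be compared via \eqref{cost} and the KLR relations \eqref{first-QH}--\eqref{triple-dumb} to the PBW normal form, which produces exactly the twisted right action involving $K_i^{-1}$. The hard part will be $\tilde{\fI}_\la^*$: inserting a red strand in the middle of a diagram forces the existing black strands to be split into those remaining to the left of the new red and those pushed to its right, and the pass-through relations \eqref{red-triple-correction}--\eqref{cost} produce Sweedler-type sums matching $u_{(1)} \otimes (u_{(2)} v_\la \otimes w)$.

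Finally, the statement for $K^0_q(\cata^\bla)$ is obtained from that for $K^0_q(\tcata^\bla)$ by quotienting by the span of violated diagrams: the violating strands on the left are exactly those contributing to the free $U_q^-$ factor, so imposing the violating relation collapses that factor to its counit and leaves $V_{\la_1} \otimes \cdots \otimes V_{\la_\ell}$. Once the bijection on bases is established, the compatibility of $\fF_i, \fE_i$ with the $U_q(\fsl_n)$-action (and of $\fI_\la$ with tensoring on the right by $v_\la$) follows formally from the $\tcata^\bla$ case, completing the plan; the essential inputs are the basis theorem and the compatibility statements already proven in \cite[\ref{m-Uq-action}, \ref{m-tilde-iso}]{Webmerged}.
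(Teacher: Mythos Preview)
The paper does not give a proof of this proposition at all: it is stated as a direct consequence of results proven in the companion paper, introduced by the sentence ``The results \cite[\ref{m-Uq-action}, \ref{m-tilde-iso}]{Webmerged} show that:''. So there is nothing in the present paper to compare your argument against; you are effectively reconstructing what the cited references contain, and you acknowledge as much in your final sentence.

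Your outline is broadly in the right spirit, but the step ``a second dimension count via the basis theorem then confirms this is a $\Q(q)$-linear isomorphism'' hides the real work. The basis theorem \cite[\ref{m-basis}]{Webmerged} tells you the graded dimension of each $\tilde{P}^\kappa_\Bi$ as a vector space, but it does not by itself tell you the rank of $K^0_q$ in a given weight, which is the number of \emph{indecomposable} projectives (equivalently, simples) there. Different $(\Bi,\kappa)$ can give isomorphic projectives, and the same projective can decompose. What actually pins down the rank is the quasi-hereditary structure of $T^\bla$ and the standardly stratified structure of $\tilde{T}^\bla$ (Sections~\ref{sec:standard-modules}): the standards are indexed exactly by the data you want on the representation-theory side, and their classes form a $\Z[q,q^{-1}]$-basis of $K^0_q$. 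Once you invoke that, your identification of the $U_q^-$ factor with the black block $\Bi_0$ and the remaining factor with the $T^\bla$-part is correct, and the functor formulas follow as you describe. So the missing ingredient is not the basis theorem but the stratification, which is precisely what \cite[\ref{m-tilde-iso}]{Webmerged} supplies.
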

These are compatible with pullback from $T^\Bp$- to
$\tilde{T}^\Bp$-modules (resp. its adjoint pushforward), in the sense that
these functors categorify the inclusion \[w_1\otimes \cdots \otimes
w_\ell\mapsto 1\otimes  w_1\otimes \cdots \otimes w_\ell\] (resp. its
adjoint projection, induced by the counit map of $U_q^-$).

\subsubsection{Standard modules}
\label{sec:standard-modules}

In this section, we summarize results from Section \ref{m-sec:SS},
specialized to the case where $\fg=\mathfrak{gl}_n$ and all
highest weights are fundamental.  By  \cite[\ref{m-SS}]{Webmerged}, the algebra $T^{\Bp}$ is {\bf
  quasi-hereditary}, that is, the category $T^{\Bp}\mmod$ is {\bf
  highest weight.}

In this case, the labeling set for simple and projective modules is
given by $\ell$-tuples $\bmu=(\mu_1,\dots, \mu_\ell)$ where $\mu_i$ is
a weight of $\wedgep{p_i}$.  We order these elements according to
reduced dominance order
\begin{equation}
(\mu_1,\dots,
\mu_\ell)\geq (\mu_1',\dots,
\mu_\ell') \quad\text{ if }\quad\sum_{i=j}^\ell\mu_i\geq
\sum_{i=j}^\ell\mu_i' \quad\text{ for all $j\in[1,\ell]$}.\label{eq:7}
\end{equation}
Each weight $\mu$ of the representation $\wedgep {p}$ corresponds
to a $p$-element subset $J_{\mu}$ of $[1,n]$, and this weight space is spanned
by the wedge $v_\mu:=v_{j_1}\wedge \cdots \wedge v_{j_p}$ where $j_1<\cdots
<j_p$ are the elements of $J_p$ in order.  We let
$v_{\bmu}=v_{\mu_1}\otimes \cdots \otimes v_{\mu_\ell}$.
We can characterize $P_\bmu$ as the unique indecomposable projective
of the form $[P_\bmu]=v_{\bmu}+\sum_{\bmu'>\bmu}m_i(q) v_{\bmu'}$;  similarly, $L_\bmu$ is the unique simple
such that $[L_\bmu]=v_{\bmu}+\sum_{\bmu'<\bmu}n_i(q) v_{\bmu'}$. 

In brief, this means that $T^{\Bp}\mmod$ contains a distinguished
collection of objects $\nabla_\bmu$ which are ``intermediate'' between
the simples and projectives.  In this case, the labeling is fixed by
the fact that $[\nabla_\bmu]=v_\bmu$.  

The induced structure on modules over $\tilde{T}^\Bp$ is slightly more
complicated.  By  \cite[\ref{m-tilde-SS}]{Webmerged}, that category is
standardly stratified and the standard modules are precisely the
summands of $\fF_{\Bi}^*\nabla_\bmu$ for all different $\Bi$ and
$\bmu$ (where as always, we use the same symbol for a
$T^\Bp$-module and its pullback to $\tilde{T}^\Bp$).

\subsection{\texorpdfstring{$A_\infty$}{A infinity} algebras}
\label{sec:a_infty-algebras}

While we will not use them in an extremely deep way, it will be useful
to us to think about an action of $A$ in the $A_\infty$ sense.  The
basic situation in which this comes up for us is this: assume $M$ is
an object in a $\K$-linear abelian category, and consider a projective
resolution
$\cdots \longrightarrow M_{i-1}\longrightarrow M_i\longrightarrow \cdots \longrightarrow
M_0$, where all indices are non-positive. If a $\K$-algebra $A$ acts on $M$ on the right, then every element $a\in A$
induces an endomorphism of the chain complex $M_*$ which is unique up
to homotopy.  Thus, one can easily see that these endomorphisms will
induce an action of $A$ on this complex if we strictly identify
homotopic maps.  

This sort of crude identification of homotopic maps is impractical for
actually thinking of $M_*$ as an $A$-module.  A context that allows us
more control over the situation is to instead define an action of $A$
on $M_*$ in the $A_\infty$-sense.  That is, we define higher
multiplications
$\alpha_t\colon M_i\otimes A^{\otimes {u-1}}\to M_{i+u}$ for all
$i>0$; by convention $\alpha_1=\partial$. 
Since $A$ is an associative algebra, the relations these must satisfy
take the form:
\begin{multline}
  \label{eq:5}
  \sum_{s+t=u}(-1)^{su}\alpha_{t+1}(\alpha_s(m,a_1,\dots,
  a_{s-1}),a_{s},\dots, a_{u-1}) \\ +\sum_{r=1}^{u-2}(-1)^r\alpha_{u-1}(m,a_1,\dots,a_ra_{r+1},\dots,a_{u-1})=0
\end{multline}
One can prove that such an action always exists since the sum of terms
above when $s>1$ and $t>0$ acts trivially on the module $M$, and thus
is nullhomotopic.  The equation \eqref{eq:5} then says that $\alpha_u$
can be chosen to be any choice of null-homotopy.  
This is one low-tech
way of understanding the fact that the $A_\infty$ operad is a
cofibrant replacement of the associative algebra operad.   

We can always turn a module or complex of modules in the $A_\infty$
sense into an honest module by taking the $A_\infty$ tensor product with
the bimodule $A$ itself. Let $SA:=A[-1]$ denote the suspension of
$A$. For simplicity, let $\mathbf{m}:=m\otimes a_1\otimes
\cdots\otimes a_{u-1}\otimes a_u$.

\begin{definition}\label{Atensor}
  The $A_\infty$ tensor product $M\overset{\infty}\otimes A$ is given
  by the complex $M\otimes \mathsf{T} SA\otimes A$ whose $u$-th term is given by 
$\bigoplus_{i+j=u}(M_i\otimes  (A[-1])^{\otimes j}\otimes A)$ 
with the differential
  \begin{multline}\label{Adiff}
    \partial(\mathbf{m})=
\sum_{s=1}^{u}(-1)^{s(u-s)}\alpha_s(m,a_1,\dots, a_{s-1}) \otimes a_{s}\otimes
    \cdots\otimes a_{u}\\+\sum_{r=1}^{u-1}(-1)^{r} m\otimes a_1\otimes
    \cdots\otimes a_ra_{r+1}\otimes \cdots\otimes a_{u} .
  \end{multline}
\end{definition}
The relations \eqref{eq:5} are equivalent to this differential
squaring to 0.

\begin{lemma}\label{Aequiv}
  The map sending $\mathbf{m}\mapsto (-1)^{u+1}\alpha_{u+1}(m,a_1,\dots, a_{u-1},a_{u})$ gives a
  homotopy equivalence between the complexes $M\overset{\infty}\otimes
  A$ and $M$ with homotopy inverse $m\mapsto m\otimes 1$.
\end{lemma}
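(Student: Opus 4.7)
The strategy is to verify that $\phi := \mathbf{m}\mapsto (-1)^{u+1}\alpha_{u+1}(m,a_1,\ldots,a_u)$ and $\psi := m\mapsto m\otimes 1$ are mutually inverse chain maps up to a bar-style contracting homotopy. First, $\phi\circ\psi = \mathrm{id}_M$ holds on the nose: $\phi(m\otimes 1) = (-1)^2\alpha_2(m,1) = m$ by strict unitality of the $A_\infty$-action. That $\psi$ is a chain map is also immediate, since for $u=1$ the formula \eqref{Adiff} degenerates to $\partial(m\otimes 1) = \alpha_1(m)\otimes 1 = \psi(\alpha_1 m)$, there being no product-collapse terms and only the $s=1$ contribution from the first sum.

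Next, I would verify that $\phi$ is a chain map by applying it to \eqref{Adiff} and comparing with $\alpha_1\phi(\mathbf{m})$. Under $\phi$, the term $\alpha_s(m,a_1,\ldots,a_{s-1})\otimes a_s\otimes\cdots\otimes a_u$ becomes a nested composition $(-1)^{\star}\alpha_{u-s+2}(\alpha_s(m,a_1,\ldots,a_{s-1}),a_s,\ldots,a_u)$, while the product-collapse term $m\otimes\cdots\otimes a_ra_{r+1}\otimes\cdots\otimes a_u$ becomes $(-1)^{\star}\alpha_u(m,\ldots,a_ra_{r+1},\ldots,a_u)$. After tracking the Koszul signs, the equation $\phi(\partial\mathbf{m}) = \alpha_1(\phi(\mathbf{m}))$ becomes exactly the $A_\infty$-module relation \eqref{eq:5} at arity $u+1$, with the extremal $(s,t)=(u+1,0)$ contribution matching $\alpha_1\phi(\mathbf{m})$ and all other contributions making up $\phi(\partial\mathbf{m})$.

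The heart of the proof is constructing a contracting homotopy $H$ with $\partial H + H\partial = \mathrm{id} - \psi\phi$. Motivated by the classical acyclicity of the bar resolution, I set
\[
H(\mathbf{m}) := (-1)^{u}\, m\otimes a_1\otimes\cdots\otimes a_u\otimes 1,
\]
the ``append a unit'' map. Expanding $(\partial H + H\partial)(\mathbf{m})$ via \eqref{Adiff}, the product-collapse contributions $a_ra_{r+1}$ for $r\leq u-1$ in $\partial H$ cancel against the corresponding contributions arising in $H\partial$; the $\alpha_s$-output contributions in $\partial H$ for $1\leq s\leq u$, each ending in $\otimes 1$, cancel similarly against their images under $H\partial$; the newly created collapse $a_u\cdot 1 = a_u$ in $\partial H$ reproduces $+\mathbf{m}$; and the $s=u+1$ contribution in $\partial H$ is $\pm\alpha_{u+1}(m,a_1,\ldots,a_u)\otimes 1 = -\psi\phi(\mathbf{m})$ after incorporating the sign of $H$. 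The cumulative effect is $\mathrm{id}-\psi\phi$, as needed; edge cases $u=1$ and pulling $H$ past the output of $\psi$ should be checked separately but involve no new ideas.

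The only genuine obstacle is the Koszul-sign bookkeeping in the homotopy computation: one has to pick the sign in the definition of $H$ so that the three families of cancellations line up simultaneously. Conceptually, this is nothing more than the classical acyclicity of the (reduced) bar complex $M\otimes\mathsf{T}SA\otimes A$; the higher operations $\alpha_{\geq 3}$ slot in alongside $\alpha_1,\alpha_2$ without disturbing the architecture of the contraction, because each new $\alpha_s$-term in the differential is matched by a term with the tacked-on $1$ on the right.
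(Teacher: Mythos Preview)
Your proposal is correct and follows essentially the same approach as the paper: both use the ``append a unit'' homotopy $H(\mathbf{m}) = (-1)^{u}\,m\otimes a_1\otimes\cdots\otimes a_u\otimes 1$, and the verification of $\partial H + H\partial = \mathrm{id} - \psi\phi$ is exactly the computation the paper carries out. You are actually more thorough than the paper in explicitly checking that $\phi$ and $\psi$ are chain maps and that $\phi\psi = \mathrm{id}_M$; the paper only records the homotopy identity. One small caveat: your appeal to ``strict unitality'' $\alpha_2(m,1)=m$ is not stated in the paper's setup (the $\alpha_s$ are obtained by inductively choosing null-homotopies, so $\alpha_2(-,1)$ is a priori only homotopic to the identity), but this does not affect the conclusion and can be arranged by choosing the lift of $1\in A$ to be the identity on $M_*$.
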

\begin{proof}
  To show this, we need to show that there is a homotopy $h$ such that
  \[(\partial\circ h+h\circ \partial)(\mathbf{m})=\mathbf{m}+(-1)^u\alpha_{u+1}(m,a_1,\dots, a_{u-1},a_{u})\otimes 1\]
We claim that this homotopy is given by \[h(\mathbf{m})=(-1)^um\otimes a_1\otimes \cdots \otimes a_{u-1}\otimes
  a_{}\otimes 1,\] since we have 
  \begin{align*}
   (\partial\circ h+h\circ \partial)(\mathbf{m})&=\sum_{s=1}^{u+1}(-1)^{s(u+1-s)+u}\alpha_s(m, a_1,
  \dots,a_{s-1})\otimes a_s\otimes \dots \otimes
  a_{u}\otimes 1\\ &\quad +\sum_{r=1}^{u} (-1)^{r+u} m\otimes a_1\otimes
    \cdots\otimes a_ra_{r+1}\otimes \cdots\otimes a_{u}\otimes
    1\\ &\quad +\sum_{s=1}^u (-1)^{s(u-s)+u-s+1} \alpha_s(m, a_1,
  \dots,a_{s-1})\otimes a_s\otimes \dots \otimes
  a_{u}\otimes 1\\ &\quad +\sum_{r=1}^{u-1} (-1)^{r+u-1} m\otimes a_1\otimes
    \cdots\otimes a_ra_{r+1}\otimes \cdots\otimes a_{u}\otimes
    1\\
&=\mathbf{m}+(-1)^u\alpha_{u+1}(m,a_1,\dots, a_{u-1},a_{u})\otimes 1.\qedhere
  \end{align*}
\end{proof}

\subsection{Hochschild cohomology}
\label{sec:hochsch-cohom}

For any algebra $A$ the Hochschild cohomology is the Ext algebra
$HH^\bullet(A)\cong \Ext^\bullet_{A\otimes A^{\op}}(A,A)$ of the diagonal
bimodule with itself.  It's the ``center'' of this algebra, computed
in the derived category.

For us, the most important thing about Hochschild cohomology is its
connection to deformations.  For a $\K$-algebra $A$, a {\bf
  deformation} of $A$ is a free $\K[h]/(h^2)$-algebra $\tilde{A}$ with a fixed isomorphism $A\cong \tilde{A}/h \tilde{A}$ (up to the
  obvious equivalence). A classical theorem of Hochschild relates
  these to Hochschild cohomology:
\begin{theorem}[\mbox{\cite[6.2]{Hoch}}]
  There is a bijection between deformations of $A$ and $HH^2(A)$.  
\end{theorem}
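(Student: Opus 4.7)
The plan is to exhibit an explicit bijection by choosing, for each deformation $\tilde{A}$, a $\K$-linear splitting and reading off a Hochschild $2$-cocycle. Since $\tilde{A}$ is free as a $\K[h]/(h^2)$-module of rank equal to $\dim_\K A$, any $\K$-linear section $s\colon A\to \tilde{A}$ of the reduction $\tilde{A}\twoheadrightarrow A$ induces an isomorphism of $\K$-modules $\tilde{A}\cong A\oplus hA$. Transporting the multiplication of $\tilde{A}$ along $s$, there is a unique $\K$-bilinear $\phi\colon A\otimes_\K A\to A$ so that
\[
s(a)\cdot s(b)\;=\;s(ab)\;+\;h\,\phi(a,b),
\]
and the $\K[h]/(h^2)$-algebra structure on $\tilde{A}$ is completely determined by $\phi$.

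Next I would translate associativity into the Hochschild cocycle condition. Computing $s(a)s(b)s(c)$ in two ways and comparing the coefficient of $h$ yields
\[
a\,\phi(b,c)\;-\;\phi(ab,c)\;+\;\phi(a,bc)\;-\;\phi(a,b)\,c\;=\;0,
\]
which is exactly the condition that $\phi$, viewed as an element of $\Hom_\K(A^{\otimes 2},A)$, be a $2$-cocycle in the bar complex computing $\Ext^\bullet_{A\otimes A^{\op}}(A,A)$. Conversely, any $2$-cocycle $\phi$ defines an associative $\K[h]/(h^2)$-algebra structure on $A\oplus hA$ by the formula above, so deformations with a chosen splitting correspond bijectively to Hochschild $2$-cocycles.

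Finally I would check that changing the splitting, or equivalently passing to an equivalent deformation, modifies $\phi$ precisely by a Hochschild $2$-coboundary. Any $\K[h]/(h^2)$-algebra isomorphism $\tilde{A}\to \tilde{A}'$ which reduces to the identity modulo $h$ is, after choosing sections, of the form $s(a)\mapsto s'(a)+h\,\psi(a)$ for some $\K$-linear $\psi\colon A\to A$; expanding the compatibility with multiplication shows that the corresponding cocycles differ by
\[
(\delta\psi)(a,b)\;=\;a\,\psi(b)\;-\;\psi(ab)\;+\;\psi(a)\,b,
\]
the image of $\psi$ under the bar differential. Thus equivalence classes of deformations are in bijection with $Z^2/B^2 = HH^2(A)$. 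The only real step requiring care is verifying that the splittings can always be adjusted and that the trivial deformation $A[h]/(h^2)$ corresponds to the zero class, both of which follow by direct bookkeeping; there is no conceptual obstacle, since the freeness of $\tilde{A}$ over $\K[h]/(h^2)$ guarantees the existence of splittings at every stage.
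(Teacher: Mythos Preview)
Your proposal is correct and follows essentially the same approach as the paper's (excised) proof: choose a $\K$-linear splitting $s\colon A\to\tilde{A}$, read off the $2$-cocycle from $s(a)s(b)=s(ab)+h\phi(a,b)$, verify that associativity is the cocycle condition, and check that changing the splitting or passing to an isomorphic deformation alters $\phi$ by a coboundary. The paper's version is slightly terser but otherwise identical in structure.
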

For a more modern treatment of this theorem, see
\cite[9.3.1]{Weibel}.  The important point for us is that a Hochschild
class $s$ can be represented by a 2-cocycle $r\colon A\otimes A\to A$,
and the associated deformation is the product on $A[h]/(h^2)$ defined by
$a\star_r b= ab+hr(a,b)$.

\excise{\begin{proof}
  Since the form of this equivalence is important for us, let us give
  a brief proof.  We can compute $HH^2(A)$ using the Hochschild
  resolution
\[\cdots A\otimes A\otimes A\otimes A\to A\otimes A\otimes A\to
A\otimes A\to A\to 0\]
In this presentation, an element of $HH^2(A)$ is a map $r\colon A\otimes A\to A$ such that 
\begin{equation}
a_1r(a_2,a_3)-r(a_1a_2,a_3)+r(a_1,a_2a_3)-r(a_1,a_2)a_3=0\label{eq:6}
\end{equation}
modulo
those of the form $r(a_1,a_2)=a_1s(a_2)-s(a_1a_2)+s(a_1)a_2$ for some
$s\colon A\to A$.  

To obtain a deformation from a 2-cocycle $r$, we consider multiplication on $A[h]/(h^2)$ by
$a\star_r b= ab+hr(a,b)$, with the cocycle equation \eqref{eq:6} assuring
associativity.   If two such products are isomorphic, the isomorphism
is of the form $a\mapsto a+hs(a)$, so $r$ and $r'$ differ by a
coboundary.

 On the other hand, assume we have a deformation.  If we fix a vector
space splitting $\phi\colon A\to \tilde{A}$, then we have that
$\phi(a)\phi(b)=\phi(ab)+h\phi(r(a,b))$ for some 2-cocycle $r$.  Thus
$\tilde{A}$ is isomorphic to the deformation attached to this cocycle.
\end{proof}}

The algebras $\tilde{T}^\Bp$ have a natural deformation coming from changing
the relation \eqref{cost};  we consider the span of Stendhal diagrams
over the polynomial ring $\K[z_1,\dots z_\ell]$, and then impose the
relations as before, except replacing \eqref{cost} with the relation   \begin{equation*}\label{d-cost}
  \begin{tikzpicture}[very thick,baseline]
    \draw (-2.8,0)  +(0,-1) .. controls (-1.2,0) ..  +(0,1) node[below,at start]{$i$};
       \draw[wei] (-1.2,0)  +(0,-1) .. controls (-2.8,0) ..  +(0,1) node[below,at start]{$\om_i$};
           \node at (-.3,0) {=};
    \draw[wei] (2.8,0)  +(0,-1) -- +(0,1) node[below,at start]{$\om_i$};
       \draw (1.2,0)  +(0,-1) -- +(0,1) node[below,at start]{$i$};
       \fill (1.2,0) circle (3pt);
 \node at (3.8,0) {$+$};\node at (4.5,0){$z_k$};
        \draw[wei] (6.8,0)  +(0,-1) -- +(0,1) node[below,at start]{$\om_i$};
       \draw (5.2,0)  +(0,-1) -- +(0,1) node[below,at start]{$i$};
  \end{tikzpicture}
\end{equation*}
where the red strand shown is the $k$th from the left.

Since $\tilde{T}$ has a basis given by diagrams in
\cite[\ref{m-basis}]{Webmerged}, we can use this basis to define a
splitting from $\tilde{T}^\Bp$ into its deformation.  
We can then use this splitting to compute a 2-cocycle for this
deformation.  By definition,  we take the product of two diagrams in this basis, and
expanding them in terms of the basis using the deformed relations; the
sum of the terms divisible by $h$ with the $h$ removed is $r$ applied
to these diagrams.

This is a special case of the canonical deformation discussed in
\cite[\S 2.6]{WebwKLR}, since by \cite[\S 3.5]{WebwKLR}, the algebra
$\tilde{T}^{\Bp}$ is a reduced weighted KLR algebra.  The deformation
is induced by deforming the polynomials $Q_{{*,*}}$ from the definition
of the weighted KLR algebra, and thus \cite[2.7]{WebwKLR} shows
that the same set of diagrams provides a basis for the deformed and
undeformed algebras.  The key point is that the polynomial
representation of $\tilde{T}$ can easily be deformed compatibly with
this deformation, which allows to show that these diagrams remain
linearly independent.  

Thus, this deformation defines a map $\mathbf{y}\colon
\K^\ell\to HH^2(\tilde{T}^\Bp)$. 
 We'll denote the images of the coordinate
vectors by $\ssy_k$; these
correspond to the specialization $z_k=h,z_j=0$ for $j\neq k$.  We'll
choose a specific Hochschild cochain $r_k\colon T^\Bp\otimes T^\Bp\to
T^{\Bp}$ which represents this deformation.

Another important issue we'll consider is how these classes act on
$\Ext(M,M)$ for an $A$-module $M$.  Let $A_h$ be a deformation of $A$
over $\K[h]/(h^2)$ and let $s\in HH^2(A)$ be the corresponding Hochschild
class, and $s_M\in \Ext^2(M,M)$ the induced action on $M$.

\begin{proposition}[\mbox{Gerstenhaber and Schack, \cite[\S 3]{GerSch}}]
  The module $M$ has a deformation to a $\K[h]/(h^2)$-free module over $A_h$ if and
  only if $s_M=0$. 
\end{proposition}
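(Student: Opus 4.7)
The plan is to translate both sides of the claimed equivalence into explicit cochain-level data and observe that they agree. First, I fix a $\K$-linear splitting so that any prospective deformation takes the form $M_h = M \oplus hM$ as a $\K[h]/(h^2)$-module on the nose. An extension of the given $A$-action to an $A_h$-action on $M_h$ is then determined by a single $\K$-bilinear map $t \colon M \otimes A \to M$ through the formula
\[
(m + hn) \cdot_h (a + hb) \;=\; ma \;+\; h\bigl(na + mb + t(m,a)\bigr),
\]
and any two splittings differ by a transformation of the form $m \mapsto m + h\,u(m)$ with $u \in \End_\K(M)$, which changes $t$ by the Hochschild coboundary $\partial u$.

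Next, I expand the associativity constraint $\bigl((m + hn) \cdot_h a\bigr) \cdot_h c = (m+hn)\cdot_h (a \star_r c)$ modulo $h^2$, using $a \star_r c = ac + h\, r(a,c)$ with $r\colon A\otimes A\to A$ a 2-cocycle representing $s$. The $h^0$-terms cancel by associativity of the undeformed $A$-action on $M$, and the $h^1$-coefficient collapses after a short computation to
\[
t(ma, c) \;-\; t(m, ac) \;+\; t(m,a)\, c \;=\; m \cdot r(a,c).
\]
I then identify the left-hand side as the bar-complex coboundary $(\partial t)(m, a, c)$ of $t$ viewed as a 1-cochain in $\Hom_\K(M \otimes A, M)$, the standard complex computing $\Ext^\bullet_A(M,M)$ via the bar resolution $\cdots\to M\otimes A^{\otimes 2}\to M\otimes A\to M\to 0$. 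The right-hand side is the 2-cocycle $r_M(m, a, c) := m \cdot r(a,c)$, which by construction represents the image of $s$ under the canonical map $HH^\bullet(A) \to \Ext^\bullet_A(M,M)$ induced by the action on $M$; that is, $[r_M] = s_M$.

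Combining these observations, a deformation of $M$ exists precisely when there is a 1-cochain $t$ with $\partial t = r_M$, and such a $t$ exists iff $s_M = [r_M] = 0$ in $\Ext^2_A(M, M)$. The well-definedness of the class $[r_M]$ follows from the splitting independence recorded in the first paragraph, and one can check independently that different choices of cocycle representative $r$ for $s$ alter $r_M$ only by a coboundary. There is no genuinely hard step here; once the bar resolution and the definition of the $HH^\bullet(A)\to\Ext^\bullet_A(M,M)$ map are in hand, the rest is careful bookkeeping at the cochain level.
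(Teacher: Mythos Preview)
Your proof is correct and follows essentially the same approach as the paper's: both unwind the deformation condition into the equation $\partial t = r_M$ at the cochain level, where $r_M$ is the $2$-cocycle induced by the Hochschild cocycle $r$ and $t$ is the first-order correction to the action. The only cosmetic difference is that the paper phrases everything through the action map $\varphi\colon A\to\End_\K(M)$ and the identification $\Ext^2_A(M,M)\cong HH^2(A,\End_\K(M))$, writing the $1$-cochain as $\varphi_1\colon A\to\End_\K(M)$, whereas you work directly with $t\colon M\otimes A\to M$; these are the same data under the adjunction $\Hom_\K(M\otimes A,M)\cong\Hom_\K(A,\End_\K(M))$.
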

\begin{proof}
Let $\varphi\colon A\to \End_{\K}(M)$ be the action map.
We make $\End_{\K}(M)$ into an $A$-$A$-bimodule, with the left action
being precomposition with $\varphi(a)$ and the right action post-composition.
If we freely resolve $M$ using the tensor product with the Hochschild
resolution, we can identify $\Ext^2(M,M)$ with the Hochschild cohomology
$HH^2(A,\End_{\K}(M) )$ of the bimodule $\End_{\K}(M)$.  More
concretely, this group is
the subquotient of
$\Hom_\K(M\otimes A^{\otimes 2},M)\cong \Hom_\K(A^{\otimes 2},\End_{\K}(M))$ of cocycles $\xi$  satisfying
\[\varphi(a_1)\xi(a_2\otimes a_3)-\xi(a_1a_2\otimes
a_3)+\xi(a_1\otimes a_2a_3)-\xi(a_1\otimes a_2
)\varphi(a_3),\] modulo those of the form \[\partial\eta(a_1\otimes
a_2)=\varphi(a_1)\eta(a_2)-\eta(a_1a_2)+\eta(
a_1)\varphi(a_2).\]  If $r$ is a Hochschild 2-cocycle representing $s$, then the induced cocycle
$r_M$ is $r_M(a_1\otimes
a_2)=\varphi(r(a_1,a_2))$.

  The module $M$ has a flat deformation if the map $\varphi\colon A\to
  \End_{\K}(M)$ has a deformation to a map $\varphi_h=\varphi+h\varphi_1\colon A_h\to \End_{\K}(M)[h]/(h^2)$.  The
  argument of \cite{GerSch} shows that this happens when
  $r_M(a_1,a_2)=\partial(
  \varphi_1)$. Thus, we have
 $s_M=0$. 
\end{proof}

In general we can compute the class $s_M$ by studying {\it how} a
module fails to deform flatly.  Let us make this more precise.
Consider a complex 
\[\cdots \overset{{\partial}}\longrightarrow  M_i
\overset{{\partial}}\longrightarrow  M_{i+1}
  \overset{{\partial}}\longrightarrow\cdots
  \overset{{\partial}}\longrightarrow  M_0\] of $A$-modules. Assume there exists a precomplex 
(that is, a sequence of maps where the differential
is not assumed to square to $0$.)
\[\cdots \overset{\tilde{\partial}}\longrightarrow \tilde M_i
\overset{\tilde{\partial}}\longrightarrow \tilde M_{i+1}
  \overset{\tilde{\partial}}\longrightarrow\cdots
  \overset{\tilde{\partial}}\longrightarrow \tilde M_0\] of $\K[h]/(h^2)$-free modules over the deformation $A_h$,
 such that $\tilde M_i/h \tilde M_i\cong M_i$ and $\partial$ is the
 reduction of $\tilde{\partial}$ modulo $h$. For example, if each
 $M_i$ is projective, then such a precomplex always exists. If this
 precomplex is actually a complex, that is,  $\tilde{\partial}^2=0$,
 then this is a deformation in the derived category; if the $M_i$'s
 are a projective resolution of a module $N$, then the cohomology of
 the deformation will be a flat deformation of $N$ and $s_N$ must be 0.

Even if $\tilde{\partial}^2\neq 0$, then we still have that $\tilde{\partial}^2$ is
  equal to 0 $\pmod h$, so we have an induced map
  $\tilde{\partial}^2/h \colon M_i\to M_{i+2}$.  This is a chain map, since \[\partial\circ
  \tilde{\partial}^2/h =\tilde{\partial}^3/h =\tilde{\partial}^2/h
  \circ \partial,\] and we can think of its class up to homotopy as an
  obstruction to deforming the complex.  Since $s_M$ is another such
  obstruction, it seems logical they would agree, as indeed is true:
\begin{lemma}\label{Hochschild-action}
  The class of $s_M\in \Ext^2(M,M)$ agrees with the action induced by 
  $\tilde{\partial}^2/h$ on the complex $M$.
\end{lemma}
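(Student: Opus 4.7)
The strategy is to verify both classes on a single convenient projective resolution (the bar resolution), after first checking that $[\tilde{\partial}^2/h]\in\Ext^2(M,M)$ depends only on $M$ up to quasi-isomorphism.

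First I would establish independence of the precomplex lift. If $\tilde{\partial}'$ is another precomplex lift of $\partial$ on the same underlying graded $A_h$-free module, then $\tilde{\partial}'-\tilde{\partial}=h\delta$ for some endomorphism $\delta$ of degree $-1$, and a short calculation gives
\[(\tilde{\partial}')^2/h\;-\;\tilde{\partial}^2/h\;=\;\partial\delta+\delta\partial\pmod h,\]
which is a coboundary in $\Hom(M_\bullet,M_\bullet)$. Combined with the standard fact that a quasi-isomorphism between two projective resolutions can be lifted to a precomplex map over $A_h$, this shows $[\tilde{\partial}^2/h]$ is also independent of the chosen projective resolution, so it suffices to compute on a model of our choice.

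Next I would carry out the computation for the bar resolution. Take $B_\bullet=A^{\otimes(\bullet+1)}\otimes_\K M$ with the usual bar differential, and lift it to $\tilde{B}_\bullet=A_h^{\otimes(\bullet+1)}\otimes_{\K[h]/(h^2)}\tilde{M}$, where $\tilde{M}=M\otimes_\K\K[h]/(h^2)$ is given the naive, non-deformed $A_h$-action (which is \emph{not} an honest $A_h$-action in general). Define $\tilde{\partial}$ using $\star_r$ for all internal bar contractions and this naive action for the final contraction onto $\tilde{M}$. The cocycle condition on $r$ is exactly associativity of $\star_r$ modulo $h^2$, so every contribution to $\tilde{\partial}^2$ coming from two consecutive bar contractions vanishes; the only terms that survive come from the mismatch
\[(a_{n-1}\star_r a_n)\cdot m\;-\;a_{n-1}\cdot(a_n\cdot m)\;=\;h\,r(a_{n-1},a_n)\,m.\]
This yields $\tilde{\partial}^2/h\,(a_0|\cdots|a_n|m)=a_0|\cdots|a_{n-2}|\,r(a_{n-1},a_n)\,m$, which under the identification $\Ext^2(M,M)\cong HH^2(A,\End_\K(M))$ used in the proof of the preceding proposition is precisely the chain map on $B_\bullet$ corresponding to the Hochschild cocycle $r_M(a_1,a_2)(m)=\varphi(r(a_1,a_2))(m)$ representing $s_M$.

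The main obstacle is the bookkeeping in the independence step: producing a precomplex lift of a quasi-isomorphism between two resolutions, and tracking that the induced maps on $\tilde{\partial}^2/h$ differ by a coboundary. The bar-resolution computation itself is routine once signs are tracked carefully, since the cocycle condition was designed to make associativity hold mod $h^2$.
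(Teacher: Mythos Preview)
Your proposal is correct and follows essentially the same two-step strategy as the paper: first argue that the class $[\tilde\partial^2/h]$ is independent of choices, then compute it on the bar resolution and match it with the cocycle $r_M$.

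The differences are minor but worth noting. For the independence step, the paper proves a single naturality statement: for any chain map $f\colon M\to N$ and any lifts $\tilde\partial,\tilde\partial'$ on either side, one has $f\circ(\tilde\partial^2/h)-(\tilde\partial'^2/h)\circ f=\partial' g+g\partial$ with $g=(f\tilde\partial-\tilde\partial' f)/h$. This simultaneously handles change of lift (take $f=\id$) and change of resolution (take $f$ a quasi-isomorphism), and is exactly the ``bookkeeping'' you flag as the main obstacle. Your split into two sub-steps is fine, but the second one really amounts to this same naturality computation. For the bar computation, the paper deforms only the \emph{leftmost} contraction $a'\phi(a_1)$ (keeping the remaining $a_ia_{i+1}$ and $a_k\cdot m$ undeformed), so the mismatch appears at positions $1,2$ and gives $\tilde\partial^2/h(a'|a_1|\cdots|m)=a'r(a_1,a_2)|a_3|\cdots|m$; you deform all internal contractions and leave the last one undeformed, so the mismatch appears at positions $n-1,n$. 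These are different chain maps but homotopic, and both restrict on $B_2\to B_0\to M$ to $a_0|a_1|a_2|m\mapsto a_0\,r(a_1,a_2)\,m$, which is precisely $r_M$ under the identification $\Ext^2(M,M)\cong HH^2(A,\End_\K M)$.
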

\begin{proof}
First, we will prove that this induced map commutes with all chain maps, and
in particular, its action on an object in the derived category is well
defined and independent of the choice of precomplex.

Assume that $M$ and $N$ are two complexes of $A$-modules, and we
deform them to precomplexes $\tilde{M}$ and $\tilde{N}$ with
``differentials'' $\tilde{\partial}$ and $\tilde{\partial}'$.  Assume
that $f\colon M\to N$ is a chain map.  Then,
modulo $h$, we have
\[f\circ \tilde{\partial}^2/h-(\tilde{\partial}')^2/h\circ f={\partial}'\circ
(f\circ \tilde{\partial}-\tilde{\partial}'\circ
f)/h+(f\circ\tilde{\partial}-\tilde{\partial}'\circ f)/h
\circ 
{\partial}\] so $(f\circ\tilde{\partial}-\tilde{\partial}'\circ f)/h$
exhibits a homotopy between the two maps. Most importantly, this shows that the answer
will be the same up to homotopy for any deformation of the same
differential on a single complex.  In fact, this shows that
$\tilde{\partial}^2/h$ must be the action of some element of
Hochschild cohomology.

Thus, we need only show that it agrees with $s_M$ for some
quasi-isomorphic complex and one particular choice of deformed differential. Choose a splitting
$\phi\colon A\to A_h$, and consider the deformed
Hochschild complex whose $k$th term is $A_h\otimes A^{\otimes k}\otimes
M$. The deformed differential on this is given by 
\[\tilde \partial(a'\otimes a_1\otimes \cdots\otimes a_k\otimes
m)=a'\phi(a_1)\otimes a_2\otimes \cdots \otimes m-a'\otimes
a_1a_2\otimes \cdots \otimes m+\cdots \]
One can easily check that 
\[ \tilde \partial^2(a'\otimes a_1\otimes \cdots\otimes a_k\otimes
m)=ha'r(a_1,a_2)\otimes \cdots\otimes m.\]
Thus, the resulting endomorphism agrees with that induced by the
cocycle $r$.
\end{proof}

We note that if the action of $A$ on $M$ is an
$A_\infty$ action, then we can consider any deformation of the
$A_\infty$ operations: these induce a precomplex structure on
$\tilde M\otimes \mathsf{T} SA\otimes A$ whose square calculates the same
Hochschild homology class.

\section{Ladder bimodules}
\label{sec:ladder-bimodules}

\subsection{The case of \texorpdfstring{$Y$}{Y}-ladders}
\label{sec:case-y-ladders}

Fix a triple of integers $a+b=c\leq n$.  

Let $L_{a,b}$ be the unique highest weight simple module over
$T^{a,b}_{\omega_c}$. We'll instead want to think of this simple as a module
over $\tilde{T}^{a,b}_{\omega_c}$.  Let $e_{a,b}$ be the idempotent in $\tilde{T}^{a,b}$
summing straight line diagrams with the red strands at far left and
far right, and all black strands between them.

By
  \cite[\ref{m-add-red}]{Webmerged}, we have the isomorphism
  $e_{a,b}T^{a,b}e_{a,b}\cong T^a_{\omega_c-\omega_b}$, and thus a
  surjective map $e_{a,b}\tilde{T}^{a,b}e_{a,b}\to
  T^a_{\omega_c-\omega_b}$.  Since we have that
  $\omega_c-\omega_b=(0,\dots, 0,1,\dots, 1,0,\dots,0)$ with $b$ 0's,
  $a$ 1's and $n-c$ 0's, the corresponding weight space in
  $\wedgep{a}$ is 1-dimensional and extremal. The algebra $T^a_{\omega_c-\omega_b}$
  is thus a matrix algebra by \cite[\ref{m-proj-irr}]{Webmerged}.

\begin{proposition}\label{prop-a-b-only}
We have that $L_{a,b}=e_{a,b}L_{a,b}$ and $L_{a,b}$ is thus isomorphic
to the unique simple module over
$e_{a,b}T^{a,b}e_{a,b}\cong T^a_{\omega_c-\omega_b}$. 
\end{proposition}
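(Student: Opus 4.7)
Proof plan. I would proceed in three steps: identify $L_{a,b}$ as the highest-weight simple with a known Grothendieck class, construct an auxiliary module $M$ whose class also matches $[L_{a,b}]$, and conclude via the exactness of idempotent truncation.

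First, I would identify $L_{a,b}$ as $L_{\bmu_{\max}}$ where $\bmu_{\max}$ is the maximum of the reduced dominance order on pairs $(\mu_1,\mu_2)$ of weights of $\iwedge{a}\C^n$ and $\iwedge{b}\C^n$ summing to $\omega_c$. Since reduced dominance on two-tuples depends only on the second entry, and since the dominance-largest $\mu_2$ of $\iwedge{b}\C^n$ with $\omega_c-\mu_2$ still a weight of $\iwedge{a}\C^n$ (i.e., a $\{0,1\}$-vector) is $\mu_2=\omega_b=(1^b,0^{n-b})$, this gives $\bmu_{\max}=(\omega_c-\omega_b,\omega_b)=\bigl((0^b,1^a,0^{n-c}),(1^b,0^{n-b})\bigr)$. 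Because $\bmu_{\max}$ is maximal, the character formula yields $[L_{a,b}]=v_{\bmu_{\max}}$ with no lower terms, so $L_{a,b}=\Delta_{\bmu_{\max}}$ is one-dimensional as a graded $\K$-vector space.

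Next, I would use \cite[\ref{m-proj-irr}]{Webmerged}: the weight $\omega_c-\omega_b$ is extremal in $\iwedge{a}\C^n$ (it is the weight of $v_{b+1}\wedge\cdots\wedge v_c$, spanning a one-dimensional weight space), so $T^a_{\omega_c-\omega_b}$ is a matrix algebra with unique simple $\bar L$. Let $M:=T^{a,b}e_{a,b}\otimes_{e_{a,b}T^{a,b}e_{a,b}}\bar L$; tautologically $e_{a,b}M\cong\bar L\neq 0$. The isomorphism $e_{a,b}T^{a,b}e_{a,b}\cong T^a_{\omega_c-\omega_b}$ of \cite[\ref{m-add-red}]{Webmerged} is induced by the ``add-red-on-right'' algebra map $I_{\omega_b}\colon T^a\to T^{a,b}$, which identifies $M$ with $\fI_{\omega_b}\bar L$ from Definition \ref{I-def}. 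Proposition \ref{prop:GG-iso} then computes $[M]=[\bar L]\otimes v_{\omega_b}=v_{\omega_c-\omega_b}\otimes v_{\omega_b}=v_{\bmu_{\max}}=[L_{a,b}]$, so $L_{a,b}$ is the unique composition factor of $M$.

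Finally, exactness of the idempotent truncation $e_{a,b}(-)$ together with the nonvanishing $e_{a,b}M\neq 0$ forces some composition factor of $M$ to survive truncation, giving $e_{a,b}L_{a,b}\neq 0$. Combined with the one-dimensionality of $L_{a,b}$, this upgrades to $L_{a,b}=e_{a,b}L_{a,b}$. Regarding $L_{a,b}$ as a module over $e_{a,b}T^{a,b}e_{a,b}\cong T^a_{\omega_c-\omega_b}$ then exhibits it as a nonzero simple over a matrix algebra, hence isomorphic to $\bar L$. The principal obstacle is the Grothendieck-level identification $[M]=v_{\bmu_{\max}}$: this requires unpacking \cite[\ref{m-add-red}]{Webmerged} enough to identify the induction $M$ with $\fI_{\omega_b}\bar L$, after which Proposition \ref{prop:GG-iso} does the rest.
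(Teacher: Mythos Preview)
Your proof has a genuine gap in Step~1: the claim that $L_{a,b}$ is one-dimensional is false. For instance, when $a=b=2$ the paper computes explicitly (in the example following Proposition~\ref{prop:Lab-basis}) that $L_{2,2}$ is $2$-dimensional; in general $\dim L_{a,b}$ equals the number of standard Young tableaux on an $a\times b$ rectangle. The error is a conflation: having $[L_{a,b}]=v_{\bmu_{\max}}$ be a single basis vector in $K^0(T^{a,b})\cong\wedgep{a}\otimes\wedgep{b}$ does not make the module one-dimensional, since the basis vectors $v_\bmu$ are the classes of the standard modules $\nabla_\bmu$, which are typically not one-dimensional. (A smaller issue: your derivation of $[L_{a,b}]=v_{\bmu_{\max}}$ is backwards. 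Maximality of $\bmu_{\max}$ does not eliminate the lower terms in $[L_\bmu]=v_\bmu+\sum_{\bmu'<\bmu}(\cdots)v_{\bmu'}$; the correct argument is that $\nabla_{\bmu_{\max}}$ has no composition factor $L_{\bmu'}$ with $\bmu'>\bmu_{\max}$, hence equals $L_{\bmu_{\max}}$.)

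Without one-dimensionality, Step~3 collapses: from $e_{a,b}L_{a,b}\neq 0$ alone you cannot conclude $(1-e_{a,b})L_{a,b}=0$. Your construction $M=\fI_{\omega_b}\bar L\cong L_{a,b}$ is fine, but establishing $(1-e_{a,b})M=0$ directly from that description is essentially the content of the proposition. The paper's argument is much shorter and different in spirit: since $L_{a,b}$ is the highest weight simple of $T^{a,b}_{\omega_c}$, we have $\eE_iL_{a,b}=0$ for every $i$. Now $\eE_i$ is (bi)adjoint to $\fF_i$, which adds a black $i$-strand at the far right; unwinding the adjunction, $\eE_iM$ is exactly the piece of $M$ cut out by the idempotents whose rightmost strand is black with label $i$. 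Hence every idempotent $e(\Bi,\kappa)$ whose second red line is not at the far right annihilates $L_{a,b}$, and by elimination $e_{a,b}$ acts as the identity.
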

\begin{proof}
  Since the simple module $L_{a,b}$ satisfies $\eE_i L_{a,b}=0$, we have
  that this simple is killed by the idempotent $e_{\kappa,\Bi}$ if the
  second red line is not at the far right.  Thus, by process of
  elimination, $e_{a,b}$ acts by the identity. \end{proof}

We can describe this simple more explicitly using the fact that
$T^a_{\omega_c-\omega_b}$ 
  has a graded cellular basis by work of Hu and Mathas \cite{HM}. This basis is indexed by pairs of standard tableaux on an $a\times b$ rectangle. 

In this paper, we read Young tableaux from the bottom left corner to the bottom right corner, and then in rows from 
left to right up the diagram. The content of a box in a Young tableau we define to be equal to $a+j-i$, where $i$ is the row number of the box and $j$ its column number and $a$ is the number of rows in $\sS$. We further identify this content with the corresponding root
$\al_{a+j-i}$ of $\gln$.

\begin{definition}
  Given a standard tableau $\sS$, we let the {\bf content reading word} of
  $\sS$ be the word whose $i$th entry is the content of the box
  containing $i$ in $\sS$.  The {\bf row reading word} of $\sS$ is the
  entries of $\sS$ read in the natural order explained above. 
\end{definition}
\begin{example}
Consider the following two tableaux:
$$\tikz[thick,scale=.8]{\node at (-1,1) {$\sS_1=$};\draw (0,0) -- (2,0);\draw (0,1) -- (2,1);\draw (0,2)
  -- (2,2);\draw (0,0) -- (0,2);\draw (1,0) -- (1,2);\draw (2,0) --
  (2,2); \node at (.5,.5){$1$};\node at (1.5,.5){$2$};\node at
  (.5,1.5){$3$};\node at (1.5,1.5){$4$};}\qquad \qquad
\tikz[thick,scale=.8]{\node at (-1,1) {$\sS_2=$};\draw (0,0) -- (2,0);\draw (0,1) -- (2,1);\draw
  (0,2) -- (2,2);\draw (0,0) -- (0,2);\draw (1,0) -- (1,2);\draw (2,0)
  -- (2,2);\node at (.5,.5){$1$};\node at (1.5,.5){$3$};\node at
  (.5,1.5){$2$};\node at (1.5,1.5){$4$};}
$$
The content reading word of $\sS_1$ is equal to $(2,3,1,2)$ and its row reading word is equal to $(1,2,3,4)$. 
The content reading word of $\sS_2$ is equal to $(2,1,3,2)$ and its row reading word is equal to $(1,3,2,4)$. 
\end{example}

\excise{We'll index the content of this diagram by putting the top
left corner equal to $1$; that is, the content of the box $(i,j)$ is
$a+j-i$.  We'll use French notation for tableaux, so $(1,1)$ is the
bottom left corner of the tableau.  For example, if $a=2,b=3$, then the contents are:
\[\tikz[thick,scale=.6,baseline]{\draw (0,0) -- (3,0);\draw (0,1) -- (3,1);\draw (0,2)
  -- (3,2);\draw (0,0) -- (0,2);\draw (1,0) -- (1,2);\draw (2,0) --
  (2,2); \draw (3,0) --
  (3,2);\node at (.5,.5){$2$};\node at (1.5,.5){$3$};\node at
  (.5,1.5){$1$};\node at (1.5,1.5){$2$};\node at
  (2.5,.5){$4$};\node at (2.5,1.5){$3$};}\]}

Let $\sR$ be the tableau in which the box $(i,j)$
has filling $(i-1)b+j$, that is, with row reading $(1,2,3,\dots, ab)$. Attached to $\sS$, there is a unique permutation $w_\sS$ sending
$(1,2,3,\dots)$ to the
row reading word of $\sS$.  Note that $w_\sR$ is the identity. 

The basis vector $C_{\sS,\sT}$ for a pair $\sS,\sT$ is given by the diagram that traces out a string
diagram for $w_\sS$ read upward from $y=\nicefrac 12$ to $y=1$ and a string
diagram for $w_\sT$ read downward from $y=\nicefrac 12$ to $y=0$.  The
labels on strands are determined by the property that
\begin{itemize}
\item at $y=1$, reading left to right gives the content reading word of $\sS$.
\item at $y=\nicefrac 12$, reading left to right gives the content reading word of $\sR$,
  that is, $(\al_a,\al_{a+1},\dots, \al_{c-1},\al_{a-1},\dots, \al_{c-2},\dots,
  \al_1,\dots, \al_b)$.
\item at $y=0$, reading left to right gives the content reading word of $\sT$.  
\end{itemize}

This follows the general principle that each strand is attached to a
box in the tableau, and the strand connects the entries
of the sequences at $y=0,\nicefrac 12 ,1$ associated to the same box
of the diagrams. 

\begin{proposition}[\mbox{Hu-Mathas}]\label{prop:Lab-basis}
  The vectors $C_{\sS,\sT}$ form a basis of
  $T^{a}_{\omega_c-\omega_b}$ with the multiplication rule 
\[C_{\sS,\sT}C_{\sS',\sT'}=
\begin{cases}
  C_{\sS,\sT'} & \sS'=\sT\\
0 & \sS'\neq\sT
\end{cases}
\]
\end{proposition}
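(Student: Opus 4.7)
The plan is to deduce this proposition from Hu and Mathas's construction~\cite{HM} of a graded cellular basis for cyclotomic KLR algebras. My first task is to recognize $T^a_{\omega_c-\omega_b}$ as a weight block of such an algebra: it has a single red strand labelled $\omega_a$ and is therefore a level-one cyclotomic quotient whose blocks are indexed by $\gln$ weights, and I want the block for $\omega_c-\omega_b$. Hu and Mathas's basis is indexed by pairs $(\sS,\sT)$ of standard tableaux of common shape $\mu\vdash N$, where $N$ is the number of black strands and $\mu$ ranges over partitions whose residue content matches the weight of the block.

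I would then check that the $a\times b$ rectangle is the \emph{unique} such partition for this block. This is a short combinatorial verification that becomes transparent once one notes that $\omega_c-\omega_b$ is extremal in $\wedgep a$: the corresponding weight space is one-dimensional, and under the classical bijection between weight spaces of $V(\omega_a)$ and partitions with the prescribed residue content, the only option is the rectangle $(b^a)$ with contents ranging over exactly the multiset of simple roots exhibited in the statement.

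The next step is to match the diagrammatic elements $C_{\sS,\sT}$ with Hu and Mathas's basis vectors $\psi_{\sS\sT}$. Both are constructed by taking the idempotent at the row reading word of the canonical tableau $\sR$ and sandwiching it between reduced-word diagrams for the permutations $w_\sS$ (above) and $w_\sT$ (below); the consistency of the two recipes is immediate once one reads off that the content reading word of $\sR$ is precisely the residue sequence $(\al_a,\al_{a+1},\dots,\al_{c-1},\al_{a-1},\dots,\al_{c-2},\dots,\al_1,\dots,\al_b)$ listed at $y=\nicefrac 12$ in the geometric definition.

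With this identification the multiplication rule follows from Hu and Mathas's general formula $\psi_{\sS\sT}\psi_{\sU\sV}=\delta_{\sT,\sU}\psi_{\sS\sV}$ modulo cellular correction terms supported on shapes strictly dominating $\mu$. Since the rectangle is the only shape present in our block, those corrections vanish and we obtain the claimed matrix-unit product. The main point requiring care is the absence of a nontrivial scalar in the leading coefficient: this is built into Hu and Mathas's normalization, but it is also forced on us by Proposition~\ref{prop-a-b-only}, which identifies $T^a_{\omega_c-\omega_b}$ with a matrix algebra. Hence the cellular form on the cell module is, in this basis of standard tableaux, literally the identity, and any global rescaling can be reabsorbed without breaking the stated formula.
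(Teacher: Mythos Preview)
Your proposal is correct and follows the same approach as the paper: the paper's entire proof is the single sentence ``This is a special case of a much more general result for all type A cyclotomic quotients \cite[Main Theorem]{HM},'' and your write-up is precisely the unpacking of that citation---identifying the block, noting that the rectangle is the unique shape because the weight is extremal, and observing that the cellular correction terms vanish since there is only one cell. The one point where you are slightly more careful than necessary is the scalar in the leading term: since distinct standard tableaux on a fixed shape have distinct residue sequences, the orthogonality $C_{\sS,\sT}C_{\sS',\sT'}=0$ for $\sS'\neq\sT$ is immediate from idempotent mismatch, and the remaining identity $C_{\sS,\sT}C_{\sT,\sT'}=C_{\sS,\sT'}$ reduces (via $C_{\sS,\sT}=C_{\sS,\sR}C_{\sR,\sT}$, which holds by construction since $w_\sR=\id$) to checking $C_{\sR,\sT}C_{\sT,\sR}=C_{\sR,\sR}$, which is the statement that the Hu--Mathas cellular form is $1$ on each basis vector of the cell module.
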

This is a special case of a much more general result for all type A
cyclotomic quotients \cite[Main Theorem]{HM}.

In particular $C_{\sS,\sS}$ is an idempotent.  Note that all the
vectors $C_{\sS,\sT}$ are of degree 0, showing that all elements of
$T^{a}_{\omega_c-\omega_b}$ of positive or negative degree vanish.  

\begin{example}
  Consider the case where $a=b=2$, so $c=4$.  We're considering
  tableaux on a $2\times 2$ rectangle, of which there are two:
\[\tikz[thick,scale=.8]{\draw (0,0) -- (2,0);\draw (0,1) -- (2,1);\draw (0,2)
  -- (2,2);\draw (0,0) -- (0,2);\draw (1,0) -- (1,2);\draw (2,0) --
  (2,2); \node at (.5,.5){$1$};\node at (1.5,.5){$2$};\node at
  (.5,1.5){$3$};\node at (1.5,1.5){$4$};}\qquad \qquad
\tikz[thick,scale=.8]{\draw (0,0) -- (2,0);\draw (0,1) -- (2,1);\draw
  (0,2) -- (2,2);\draw (0,0) -- (0,2);\draw (1,0) -- (1,2);\draw (2,0)
  -- (2,2);\node at (.5,.5){$1$};\node at (1.5,.5){$3$};\node at
  (.5,1.5){$2$};\node at (1.5,1.5){$4$};}\]
Thus, we have 4 basis vectors in $e_{a,b}T^{a,b}e_{a,b}$, one for each
pair of these tableaux
\[\tikz[thick,scale=.6,baseline]{\draw (0,0) -- (2,0);\draw (0,1) -- (2,1);\draw (0,2)
  -- (2,2);\draw (0,0) -- (0,2);\draw (1,0) -- (1,2);\draw (2,0) --
  (2,2); \node at (.5,.5){$1$};\node at (1.5,.5){$2$};\node at
  (.5,1.5){$3$};\node at (1.5,1.5){$4$};}\quad
\tikz[thick,scale=.6,baseline]{\draw (0,0) -- (2,0);\draw (0,1) -- (2,1);\draw
  (0,2) -- (2,2);\draw (0,0) -- (0,2);\draw (1,0) -- (1,2);\draw (2,0)
  -- (2,2);\node at (.5,.5){$1$};\node at (1.5,.5){$2$};\node at
  (.5,1.5){$3$};\node at (1.5,1.5){$4$};}\qquad 
\tikz[thick,yscale=.6,baseline,xscale=1.2]{\draw[wei] (0,0) -- node[below,at start]{$2$}(0,2);\draw (.5,0) -- node[below,at start]{$2$} (.5,2);\draw (1,0) -- node[below,at start]{$3$} (1,2);\draw (1.5,0) -- node[below,at start]{$1$} (1.5,2);\draw
  (2,0) -- node[below,at start]{$2$} (2,2);\draw[wei] (2.5,0) --
  node[below,at start]{$2$} (2.5,2);}\qquad \qquad \tikz[thick,scale=.6,baseline]{\draw (0,0) -- (2,0);\draw (0,1) -- (2,1);\draw
  (0,2) -- (2,2);\draw (0,0) -- (0,2);\draw (1,0) -- (1,2);\draw (2,0)
  -- (2,2);\node at (.5,.5){$1$};\node at (1.5,.5){$3$};\node at
  (.5,1.5){$2$};\node at (1.5,1.5){$4$};}\quad
\tikz[thick,scale=.6,baseline]{\draw (0,0) -- (2,0);\draw (0,1) -- (2,1);\draw
  (0,2) -- (2,2);\draw (0,0) -- (0,2);\draw (1,0) -- (1,2);\draw (2,0)
  -- (2,2);\node at (.5,.5){$1$};\node at (1.5,.5){$2$};\node at
  (.5,1.5){$3$};\node at (1.5,1.5){$4$};}\qquad 
\tikz[thick,yscale=.6,baseline,xscale=1.2]{\draw[wei] (0,0) --
  node[below,at start]{$2$}(0,2);\draw (.5,0) -- node[below,at
  start]{$2$} (.5,2);\draw (1,0) to[out=90,in=-90] node[below,at start]{$3$}  (1,1) to[out=90,in=-90] (1.5,2);\draw (1.5,0)  to[out=90,in=-90]node[below,at start]{$1$}  (1.5,1) to[out=90,in=-90]  (1,2);\draw
  (2,0) -- node[below,at start]{$2$} (2,2);\draw[wei] (2.5,0) --
  node[below,at start]{$2$} (2.5,2);}\]
\[\tikz[thick,scale=.6,baseline]{\draw (0,0) -- (2,0);\draw (0,1) -- (2,1);\draw
  (0,2) -- (2,2);\draw (0,0) -- (0,2);\draw (1,0) -- (1,2);\draw (2,0)
  -- (2,2);\node at (.5,.5){$1$};\node at (1.5,.5){$2$};\node at
  (.5,1.5){$3$};\node at (1.5,1.5){$4$};}\quad
\tikz[thick,scale=.6,baseline]{\draw (0,0) -- (2,0);\draw (0,1) -- (2,1);\draw
  (0,2) -- (2,2);\draw (0,0) -- (0,2);\draw (1,0) -- (1,2);\draw (2,0)
  -- (2,2);\node at (.5,.5){$1$};\node at (1.5,.5){$3$};\node at
  (.5,1.5){$2$};\node at (1.5,1.5){$4$};}\qquad
\tikz[thick,yscale=.6,baseline,xscale=1.2]{\draw[wei] (0,0) --
  node[below,at start]{$2$}(0,2);\draw (.5,0) -- node[below,at
  start]{$2$} (.5,2);\draw (1.5,0) to[out=90,in=-90] node[below,at start]{$3$}  (1,1) to[out=90,in=-90] (1,2);\draw (1,0)  to[out=90,in=-90]node[below,at start]{$1$}  (1.5,1) to[out=90,in=-90]  (1.5,2);\draw
  (2,0) -- node[below,at start]{$2$} (2,2);\draw[wei] (2.5,0) --
  node[below,at start]{$2$} (2.5,2);}\qquad\qquad \tikz[thick,scale=.6,baseline]{\draw (0,0) -- (2,0);\draw (0,1) -- (2,1);\draw
  (0,2) -- (2,2);\draw (0,0) -- (0,2);\draw (1,0) -- (1,2);\draw (2,0)
  -- (2,2);\node at (.5,.5){$1$};\node at (1.5,.5){$3$};\node at
  (.5,1.5){$2$};\node at (1.5,1.5){$4$};}\quad
\tikz[thick,scale=.6,baseline]{\draw (0,0) -- (2,0);\draw (0,1) -- (2,1);\draw
  (0,2) -- (2,2);\draw (0,0) -- (0,2);\draw (1,0) -- (1,2);\draw (2,0)
  -- (2,2);\node at (.5,.5){$1$};\node at (1.5,.5){$3$};\node at
  (.5,1.5){$2$};\node at (1.5,1.5){$4$};}\qquad
\tikz[thick,yscale=.6,baseline,xscale=1.2]{\draw[wei] (0,0) --
  node[below,at start]{$2$}(0,2);\draw (.5,0) -- node[below,at
  start]{$2$} (.5,2);\draw (1.5,0) to[out=90,in=-90] node[below,at start]{$3$}  (1,1) to[out=90,in=-90] (1.5,2);\draw (1,0)  to[out=90,in=-90]node[below,at start]{$1$}  (1.5,1) to[out=90,in=-90]  (1,2);\draw
  (2,0) -- node[below,at start]{$2$} (2,2);\draw[wei] (2.5,0) --
  node[below,at start]{$2$} (2.5,2);}\]
The relation (\ref{black-bigon}) shows that this is isomorphic to a
$2\times 2$ matrix algebra, as expected.
\end{example}
As with any matrix algebra, there is a single irreducible
representation, given by the left ideal
$L_{a,b}\cong T^a_{\omega_c-\omega_b}C_{\sR,\sR}$; in fact, we could replace $\sR$
by any $\sS$, but we prefer to have a fixed choice. 

 We will use a graphical notation for elements of this
 representation.  To avoid confusion with the algebra, we'll represent
 $C_{\sR,\sR}$ thought of as an element of $L_{a,b}$ 
as the diagram
\begin{equation}
C_{\sR,\sR}=\tikz[baseline,very thick,yscale=2,xscale=2.5]{\draw (0,0) -- (.35,.25) to node
  [above, at end]{$b$}(.7,.5); \draw
  (0,0) -- node
  [above, at end]{$a$} (-.7,.5);\draw (0,0) -- node
  [above, at end]{$b-1$} (.4,.5); \draw (0,0) -- node
  [above, at end]{$a+1$} (-.4,.5);
  \draw[wei] (0,0) -- (0,-.3) ; \draw[wei] (0,0)-- (.5,.25) to node
  [above, at end]{$b$} (1,.5);\draw[wei]
  (0,0) -- node
  [above, at end]{$a$} (-1,.5); \node at (0,.4){$\cdots$};}\label{CRR}
\end{equation}

By our usual conventions, the labelling of the strands reminds us that
all idempotents corresponding to other tableaux act by zero, since the
labels on strands won't match when we compose the diagrams.
If, as before, we take $a=b=2$, then this module is 2-dimensional,
with basis given by 
\[\tikz[baseline,very thick,yscale=2,xscale=2.5]{\draw (0,0) to node
  [above, at end]{$2$}(.6,.5); \draw
  (0,0) -- node
  [above, at end]{$2$} (-.6,.5);\draw (0,0) -- node
  [above, at end]{$1$} (.2,.5); \draw (0,0) -- node
  [above, at end]{$3$} (-.2,.5);
  \draw[wei] (0,0) -- (0,-.3) ; \draw[wei] (0,0)-- (.5,.25) to node
  [above, at end]{$2$} (1,.5);\draw[wei]
  (0,0) -- node
  [above, at end]{$2$} (-1,.5);}\qquad \tikz[baseline,very thick,yscale=2,xscale=2.5]{\draw (0,0) to node
  [above, at end]{$2$}(.6,.5); \draw
  (0,0) -- node
  [above, at end]{$2$} (-.6,.5);\draw (0,0) --  (.15,.25) --node
  [above, at end]{$1$} (-.2,.5); \draw (0,0) -- (-.15,.25) --node
  [above, at end]{$3$} (.2,.5);
  \draw[wei] (0,0) -- (0,-.3) ; \draw[wei] (0,0)-- (.5,.25) to node
  [above, at end]{$2$} (1,.5);\draw[wei]
  (0,0) -- node
  [above, at end]{$2$} (-1,.5);} \]

All the relations of this module are encapsulated in the fact that
$(1-e_{a,b})L_{a,b}=0$ and the relations in
$T^a_{\omega_c-\omega_b}$.  However, it will be useful to record some
of them here for later use:
\newseq
\begin{equation*}\subeqn
\label{Lbc-relations}\tikz[baseline,very thick,scale=2]{\draw (0,0) -- (.35,.25) to node
  [above, at end]{$b$} (1,.5); \draw
  (0,0) --node
  [above, at end]{$a$} (-.7,.5);\draw (0,0) -- (.4,.5); \draw (0,0) -- (-.4,.5);
  \draw[wei] (0,0) -- (0,-.5) ; \draw[wei] (0,0)-- (.5,.25) to node
  [above, at end]{$b$} (.7,.5);\draw[wei]
  (0,0) -- node
  [above, at end]{$a$} (-1,.5); \node at (0,.4){$\cdots$};}=
\tikz[baseline,very thick,scale=2]{\draw (0,0) -- (-.35,.25) to node
  [above, at end]{$a$} (-1,.5); \draw
  (0,0) -- node
  [above, at end]{$b$} (.7,.5);\draw (0,0) -- (.4,.5); \draw (0,0) -- (-.4,.5);
  \draw[wei] (0,0) -- (0,-.5) ; \draw[wei] (0,0)-- (-.5,.25) to node
  [above, at end]{$a$} (-.7,.5);\draw[wei]
  (0,0) -- node
  [above, at end]{$b$} (1,.5); \node at (0,.4){$\cdots$};}=0
\end{equation*}
\begin{equation*}\subeqn
  \label{eq:4}
  \tikz[baseline,very thick,scale=2]{\draw (0,0) -- (.1,.25) to node
  [above, at end]{$i$} (-.2,.5); \draw (0,0) -- (-.1,.25) to  node
  [above, at end]{$i-1$} (.2,.5); 
  \draw[wei] (0,0) -- (0,-.5) ; \draw[wei] (0,0)-- (1,.5);\draw[wei]
  (0,0) -- (-1,.5); \node at (.4,.4){$\cdots$};\node at (-.4,.4){$\cdots$};}=  \tikz[baseline,very thick,scale=2]{\draw (0,0) -- (.3,.25) to node
  [above, at end]{$i$} (-.2,.5); \draw (0,0) -- (-.3,.25) to  node
  [above, at end]{$i+1$} (.2,.5); 
  \draw[wei] (0,0) -- (0,-.5) ; \draw[wei] (0,0)-- (1,.5);\draw[wei]
  (0,0) -- (-1,.5); \node at (.45,.4){$\cdots$};\node at (-.45,.4){$\cdots$};\node at (0,.2){$\cdots$};}=0
\end{equation*}

\excise{Let $v_{I}$ for $I\subset [1,n]$ denote the wedge product
$v_{i_1}\wedge \cdots \wedge v_{i_m}$.  

\begin{proposition}
  The class of $L_{a,b}$ in $K(\tilde{T}^{a,b}\mmod)$ is $1\otimes
  v_{a,b}$ where $v_{a,b}$ is the
  unique highest weight vector of the form $v_{[b+1,c]}\otimes
  v_{[1,b]}+\cdots$  
\end{proposition}
\begin{proof}
  The defining property of $[L_{a,b}]$ is that it is orthogonal to the
  image of the left and right actions of $F_i$ for all $i$, and that
  $\langle P_{\sR},L_{a,b}\rangle =1$.  

\bentodo{This needs to be fixed up.}
\end{proof}}

%If instead, we consider the case $a+b=c+n$, then we can obtain a
%corresponding simple module $\lle_{a,b}$ by applying the diagram automorphism switching
%the simple roots $\al_i\leftrightarrow \al_{n-i}$.  
Note that~\eqref{Lbc-relations} and~\eqref{eq:4} imply that all
diagrams of positive or negative degree are zero in $L_{a,b}$; this 
also follows from the basis given by Proposition \ref{prop:Lab-basis}.  In particular, this 
also kills any diagram which contains a dot. If we let $d$ denote the portion of the
diagram including the dot and things below it and $d'$ denote the portion of the
diagram strictly below the dot, then $\deg(d')=\deg(d)+2$.  However
this means that one of these diagrams has non-zero degree, and thus is
0.

Ultimately, we'll attach a bimodule to any ladder, but let us start with
the case of a single trivalent vertex.  Recall $Y_i$ and $Y_i^\star$:
\[\tikz[baseline,very thick]{\draw[wei] (0,0) -- node [at end, below]{$c$} (0,-.5) ; \draw[wei] (0,0)-- node [at end, above]{$b$}  (.4,.5);\draw[wei]
  (0,0) --node [at end, above]{$a$} (-.4,.5); \node at
  (0,-1.3){$Y_i$};}\qquad \tikz[baseline,very thick]{
\draw[wei] (0,0)
  -- node [at end, above]{$c$} (0,.5); 
\draw[wei] (0,0)-- node [at end, below]{$b$}  (.4,-.5);
\draw[wei]
  (0,0) --node [at end, below]{$a$} (-.4,-.5);
\node at (0,-1.3){$Y_i^\star$};}\]
\begin{definition}
  A {\bf ladder diagram} for a ladder with a single trivalent vertex is a diagram
  with
  \begin{itemize}
  \item red lines that trace out the ladder, labeled with the appropriate
    fundamental weights (which we'll just denote with their number)
  \item black lines that map immersively to $[0,1]$; as usual,
    these are labeled with simple roots and constrained intersect generically,
    that is, with no triple points or tangencies.  These can carry any
    number of dots, which must avoid intersection points.  
  \item over the triple point of the ladder, we include a box which
    contains an element of $L_{a,b}$.  If the $Y$ opens upward, this
    box connects with 2 red and $ab$ different black strands if
    $a+b=c$ at distinct points on
    the top of the box, and connects with one red strand labeled $c$
    at the bottom.  If the $Y$ opens downwards, we reflect this
    configuration through a horizontal axis. 
\item every black strand is constrained to have its endpoints on
  $y=1,y=0$ or on the box.  By the requirement that the projection to
  the $y$-axis is an immersion, we see that the only possible
  combinations are one endpoint at $y=0$ and the other at $y=1$, or
  one at $y=1$ (resp. $y=0$) and the other on the box if the $Y$ opens
  upward (resp. downward).
  \end{itemize}
\end{definition}
Using the notation introduced above for elements of $L_{a,b}$, we can
represent a ladder diagram as a Stendhal diagram where at the trivalent
vertex, we have inserted a picture which looks like the diagram
\eqref{CRR}.  The relation moving a crossing or dot in or out of the box just
becomes an isotopy in this schema; the relations of $L_{a,b}$ are
encapsulated in the local relations (\ref{Lbc-relations}--\ref{eq:4}).

An example of such a diagram is 
\[\tikz[baseline,very thick,yscale=2,xscale=2.5]{\draw (0,-.2) to node
  [above, at end]{$2$}(.6,.5); \draw
  (0,-.2) -- node
  [above, at end]{$2$} (-.6,.5);\draw (0,-.2) --  (.15,.25) --node
  [above, at end]{$1$} (-.2,.5); \draw (0,-.2) -- (-.15,.25) --node
  [above, at end]{$3$} (.2,.5);
  \draw[wei] (0,-.2) -- (0,-.5) ; \draw[wei] (0,-.2) to node
  [above, at end]{$2$} (1,.5);\draw[wei]
  (0,-.2) -- node
  [above, at end]{$2$} (-1,.5);  
\draw[wei]
  (-2,-.5) -- node
  [above, at end]{$3$} (-2,.5);\draw
  (-1.5,-.5) -- node
  [above, at end]{$3$} (-.4,.5);
\draw
  (-1.25,-.5) -- node
  [above, at end]{$2$} (-1.5,.5);
\draw
  (-1.75,-.5) -- node
  [above, at end]{$1$} (-2.25,.5);
} \]
Note that the $\K$-span of these diagrams is naturally a bimodule over
the algebras $\doubletilde{T}$ freely spanned by Stendhal diagrams for
the sequences at top and bottom of the ladder.

\begin{definition}
  The ladder bimodule $W_{Y_i}$ of a ladder with one trivalent vertex as above
  is the quotient of the $\K$-span of ladder diagrams by the local
  relations (\ref{first-QH}--\ref{triple-dumb}) and (\ref{red-triple-correction}--\ref{cost})
as well as:
\begin{itemize}
\item for any element of $\tilde{T}^{a,b}$, we obtain the
  same result by tracing out the Stendhal diagrams with the strands
  leading to the top of the box, or by
  acting on the element of $L_{a,b}$ in the box.
\newseq
%\[\subeqn\label{into-box} \]
\item black strands can pass through the ladder triple point up to a
  scalar.  Let the scalar 
$    \sigma(m,a,b)$ be $1$ if the number of elements of $\{a,b,c\}$ less
than $m$ is odd and $-1$ if it is even.  That is, $\sigma(m,a,b)=1$ if
$a\leq m< b$ or $b\leq m< a$ or $m\geq c$ and $-1$ otherwise.
\[\subeqn\label{through-box1}\tikz[baseline,very thick]{\draw (0,0) -- (.7,.5); \draw
  (0,0) -- (-.7,.5);\draw (0,0) -- (.4,.5); \draw (0,0) -- (-.4,.5);
  \draw[wei] (0,0) -- node[below,at end]{$c$} (0,-.5) ; \draw[wei] (0,0)-- node[above,at
  end]{$b$} (1,.5);\draw[wei]
  (0,0) -- node[above,at
  end]{$a$}  (-1,.5); \node at (0,.4){$\cdots$}; \draw (-1,-.5) --
  node[below,at start]{$m$}
  (.4,0)--(1.3,.5);}= \tikz[baseline,very thick]{\draw (0,0) -- (.7,.5); \draw
  (0,0) -- (-.7,.5);\draw (0,0) -- (.4,.5); \draw (0,0) -- (-.4,.5);
  \draw[wei] (0,0) -- node[below,at end]{$c$} (0,-.5) ; \draw[wei] (0,0)-- node[above,at
  end]{$b$}  (1,.5);\draw[wei]
  (0,0) --node[above,at
  end]{$a$}  (-1,.5); \node at (0,.4){$\cdots$}; \draw (-1,-.5) --  node[below,at start]{$m$}
  (-.5,.1)--(1.3,.5);}\]
\[ \subeqn\label{through-box2}\tikz[baseline,very thick]{\draw (0,0) -- (.7,.5); \draw
  (0,0) -- (-.7,.5);\draw (0,0) -- (.4,.5); \draw (0,0) -- (-.4,.5);
  \draw[wei] (0,0) -- node[below,at end]{$c$} (0,-.5) ; \draw[wei] (0,0)-- node[above,at
  end]{$b$}  (1,.5);\draw[wei]
  (0,0) -- node[above,at
  end]{$a$}  (-1,.5); \node at (0,.4){$\cdots$}; \draw (1,-.5) -- node[below,at start]{$m$}
  (-.4,0)--(-1.3,.5);}=\sigma(m,a,b) \tikz[baseline,very thick]{\draw (0,0) -- (.7,.5); \draw
  (0,0) -- (-.7,.5);\draw (0,0) -- (.4,.5); \draw (0,0) -- (-.4,.5);
  \draw[wei] (0,0) -- node[below,at end]{$c$} (0,-.5) ; \draw[wei] (0,0)-- node[above,at
  end]{$b$}  (1,.5);\draw[wei]
  (0,0) -- node[above,at
  end]{$a$}  (-1,.5); \node at (0,.4){$\cdots$}; \draw (1,-.5) -- node[below,at start]{$m$}
  (.5,.1)--(-1.3,.5);}\]
\end{itemize}
We let $W_{Y_i^\star}$ be the bimodule $W_{Y_i}$ with left and right actions
exchanged by the anti-automorphism of $T^{\Bp}$ and $T^{\Bp'}$
reflecting diagrams through the $x$-axis with
internal and homological grading shifted downward by the quantity
$\eta(Y_i)=- ab$.
Pictorially, we can think
of this as a bimodule with the same definition as $W_{Y_i}$ but with the
$Y$ upside down (and a grading shift). 
\end{definition}
We fix $\Bp$ to be a sequence of indices given by the bottom of $Y_i$
(resp. top of $Y_i^\star$), with $p_i=c$ and $\Bp'$ to be the
sequence at the top of $Y_i$ (resp. bottom of $Y_i^\star$).  That is,
we have that $\Bp'=(\dots, p_{i-1},a,b,p_{i+1},\dots)$.  We can attach
a Stendhal diagram from $\tilde{T}^{\Bp'}$ to the top of $Y_i$ or
bottom of $Y_i^\star$; similarly, a diagram from $\tilde{T}^{\Bp}$ can be
attached to the bottom of $Y_i$ or top of $Y_i^\star$.  This endows
$W_{Y_i}$ (resp. $W_{Y_i^\star}$) with a
$\tilde{T}^{\Bp'}\operatorname{-}\tilde{T}^{\Bp}$-bimodule structure
(resp. $\tilde{T}^{\Bp}\operatorname{-}\tilde{T}^{\Bp'}$-bimodule structure) by
the stacking of diagrams. 

If $X$ and $Y$ are functors between categories with a categorical
action of $\tU_n$, then a {\bf strong equivariant} structure on $X$ is
a system of natural isomorphisms $u\circ X\cong X\circ u$ for every
1-morphism $u$ which satisfies the obvious commutative square for
every 2-morphism.  We say that a natural transformation $X\to Y$ of
strongly equivariant functors 
{\bf commutes with the action of $\tU_n$ (resp. $\tU_n^-$)} if the obvious square
commutes.  
\begin{proposition}\label{prop:trivalent-commute}
  The ladder bimodules $W_{Y_i},W_{Y_i^\star}$  commute with
  induction functors, that is, they have a strongly equivariant
  structure for the action of $\tU^-_n\times\tU^-_n$.
\end{proposition}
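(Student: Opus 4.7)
The plan is to exhibit explicit bimodule isomorphisms $\tilde{\fF}_k W_{Y_i} \cong W_{Y_i} \tilde{\fF}_k$ and $\tilde{\fF}_k^* W_{Y_i} \cong W_{Y_i} \tilde{\fF}_k^*$ compatible with the 2-morphisms of $\tU_n^-$, and then observe that the analogous statements for $W_{Y_i^\star}$ follow by the defining anti-automorphism. Recall that $\tilde{\fF}_k$ (respectively $\tilde{\fF}_k^*$) is extension of scalars along the algebra map that adds a new black $k$-labeled strand on the far right (respectively far left) of each Stendhal diagram. Consequently both sides of each asserted isomorphism are spanned by ladder diagrams with an additional straight $k$-labeled strand running from bottom to top at the right-most (or left-most) edge of the picture, and the map we want is the obvious identification on this spanning set.

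The key point making the identification well-defined is locality together with the position of the trivalent vertex. In $W_{Y_i}$ the trivalent vertex joins red strands at positions $i$ and $i{+}1$ at the top, well inside the interior of the diagram; when we add a strand via $\tilde{\fF}_k$ it sits to the right of every red upright, and when we add one via $\tilde{\fF}_k^*$ it sits to the left of every red upright (even in the case $i=1$, the added left strand is strictly to the left of the leftmost red upright which itself bounds, but does not lie on, the trivalent vertex). Hence the added strand never approaches or crosses the trivalent vertex; the relations \eqref{through-box1}--\eqref{through-box2}, which govern such crossings (and which introduce the sign $\sigma(m,a,b)$), are never invoked, and the remaining local relations (the KLR relations (\ref{first-QH}--\ref{triple-dumb}) and the red-black relations (\ref{red-triple-correction}--\ref{cost})) are insensitive to the added strand. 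Thus the natural map respects the $\tilde T^{\Bp'}\text{-}\tilde T^{\Bp}$-bimodule structure on both sides, and the evident strand-removal map is an inverse.

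Next, for the strong equivariant structure one verifies that these isomorphisms intertwine every 2-morphism of $\tU_n^-$. Any such 2-morphism is a polynomial in dots and KLR crossings applied to the new rightmost (respectively leftmost) strand and its immediate neighbor. By the same locality argument, applying the 2-morphism before and after the identification gives the same diagram, so the naturality square commutes on the nose. The coherence of these isomorphisms under composition of 1-morphisms is automatic: composing induction functors just stacks several added strands at the right (or left) edge, and the identification propagates through each separately. The case of $W_{Y_i^\star}$ is handled by reflecting through a horizontal axis and invoking the $W_{Y_i}$ statement on the opposite side.

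The only place in this plan where anything could go wrong is if, in some degenerate configuration, the left-most or right-most added strand were forced by a defining relation of $W_{Y_i}$ to pick up a nontrivial correction. The potential culprits are precisely the cost relation \eqref{cost} (if the added strand passed through a red strand) and the through-vertex relations \eqref{through-box1}--\eqref{through-box2} (if it passed through the trivalent vertex); I would therefore focus the bookkeeping on verifying that neither situation occurs, which is immediate from the geometric placement described above. This is the only non-trivial check, and once it is made, the proposition follows.
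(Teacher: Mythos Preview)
Your argument has a genuine gap. You claim the identification is an isomorphism because the added strand ``never approaches or crosses the trivalent vertex,'' so the through-box relations are never invoked. But this only describes the \emph{image} of the obvious map, not the full target module. In $W_{Y_i}\,\tilde{\fF}_k$, elements are $Y$-ladder diagrams with $m+1$ black strands at the bottom, the new one entering at the far right; inside the ladder region this strand is subject to \emph{all} the ladder relations, including (\ref{through-box1})--(\ref{through-box2}), and the left $\tilde{T}^{\Bp'}$-action can drag it across red strands and through the vertex. So the target genuinely has through-box relations for the new strand, while the source $\tilde{\fF}_k\,W_{Y_i}$ (where that strand lives only above the box) does not. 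Your geometric-placement observation shows the map is well defined, not that it is bijective; the proposed ``strand-removal'' inverse is likewise not obviously well defined.

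The paper handles exactly this point. Surjectivity uses (\ref{through-box1})--(\ref{through-box2}) to slide the new strand above the $Y$. Injectivity is an ``unzipping'' argument: push the branch point of the $Y$ downward past every strand and verify that each defining relation survives this operation. The nontrivial checks are for (\ref{red-triple-correction}) and (\ref{cost}); for the latter one must prove the identity \eqref{split-bigon}, comparing a bigon around the split $(a,b)$-branches with a bigon around the single $c$-strand below the vertex and accounting for the sign $\sigma(k,a,b)$. That computation (carried out via the auxiliary relations (\ref{tetris-long})--(\ref{tetris-red2}) and a combinatorial bookkeeping of ``tetris pieces'') is the real content of the proof, and your proposal does not supply a substitute for it.
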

\begin{proof}
Consider the case of $W_{Y_i}$.  There is an obvious map from $\fF_{M} (W_A\otimes N)\to W_A\otimes
  \fF_MN$ given by the obvious inclusion of diagrams:  we can simply
  extend the new strands downward so that they are added before we
  tensor with $W_A$ rather than after.

Furthermore, this map is surjective, since in any diagram of $W_A\otimes
  \fF_MN$ where a new strand passes below the $Y$, we can use isotopy,
  and the relations (in particular (\ref{through-box1}--\ref{through-box2})) to slide the
  new strands above the $Y$.  Thus this diagram is in the image of
  this map.

Now, consider an element of its kernel.  This is a sum of diagrams
which become 0  if we allow relations where the new strands can pass
below the $Y$, but not if they must stay above.  Thus, in $W_A\otimes
  \fF_MN$, we can write it as a sum of relations.  Now, we take these
  relations and ``unzip the $Y$.'' That is, in each relation, we push
  the branch point in the $Y$ further down.  When we push through a
  black strand, we include the sign in (\ref{through-box1}--\ref{through-box2}); we are not
  using this relation, but rather we wish to show that doing this on both sides
  of a relation will result in a new relation, by the locality of
  relations, or will simply result in the two sides coinciding if the
  relation is  (\ref{through-box1}--\ref{through-box2}).  

This requires some care about the relations
(\ref{red-triple-correction},\ref{cost}).  For
(\ref{red-triple-correction}), we wish to show that 
\begin{equation}
\tikz[baseline,very thick,yscale=2,xscale=1.5]{\draw (.7,-.5) to node
  [above, at end]{$b$}(.7,.5); \draw
  (-.7,-.5) -- node
  [above, at end]{$a$} (-.7,.5);\draw (.4,-.5) -- (.4,.5); \draw (-.4,-.5) -- (-.4,.5);
\draw[wei] (1,-.5) to node
  [above, at end]{$b$} (1,.5);\draw[wei]  (-1,-.5) -- node
  [above, at end]{$a$} (-1,.5); \node at (0,0){$\cdots$};
\draw (1.4,-.5) to[out=90,in=-35] (1.15,0) to[out=145,in=-20] node
  [above, at end]{$k$}(-1.4,.5);\draw (-1.4,-.5) to[out=20,in=-145] (1.15,0) to[out=35,in=-90] node
  [above, at end]{$k$}(1.4,.5);}-\tikz[baseline,very thick,yscale=2,xscale=1.5]{\draw (.7,-.5) to node
  [above, at end]{$b$}(.7,.5); \draw
  (-.7,-.5) -- node
  [above, at end]{$a$} (-.7,.5);\draw (.4,-.5) -- (.4,.5); \draw (-.4,-.5) -- 
 (-.4,.5);
\draw[wei] (1,-.5) to node
  [above, at end]{$b$} (1,.5);\draw[wei]  (-1,-.5) -- node
  [above, at end]{$a$} (-1,.5); \node at (0,0){$\cdots$};
\draw (1.4,-.5) to[out=160,in=-35] (-1.15,0) to[out=145,in=-90] node
  [above, at end]{$k$}(-1.4,.5);\draw (-1.4,-.5) to[out=90,in=-145] (-1.15,0) to[out=35,in=-160] node
  [above, at end]{$k$}(1.4,.5);}=\delta_{k,c} e_{k,\omega_a,a,\dots,b,\omega_b,k}
\end{equation}
In order to do this calculation, we apply the relations
(\ref{triple-dumb}, \ref{red-triple-correction}) successively.
We leave the reader to check that whenever $c\neq k$, the correction
terms which appear create a bigon with both sides labeled $k$, and
thus are 0 by  (\ref{black-bigon}).  If $k=c$, then
the crossing commutes past every strand in the diagram except the one
labeled $c-1$.  In moving past this strand, we apply the relation
(\ref{triple-dumb});  now, we have a non-zero correction term which
breaks open the crossing.  The relation (\ref{black-bigon}) shows that the strands labeled $c$ can be pulled straight, and thus give the idempotent attached to
$c,\omega_a,a,\dots,b,\omega_b,c$.

Now we return to (\ref{cost}).  What we must show is that \begin{equation}\label{split-bigon}
\tikz[baseline,very thick,yscale=2.2,xscale=2.5]{\draw (0,0) to node
  [above, at end]{$b$}(.7,.7); \draw
  (0,0) -- node
  [above, at end]{$a$} (-.7,.7);\draw (0,0) -- node
  [above, at end]{$b-1$} (.4,.7); \draw (0,0) -- node
  [above, at end]{$a+1$} (-.4,.7);
  \draw[wei] (0,0) -- (0,-.7) ; \draw[wei] (0,0) to node
  [above, at end]{$b$} (1,.7);\draw[wei]
  (0,0) -- node
  [above, at end]{$a$} (-1,.7); \node at (0,.4){$\cdots$}; \draw
  (1,-.7) to[out=90,in=-10] (.3,.1) to[in=-30,out=170] (-.8,.25) to[in=-150,out=150] (-.8,.35) to[out=30,in=-150]node
  [above, at end]{$k$} (1.3,.7);}\hspace{-3mm}=\sigma(k,a,b)\hspace{-3mm}\tikz[baseline,very thick,yscale=2.2,xscale=2.5]{\draw (0,0) to node
  [above, at end]{$b$}(.7,.7); \draw
  (0,0) -- node
  [above, at end]{$a$} (-.7,.7);\draw (0,0) -- node
  [above, at end]{$b-1$} (.4,.7); \draw (0,0) -- node
  [above, at end]{$a+1$} (-.4,.7);
  \draw[wei] (0,0) -- (0,-.7) ; \draw[wei] (0,0) to node
  [above, at end]{$b$} (1,.7);\draw[wei]
  (0,0) -- node
  [above, at end]{$a$} (-1,.7); \node at (0,.4){$\cdots$}; \draw
  (.5,-.7) to[in=-60,out=140] (-.3,-.4) to[out=120,in=-120] (-.3,-.3)  to[out=60,in=-140] (1,.2) to[out=40,in=-90]node
  [above, at end]{$k$} (1.3,.7);}
\end{equation}
In order to do these calculations, let us note the following
relations:
\newseq
\[\label{tetris-long}\subeqn\begin{tikzpicture}[very thick,scale=1.5,baseline]
\draw (1.5,-.5) to[out=90,in=-90] node[at start,below]{$k$} (0,0) to[out=90,in=-90] (1.5,.5) ;
\draw (1,-.5) to[out=90,in=-90] node[at start,below]{${k\mp 1}$} (1.5,0) to[out=90,in=-90] (1,.5) ;
\draw (.5,-.5) to[out=90,in=-90] node[at start,below]{${k}$} (1,0) to[out=90,in=-90] (.5,.5);
\draw (0,-.5) to[out=90,in=-90] node[at start,below]{${k\pm 1}$} (.5,0) to[out=90,in=-90] (0,.5);
\end{tikzpicture}
=
\begin{tikzpicture}[very thick,scale=1.5,baseline]
\draw (1.5,-.5) to[out=90,in=-90] node[at start,below]{$k$} (1.5,.5);
\draw (1,-.5) to[out=90,in=-90] node[at start,below]{${k\mp 1}$} (1,.5);
\draw (.5,-.5) to[out=90,in=-90]node[at start,below]{${k}$}  (.5,.5) ;
\draw (0,-.5) to[out=90,in=-90] node[at start,below]{${k\pm 1}$}  (0,0) to[out=90,in=-90] (0,.5);
\end{tikzpicture}\mp
\begin{tikzpicture}[very thick,scale=1.5,baseline]
\draw (1.5,-.5) to[out=90,in=-90] node[at start,below]{$k$} (.5,.5);
\draw (1,-.5) to[out=90,in=-90] node[at start,below]{${k\mp 1}$} (.5,0) to[out=90,in=-90] (1,.5);
\draw (.5,-.5) to[out=90,in=-90] node[at start,below]{${k}$} (1.5,.5);
\draw (0,-.5) to[out=90,in=-90] node[at start,below]{${k\pm 1}$}  (0,0) to[out=90,in=-90] (0,.5);
\end{tikzpicture}
\]
\[\label{tetris-angle}\subeqn\begin{tikzpicture}[very thick,scale=1.5,baseline]
\draw (1.5,-.5) to[out=90,in=-90] node[at start,below]{$k$} (0,0) to[out=90,in=-90] (1.5,.5) ;
\draw (1,-.5) to[out=90,in=-90] node[at start,below]{${k\pm 1}$} (1.5,0) to[out=90,in=-90] (1,.5) ;
\draw (.5,-.5) to[out=90,in=-90] node[at start,below]{${k}$} (1,0) to[out=90,in=-90] (.5,.5);
\draw (0,-.5) to[out=90,in=-90] node[at start,below]{${k\pm 1}$} (.5,0) to[out=90,in=-90] (0,.5);
\end{tikzpicture}
= -
\begin{tikzpicture}[very thick,scale=1.5,baseline]
\draw (1.5,-.5) to[out=90,in=-90] node[at start,below]{$k$} (1.5,.5);
\draw (1,-.5) to[out=90,in=-90] node[at start,below]{${k\pm 1}$} (1,.5);
\draw (.5,-.5) to[out=90,in=-90]node[at start,below]{${k}$}  (.5,.5) ;
\draw (0,-.5) to[out=90,in=-90] node[at start,below]{${k\pm 1}$}  (0,0) to[out=90,in=-90] (0,.5);
\end{tikzpicture}\mp
\begin{tikzpicture}[very thick,scale=1.5,baseline]
\draw (1.5,-.5) to[out=90,in=-90] node[at start,below]{$k$} (.5,.5);
\draw (1,-.5) to[out=90,in=-90] node[at start,below]{${k\pm 1}$} (.5,0) to[out=90,in=-90] (1,.5);
\draw (.5,-.5) to[out=90,in=-90] node[at start,below]{${k}$} (1.5,.5);
\draw (0,-.5) to[out=90,in=-90] node[at start,below]{${k\pm 1}$}  (0,0) to[out=90,in=-90] (0,.5);
\end{tikzpicture}
\]
\[\label{tetris-red1}\subeqn\begin{tikzpicture}[very thick,scale=1.5,baseline]
\draw (1.5,-.5) to[out=90,in=-90] node[at start,below]{$k$} (0,0) to[out=90,in=-90] (1.5,.5) ;
\draw (1,-.5) to[out=90,in=-90] node[at start,below]{${k\pm 1}$} (1.5,0) to[out=90,in=-90] (1,.5) ;
\draw (.5,-.5) to[out=90,in=-90] node[at start,below]{$k$} (1,0) to[out=90,in=-90] (.5,.5);
\draw[wei] (0,-.5) to[out=90,in=-90] node[at start,below]{$k$} (.5,0) to[out=90,in=-90] (0,.5);
\end{tikzpicture}
= \pm
\begin{tikzpicture}[very thick,scale=1.5,baseline]
\draw (1.5,-.5) to[out=90,in=-90] node[at start,below]{$k$} (1.5,.5);
\draw (1,-.5) to[out=90,in=-90] node[at start,below]{${k\pm 1}$} (1,.5);
\draw (.5,-.5) to[out=90,in=-90]node[at start,below]{${k}$}  (.5,.5) ;
\draw[wei] (0,-.5) to[out=90,in=-90] node[at start,below]{${k}$}  (0,0) to[out=90,in=-90] (0,.5);
\end{tikzpicture}+
\begin{tikzpicture}[very thick,scale=1.5,baseline]
\draw (1.5,-.5) to[out=90,in=-90] node[at start,below]{$k$} (.5,.5);
\draw (1,-.5) to[out=90,in=-90] node[at start,below]{${k\pm 1}$} (.5,0) to[out=90,in=-90] (1,.5);
\draw (.5,-.5) to[out=90,in=-90] node[at start,below]{${k}$} (1.5,.5);
\draw[wei] (0,-.5) to[out=90,in=-90] node[at start,below]{${k}$}  (0,0) to[out=90,in=-90] (0,.5);
\end{tikzpicture}
\]
\[\label{tetris-red2}\subeqn\begin{tikzpicture}[very thick,scale=1.5,baseline]
\draw (1.5,-.5) to[out=90,in=-90] node[at start,below]{$k$} (0,0) to[out=90,in=-90] (1.5,.5) ;
\draw[wei] (1,-.5) to[out=90,in=-90] node[at start,below]{${k}$} (1.5,0) to[out=90,in=-90] (1,.5) ;
\draw (.5,-.5) to[out=90,in=-90] node[at start,below]{$k$} (1,0) to[out=90,in=-90] (.5,.5);
\draw (0,-.5) to[out=90,in=-90] node[at start,below]{$k\pm 1$} (.5,0) to[out=90,in=-90] (0,.5);
\end{tikzpicture}
= \pm
\begin{tikzpicture}[very thick,scale=1.5,baseline]
\draw (1.5,-.5) to[out=90,in=-90] node[at start,below]{$k$} (1.5,.5);
\draw[wei] (1,-.5) to[out=90,in=-90] node[at start,below]{${k}$} (1,.5);
\draw (.5,-.5) to[out=90,in=-90]node[at start,below]{${k}$}  (.5,.5) ;
\draw (0,-.5) to[out=90,in=-90] node[at start,below]{${k\pm 1}$}  (0,0) to[out=90,in=-90] (0,.5);
\end{tikzpicture}\pm
\begin{tikzpicture}[very thick,scale=1.5,baseline]
\draw (1.5,-.5) to[out=90,in=-90] node[at start,below]{$k$} (.5,.5);
\draw[wei] (1,-.5) to[out=90,in=-90] node[at start,below]{${k}$} (.5,0) to[out=90,in=-90] (1,.5);
\draw (.5,-.5) to[out=90,in=-90] node[at start,below]{${k}$} (1.5,.5);
\draw (0,-.5) to[out=90,in=-90] node[at start,below]{${k\pm 1}$}  (0,0) to[out=90,in=-90] (0,.5);
\end{tikzpicture}
\]
In order to calculate the LHS of (\ref{split-bigon}), we use the
equations above.  To organize this calculation, recall that the
strands that interact with one labeled $k$ correspond to boxes of
content $k,k\pm 1$ in a $a\times b$ box.  We can think about computing
the left hand side as a process of killing bigons and stripping off
the corresponding boxes.  The equations
(\ref{tetris-long}--\ref{tetris-red2}) show how this process can be
systematized into removing chunks which look like 3-box tetris
pieces.  In each case, the second term on the RHS contributes 0
because its product with $C_{\sR,\sR}$ is 0 by (\ref{Lbc-relations}).  

These pieces can be categorized as positive or negative
depending on how they affect the sign of the remaining term:
\[\text{positive pieces: }\tikz[thick,scale=.7,baseline]{
\draw (0,0) -- (1,0);
\draw (0,1) -- (1,1);
\draw (0,2)  -- (1,2);
\draw (0,3) -- (1,3);
\draw (0,0) -- (0,3);
\draw (1,0) -- (1,3); 
\node[scale=.7] at (.5,.5){$i+1$};
\node[scale=.7] at  (.5,1.5){$i$};
\node[scale=.7] at (.5,2.5){$i-1$};}, 
\tikz[thick,scale=.7,baseline]{\draw (0,0) -- (3,0);
\draw (0,1) -- (3,1);
\draw (2,0)-- (2,1);
\draw (3,0) -- (3,1);
\draw (0,0) -- (0,1);
\draw (1,0) -- (1,1); 
\node[scale=.7] at (2.5,.5){$i+1$};
\node[scale=.7] at (1.5,.5){$i$};
\node[scale=.7] at (.5,.5){$i-1$};},
\tikz[thick,scale=.7,baseline]{\draw (0,0) -- (2,0);
\draw (0,1) -- (2,1);
\draw (2,0)-- (2,1);
\draw (0,0) -- (0,1);
\draw (1,0) -- (1,1); 
\node[scale=.7] at (1.5,.5){$a+1$};
\node[scale=.7] at (.5,.5){$a$};},
\tikz[thick,scale=.7,baseline]{
\draw (0,0) -- (1,0);
\draw (0,1) -- (1,1);
\draw (0,2)  -- (1,2);
\draw (0,0) -- (0,2);
\draw (1,0) -- (1,2); 
\node[scale=.7] at (.5,.5){$b+1$};
\node[scale=.7] at  (.5,1.5){$b$};}\]
\[\text{negative pieces: }\tikz[thick,scale=.7,baseline]{
\draw (0,0) -- (2,0);
\draw (0,1) -- (2,1);
\draw (2,2)  -- (1,2);
\draw (2,0) -- (2,2);
\draw (0,0) -- (0,1);
\draw (1,0) -- (1,2); 
\node[scale=.7] at (.5,.5){$i-1$};
\node[scale=.7] at  (1.5,.5){$i$};
\node[scale=.7] at (1.5,1.5){$i-1$};}, 
\tikz[thick,scale=.7,baseline]{\draw (0,0) -- (1,0);
\draw (0,1) -- (2,1);
\draw (2,2)-- (2,1);
\draw (0,2) -- (2,2);
\draw (0,0) -- (0,2);
\draw (1,0) -- (1,2); 
\node[scale=.7] at (1.5,1.5){$i+1$};
\node[scale=.7] at (.5,1.5){$i$};
\node[scale=.7] at (.5,.5){$i+1$};}, 
\tikz[thick,scale=.7,baseline]{
\draw (0,0) -- (1,0);
\draw (0,1) -- (1,1);
\draw (0,2)  -- (1,2);
\draw (0,0) -- (0,2);
\draw (1,0) -- (1,2); 
\node[scale=.7] at (.5,.5){$a$};
\node[scale=.7] at  (.5,1.5){$a-1$};
}, 
\tikz[thick,scale=.7,baseline]{\draw (0,0) -- (2,0);
\draw (0,1) -- (2,1);
\draw (2,0)-- (2,1);
\draw (0,0) -- (0,1);
\draw (1,0) -- (1,1); 
\node[scale=.7] at (1.5,.5){$b$};
\node[scale=.7] at (.5,.5){$b-1$};}
\]
Assume for simplicity that $a<b$; if $k< a$, then the boxes of
content $k,k\pm 1$ can broken into one negative $L$ and then $k-1$
positive pieces.  This matches
$\sigma(k,a,b)=-1$.   

If $k=a$, we can use the postive piece with $a$ at
the corner, and then positive pieces.  
If $a<k\leq b$, then we can use all positive pieces.  This matches
$\sigma(k,a,b)=1$.  If $b<k<c$, then as when $k<a$, we use one $L$
and then positive pieces, matching
$\sigma(k,a,b)=-1$.  If $k>c$, then, there is no interaction
between any strands, matching
$\sigma(k,a,b)=1$.   
 
The remaining case of $k=c$ follows
from \ref{black-bigon}; the $c$-labeled strand only interacts with the
sole strand labeled $c-1$, producing the desired dot on the
strand labeled $c$.  The other term, that with a dot on the strand
labeled $c-1$, is 0  since any dot kills $L_{a,b}$.  

The ultimate result is a series
  of relations, all of which keep the new strands above the $Y$, which
  shows that this element of the kernel is 0.
\end{proof}

The right $\tU^-_n$-equivariant structure also induces a $\tU^-_n$-equivariant structure
on the ladder bimodules over  $T$. Let $\phi\colon \eF_jW_{Y_i}\to W_{Y_i}\eF_j$ denote the commutation isomorphism. Note that $\eE_jW_{Y_i}$ and $W_{Y_i}\eE_j$ are both adjoints of $W_{Y_i^\star}\eF_j\cong 
\eF_jW_{Y_i^\star}$ and therefore have to be naturally isomorphic. The unit and counit we pick are of course 
the ones induced by the units and counits of the adjunctions between $W_{Y_i}$ and $W_{Y_i^\star}$, and 
$\eE_j$ and $\eF_j$. Since the unit and counit of the first adjunction are $\tU^-_n$ equivariant, the induced units and counits for $\eE_jW_{Y_i}\cong W_{Y_i}\eE_j$ and $W_{Y_i^\star}\eF_j\cong \eF_jW_{Y_i^\star}$ are independent 
of the possible choices in their definition and they are compatible with the commutation isomorphisms. 
By the zig-zag relation for the unit and counit of $W_{Y_i}$ and $W_{Y_i^\star}$, we can simplify the definition of the commutation map $\phi'\colon  W_{Y_i}\eE_j\to \eE_jW_{Y_i}$ a bit. It is given by the composite
$$ W_{Y_i}\eE_j\to \eE_j\eF_jW_{Y_i}\eE_j\cong \eE_jW_{Y_i}\eF_j\eE_j\to  \eE_jW_{Y_i}.$$ 
This isomorphism is the "vertical reflection" of the commutation map $\eF_jW_{Y_i^*}\to W_{Y_i^*}\eF_j$.
\begin{lemma}
  The maps $\phi$ and $(\phi')^{-1}$ define an equivariant structure
  over $\tU_n$. 
\end{lemma}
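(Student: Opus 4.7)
The plan is to check that the collection of isomorphisms $\phi\colon \eF_jW_{Y_i}\to W_{Y_i}\eF_j$ together with $(\phi')^{-1}\colon \eE_jW_{Y_i}\to W_{Y_i}\eE_j$ is compatible with every 2-morphism of $\tU_n$. By the Brundan presentation recalled in Section \ref{sec:2-category-tu}, the 2-morphisms of $\tU_n$ are generated by the four elementary diagrams $\iota,\ep,\psi,y$, together with their mirrors obtained via the other adjunction $(\iota',\ep')$. The KLR-style relations involving only the generators $\psi$ and $y$ between downward strands form the 2-morphisms of $\tU^-_n$, so compatibility with these generators is exactly the content of Proposition \ref{prop:trivalent-commute}. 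Since $\phi'$ is obtained from $\phi$ by the standard mate construction across the biadjunction $(\eE_j,\eF_j)\cong (W_{Y_i^\star},W_{Y_i})$ wrapped around the vertical reflection, the analogous relations between upward strands follow automatically from those for downward strands by applying the zig-zag identities.

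Thus the only remaining content is compatibility with the \emph{mixed} 2-morphisms: the units $\iota,\iota'$ and counits $\ep,\ep'$ of the $(\eE_j,\eF_j)$ biadjunctions, and (as a consequence) the sideways crossings and $\sigma_{\la,j}$. For $\iota$ and $\ep$, this is built into the definition of $\phi'$: by construction, the squares expressing compatibility of $\phi,\phi'$ with $\iota$ and $\ep$ commute by a two-step computation using naturality of the unit/counit of the biadjunction between $W_{Y_i}$ and $W_{Y_i^\star}$ (which is $\tU^-_n$-equivariant by design) and the zig-zag equations for $(\eE_j,\eF_j)$. This is exactly the standard fact that the mate of an equivariant iso under a biadjunction is automatically compatible with the biadjunction data.

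For the other adjunction $(\iota',\ep')$, the same argument applies once we note that the vertical reflection used in the construction of $\phi'$ interchanges $(\iota,\ep)$ with $(\iota',\ep')$. With all four adjunction data checked, compatibility with the sideways crossings and bubbles is automatic: they are built out of $\iota,\iota',\ep,\ep',\psi,y$ and whiskerings, and every building block has already been handled. Finally, the isomorphism $\sigma_{\la,j}$ is a sum of the sideways crossing and cup-cap compositions with dots, so it too is compatible.

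The main potential obstacle is purely a matter of bookkeeping with signs and degree shifts: one must verify that the shift $\eta(Y_i)=-ab$ introduced in passing from $W_{Y_i}$ to $W_{Y_i^\star}$ is consistent with the grading on the unit and counit, so that the mate of the $\tU^-_n$-equivariance on $W_{Y_i}$ really lands in the $\tU^-_n$-equivariance on $W_{Y_i^\star}$ with the right grading. This is a local check at each cup and cap and, once performed, yields the lemma without further calculation.
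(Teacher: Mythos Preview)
Your proposal is essentially correct and follows the same approach as the paper: reduce to generators, use $\tU_n^-$-equivariance (Proposition~\ref{prop:trivalent-commute}) for the KLR part, and use the mate construction defining $\phi'$ for the adjunction data. The paper's own proof is two sentences long and does exactly this.

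Two minor points where you overcomplicate. First, in Brundan's presentation only \emph{one} adjunction $(\iota,\ep)$ is a generator; the opposite pair $(\iota',\ep')$ is defined in terms of $\iota,\ep,\psi,y$ and the inverses of the maps appearing in the $\sigma_{\la,i}$ relations. So there is no need to verify compatibility with $(\iota',\ep')$ separately, and your claim that ``vertical reflection interchanges $(\iota,\ep)$ with $(\iota',\ep')$'' is not quite right (vertical reflection swaps units with counits within the same adjunction, not one adjunction with the other). Second, the paragraph about sign and grading-shift bookkeeping for $\eta(Y_i)$ is unnecessary here: the argument is purely formal on the level of 2-morphisms, and the paper's proof makes no mention of it.
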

\begin{proof}
  Since the morphisms in $\tU_n$ are generated by the KLR
  endomorphisms of $\eF^n$, the unit and counit of the adjunction
  $(\eF_j,\eE_j)$ and the inverse of a map build of these.  Thus
  commutation with the KLR endomorphisms and the unit and counit
  suffice.  The former follows by $\tU_n^-$-equivariance, and the
  latter is automatic from the definition of $\phi'$.  
\end{proof}
Note that from the definition of $\phi'$ above, it is also clear that if a given natural map between two ladder 
bimodules commutes with $\tU^-$, then it commutes with the whole $\tU$. 

 The bimodule $e_{\Bi;\kappa}W_{Y_i}e_{\Bj;\kappa'}$ has a basis in bijection with
  quadruples consisting of
  \begin{enumerate}
  \item a permutation $\pi\in S_m$ acting on the entries of $\Bj$,
   \item a vector $\mathbf{a}\in \Z_{\geq 0}^m$ 
   \item a tableau $ \sS$ on a $b\times c$
    rectangle together with 
\item a shuffle $\sigma$ of $\pi\cdot \Bj$ and
    the content word of $\sS$ which is equal to $\Bi$, such that all entries
    from the content word lie in the segment
    $[\kappa(i)+1,\kappa(i+1)]$. 
\end{enumerate}

We construct a basis vector from this data by taking the basis vector of
$L_{a,b}$ associated to the tableau in the box, connecting the strands
from the top of the box to $y=1$ without crossings, and then adding in
strands starting at $y=0$ corresponding to $\Bj$.  The strands trace
out a string diagram of the permutation $\pi$, and the interlace with
those from the top of the box according to the shuffle $\sigma$ while
creating a minimal number of crossings of all types.  We
then multiply at the bottom by $y_1^{a_1}\cdots y_m^{a_m}$. 
There is not a unique such diagram, but for each set of data, we
simply choose one. 

For $W_{Y_i^\star}$ we obtain a similar basis by reflecting the basis elements 
above in a horizontal axis. 
  
\begin{proposition}
The diagrams as above for all appropriate triples form a basis of
$W_{Y_i},W_{Y_i^\star}$.
\end{proposition}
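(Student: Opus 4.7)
The plan is to establish spanning and linear independence separately, then obtain the statement for $W_{Y_i^\star}$ by applying the horizontal reflection that exchanges the two algebras and shifts grading.

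For spanning, I would reduce an arbitrary ladder diagram to the claimed normal form in three stages. First, using the KLR relations (\ref{first-QH})--(\ref{triple-dumb}) and the red-strand relations (\ref{red-triple-correction})--(\ref{cost}), I would slide every dot to the bottom of the diagram and push every crossing of black strands that sits above the $Y$-vertex down past it via (\ref{through-box1})--(\ref{through-box2}), picking up the prescribed signs $\sigma(m,a,b)$. Second, at the box, the content is an element of $L_{a,b}$, so by Proposition \ref{prop:Lab-basis} together with the observation that $C_{\sS,\sT} \cdot C_{\sR,\sR} = \delta_{\sT,\sR} C_{\sS,\sR}$, I may replace it by a linear combination of tableau-indexed vectors. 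What remains between $y=0$ and the box are black strands realizing a permutation $\pi$ of $\Bj$ shuffled into the content word of $\sS$ according to some shuffle $\sigma$ with $\Bi$; choosing a fixed minimal-crossing representative for each datum $(\pi,\mathbf{a},\sS,\sigma)$ as in the statement completes the reduction. Confluence of this reduction requires some care, but the calculation is entirely parallel to the tetris-piece argument in the proof of Proposition \ref{prop:trivalent-commute}.

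For linear independence, my approach is to exploit the strong $\tU_n^- \times \tU_n^-$-equivariance of Proposition \ref{prop:trivalent-commute}. This equivariance shows that $W_{Y_i}$ is generated as a bimodule by the single "bare" diagram consisting of the $Y$-vertex with $C_{\sR,\sR}$ inserted in the box and no additional black strands. All basis vectors are then obtained by applying induction functors of the form $\tilde{\fF}_j$ (with an additional $\fI_\mu$ on one side if red strands are present). Using the basis theorem \cite[\ref{m-basis}]{Webmerged} for $\tilde{T}^\bla$ to control the resulting diagrams on each side of the box, combined with the Hu-Mathas basis of $L_{a,b}$ from Proposition \ref{prop:Lab-basis} to control the box, one computes the graded dimension of each summand $e_{\Bi,\kappa} W_{Y_i} e_{\Bj,\kappa'}$ as a tensor product. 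The resulting count matches exactly the cardinality of the proposed indexing set, so no nontrivial relations among the basis vectors can hold.

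The main obstacle will be verifying confluence of the reduction process used for spanning: one must check that the relations (\ref{through-box1})--(\ref{through-box2}), when combined with the KLR and red-strand relations, do not impose further identities on the normal forms beyond those tracked above. This reduces, as in Proposition \ref{prop:trivalent-commute}, to a bookkeeping argument on signs $\sigma(m,a,b)$ and to the observation that any correction term produced by moving a crossing through the $Y$ either lies in the span of diagrams already in normal form or vanishes by one of (\ref{Lbc-relations})--(\ref{eq:4}). Once confluence is established, the dimension count from linear independence matches the spanning argument and the result follows; the statement for $W_{Y_i^\star}$ is then immediate from the definition as a dual bimodule via the horizontal reflection anti-automorphism.
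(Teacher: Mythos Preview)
Your approach is correct but takes a more laborious route than the paper's. The paper's proof is much shorter: it observes that by Proposition \ref{prop:trivalent-commute}, the module $W_{Y_i}e_{\Bj,\kappa'}$ is obtained from the base case $L_{a,b}$ (where $\Bj=\emptyset$ and $\ell=1$) by applying induction functors $\tilde{\fF}_j$ and $\tilde{\fI}_\mu$. Since induction of a module with a known basis produces a module whose basis consists of all shuffles of the old basis with the new strands, both spanning and linear independence follow at once from the Hu--Mathas basis of $L_{a,b}$.

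The key point is that your separate spanning argument via confluence of the reduction process is unnecessary: the induction structure you invoke for linear independence already delivers spanning for free. You correctly identify confluence of the relations (\ref{through-box1})--(\ref{through-box2}) with the KLR and red-strand relations as the ``main obstacle,'' and indeed the paper's excised earlier draft attempted exactly this direct reduction before replacing it with the induction shortcut. What the paper's approach buys is that it completely avoids the tedious sign-bookkeeping and correction-term analysis you anticipate; what your approach would buy, if carried out, is a more self-contained argument that does not rely on knowing how induction interacts with bases, but at the cost of reproving a special case of the basis theorem for $\tilde{T}^\bla$ inside the ladder bimodule.
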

\begin{proof}
  Consider $W_{Y_i}e_{\Bi,\kappa}$, the other case can be proved similarly. 
We need to show that this has
  the correct basis. 
If  $\Bi=\emptyset$ and
  $\ell=1$, then this is just the basis of $L_{a,b}$ we already know
  and the result holds. Otherwise,
 by Proposition \ref{prop:trivalent-commute},  the module
 $W_{Y_i}e_{\Bi,\kappa}$ is an induction, and the result follows since an
 induction has a basis given by all ways of shuffling together basis
 vectors of the two modules.  
\excise{ First, we should prove that such
    vectors span.

  We can assume that the strands connected to the top of the box don't
  cross or carry dots, since any crossing between them or dot could be
  absorbed in the box by relation \ref{into-box}.  Similarly, we can
  assume that no strand from the top of the box crosses the two red
  strands from the top (this gives the condition that these elements
  stay in the segment $(\kappa(k),\kappa(k+1)]$).  In all cases, we
  may have to apply the relations such as (\ref{nilHecke-1}) or
  (\ref{triple-smart}) that have correction terms, but these always
  have a smaller number of crossings.  

We can deal with all dots by pushing them to the bottom of the
diagram, using the relations (\ref{first-QH}--\ref{nilHecke-1}).
  Now, we wish to eliminate any bigons from strands from the line
  $y=0$.  If this bigon does not have the box in its interior, we may
  use the argument from \bentodo{reference to KIHRT} to remove it.  If
  it does have the box in its interior, we can use the same argument
  to shrink the bigon so that only the box is in its interior, no
  other crossings or dots.  One of the sides of the bigon must
  cross the red strand coming from the bottom of the box, so we can
  use relation \ref{through-box} to remove the box from its interior,
  and then argue as before.  This shows that elements of the same type
  as our basis span, but we may still be using multiple different
  diagrams for the same quadruple.  However, any two such diagrams
  differ by a finite number of applications of the relations
  (\ref{triple-dumb},\ref{triple-smart},\ref{red-triple-correction},\ref{dumb},\ref{through-box})
  and thus differ by a sum of diagrams with fewer crossings.  By
  induction, this establishes the desired result.  }
\end{proof}
This basis further shows that $e_{\Bi,\kappa}W_{Y_i}$ is projective as
a right module:
\begin{corollary}
  The right module $e_{\Bi,\kappa}W_{Y_i}$ is isomorphic to the sum of
  $e_{\Bi',\kappa'}\tilde{T}$ with multiplicity given by the number of
  tableaux $\sS$   and shuffles $\sigma$ of $\Bi'$ with
    the content word of $\sS$ which are equal to $\Bi$.
\end{corollary}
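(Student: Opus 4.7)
The plan is to deduce this directly from the basis description of $W_{Y_i}$ established in the preceding proposition, by grouping basis elements according to the ``top half'' (what happens at and above the box) and the ``bottom half'' (what happens strictly below the box).

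First, I would fix a quadruple $(\sS,\sigma)$ consisting of a tableau $\sS$ on an $a\times b$ rectangle together with a shuffle $\sigma$ of some sequence $\Bi'$ with the content word of $\sS$ which yields $\Bi$, such that the content entries lie in the segment $[\kappa(i)+1,\kappa(i+1)]$. Associated to this data, there is a distinguished sequence $(\Bi',\kappa')$ at the bottom of the box (the $\Bi'$ is read off from the shuffle, and $\kappa'$ is determined by $\kappa$ and the positions chosen in the shuffle). I would then define a map
\[
\Phi_{\sS,\sigma}\colon e_{\Bi',\kappa'}\tilde{T}^{\Bp}\longrightarrow e_{\Bi,\kappa}W_{Y_i}
\]
by sending the generator $e_{\Bi',\kappa'}$ to the fixed ``top-half'' diagram $D_{\sS,\sigma}$ built in the preceding proposition: the box is filled with the $L_{a,b}$-basis vector corresponding to $\sS$, the strands emanating from the top of the box go up to $y=1$ without crossings, and the strands from the bottom of the box are then interlaced with them by the chosen shuffle $\sigma$. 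Right-multiplication by a Stendhal diagram is manifestly a map of right $\tilde{T}^{\Bp}$-modules, so each $\Phi_{\sS,\sigma}$ is well-defined.

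Next, I would form the total map
\[
\Phi = \bigoplus_{(\sS,\sigma)} \Phi_{\sS,\sigma}\colon \bigoplus_{(\sS,\sigma)} e_{\Bi',\kappa'}\tilde{T}^{\Bp}\longrightarrow e_{\Bi,\kappa}W_{Y_i},
\]
where the sum is over all pairs $(\sS,\sigma)$ as above yielding $\Bi$. The key point is that the basis of $e_{\Bi,\kappa}W_{Y_i}$ from the preceding proposition is indexed precisely by quadruples $(\pi,\mathbf{a},\sS,\sigma)$ where $(\sS,\sigma)$ runs over the indexing set of the direct sum and $(\pi,\mathbf{a})$ indexes a basis of the ``bottom half'' of the diagram. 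After pushing the dots to the bottom of the diagram, the bottom-half datum $(\pi,\mathbf{a})$ is exactly the datum indexing the standard basis of $e_{\Bi',\kappa'}\tilde{T}^{\Bp}$ given by permutations with dots. Thus $\Phi$ sends a basis to a basis, so it is an isomorphism.

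The only real point to be checked with care is that ``right-multiplying $D_{\sS,\sigma}$ by the basis element of $e_{\Bi',\kappa'}\tilde{T}^{\Bp}$ indexed by $(\pi,\mathbf{a})$'' produces, after straightening, exactly the quadruple $(\pi,\mathbf{a},\sS,\sigma)$ basis element of $W_{Y_i}$ rather than a nontrivial linear combination. This is where I expect the main (but mild) obstacle to be: the straightening may introduce correction terms from relations such as \eqref{first-QH}--\eqref{triple-dumb}, \eqref{red-triple-correction}, \eqref{cost}, and \eqref{through-box1}--\eqref{through-box2}. However, all such corrections strictly decrease the number of crossings and therefore contribute only to strictly lower-order basis elements (with the same top-half datum $(\sS,\sigma)$), so the transition matrix between the image of $\Phi$ and the basis of $e_{\Bi,\kappa}W_{Y_i}$ is unitriangular with respect to the crossing filtration used in the proof of the previous proposition. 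Hence $\Phi$ is an isomorphism, yielding the stated direct sum decomposition with the asserted multiplicities.
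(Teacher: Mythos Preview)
Your proposal is correct and is essentially the same approach as the paper, which states the corollary as an immediate consequence of the basis from the preceding proposition without giving a separate proof; you have simply spelled out the implicit argument. Note that since the paper allows an arbitrary choice of basis diagram for each quadruple, one can even choose the basis of $e_{\Bi,\kappa}W_{Y_i}$ to consist precisely of the products $D_{\sS,\sigma}\cdot(\text{basis element of }e_{\Bi',\kappa'}\tilde{T})$, making $\Phi$ a bijection of bases on the nose and rendering your unitriangularity argument (while correct) unnecessary.
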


\subsection{The case of \texorpdfstring{$a=1$}{a=1}}
\label{sec:case-a=1}

We'll consider further the structure of
$\tilde{T}^{1,c-1}_{\omega_c}$.  Every sequence that appears in this
algebra is a permutation of $(\omega_1,1,\dots,c-1,\omega_{c-1})$.
Note that the space of degree 0 endomorphisms of such a projective is
1-dimensional in each case, so these projectives are all
indecomposable.  Conversely, every indecomposable is necessarily of this form.
Furthermore, if two such sequences differ by permuting symbols which
are not consecutive in the sequence above, they are isomorphic by
(\ref{black-bigon}) or \eqref{cost}.  

Consider the collection of proper
subsets $S\subsetneq [1,c]$.  Let $w_S$ be the longest permutation of
$[0,c]$ such that we have $m<k$ and  $w_S(m)>w_S(k)$ if and only if $[m+1,k]\in S$.  Let $P_{S}$ be the
projective over $\tilde{T}^{1,c-1}$ associated to the unique sequence
obtained by applying $w_S$ to
$(\omega_1,1,\dots,c-1,\omega_{c-1})$.  
\begin{lemma}
  The projectives $P_S$ are a complete irredundant collection of
  indecomposable projectives for
  $\tilde{T}^{1,c-1}$.
\end{lemma}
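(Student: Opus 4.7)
The plan is to identify the isomorphism classes of indecomposable projectives with proper subsets of $[1,c]$ via a combinatorial signature map. The main intermediate step is an isomorphism criterion: two projectives $P_{(\Bi,\kappa)}=\tilde T^{1,c-1}e(\Bi,\kappa)$ and $P_{(\Bi',\kappa')}$ are isomorphic if and only if their Stendhal sequences differ by a finite composition of \emph{non-blocking} adjacent transpositions, meaning transpositions of either (a) two black strands with labels $i,j$ satisfying $|i-j|\geq 2$, or (b) a red strand $\omega$ and a black strand of label $i$ with $\omega^i=0$.

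To establish the criterion, I use that since each $P_{(\Bi,\kappa)}$ is indecomposable with one-dimensional degree zero endomorphism ring, any graded isomorphism between two such projectives is homogeneous of degree $0$. A degree zero element of $e(\Bi',\kappa')\tilde T^{1,c-1}e(\Bi,\kappa)$ carries no dots and only degree zero crossings, and by the degree formula in Section \ref{sec:algebras-t-tildet} the degree zero crossings are exactly those of types (a) and (b). Relations \eqref{black-bigon} and \eqref{cost} show that each such crossing satisfies a bigon relation whose right-hand side is the identity (the ``cost'' in \eqref{cost} vanishes precisely when $\omega^i=0$), so each non-blocking crossing is an isomorphism of projectives. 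Conversely, a degree zero diagram is determined up to scalar by the permutation it induces on endpoints, and this permutation must decompose as a product of non-blocking adjacent transpositions.

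For the parametrization, I identify the red labels $\omega_1\leftrightarrow 0$ and $\omega_{c-1}\leftrightarrow c$, so each Stendhal sequence becomes a permutation $\sigma$ of $\{0,1,\ldots,c\}$ in which $0$ appears to the left of $c$ (a vacuous constraint when $c=2$, where the two reds carry the same label). Define the signature $S(\sigma)=\{i\in[1,c]:i-1\text{ appears to the right of }i\text{ in }\sigma\}$. A non-blocking swap exchanges two labels whose values differ by at least $2$, and therefore never alters the relative order of any consecutive pair $(i-1,i)$; hence $S(\sigma)$ is constant on equivalence classes. The case $S(\sigma)=[1,c]$ forces the strict reversal of $(0,1,\ldots,c)$ and so places $c$ to the left of $0$, which is excluded; thus $S(\sigma)\subsetneq[1,c]$.

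Surjectivity of $\sigma\mapsto S(\sigma)$ onto $\{S\subsetneq[1,c]\}$ is witnessed by $w_S$: by construction the adjacent descents of $w_S$ are exactly the elements of $S$, and the condition $S\neq[1,c]$ ensures that $w_S$ keeps $0$ to the left of $c$. Injectivity is a sorting argument: given $\sigma$ with $S(\sigma)=S$, any disagreement with the canonical representative $w_S\cdot(\omega_1,1,\ldots,c-1,\omega_{c-1})$ produces an adjacent pair in $\sigma$ whose order differs from the canonical form; since the signatures agree the pair is not of the form $(i-1,i)$, and since $w_S$ keeps $0$ before $c$ it is not the red-red pair $(0,c)$, so a non-blocking swap strictly reduces the discrepancy. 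The subtle step will be the degree-theoretic analysis that certifies no isomorphisms exist beyond non-blocking swaps; the combinatorial book-keeping is routine once that is in place.
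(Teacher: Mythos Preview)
Your proof is correct and follows essentially the same approach as the paper: the paper's two-sentence argument rests on the same degree-zero analysis (all crossings have nonnegative degree since each black label appears exactly once, so degree-zero morphisms involve only non-blocking crossings) and leaves implicit the combinatorics you make explicit via your signature invariant and sorting argument. One minor quibble: the $0$-before-$c$ constraint is not vacuous when $c=2$---it is precisely what excludes $S=[1,c]$ there as well---but this does not affect your argument.
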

\begin{proof}
  We've already observed that every indecomposable projective must be
  isomorphic to one of these, by a diagram crossing strands labeled
  with distant roots.

  On the other hand, if $S\neq S'$, there are no diagrams of degree 0
  joining them, so the corresponding projectives are not isomorphic.  
\end{proof}

Let $S\triangle S'=(S\cup S')\setminus(S\cap S')$ be the operation of
symmetric difference. 
For every $k$, we have a unique dotless diagram $D_{S,k}$ with a minimal number
of 
crossings with bottom $P_S$ and top $P_{S\triangle\{k\}}$.  One can
easily check that this diagram has degree 1.  We'll define
 a map
$x_{S,k}\colon P_{S}\to P_{S\triangle \{k\}}$ given by $D_{S,k}$
unless $k=c\in S$, in which case we take $-D_{S,k}$. 
One can easily calculate that:
\begin{lemma}
For all $k,m\in [1,c]$, we have 
  \begin{equation}
    x_{S\triangle\{k\},m}x_{S,k}=x_{S\triangle\{m\},k}x_{S,m} \qquad
    \sum_{k}x_{S\triangle\{k\},k}x_{S,k}=0\label{eq:2}
  \end{equation}
\end{lemma}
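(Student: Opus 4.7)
The strategy is to verify both identities by direct diagrammatic computation in $\tilde{T}^{1,c-1}_{\omega_c}$, expanding each side in terms of the defining Stendhal diagrams $D_{S,k}$ and applying the KLR and red--strand relations from Section~\ref{sec:algebras-t-tildet}. The key observation is that $D_{S,k}$ is supported, up to isotopy, at a single ``swap location'' in the string diagram: namely the leftmost black--red crossing (for $k=1$), the swap of two adjacent black strands carrying labels differing by $1$ (for $k\in[2,c-1]$), or the rightmost black--red crossing (for $k=c$). These $c$ locations are disjoint in any fixed representative.

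For the first identity, note that $(S\triangle\{k\})\triangle\{m\}=(S\triangle\{m\})\triangle\{k\}$, so both sides are maps $P_S\to P_{S\triangle\{k\}\triangle\{m\}}$. The case $k=m$ is immediate. For $k\neq m$ the two ``swap'' operations take place at disjoint positions, hence the two compositions differ only by planar isotopy (and, in the sub-case where the two swaps involve adjacent positions, one application of the pitchfork-type relation~\eqref{triple-dumb}). The sign convention (a minus sign only when $k=c\in S$) appears symmetrically on both sides after a short parity analysis: the contribution of the $c$-sign to $x_{S\triangle\{k\},m}\,x_{S,k}$ depends only on the parity of $|(S\triangle\{k\}\triangle\{m\})\cap\{c\}|+|S\cap\{c\}|$, which is visibly symmetric in $k,m$.

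For the second identity, each composition $x_{S\triangle\{k\},k}\,x_{S,k}\colon P_S\to P_S$ is a degree-$2$ endomorphism obtained by performing the same swap at position $k$ twice, creating a bigon at that position. The contribution reduces as follows: for $k\in\{1,c\}$ the bigon involves a red strand and reduces via~\eqref{cost} to a single dot on the adjacent black strand, while for $k\in[2,c-1]$ it involves two adjacent black strands with consecutive labels and reduces via~\eqref{black-bigon} to a difference of dots on those two strands. Summing over $k$ with the prescribed signs, the middle-range contributions telescope, and the sign convention ``$-1$ if $k=c\in S$'' is designed precisely so that the two ``boundary'' single-dot contributions (from $k=1$ and $k=c$) enter with opposite signs and cancel against the remaining telescoped endpoints. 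This is a diagrammatic incarnation of the Koszul differential on an exterior algebra squaring to zero.

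The main obstacle is the sign audit in the second identity: one must track the $\pm$ sign of every bigon reduction across all $k\in[1,c]$ and verify that the asymmetry between the two boundary bigons (black against $\omega_1$ on the left versus black against $\omega_{c-1}$ on the right) is exactly compensated by the $k=c\in S$ sign convention. By contrast, the first identity reduces routinely to commutation of disjoint local moves once the sign parity is verified.
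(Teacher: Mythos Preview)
Your approach is essentially the paper's: both identities are checked by reducing the compositions of $D_{S,k}$'s via the bigon relations \eqref{cost} and \eqref{black-bigon}, with the second identity coming out as a telescoping sum of dots. The paper phrases the first identity slightly differently---rather than ``disjoint swaps commute,'' it observes that since all black labels are pairwise distinct, every reduced diagram from $P_S$ to $P_{S\triangle\{k,m\}}$ is equal in the algebra (the correction terms in \eqref{triple-dumb} and \eqref{red-triple-correction} all vanish), so both products coincide with the unique minimal one.

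One point to correct: your premise that $D_{S,k}$ is a single crossing at a fixed ``swap location'' is false for general $S$. For instance with $c=3$, $S=\{1\}$, $k=2$, the diagram $D_{\{1\},2}$ carries the black strand labeled $2$ across both $\omega_1$ and the black $1$-strand---two crossings. What is true is that exactly one crossing in $D_{S,k}$ has degree~$1$ (the others involve red $\omega_j$ with $j\neq i$ or non-adjacent black labels, hence degree~$0$), and the degree-$0$ bigons in $D_{S\triangle\{k\},k}D_{S,k}$ reduce to the identity. Once you build this in, your telescoping argument goes through exactly as in the paper. Your sign-parity formula for the first identity is also not quite right as stated; it is cleaner to note directly that the extra sign appears iff $c\in\{k,m\}$, and then it appears on both sides.
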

\begin{proof}
  The products $x_{S\triangle\{k\},m}x_{S,k}$ and
  $x_{S\triangle\{m\},k}x_{S,m}$ are both the unique diagram with a
  minimal number of crossings and no dots from $P_S$ to
  $P_{S\triangle\{k,m\}}$, unless $k=c$ or $m=c$, in which case both are
  the negatives of this diagram. In either case, they are equal.

The product $x_{S\triangle\{k\},k}x_{S,k}$ is the identity on $P_S$
times a dot on the strand labeled $k+1$ minus a dot on that labeled
$k$ if $0<k<c$.  If $k=c$, we only get the negative term due to the
sign change $x_{S,k}=-D_{S,k}$, and if
$k=0$ only the positive.  Thus, in the sum of (\ref{eq:2}), all terms
appear twice with opposite signs, and we get that the sum is 0.  
\end{proof}

\begin{proposition}\label{prop:endomorphisms}
  The endomorphism algebra $A=\End(\oplus_SP_S)$ is a quadratic algebra
  generated by the idempotents $e_S$, the degree 1 morphisms
  $x_{S,k}$, and with all relations given by \eqref{eq:2}.

The algebra $A$ is isomorphic to the algebra
$A^!_{\operatorname{pol}}(\eta,-)$ defined in \cite[\S 8.5]{BLPWtorico} 
corresponding to the intersection of the coordinate arrangement in
$\R^c$ with the affine subspace $\{(z_1,\dots, z_n)\mid z_1+\cdots
+z_n=1\}$. 
\end{proposition}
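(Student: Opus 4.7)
The plan is to establish the presentation by identifying a candidate presentation, verifying the relations, then using the basis theorem from \cite[\ref{m-basis}]{Webmerged} to check the dimension/Hilbert series match between the abstract quadratic algebra and $A$; the identification with $A^!_{\operatorname{pol}}(\eta,-)$ will then be a matter of matching the BLPW presentation to ours.

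First I would set up the surjection from the abstract quadratic algebra. Call $B$ the $\K$-algebra with generators $e_S$ (one for each proper subset $S \subsetneq [1,c]$) and morphisms $\xi_{S,k}\colon e_{S\triangle\{k\}} B e_S \ni \xi_{S,k}$ of degree $1$, modulo the two families of quadratic relations in \eqref{eq:2}. The previous lemma verifies that $x_{S,k}\in A$ satisfy these relations, so there is a well-defined graded algebra homomorphism $\Phi\colon B\to A$ sending $\xi_{S,k}\mapsto x_{S,k}$. Surjectivity of $\Phi$ is the first thing to check: every element of $e_{S'}Ae_S$ is represented by a Stendhal diagram with no red crossings and a fixed shape, and by repeatedly applying (\ref{cost}) one can move any dot toward the nearest red strand, turning it into an extra $Y$-style reflection, i.e., into a further application of some $x_{S'',k}$. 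Using relations (\ref{black-bigon}) and the $\kappa$-shifting relations, every diagram reduces to a composition of $x_{S,k}$'s up to lower-crossing terms, giving surjectivity by induction on the number of crossings.

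Next I would compute $\dim_q e_{S'} B e_S$ from the presentation and match it with $\dim_q e_{S'} A e_S$. For $B$, note that \eqref{eq:2} says precisely that on the ``hypercube'' graph with vertices the proper subsets and edges $S\leftrightarrow S\triangle\{k\}$, any two monotone paths between $S$ and $S'$ that differ by a single square commute, and the sum over all $k$ around a ``full'' corner vanishes. A standard deformation-theoretic/polynomial-ring argument (compare the ``polynomial representation'' used for $\tilde T$) shows that $B$ is a free module over the subalgebra generated by the $\sum_k x_{S\triangle\{k\},k}x_{S,k}$-style symmetric elements, and produces a candidate basis indexed by pairs (a monotone path from $S$ to $S'$ through the symmetric difference, a polynomial in dot-variables subject to the vanishing relation). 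This basis should biject with the basis of $e_{S'} A e_S$ one reads off from \cite[\ref{m-basis}]{Webmerged}, matching graded dimensions term-by-term; since $\Phi$ is surjective, it is then an isomorphism.

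The main obstacle, as I see it, is the careful bookkeeping in step two: both the sign in the definition $x_{S,c}=-D_{S,c}$ and the precise form of the second relation in \eqref{eq:2} have to line up with the ``costs'' created when strands move across red strands via \eqref{cost}, and a dot on the leftmost strand has to be convertible (via the violation ideal implicit in $\tilde T$) to the sum $\sum_k x_{S\triangle\{k\},k}x_{S,k}$. I expect that writing down the polynomial representation of $B$ on $\bigoplus_S \K[y_1,\dots,y_c]e_S$ mod the appropriate symmetric-function ideal, and comparing with the action of $A$ on the analogous space by dot insertion and strand reflection, is the cleanest way to pin down the dimensions and avoid sign mistakes.

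Finally, for the identification with $A^!_{\operatorname{pol}}(\eta,-)$: the polarized arrangement in question has hyperplanes labeled by $k\in[1,c]$, so chambers/bounded feasible sign vectors correspond to subsets $S\subsetneq[1,c]$, which matches our indexing. The generators of $A^!_{\operatorname{pol}}(\eta,-)$ from \cite[\S 8.5]{BLPWtorico} are degree-$1$ maps across the hyperplanes (matching $x_{S,k}$), and its defining relations are precisely the square-commutativity and the linear-dependence-among-normal-vectors relation at each vertex; for the coordinate arrangement cut by $\sum z_i=1$, the normal vectors at any vertex sum to zero, which reproduces $\sum_k x_{S\triangle\{k\},k}x_{S,k}=0$. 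So the presentation of $B$ is literally the BLPW presentation, completing the identification.
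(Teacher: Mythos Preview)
Your overall architecture---construct a surjection from an abstractly presented quadratic algebra onto $A$, then match Hilbert series---is exactly what the paper does. However, the paper orders things more efficiently: it identifies the abstract algebra with $A^!_{\operatorname{pol}}(\eta,-)$ \emph{first}, so that the Hilbert series on the abstract side comes for free from \cite{BLPWtorico}, and then computes the Hilbert series of $A$ directly (each $e_{S'}Ae_S$ is free of rank one over a polynomial ring in $c-1$ variables, generated in degree equal to the shortest-path distance $|S\triangle S'|$). You instead propose to compute $\dim_q e_{S'}Be_S$ from the presentation by a ``standard deformation-theoretic/polynomial-ring argument,'' but this step is not actually carried out and is the nontrivial part---it essentially requires proving a PBW-type basis for $B$, which is precisely what the BLPW reference supplies.

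There are also two local confusions. First, your surjectivity argument runs backwards: you say one moves dots toward red strands via \eqref{cost} to produce $x_{S,k}$'s, but \eqref{cost} goes the other way (a bigon with a red strand \emph{creates} a dot). The clean argument, as in the paper, is that any dotless diagram with a minimal number of crossings is already a product of degree-1 generators $x_{S,k}$, and then a dot on the strand labeled $k$ is obtained \emph{from} the product $x_{S\triangle\{k\},k}x_{S,k}$ (with the relation \eqref{eq:2} accounting for the one linear dependence among dots). Second, there is no violation ideal here: we are working in $\tilde T^{1,c-1}$, not its cyclotomic quotient, so your remark about converting a leftmost dot via the violation ideal is a red herring.
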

\begin{proof}
First, note that the morphisms $x_{S,k}$ generate $A$; obviously, any
diagram without dots can be factored into degree 1 morphisms, which
must correspond to a factorization as $x_{S,k}$.  Thus, we need only
show we can get all dots.  However, the products
$x_{S\triangle\{k\},k}x_{S,k}$ span the degree 2 dots, with one
relation (given by \eqref{eq:2}).  

  The equations \eqref{eq:2} correspond to the relations of
  $A^!_{\operatorname{pol}}(\eta,-)$ as described in \cite[\S
  3.3]{BLPWtorico}. The first set to the relation that the length two
  paths between $\al$ and $\be$ are the same, and the second to the
  relation killing the image of $\vartheta(\mathfrak{t})$, since in
  this case, $ \mathfrak{t}$ corresponds to the diagonal
  $\C^*$-action.  The fact that we only consider proper subsets
  matches the fact that the subspace $\{(z_1,\dots, z_n)| z_1+\cdots
+z_n=1\}$ misses one of the chambers of $\R^n$, that where all
$z_i<0$.  

Thus, we have an algebra map $A^!_{\operatorname{pol}}(\eta,-) \to
A$.  This map is surjective since the $x_{S,k}$ generate.  On the
other hand, $A$ and  $A^!_{\operatorname{pol}}(\eta,-)$ have the same
Hilbert series since in both cases, the morphism space between two
projectives is a free module over polynomials in $c-1$ variables
generated by a morphism of minimal degree equal to the length of the
shortest path between the two idempotents.  
\end{proof}

By \cite[8.25]{BLPWtorico}, this shows that $A$ is Koszul, and we can
construct a canonical projective resolution of every simple using the
Koszul resolution from \cite{BGS96}.  Let $B$ be the quadratic dual of
$A$.  The Koszul resolution of the simple quotient of $P_S$ has the
form \[\cdots \to A\otimes_{A_0}B^*_2e_S \to A\otimes_{A_0}B^*_1e_S \to
A\otimes_{A_0}B^*_0e_S.\]  If we identify $B^*_n$ with 
$A_1\otimes_{A_0}\cdots
\otimes_{A_0}R\otimes_{A_0}\cdots \otimes_{A_0}A_1$ where $R\subset
A_1\otimes_{A_0}A_1$ is the span of the relations \eqref{eq:2}, then
the differential just becomes the usual Hochschild differential.  

Let $x'_{S,k}=(-1)^{\#(S\cap [1,k-1])}x_{S,k}$.  A {\bf wall path} of length
$m$ from $S$ to $S'$ is a sequence $S'=S_0,S_1,\dots,
  S_m=S$ such that $S_p=S_{p-1}\triangle\{k_p\}$ for some sequence
  $k_1,\dots, k_m$.
  \begin{proposition}
    The space $e_{S'}B^*e_S$ has a basis given by the sums  \[b_m=\sum_{S=S_0,S_1,\dots,
  S_m=S'}x'_{S_{m-1},k_m}\otimes \cdots \otimes x'_{S_{0},k_1}\] 
over wall paths of length $m=\#(S\triangle S')+2p$ for some $0\leq p<c-\#(S\cup S')$
  \end{proposition}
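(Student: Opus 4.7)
The plan is to use the standard identification of $B^*_m$ with
\[
\bigcap_{i=0}^{m-2} V^{\otimes i}\otimes R\otimes V^{\otimes(m-2-i)}\ \subset\ V^{\otimes m},
\]
where $V$ is the span of the generators $x_{S,k}$ and $R\subset V\otimes V$ is the span of the relations \eqref{eq:2}. Under this identification, an element of $e_{S'}V^{\otimes m}e_S$ expanded in the wall-path basis lies in $B^*_m$ if and only if, for every $i$ and every choice of values in the positions outside $\{i,i+1\}$, the ``cross section'' at positions $i,i+1$ lies in $R$.

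First I would verify that each $b_m$ actually lies in $B^*_m$. Fix $i$ and a partial path $S=S_0,\ldots,S_{i-1}=T$ and $S_{i+1}=T',\ldots,S_m=S'$; the cross section of $b_m$ at positions $i,i+1$ is the sum of $x'_{T',k_{i+1}}\otimes x'_{T,k_i}$ over all single-step insertions $T\to T\triangle\{k_i\}\to T'$. Two cases arise.  If $T'=T\triangle\{a,b\}$ with $a\neq b$, there are exactly two such insertions; a short computation (using that $\#((T\triangle\{a\})\cap[1,b-1])=\#(T\cap[1,b-1])\pm 1$ when $a<b$ while $\#((T\triangle\{b\})\cap[1,a-1])=\#(T\cap[1,a-1])$) shows the sign convention $x'_{S,k}=(-1)^{\#(S\cap[1,k-1])}x_{S,k}$ is precisely the one that converts the commutation relation $x_{\cdot,b}x_{\cdot,a}=x_{\cdot,a}x_{\cdot,b}$ into the symmetric combination that appears in $b_m$. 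If instead $T'=T$, then the cross section is $\sum_{k:\,T\triangle\{k\}\subsetneq[1,c]} x'_{T\triangle\{k\},k}\otimes x'_{T,k}$; since $k\notin[1,k-1]$ the two copies of $(-1)^{\#(T\cap[1,k-1])}$ cancel and this equals the second relation of \eqref{eq:2}, which lies in $R$. Thus $b_m\in B^*_m$.

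Next I would argue that the $b_m$ are linearly independent and span. Linear independence is immediate because different $m$'s lie in different graded components $B^*_m$, while for fixed $m$ the element $b_m$ is manifestly non-zero: it is a sum of distinct basis vectors in the wall-path basis of $V^{\otimes m}$, each appearing with coefficient $\pm 1$. For spanning, I would invoke the identification of $A$ with the hypertoric algebra $A^!_{\operatorname{pol}}(\eta,-)$ from Proposition~\ref{prop:endomorphisms}. By \cite[8.25]{BLPWtorico}, $A$ is Koszul and its quadratic dual $B$ is of the same hypertoric type; from the explicit description of the Hilbert series of such algebras in \cite[\S 8]{BLPWtorico} one reads off that $\dim e_{S'}B^*_m e_S$ is $1$ for $m=\#(S\triangle S')+2p$ with $0\leq p<c-\#(S\cup S')$ and $0$ otherwise. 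Since we have already produced a non-zero element in each non-vanishing graded piece, the $b_m$ form a basis.

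The main obstacle is the sign verification in the first case of step~one: one must check carefully that the sign twist $(-1)^{\#(S\cap[1,k-1])}$ is exactly the one required so that the two paths through $T\triangle\{a\}$ and $T\triangle\{b\}$ enter $b_m$ with equal (rather than opposite) coefficients, which is what makes the defining relation $x_{T\triangle\{a\},b}x_{T,a}-x_{T\triangle\{b\},a}x_{T,b}=0$ actually annihilate $b_m$ in the Koszul dual. The bound $p<c-\#(S\cup S')$ on the secondary side is the combinatorial shadow of the fact that we only admit proper subsets of $[1,c]$ as idempotents: a path of greater length would be forced to visit $[1,c]$, at which point the second relation of \eqref{eq:2} degenerates and cuts the dimension down, matching the BLPW count.
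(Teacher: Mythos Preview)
Your proposal is essentially the same argument as the paper's: both use the dimension count from \cite[8.25]{BLPWtorico} to know the target dimension of $e_{S'}B^*e_S$, then verify that each $b_m$ lies in $B^*_m$ by checking that every length-two cross section lands in $R$, and conclude by noting that the $b_m$ are visibly nonzero and in distinct degrees. Your sign verification in the $T'=T\triangle\{a,b\}$ case and your treatment of the $T'=T$ case match the paper's two cases exactly.

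One small imprecision: in your final paragraph you explain the upper bound $p<c-\#(S\cup S')$ by saying ``a path of greater length would be forced to visit $[1,c]$.'' That is not quite the mechanism. Wall paths by definition never visit $[1,c]$; what goes wrong for larger $m$ is that in the $T'\neq T$ cross-section check you tacitly assume \emph{both} intermediates $T\triangle\{a\}$ and $T\triangle\{b\}$ are proper, so that there really are two insertions and the cross section is their (signed) sum. When $T\cup T'=[1,c]$ one of the two intermediates is $[1,c]$, leaving a single term that is not in $R$. The paper flags exactly this (``we need an upper bound on $m$ precisely to avoid the case where $S_1\cup S_2=[1,c]$''), and a short count shows that reaching such a configuration requires $m\ge 2c-\#(S\cup S')-\#(S\cap S')$, which exceeds the allowed maximum $\#(S\triangle S')+2(c-\#(S\cup S')-1)=2c-\#(S\cup S')-\#(S\cap S')-2$. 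Neither you nor the paper spells this inequality out, so your argument is at the same level of completeness as the original.
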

  \begin{proof}
    By \cite[8.25]{BLPWtorico}, the dimension of $e_{S'}B^*_0e_S$ is
    $c-\#(S\cup S')$, the number of vertices of the intersection of
    the corresponding closed chambers.  Thus, we need only show that $b_m$ lies in this
    space (since these are obviously non-zero and linearly independent).  

    Of course, a wall paths of length $m$ from $S$ to $S'$ can be
    factored into paths from  $S$ to $S_1$ of length $g$, from $S_1$
    to $S_2$ of length 2, and from $S_2$ to $S'$ of length $m-g-2$.
    Fixing the first and last of these paths, we can break $b_m$ into
    a sum over pairs $S_1$ and $S_2$ and length two paths from $S_1$
    to $S_2$.  If $S_1=S_2$, then we obtain $
    \sum_{k}x_{S_1\triangle\{k\},k}'\otimes
    x_{S_1,k}'=\sum_{k}x_{S_1\triangle\{k\},k}\otimes x_{S_1,k}$ which
    lies in $R$ by \eqref{eq:2}.  If $S_1\neq S_2$ then as long as $S_1\cup S_2\neq
    [1,c]$, we arrive at
    \[x'_{S\triangle\{k\},k'}x'_{S,k}+x'_{S\triangle\{k'\},k}x'_{S,k'}=\pm
    (x_{S\triangle\{k\},k'}x_{S,k}-x_{S\triangle\{k'\},k}x_{S,k'})\]
    where $\{k,k'\}=S_1\triangle S_2$.  This is in $R$ by
    \eqref{eq:2}, completing the proof.  Note that we need an upper
    bound on $m$ precisely to avoid the case where  $S_1\cup S_2=
    [1,c]$.
  \end{proof}

  If $S=\emptyset$, then this is particularly simple;
  $e_{S'}B^*e_{\emptyset}$ is spanned by $b_m$ for $m=\#S',\#S'+2,\dots
  ,2c-2-\#S'$.  Let $Q_{-k}$ be the sum of $P_{S'}(-k)$ for all $S'$ such
  that $\#S'=k\pmod 2$ and $\#S'\leq k\leq 2c-2-\#S'$.  We have a differential $Q_{-k}\to Q_{-k+1}$
  where the component $P_S\to P_{S\triangle\{k\}}$ is just $x_{S,k}'$.
\begin{corollary}\label{W-res}
  As a left module $L_{1,c-1}$ has a resolution as a left module of the form:
\[Q_{-(2c-2)}\cong P_{\emptyset}(-2c+2)\to \cdots \to Q_{-k}\to \cdots \to Q_{-1}\cong
\bigoplus_{s\in [1,c]} P_{\{s\}}\to Q_0
\cong P_{\emptyset}.\]
\end{corollary}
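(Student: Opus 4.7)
The plan is to obtain this resolution as a direct application of the Koszul resolution for the algebra $A = \End(\oplus_S P_S)$ studied in Proposition \ref{prop:endomorphisms}. First, note that $P_\emptyset$ is the projective corresponding to the sequence $(\omega_1, 1, 2, \dots, c-1, \omega_{c-1})$ obtained from the identity permutation $w_\emptyset$, and by Proposition \ref{prop-a-b-only} the simple $L_{1,c-1}$ is concentrated on the idempotent $e_{1,c-1}$, which matches $P_\emptyset$. So $L_{1,c-1}$ is the simple top of $P_\emptyset$, and via the Morita equivalence between $\tilde{T}^{1,c-1}$ (restricted to the block containing the $P_S$) and $A$, it suffices to produce the analogous resolution for the simple top $L_\emptyset$ of $Ae_\emptyset$ over $A$.

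Since $A$ is Koszul (Proposition \ref{prop:endomorphisms} together with \cite[8.25]{BLPWtorico}), the standard Koszul resolution of \cite{BGS96} gives
\[
\cdots \to A \otimes_{A_0} B^*_m e_\emptyset \to \cdots \to A \otimes_{A_0} B^*_1 e_\emptyset \to A \otimes_{A_0} B^*_0 e_\emptyset \to L_\emptyset \to 0,
\]
where $B$ is the quadratic dual of $A$. The previous proposition computes $e_{S'} B^*_m e_\emptyset$ explicitly: it is one-dimensional, spanned by the wall-path sum $b_m$, exactly when $\#S' \equiv m \pmod 2$ and $\#S' \leq m \leq 2c-2-\#S'$, and zero otherwise. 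Summing over $S'$ and taking the usual internal grading shift (an element of $B^*_m$ has internal degree $m$, producing the shift $(-m)$ on $P_{S'}$) identifies the $m$-th term of the Koszul complex with $Q_{-m}$ as defined in the corollary. In particular, at the two ends one recovers $Q_0 = P_\emptyset$ (only $S' = \emptyset$, $m=0$ contributes) and $Q_{-(2c-2)} = P_\emptyset(-2c+2)$ (only $S' = \emptyset$, $m = 2c-2$ contributes), since these are the unique subsets $S'$ meeting the constraints at these extremes.

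The last task is to check that the Koszul differential, written through the identification of $B^*_m$ with the intersection $A_1^{\otimes(k)} \otimes_{A_0} R \otimes_{A_0} A_1^{\otimes(m-k-2)}$ inside $A_1^{\otimes m}$, agrees with the map whose component $P_S \to P_{S \triangle \{k\}}$ is $x'_{S,k}$. Under this identification the Koszul differential is the Hochschild-style boundary $b_m \mapsto \sum_k x'_{S',k} \otimes b_{m-1}$, and the essential combinatorics is that cutting off the first letter of a wall path sum reorganizes into the desired sum over single edges. The main technical obstacle is the sign bookkeeping: one must verify that the twist $x'_{S,k} = (-1)^{\#(S\cap [1,k-1])} x_{S,k}$ absorbs the alternating signs inherent in the Hochschild differential so that the second relation of \eqref{eq:2}, $\sum_k x_{S\triangle\{k\},k} x_{S,k} = 0$, which encodes the quadratic relations defining $B$, is precisely what makes the composition of two successive differentials vanish. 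This check is a routine but careful sign computation using the explicit basis of the previous proposition.
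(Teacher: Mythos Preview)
Your proof is correct and follows essentially the same approach as the paper: apply the Koszul resolution of \cite{BGS96} to the simple $L_\emptyset$ over the Koszul algebra $A$, then read off the terms using the explicit basis of $e_{S'}B^*e_\emptyset$ computed in the preceding proposition. The paper presents this as an immediate consequence, simply noting that the terms are the $Q_{-k}$ with differential given by $x'_{S,k}$; your version spells out the Morita equivalence step and the sign bookkeeping for the differential a bit more explicitly, but the argument is the same.
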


For example, if $c=2$, then the resolution is 3 step, of the form:
\begin{center}
\tikz[xscale=.8, yscale=.6,very thick]{
\draw[wei] (0,0)--(0,1); \draw[very thick] (.5,0) --(.5,1);\draw[wei] (1,0)--(1,1);
\node at (1.3,-2){$-$}; \draw[wei] (1.7,-1.5)--(1.7,-2.5); \draw[very thick] (2.7,-2.5) --(2.2,-1.5);
\draw[wei] (2.7,-1.5)--(2.2,-2.5);
\draw[->] (1.5,0)--(4,-2);
\draw[wei] (4.5,-3)--(4.5,-2); \draw[wei] (5,-3) --(5,-2);\draw[very thick] 
(5.5,-3)--(5.5,-2);
\draw[wei] (1.7,3.5)--(2.2,2.5); \draw[very thick] (1.7,2.5) --(2.2,3.5);\draw[wei] 
(2.7,2.5)--(2.7,3.5);
\draw[wei] (7.8,3.5)--(7.3,2.5); \draw[very thick] (7.3,3.5) --(7.8,2.5);\draw[wei] 
(8.3,2.5)--(8.3,3.5);
\draw[->] (1.5,1)--(4,3);
\draw[very thick](4.5,3)--(4.5,4);\draw[wei] (5,3)--(5,4); \draw[wei] 
(5.5,3)--(5.5,4);
\draw[->] (6,3)--(8.5,1);
\draw[wei] (7.3,-1.5)--(7.3,-2.5); \draw[very thick] (8.3,-1.5)--(7.8,-2.5);\draw[wei] 
(7.8,-1.5)--(8.3,-2.5);
\draw[->] (6,-2)--(8.5,0);
\draw[wei] (9,0)--(9,1); \draw[very thick] (9.5,0)--(9.5,1);\draw[wei] 
(10,0)--(10,1);
\draw[->] (10.5,0.5)--(13,0.5);
\node at (13.7,0.5) {$L_{1,1}$};
\node at (11.75,1) {$\pi_{1,1}$};
}
\end{center}
where 
\vskip0.5cm
\begin{center}
\tikz[xscale=.8, yscale=.6]{
\draw[wei] (0,0)--(0,1); \draw[very thick] (.5,0) --(.5,1);\draw[wei] (1,0)--(1,1);
\node at (1.7,0.5) {$=P_{\emptyset}$};}\qquad
\tikz[xscale=.8, yscale=.6]{\draw[very thick] (0,0)--(0,1); \draw[wei] (.5,0) --(.5,1);\draw[wei] (1,0)--(1,1);
\node at (1.9,0.5) {$=P_{\{1\}}$};}\qquad 
\tikz[xscale=.8, yscale=.6]{\draw[wei] (0,0)--(0,1); \draw[wei] (.5,0) --(.5,1);\draw[very thick] (1,0)--(1,1);
\node at (1.9,0.5) {$=P_{\{2\}}$};}
\end{center}

For $c=3$, we have a slightly more complicated example, which is illustrative:
 
\begin{center}
\tikz[xscale=.8, yscale=.6,very thick]
{
\draw[wei] (0,0)--(0,1); \draw[very thick] (.5,0) --(.5,1);\draw[very thick] (1,0)--(1,1); \draw[wei] (1.5,0)--(1.5,1);
\node at (0,-0.5) {1};
\node at (.5,-0.5) {1};
\node at (1,-0.5) {2};
\node at (1.5,-0.5) {2};
\draw[->] (2,0)--(3,-3);
\draw[wei] (3.5,-4)--(3.5,-3); \draw[very thick] (4,-4)--(4,-3); \draw[wei] (4.5,-4) --(4.5,-3);\draw[very thick] (5,-4)--(5,-3);
\node at (3.5,-4.5) {1};
\node at (4,-4.5) {1};
\node at (4.5,-4.5) {2};
\node at (5,-4.5) {2};
\draw[->] (2,1)--(3,4);
\draw[very thick](3.5,4)--(3.5,5);\draw[wei](4,4)--(4,5);\draw[very thick] (4.5,4)--(4.5,5); \draw[wei] (5,4)--(5,5);
\node at (3.5,3.5) {1};
\node at (4,3.5) {1};
\node at (4.5,3.5) {2};
\node at (5,3.5) {2};
\draw[->] (2,0.5)--(3,0.5);
\draw[wei] (3.5,0)--(3.5,1); \draw[very thick] (4,0) --(4,1);\draw[very thick] (4.5,0)--(4.5,1); \draw[wei] (5,0)--(5,1);
\node at (3.5,-0.5) {1};
\node at (4,-0.5) {2};
\node at (4.5,-0.5) {1};
\node at (5,-0.5) {2};
\draw[->] (5.5,5)--(6.5,6.5);
\draw[->] (5.5,4.5)--(6.5,3);
\draw[->] (5.5,4)--(6.5,-1);
\draw[wei] (7,-6)--(7,-5); \draw[wei] (7.5,-6)--(7.5,-5); \draw[very thick] (8,-6) --(8,-5);\draw[very thick] (8.5,-6)--(8.5,-5);
\node at (7,-6.5) {1};
\node at (7.5,-6.5) {2};
\node at (8,-6.5) {2};
\node at (8.5,-6.5) {1};
\draw[very thick](7,6)--(7,7);\draw[very thick](7.5,6)--(7.5,7);\draw[wei] (8,6)--(8,7); \draw[wei] (8.5,6)--(8.5,7);
\node at (7,5.5) {2};
\node at (7.5,5.5) {1};
\node at (8,5.5) {1};
\node at (8.5,5.5) {2};
\draw[wei] (7,2)--(7,3); \draw[very thick] (7.5,2) --(7.5,3);\draw[very thick] (8,2)--(8,3); \draw[wei] (8.5,2)--(8.5,3);
\node at (7,1.5) {1};
\node at (7.5,1.5) {1};
\node at (8,1.5) {2};
\node at (8.5,1.5) {2};
\draw[very thick] (7,-2)--(7,-1); \draw[wei] (7.5,-2) --(7.5,-1);\draw[wei] (8,-2)--(8,-1); \draw[very thick] (8.5,-2)--(8.5,-1);
\node at (7,-2.5) {1};
\node at (7.5,-2.5) {1};
\node at (8,-2.5) {2};
\node at (8.5,-2.5) {2};
\draw[wei] (10.5,-4)--(10.5,-3); \draw[very thick] (11,-4)--(11,-3); \draw[wei] (11.5,-4) --(11.5,-3);\draw[very thick](12,-4)--(12,-3);
\node at (10.5,-4.5) {1};
\node at (11,-4.5) {1};
\node at (11.5,-4.5) {2};
\node at (12,-4.5) {2};
\draw[very thick] (10.5,4)--(10.5,5);\draw[wei](11,4)--(11,5);\draw[very thick] (11.5,4)--(11.5,5); \draw[wei] (12,4)--(12,5);
\node at (10.5,3.5) {1};
\node at (11,3.5) {1};
\node at (11.5,3.5) {2};
\node at (12,3.5) {2};
\draw[wei] (10.5,0)--(10.5,1); \draw[very thick] (11,0) --(11,1);\draw[very thick] (11.5,0)--(11.5,1); \draw[wei] (12,0)--(12,1);
\node at (10.5,-0.5) {1};
\node at (11,-0.5) {2};
\node at (11.5,-0.5) {1};
\node at (12,-0.5) {2};
\draw[wei] (14,0)--(14,1); \draw[very thick] (14.5,0) --(14.5,1);\draw[very thick] (15,0)--(15,1); \draw[wei] (15.5,0)--(15.5,1);
\node at (14,-0.5) {1};
\node at (14.5,-0.5) {1};
\node at (15,-0.5) {2};
\node at (15.5,-0.5) {2};
\draw[->] (5.5,-3)--(6.5,2);
\draw[->] (5.5,-3.5)--(6.5,-2);
\draw[->] (5.5,-4)--(6.5,-5.5);
\draw[->] (5.5,0)--(6.5,-5);
\draw[->] (5.5,0.5)--(6.5,2.5);
\draw[->] (5.5,1)--(6.5,6);
\draw[->] (9,-5.5)--(10,-4);
\draw[->] (9,-5)--(10,0);
\draw[->] (9,-2)--(10,-3.5);
\draw[->] (9,-1)--(10,4);
\draw[->] (9,2)--(10,-3);
\draw[->] (9,2.5)--(10,0.5);
\draw[->] (9,3)--(10,4.5);
\draw[->] (9,6)--(10,1);
\draw[->] (9,6.5)--(10,5);
\draw[->] (12.5,4)--(13.5,1);
\draw[->] (12.5,0.5)--(13.5,0.5);
\draw[->] (12.5,-3.5)--(13.5,0);
\draw[->] (16,0.5)--(18,0.5);
\node at (18.5,0.5) {$L_{1,2}$};
\node at (17,1) {$\pi_{1,2}$};
}
\end{center}
Reading from top to bottom, we have: in the first and the last column
$P_{\emptyset}$; in the second and the fourth column, the sum
$P_{\{1\}}\oplus P_{\{2\}}\oplus P_{\{3\}}$; and in the middle column $P_{\{1,2\}}\oplus P_{\emptyset}\oplus P_{\{1,3\}}\oplus P_{\{2,3\}}$.  
The differentials are defined by the diagrams $\pm D_{S,k}$.

Using induction, this is easily extended to a resolution of $W_{Y_i}$ as a
left module.  As explained in Sect.~\ref{sec:a_infty-algebras}, this resolution has a right $A_\infty$ action, whose existence is important to us but not its precise definition, which we therefore omit. 

\begin{lemma}\label{a-b-1}
If $a=1$ or $b=1$, we have \[\RHom_{\tilde{T}^{\Bp'}
  }(W_{Y_i},\tilde{T}^{\Bp'})\cong W_{Y_i^\star}\langle c-1\rangle\qquad
  \RHom_{\tilde{T}^{\Bp}
  }(W_{Y_i^\star},\tilde{T}^{\Bp})\cong W_{Y_i}\langle 1-c\rangle.\]  This defines adjunction
  maps \[\ep_{Y_i}\colon W_{Y_i}\Lotimes_{\tilde{T}^{\Bp}}
  W_{Y_i^\star}\langle c-1\rangle\to \tilde{T}^{\Bp'}\qquad
  \iota_{Y_i}\colon \tilde{T}^{\Bp}\to  W_{Y_i^\star}\Lotimes_{\tilde{T}^{\Bp'}}
  W_{Y_i}\langle c-1\rangle\]
and 
\[\ep_{Y_i^\star}\colon W_{Y_i^\star}\Lotimes_{\tilde{T}^{\Bp'}} W_{Y_i}\langle 1-c\rangle\to \tilde{T}^{\Bp}\qquad
  \iota_{Y_i^\star}\colon \tilde{T}^{\Bp'}\to  W_{Y_i}\Lotimes_{\tilde{T}^{\Bp}} W_{Y_i^\star}\langle 1-c\rangle\]
\end{lemma}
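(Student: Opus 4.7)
The plan is to exploit the explicit projective resolution of $W_{Y_i}$ that we can build out of the one for $L_{1,c-1}$ given in Corollary \ref{W-res}, and then to dualise it.

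First I would observe that, since $a=1$ or $b=1$, the module $L_{a,b}$ sits inside the algebra $A=\End(\oplus_S P_S)$ analysed in Proposition \ref{prop:endomorphisms}. Corollary \ref{W-res} exhibits a length $2c-2$ projective resolution $Q_{\bullet}\to L_{1,c-1}$ with $Q_0\cong P_\emptyset$ at one end and $Q_{-(2c-2)}\cong P_\emptyset(-2c+2)$ at the other, and with middle terms built from the $P_S$ with the appropriate parity and size condition. By the right $\tU_n^-$-equivariance of the ladder bimodule (Proposition \ref{prop:trivalent-commute}), tensoring this resolution on the right with the appropriate induction gives a projective resolution $\cQ_\bullet\to W_{Y_i}$ as a left $\tilde{T}^{\Bp'}$-module. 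This step rests only on the exactness and projectivity-preservation provided by Proposition \ref{exactfunctors}.

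Next I would apply $\Hom_{\tilde{T}^{\Bp'}}(-,\tilde{T}^{\Bp'})$ to $\cQ_\bullet$. Each summand $P_S$ in the resolution of $L_{1,c-1}$ appears (after induction) as $\tilde{T}^{\Bp'}e_{S,\ast}$ for an explicit idempotent, so its dual is $e_{S,\ast}\tilde{T}^{\Bp'}$ as a right module. The differentials in the resolution are given by the $\pm D_{S,k}$, and their transposes are the mirror diagrams read from top to bottom. The key structural observation is that the resulting dualised complex is, up to relabelling of indices, precisely the analogous resolution of $W_{Y_i^\star}$ built as a right $\tilde{T}^{\Bp'}$-module: the involution $S\mapsto [1,c]\setminus S$ (together with parity $\#S\equiv k\pmod 2$) interchanges the two ends of the Koszul-type resolution, with the top term $Q_0$ becoming the bottom and the shift $(-2c+2)$ becoming $(2c-2)$. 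Combining this with the internal-plus-homological shift of $-ab=-(c-1)$ built into the definition of $W_{Y_i^\star}$, the dual complex is exactly $W_{Y_i^\star}\langle c-1\rangle$. Vanishing of higher $\Ext$'s is automatic since $\cQ_\bullet$ is a finite projective resolution.

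Having the first isomorphism, the second is obtained by the analogous argument with the roles of $Y_i$ and $Y_i^\star$ exchanged (or by applying the horizontal reflection anti-automorphism). For the adjunction data, I would note that $W_{Y_i}$ is perfect as a left $\tilde{T}^{\Bp'}$-module by the first step, hence dualisable in the derived category; the counit $\ep_{Y_i}$ and unit $\iota_{Y_i}$ are then the standard evaluation and coevaluation maps of the pair $(W_{Y_i},\RHom(W_{Y_i},\tilde{T}^{\Bp'}))$, translated through the isomorphisms just established. The maps $\ep_{Y_i^\star},\iota_{Y_i^\star}$ come from the second isomorphism.

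The main obstacle will be bookkeeping: verifying that the transpose of the Koszul-type differential $\pm D_{S,k}$ really matches the corresponding differential in the resolution of $W_{Y_i^\star}$ with the correct sign, and tracking the two independent gradings (internal $(\cdot)$ and homological $[\cdot]$) so that the total shift comes out to $\langle c-1\rangle$ rather than, say, $\langle 1-c\rangle$. The cleanest way to handle the signs is to use the normalisation $x'_{S,k}=(-1)^{\#(S\cap[1,k-1])}x_{S,k}$ introduced before Corollary \ref{W-res}, which makes the self-duality of the differential manifest.
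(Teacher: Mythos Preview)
Your overall approach matches the paper's: reduce to the trivalent vertex alone, resolve $W_{Y_i}$ by inducing the Koszul resolution $Q_\bullet$ of $L_{1,c-1}$ from Corollary~\ref{W-res}, dualise, and identify the result. That is exactly what the paper does.

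There is, however, a concrete error in your identification step. You write that the dual complex matches the resolution of $W_{Y_i^\star}$ via the involution $S\mapsto [1,c]\setminus S$. This cannot be right: the $P_S$ appearing in $Q_\bullet$ are indexed by \emph{proper} subsets $S\subsetneq [1,c]$, so in particular $P_{[1,c]}$ never appears; and both ends of the resolution are $P_\emptyset$, not $P_\emptyset$ and $P_{[1,c]}$. The correct observation, which the paper makes, is that the resolution is \emph{self-dual} in the sense that $Q_{-k}$ and $Q_{-(2c-2-k)}$ contain exactly the same summands $P_S$ (the constraint $\#S\le k\le 2c-2-\#S$ is symmetric in $k\leftrightarrow 2c-2-k$), and the transpose of $x'_{S,k}$ is again $x'_{S,k}$ up to a single sign correction. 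Thus $\Hom(Q_{-k},\tilde{T}^{\Bp'})\cong Q_{-(2c-2-k)}\langle 2-2c\rangle$ with $\partial_k^*$ matching $\partial_{2c-k}$, and no involution on the index set is needed.

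For the second isomorphism, the paper takes a shorter route than your suggested symmetry argument: since $W_{Y_i^\star}$ is already projective as a left $\tilde{T}^{\Bp}$-module (it is the reflection of $W_{Y_i}$, which is projective on the right), the dual is computed as an honest bimodule, and one only needs that the vector-space dual of $\dot{L}_{a,b}$ is $L_{a,b}$ again.
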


\begin{proof}
In both cases, we can reduce to assuming that there are no red or black
strands but those coming from the trivalent vertex.  Also, the cases
$a=1,b=c-1$ and $a=c-1,b=1$ are completely symmetric, so we can assume
we are in the former case.

 Applying $\RHom_{\tilde{T}^{\Bp}
  }(-,\tilde{T}^{\Bp})$  to the resolution of Lemma \ref{W-res}
  we get $\Hom(Q_k,\tilde{T}^{\Bp})\cong
  Q_{2c-2-k}\langle 2-2c\rangle$.  The part of the differential $P_{S}\to
  P_{S\triangle \{k\}}$ is given by $x'_{S,k}$ and its dual is the
  mirror image of this through the $x$-axis, which is $x'_{S,k}$
  (unless $k=0$, in which case we must fix the sign)
and thus this isomorphism matches $\partial_k^{*}$ with
  $\partial_{2c-k}$. 
Thus, we arrive at the same complex, but with degrees shifted upward by $2c-2$, since taking $\RHom$ negates
  the degrees of the complex.

On the other hand, as a left module $Y_i^\star$ is projective, so its
dual is again a bimodule.  In fact, we simply take the vector space dual of
$\dot{L}_{a,b}$, the same underlying simple but
with the action switched from left to right using reflection.  This
dual is just $L_{a,b}$.
\end{proof}

Thus, we have three functors, which are isomorphic up to shift, but not
canonically so: $Y_i^\star$, the left adjoint $Y_i^L\langle c-1\rangle$ and right
adjoint $Y_i^R\langle 1-c\rangle$.  

In order to fix an isomorphism $Y_i^\star\cong Y_i^R\langle
1-c\rangle$, we should define a map $\iota_{Y_i}\colon \operatorname{id}\to Y_i^\star
Y_i\langle c-1\rangle$ which we identify with the unit of the
adjunction. We let $\epsilon_{Y_i}$ be the induced counit, which is unique. Similarly, an isomorphism $Y_i^\star\cong Y_i^L\langle
1-c\rangle$ will be fixed by choosing a map $\epsilon_{Y_i^\star}\colon Y_i^\star
Y_i\langle 1-c\rangle \to\operatorname{id}$, which we match with the
counit of the other adjunction, and let $\iota_{Y_i^\star}$ be the
unique induced unit.  

\begin{lemma}\label{lem:full-length}
 If  $a=1$ (resp. $b=1$), then $\ssy_i^{b}$ (resp. $\ssy_{i+1}^{a}$)
 induces an isomorphism from the 
 the $-c+1$st homological degree to the $c-1$st homological degree. 
\end{lemma}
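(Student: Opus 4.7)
The starting point is the projective resolution of $L_{1,c-1}$ given in Corollary \ref{W-res}. After the shift $[c-1]\langle c-1\rangle$ centering it symmetrically around homological degree zero, the complex has length $2(c-1)$ and carries $P_\emptyset$ (up to internal grading) at each of the two extreme positions $\pm(c-1)$. Induction along $\tilde{\fI}$ and $\tilde{\fF}^*$, both exact by Proposition \ref{exactfunctors}, extends this to a resolution of $W_{Y_i}$ of the same shape. The class $\ssy_i^{b} \in HH^{2b}$ with $b = c-1$ shifts homological degree by $2(c-1)$, so it carries the $(-c+1)$st position onto the $(c-1)$st; the lemma asserts that the induced map between the two copies of $P_\emptyset$ is an isomorphism.

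To compute this action, I will invoke Lemma \ref{Hochschild-action}: lift the resolution to the canonical deformation associated to $z_i$, and identify $\ssy_i$ with the chain endomorphism $\tilde{\partial}^2/h$ modulo $h$. The projectives $P_S$ are rigid under this deformation, so only the differentials $D_{S,k}$ need to be lifted, and the only relation that actually involves $z_i$ is the deformed \eqref{cost}, which introduces an extra $z_i \cdot 1$ term whenever a black strand separates from the $i$-th red strand. A direct local computation on the resolution then identifies the cohomology class of $\tilde{\partial}^2/h$ with a specific degree-one element of the Koszul dual algebra $B$ of Proposition \ref{prop:endomorphisms}, namely the one corresponding to the hyperplane associated to the $i$-th red strand.

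From here the claim is purely combinatorial in $B$. The wall-path basis described after Proposition \ref{prop:endomorphisms} shows that $e_\emptyset B^* e_\emptyset$ in the top Koszul degree is one-dimensional, spanned by the unique full round-trip wall path. The element I have identified with $\ssy_i$ is precisely the wall-generator whose $(c-1)$st power spans this top component, so $\ssy_i^{c-1}$ does not vanish there. Since both $P_\emptyset$'s at the two ends of the resolution are indecomposable with $\End$ concentrated in degree zero, a nonzero chain map of the correct bidegree between them must be a scalar multiple of the identity, hence an isomorphism. The case $b = 1$ is symmetric: one reflects the roles of the two red strands and replaces $\ssy_i$ by $\ssy_{i+1}$, leaving the argument unchanged.

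The main obstacle is the bookkeeping in the second step — keeping track of signs and of the correction terms produced by sliding crossings past the $i$-th red strand, as in the proof of Proposition \ref{prop:trivalent-commute}, so that the deformed relation \eqref{cost} can be applied cleanly. Once $\tilde{\partial}^2/h$ is correctly matched with the expected wall-generator in $B$, both the nonvanishing and the maximality of $\ssy_i^{c-1}$ follow from the explicit cellular basis of $B^*$.
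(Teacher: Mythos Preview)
Your strategy aligns closely with the paper's: both invoke Lemma~\ref{Hochschild-action} and lift the resolution of Corollary~\ref{W-res} to the deformation attached to $\ssy_i$, then analyze $\tilde\partial^2/h$. The difference is in how the computation is packaged. The paper computes $\tilde\partial^2/h$ directly as a chain map: the only place the deformed relation~\eqref{cost} contributes is in the bigon with the relevant red strand, and this forces the map $Q_{j}\to Q_{j+2}$ to be the identity on each summand $P_S$ whenever $P_S$ appears in both degrees. Iterating $c-1$ times then gives the identity $P_\emptyset\to P_\emptyset$ on the nose. For the $b=1$ case the paper also tracks the sign coming from the convention $x_{S,c}=-D_{S,c}$, which makes $\tilde\partial^2/h$ equal to $-1$ rather than $+1$; this is harmless for the isomorphism statement but is not literally ``unchanged'' as you say.

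Your Koszul-dual repackaging is conceptually appealing but has two gaps. First, $\ssy_i$ lands in $\Ext^2(L,L)\cong e_\emptyset B_2 e_\emptyset$, not in degree one; the ``wall-generator'' language suggests you are thinking of an element of $B_1$, which cannot be right. Second, and more importantly, to conclude that $\ssy_i^{c-1}\neq 0$ you need to know both that $\ssy_i$ is a \emph{nonzero} element of the one-dimensional space $e_\emptyset B_2 e_\emptyset$ and that the ring $e_\emptyset B e_\emptyset$ is isomorphic to $\K[u]/(u^c)$ (so that nonzero degree-two elements have nonvanishing top power). The first of these is precisely the ``direct local computation'' you defer --- and it is exactly the computation the paper carries out. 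The second is true but not justified by the wall-path basis alone, which describes $B^*$ as a graded vector space rather than the Yoneda ring structure on $B$. So your argument ultimately rests on the same calculation as the paper's, with an added structural claim about $B$ that you would still need to prove.

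In short: the route is correct in outline and close to the paper's, but the Koszul reinterpretation does not bypass the core computation, and the degree bookkeeping needs repair.
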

\begin{proof}
  We'll again apply Lemma \ref{Hochschild-action}.  Assume that
  $a=1$.  %Assume that $p_i=0$. 
We realize $Y_{i}^\star \Lotimes_{\tilde{T}^{\Bp_{i}}} Y_{i}$ as the tensor product
  of two copies of the resolution Corollary \ref{W-res}.  By general
  results, this is an $A_\infty$-bimodule over $ \tilde{T}^{\Bp'}$;
  the structure of the higher products will be irrelevant for us.

There is one homotopy representative of $\iota_{Y_{i}}$ which
  sends $1\mapsto X_{2c-2}\otimes X_0+\cdots +X_0\otimes
  X_{2c-2}$, where $X_0$ is the usual generator of $Q_0$, and
  $X_{2c-2}$ the usual generator of $Q_{2c-2}$.

We consider the precomplex over the deformation associated to $\ssy_i$
given by the same diagrams as the complex $Q_{2c-2}\to \cdots \to
Q_0$.  The differential is no longer 0.  Instead, the map
$\tilde{\partial}^2/h$ induces the map $Q_{j}\to Q_{j+2}$  which acts
by the identity on $P_S$ if $\#S\leq j-2<j\leq 2p_{i}-\#S$, and 0 on
all other summands.  Thus, the $p_{i}$th power of this chain map is just
the identity map on $Q_{2c-2}\cong P_\emptyset\cong Q_0$.  Thus,
$(\ssy_i^{p_{i}}\otimes 1)\iota_{Y_{i}}$ maps $1\mapsto X_0\otimes X_0 $.
The result then follows from the fact that $\ep_{Y_{i}^\star}(X_0\otimes X_0)$ is a non-zero scalar, 
which by definition we take equal to $1$. Note that this uniquely determines $\ep_{Y_{i}^\star}$. 

In the case where   $b=1$, the argument is almost the same,
except that the signs in the definition of the complex $Q_\bullet$ are
slightly different, since $x_{S,c}$ is {\it minus} the
corresponding diagram if $c\in S$.  Thus, $\tilde{\partial}^2/h$
gives the map $Q_{j}\to Q_{j+2}$ given by multiplication by $-1$. This also gives an
isomorphism, so it completes the proof.
%This sign
%difference is accounted for by the signs in our definition of
%$\epsilon'$ and $\iota'$.
\end{proof}

\excise{Thus, we can fix the sign of the biadjunction of $Y_i$ and
$Y_i^\star$ utilizing the isomorphism in Lemma~\ref{lem:full-length}. If  $a=1$, then we choose
these maps so that $\ep_{Y_i^\star}\ssy_i^{b}\iota_{Y_i}=1
,$  and if
 $b=1$, then we choose them so that $\ep_{Y_i^\star}\ssy_{i+1}^{a}\iota_{Y_i}=(-1)^a$.}   

From the proof of Lemma~\ref{lem:full-length} we immediately get the following result. 

\begin{lemma}\label{lem:bubble-dot} We can fix the biadjunction of $Y_i$ and
$Y_i^\star$ utilizing the isomorphism in Lemma~\ref{lem:full-length}, such that: 
 if  $a=1$, then \[\ep_{Y_{i}^\star}(\ssy_{i}^{q}\otimes 1)\iota_{Y_{i}}=
 \begin{cases}
   0& q<b\\
   1&q=b
 \end{cases}
;\] if $b=1$, then \[\ep_{Y_i^\star}(1\otimes \ssy_{i+1}^{q})\iota_{Y_i}=\begin{cases}
   0& q<a\\
  (-1)^{a}&q=a
 \end{cases}.\]
\end{lemma}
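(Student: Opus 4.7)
The plan is to continue the precomplex argument from the proof of Lemma~\ref{lem:full-length}, invoking Lemma~\ref{Hochschild-action} to interpret $\ssy_i$ as the map $\tilde{\partial}^2/h$ induced by the canonical deformation. Assume first that $a=1$, so that the Koszul resolution $Q_\bullet$ of $L_{1,b}$ from Corollary~\ref{W-res} has length $2b=2(c-1)$. As in the proof of Lemma~\ref{lem:full-length}, realize $Y_i^\star \Lotimes_{\tilde{T}^{\Bp_i}} Y_i$ by $Q_\bullet\otimes Q_\bullet$ and choose the specific homotopy representative
\[
  \iota_{Y_i}(1) \;=\; \sum_{k} X_{2c-2-k}\otimes X_{k},
\]
where $X_j$ denotes the canonical generator of the $P_{\emptyset}$-summand of $Q_j$ and the sum runs over those $k$ for which both factors contain such a summand.

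The key computation is that $\ssy_i^q$ maps the generator $X_j$ to $X_{j-2q}$, interpreted as zero when $j-2q$ falls outside the range of degrees where $P_\emptyset$ appears in $Q_\bullet$. This follows immediately from the fact, already established in the proof of Lemma~\ref{lem:full-length}, that $\tilde{\partial}^2/h$ restricts to the identity between successive $P_\emptyset$-summands and to zero on all other summands. Thus $(\ssy_i^q\otimes 1)\iota_{Y_i}(1)$ has homotopy representative $\sum_k X_{2c-2-k-2q}\otimes X_k$, restricted to those $k$ for which the first index remains in the allowed range.

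Next, the counit $\ep_{Y_i^\star}\colon Y_i^\star Y_i\langle 1-c\rangle\to\operatorname{id}$ is a chain map into a complex concentrated in homological degree zero, and therefore must vanish on every generator $X_{j}\otimes X_{k}$ lying in nonzero total homological bidegree. Combined with the normalization $\ep_{Y_i^\star}(X_0\otimes X_0)=1$ fixed in the proof of Lemma~\ref{lem:full-length}, this forces $\ep_{Y_i^\star}$ to be zero on every generator except $X_0\otimes X_0$. For the composite $\ep_{Y_i^\star}(\ssy_i^q\otimes 1)\iota_{Y_i}$ to be nonzero, some term in the sum above must therefore equal $X_0\otimes X_0$, i.e.\ we need both $k=0$ and $2c-2-2q=0$, which forces $q=c-1=b$. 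Hence the composite vanishes for $q<b$ and equals $1$ when $q=b$.

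The case $b=1$ is handled by the same argument with the deformation by $\ssy_{i+1}$ in place of $\ssy_i$. The only difference, already noted in the proof of Lemma~\ref{lem:full-length}, is the sign convention $x_{S,c}=-D_{S,c}$ when $c\in S$, which causes $\tilde{\partial}^2/h$ to act as $-1$ (rather than $+1$) between consecutive $P_\emptyset$-summands. After $q=a$ iterations this accumulates a sign of $(-1)^a$, yielding the asserted value. The main subtlety is the off-diagonal vanishing of $\ep_{Y_i^\star}$: one must check that the grading shifts in $\langle 1-c\rangle$ align so that $X_0\otimes X_0$ is indeed the unique generator in the bidegree that can map nontrivially to $\operatorname{id}$, but this is forced by the homological concentration of $\operatorname{id}$ together with the Koszul structure of $Q_\bullet$.
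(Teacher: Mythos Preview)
Your proof is correct and follows essentially the same approach as the paper, which simply states that the lemma is immediate from the proof of Lemma~\ref{lem:full-length}. You have filled in the details the paper leaves implicit, particularly the vanishing for $q<b$: since all terms $X_{2c-2-k-2q}\otimes X_k$ in $(\ssy_i^q\otimes 1)\iota_{Y_i}(1)$ live in the same total homological degree $-(2c-2-2q)\neq 0$, and any chain-level realization of $\ep_{Y_i^\star}$ into $\operatorname{id}$ is supported only in degree zero (where $X_0\otimes X_0$ sits), the composite must vanish.
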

As remarked above, this also fixes $\iota_{Y_i^\star}$ and $\ep_{Y_i}$ uniquely by the usual zig-zag relations 
for units and counits. 

Note, for the case where $a=b=1$, these
conditions define the same adjunction: as shown in the proof of Lemma~\ref{lem:full-length}, the
action of $\ssy_{i}$ and $-\ssy_{i+1}$ agree when we identify
$\wE_{i-1}$ and $\wF_{i}$ (both of which are $Y_i$ when thought of as
webs, rather than ladders).

Finally, we wish to note the categorical version of the bigon
relation:  let $\beta=Y^\star_i Y_i$ be the bigon ladder on the $i$th red
strand when $p_i=c$, and the sides of the bigon are labeled with $c-1$
and $1$.
\begin{proposition}\label{bigon}
  $W_\beta\cong \tilde{T}^{\Bp}\langle c-1\rangle\oplus \tilde{T}^{\Bp}\langle c-3\rangle\oplus\cdots \oplus 
  \tilde{T}^{\Bp}\langle 1-c\rangle$.
\end{proposition}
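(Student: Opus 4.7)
The plan is to build an explicit bimodule map $\Phi\colon \bigoplus_{j=0}^{c-1}\tilde{T}^{\Bp}\langle c-1-2j\rangle \to W_\beta$ using the unit of adjunction from Lemma~\ref{a-b-1} together with powers of a dot on the internal label-$1$ strand, and then show it is an isomorphism by pairing with the counit and by a direct generation argument.

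Since the bigon has a side labelled $1$, Lemma~\ref{a-b-1} applies and furnishes a unit $\iota_{Y_i}\colon \tilde{T}^{\Bp}\to W_{Y_i^\star}\Lotimes_{\tilde{T}^{\Bp'}} W_{Y_i}\langle c-1\rangle$ together with a counit $\epsilon_{Y_i^\star}\colon W_{Y_i^\star}\Lotimes_{\tilde{T}^{\Bp'}} W_{Y_i}\langle 1-c\rangle\to \tilde{T}^{\Bp}$. Because $W_{Y_i}$ is right projective (the final corollary of Section~\ref{sec:case-y-ladders}), this derived tensor product agrees with the underived one, namely $W_\beta$. Let $D\in\End(W_\beta)$ denote the endomorphism given by placing a dot on the unique internal strand of label $1$ inside the bigon; it has internal degree $2$. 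For $j=0,\ldots,c-1$ I set $\phi_j(a):=D^{j}(a\cdot\iota_{Y_i}(1))$, a graded bimodule map $\phi_j\colon \tilde{T}^{\Bp}\langle c-1-2j\rangle\to W_\beta$, and let $\Phi$ be their sum.

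For injectivity, I would pair $\Phi$ against powers of $D$ followed by $\epsilon_{Y_i^\star}$. By Lemma~\ref{lem:bubble-dot}, the composition $\epsilon_{Y_i^\star}\circ(1\otimes D^{q})\circ\iota_{Y_i}$ equals $\pm 1$ when $q=c-1$ and vanishes for $0\le q<c-1$. Consequently, for all $j,j'\in\{0,\ldots,c-1\}$,
\[
\epsilon_{Y_i^\star}\circ D^{c-1-j}\circ\phi_{j'} \;=\; \epsilon_{Y_i^\star}\circ D^{c-1-j+j'}\circ\iota_{Y_i}\;=\;\pm\,\delta_{j,j'},
\]
producing an anti-diagonal pairing matrix that certifies the linear independence of the $\phi_j$ and hence the injectivity of $\Phi$.

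For surjectivity, I would argue that $W_\beta$ is generated as a bimodule by the $c$ elements $\iota_{Y_i}(1),D\iota_{Y_i}(1),\ldots,D^{c-1}\iota_{Y_i}(1)$. The through-box relations~(\ref{through-box1}--\ref{through-box2}) permit any black strand crossing the bigon to be slid outside the cup-and-cap pair, up to correction terms that add dots to the internal label-$1$ strand; applying this iteratively to an arbitrary basis vector arising from the basis description following Proposition~\ref{prop:trivalent-commute} expresses it in the form $x\cdot D^{j}\iota_{Y_i}(1)\cdot y$ with $x,y\in \tilde{T}^\Bp$ and $j\geq 0$. The main obstacle is ensuring that only $j\in\{0,\ldots,c-1\}$ are needed: this follows because $D^c\iota_{Y_i}(1)$ already lies in the submodule generated by lower powers, as can be seen by computing the action on the projective resolution of $L_{1,c-1}$ in Corollary~\ref{W-res} via Lemma~\ref{Hochschild-action}, the resolution having length $2c-2$ so that the Hochschild class $D^c$ must factor through the lower degrees. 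Combined with injectivity, this yields the claimed isomorphism, which categorifies the classical web bigon identity $[c]=q^{c-1}+q^{c-3}+\cdots+q^{1-c}$.
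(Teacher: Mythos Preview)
Your argument has a genuine gap at its very first step. You claim that because $W_{Y_i}$ is right projective, the derived tensor product $W_{Y_i^\star}\Lotimes_{\tilde{T}^{\Bp'}} W_{Y_i}$ agrees with the underived one. But the corollary you cite says $W_{Y_i}$ is projective as a \emph{right $\tilde{T}^{\Bp}$-module} (acting at the bottom of the $Y$), whereas the tensor product here is over $\tilde{T}^{\Bp'}$ (the algebra at the top). Over $\tilde{T}^{\Bp'}$, neither factor is projective on the relevant side: both restrict to the simple $L_{1,c-1}$, which is very far from projective. So $W_\beta$ is an honest complex with cohomology spread across $c$ distinct homological degrees, not a single bimodule. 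This is precisely what the shifts $\langle c-1-2j\rangle = [c-1-2j](-(c-1-2j))$ in the statement are recording.

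Once this is understood, your operator $D$ collapses. There is no internal black strand of label $1$ in the bigon (the sides of the bigon are \emph{red} strands), and an abelian-category endomorphism of internal degree $2$ cannot move between summands sitting in different homological degrees. The object that actually plays the role you want is the Hochschild class $\ssy_i\in HH^2(\tilde{T}^{\Bp'})$ from Section~\ref{sec:hochsch-cohom}, which acts as an $\Ext^2$-element. With that replacement the injectivity pairing via Lemma~\ref{lem:bubble-dot} can be made to work (the matrix is triangular with units on the diagonal, not anti-diagonal as you wrote, since the lemma says nothing about $q>c-1$). But your surjectivity argument, phrased in terms of sliding strands and generating elements of an honest bimodule, does not transfer to the derived category and would need to be completely redone.

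The paper's proof sidesteps all of this. It simply tensors the explicit projective resolution $Q_\bullet$ of $W_{Y_i}$ from Corollary~\ref{W-res} with $W_{Y_i^\star}$ on the left. Because $L_{1,c-1}$ is one-dimensional and supported only on the idempotent for $P_\emptyset$, one has $W_{Y_i^\star}\otimes_{\tilde{T}^{\Bp'}} P_S=0$ for every $S\neq\emptyset$. The only surviving terms are the $P_\emptyset$ summands of $Q_{-k}$, one in each even degree $k=0,2,\ldots,2c-2$, and the differentials between them vanish (each component $x'_{S,k}$ of the differential changes $S$). After the grading shift built into $W_{Y_i^\star}$, this is exactly the claimed direct sum.
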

\begin{proof}
  This is immediate from Lemma \ref{W-res}.  As usual, we can reduce
  to the case where at the outside there are no black strands.  If we resolve $W_{Y_i}$
  as a left module, and then tensor with $Y^\star_i$, then the terms
  corresponding to $P_S$ for $S\neq 0$ all vanish, and we are left
  with a 1-dimensional contribution from $P_\emptyset$ in the degrees
  $c-1,\dots -(c-1)$ and thus isomorphic to $\tilde{T}^{\Bp}$ as a bimodule.
\end{proof}
Note that in particular, if $a=b=1$, this implies that
$\Ext^{-2}(W_\beta,W_\beta)\cong Z(\tilde{T}^{\Bp})(-2)$.  In
particular, the space of elements of degree $-2$ in this space is
1-dimensional, spanned by $ \iota\circ \ep$.

\subsection{General ladders}
\label{sec:general-ladders}

\begin{definition}
  The {\bf ladder bimodule} for a general ladder is the composition of the
 $\tilde{T}$-bimodules attached to a slicing of the ladder into trivalent ladders.  
\end{definition}
\excise{We can think of these bimodules as forming a 2-category whose 
\begin{itemize}
\item objects are sequences $\Bp$ of integers. 
\item 1-morphisms are ladders up to isotopy.
\item 2-morphisms are homomorphisms between ladder bimodules, either in
  the abelian category of bimodules over $\tilde{T}$ or in its derived
  category, the latter being more interesting for our purposes.   
\end{itemize}

}
\excise{\begin{lemma}
  For any ladder bimodule, the cohomology of the complex $W\Lotimes P$
  for an indecomposable projective $P$ is concentrated in all even or
  all odd degrees.
\end{lemma}
\begin{proof}
We'll prove this by induction on the number of trivalent vertices.  If
the highest trivalent vertex is an upward opening $Y$, that is, $Y_i$
in our notation of the last section, then the cohomology will stay
concentrated in whatever degrees it was in before.  If instead, it is a $Y_i^\star$
\end{proof}}

\begin{proposition}
  For every ladder bimodule $W$ over $\tilde{T}^{\Bp'}\operatorname{-} \tilde{T}^{\Bp}$, we have that
  $T^{\Bp'}\otimes_{\tilde{T}^{\Bp'}}W\cong
  W\otimes_{\tilde{T}^{\Bp}}T^{\Bp}$ as ${T}^{\Bp'}\operatorname{-} {T}^{\Bp}$ bimodules 
  and all higher Tors vanish.
\end{proposition}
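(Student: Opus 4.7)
The plan is to induct on the number of trivalent vertices in $L$. In the base case $L$ has no trivalent vertices, so $W_L=\tilde{T}^{\Bp}$ and both sides of the asserted identity are canonically $T^{\Bp}$, with higher Tors vanishing by Lemma \ref{tensor-same}.

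For the inductive step I factor $L=L_0\circ L'$ where $L_0$ is a single trivalent vertex (possibly tensored with identity uprights on the other columns) placed at the top of a ladder $L'$ with one fewer trivalent vertex; write $\Bq$ for the weight sequence at the intermediate horizontal line, so that $W_L\cong W_{L_0}\otimes_{\tilde{T}^{\Bq}}W_{L'}$. Associativity of derived tensor products gives
\[T^{\Bp'}\Lotimes_{\tilde{T}^{\Bp'}}W_L\cong T^{\Bp'}\Lotimes_{\tilde{T}^{\Bp'}}W_{L_0}\Lotimes_{\tilde{T}^{\Bq}}W_{L'},\]
and by the atomic case applied to $L_0$ (handled below) the leftmost derived tensor collapses to $W_{L_0}\otimes_{\tilde{T}^{\Bq}}T^{\Bq}$. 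One then brings $T^{\Bq}$ across the second tensor using the inductive hypothesis for $L'$, arriving at $W_L\otimes_{\tilde{T}^{\Bp}}T^{\Bp}$ concentrated in degree zero. An analogous chain of equalities handles the right-Tor vanishing.

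For the atomic case take $L_0=Y_i$; the case of $Y_i^\star$ is symmetric under the horizontal-flip anti-automorphism. The right-Tor vanishing $\Tor^i_{\tilde{T}^{\Bp}}(W_{Y_i},T^{\Bp})=0$ is immediate because $W_{Y_i}$ is right-projective over $\tilde{T}^{\Bp}$, as follows from the explicit right-module basis (the corollary after the basis proposition). For the left-Tor vanishing, I use Proposition \ref{prop:trivalent-commute} to express $W_{Y_i}$ as an iterated induction along the sweet bimodules $\tilde{\fF}^*_k$ and $\tilde{\fI}$ starting from $L_{a,b}$ over the small algebra $\tilde{T}^{a,b}_{\omega_c}$; flat base change along these sweet inclusions reduces the computation to showing $\Tor^i_{\tilde{T}^{a,b}_{\omega_c}}(T^{a,b}_{\omega_c},L_{a,b})=0$ for $i>0$. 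By Proposition \ref{prop-a-b-only} the action of $\tilde{T}^{a,b}_{\omega_c}$ on $L_{a,b}$ factors through $T^{a,b}_{\omega_c}$ and $L_{a,b}$ is a simple over the matrix algebra $T^{a,b}_{\omega_c}$, hence projective there, so the change-of-rings spectral sequence combined with Lemma \ref{tensor-same} forces the higher Tors to vanish. Finally, for the natural isomorphism $T^{\Bp'}\otimes_{\tilde{T}^{\Bp'}}W_{Y_i}\cong W_{Y_i}\otimes_{\tilde{T}^{\Bp}}T^{\Bp}$, I identify both quotients with $W_{Y_i}$ modulo the submodule spanned by ladder diagrams containing a violating black strand; the sliding relations \eqref{through-box1}--\eqref{through-box2} together with the Stendhal relations allow a top-violation to be transported to a bottom-violation up to correction terms that are themselves violated or have fewer crossings, so the equality of left- and right-violated submodules follows by induction on the number of crossings.

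The main technical obstacle is making the sweet base-change reduction clean for general $a,b\geq 2$, because the explicit Koszul-type resolution of $L_{a,b}$ given in Corollary \ref{W-res} only covers the extremal cases $a=1$ or $b=1$. If the abstract base-change spectral sequence proves awkward here, an alternative is to factor a general trivalent vertex with $a,b\geq 2$ as a composition of trivalent vertices each of which has one of its labels equal to $1$, exploiting the web associativity in $\mathcal{L}ad^n_\ell/\mathcal{I}^n_\ell$, and thereby reduce the general atomic case to the extremal case of Corollary \ref{W-res} by a further application of the compositional step.
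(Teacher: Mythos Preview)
Your proposal is correct and essentially follows the paper's own argument: reduce to the atomic case $Y_i$, use right-projectivity for one side, reduce to $L_{a,b}$ via the induction functors for the other side, and invoke Lemma~\ref{tensor-same}; then identify the two quotients by sliding violating strands through the box using (\ref{through-box1}--\ref{through-box2}). Your inductive reduction on the number of trivalent vertices is just a more explicit version of the paper's one-line ``we can reduce to the case of the ladder $Y_i$.''

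Your closing worry is unfounded, though. The left-Tor vanishing for $L_{a,b}$ works uniformly for all $a,b$ and does not require the explicit resolution of Corollary~\ref{W-res}: since $L_{a,b}$ is a $T^{a,b}$-module, associativity of derived tensor products gives
\[
T^{a,b}\Lotimes_{\tilde{T}^{a,b}}L_{a,b}
\;\cong\;
\bigl(T^{a,b}\Lotimes_{\tilde{T}^{a,b}}T^{a,b}\bigr)\Lotimes_{T^{a,b}}L_{a,b}
\;\cong\;
T^{a,b}\Lotimes_{T^{a,b}}L_{a,b}
\;\cong\; L_{a,b},
\]
using Lemma~\ref{tensor-same} for the middle isomorphism. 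This is exactly what the paper does, and it needs nothing beyond $L_{a,b}$ factoring through the cyclotomic quotient (which holds for all $a,b$ by Proposition~\ref{prop-a-b-only}). No spectral sequence and no factoring through extremal vertices is needed.
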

\begin{proof}
  We can reduce to the case of the ladder $Y_i$.  As a right module,
  $W_{Y_i}$ is projective, and thus the higher Tors always vanish.   
On the other hand, as a left module $W_{Y_i}$ is the simple module
$L_{a,b}$ induced on the left with $\tilde{T}^{\Bp_1}$ and on the right with
$\tilde{T}^{\Bp_1}$ where $\Bp'$ is the concatenation $(\Bp_1,a,b,\Bp_2)$.  We have that
$T^{a,b}\Lotimes_{\tilde{T}^{a,b}}L_{a,b}\cong L_{a,b}$ by Lemma
\ref{tensor-same}.   Thus, we have vanishing higher Tors in
general.

Thus, it only remains to prove the first claim of the proposition. The module  $T^{\Bp}\otimes_{\tilde{T}^{\Bp}}W$ kills
violating strands above the $Y$ and
$W\otimes_{\tilde{T}^{\Bp}}T^{\Bp}$ kills them below it.  Thus, we
need only show that these are the same subbimodule.  

Consider a
diagram with a violating strand above the $Y$, and let $y_0$ be the
lowest value of $y$ where a black strand crosses from right of the red
strand (not violating) to left of it (violating) when reading bottom
to top.  If this is below the $Y$, then we are done.  Otherwise,
follow the course of this black strand toward the bottom of the
diagram.  If it joins the top of the box, then we push this strand
through any crossings separating it from the box.  Once there are no
such crossings and dots, we will have a diagram which is 0 by
\eqref{Lbc-relations}, leaving only correction terms with fewer
crossings.  Eventually, we can rewrite this
diagram as a sum of others with fewer crossings where this strand is
still violating and doesn't reach the top of the box.  Thus, we can
now use relation (\ref{through-box1}--\ref{through-box2}) to move the violating strand
partially below the $Y$.

Similarly, any diagram with a violating strand below the $Y$ can be
rewritten as one with such a point above the $Y$, using
(\ref{through-box1}--\ref{through-box2}).  This completes the proof.
\end{proof}

This result shows that tensoring with the cyclotomic quotients
on the left or right defines a 2-functor from the 2-category above to the 2-category with
the same objects and 1-morphisms, but 2-morphisms given by maps between ladder bimodules over $T$.  In
particular, together with Lemma \ref{tensor-same}, it shows that:
\begin{corollary}\label{cor:reduction-tensor}
  For any ladder bimodules $W_1,W_2$ over $\tilde{T}$, we have that 
\[T^{\Bp_1}\Lotimes_{\tilde{T}^{\Bp_1}}(W_1\Lotimes_{\tilde{T}^{\Bp_2}}
W_2)\cong
(W_1\Lotimes_{\tilde{T}^{\Bp_2}}T^{\Bp_2})\Lotimes_{T^{\Bp_2}}(T^{\Bp_2}\Lotimes_{\tilde{T}^{\Bp_2}}W_2)
\cong (W_1\Lotimes_{\tilde{T}^{\Bp_2}}
W_2) \Lotimes_{\tilde{T}^{\Bp_3}}T^{\Bp_3}.\]
\end{corollary}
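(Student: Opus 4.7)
The plan is to combine the preceding Proposition (which provides $T^{\Bp'}\Lotimes_{\tilde{T}^{\Bp'}}W\cong W\Lotimes_{\tilde{T}^{\Bp}}T^{\Bp}$ with the vanishing of all higher Tors) with Lemma \ref{tensor-same} and the associativity of the derived tensor product. Nothing new needs to be constructed; the corollary should fall out as a formal manipulation.

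For the first isomorphism, I would first apply associativity of $\Lotimes$ to rewrite the outermost expression as $(T^{\Bp_1}\Lotimes_{\tilde{T}^{\Bp_1}}W_1)\Lotimes_{\tilde{T}^{\Bp_2}}W_2$, and then use the Proposition applied to $W_1$ to slide $T^{\Bp_1}$ across $W_1$, obtaining $(W_1\Lotimes_{\tilde{T}^{\Bp_2}}T^{\Bp_2})\Lotimes_{\tilde{T}^{\Bp_2}}W_2$. To identify this with the middle expression, observe that the right $\tilde{T}^{\Bp_2}$-action on $W_1\Lotimes_{\tilde{T}^{\Bp_2}}T^{\Bp_2}$ factors through $T^{\Bp_2}$; since $T^{\Bp_2}\Lotimes_{T^{\Bp_2}}T^{\Bp_2}\cong T^{\Bp_2}$ tautologically, associativity lets us insert a contraction over $T^{\Bp_2}$ in the middle to give $(W_1\Lotimes_{\tilde{T}^{\Bp_2}}T^{\Bp_2})\Lotimes_{T^{\Bp_2}}(T^{\Bp_2}\Lotimes_{\tilde{T}^{\Bp_2}}W_2)$, which is the middle term.

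The second isomorphism is obtained by running the symmetric argument on the right: associate the double tensor product the other way to arrive at $W_1\Lotimes_{\tilde{T}^{\Bp_2}}(T^{\Bp_2}\Lotimes_{\tilde{T}^{\Bp_2}}W_2)$, then invoke the Proposition applied to $W_2$ to replace $T^{\Bp_2}\Lotimes_{\tilde{T}^{\Bp_2}}W_2$ with $W_2\Lotimes_{\tilde{T}^{\Bp_3}}T^{\Bp_3}$, and reassociate once more to produce $(W_1\Lotimes_{\tilde{T}^{\Bp_2}}W_2)\Lotimes_{\tilde{T}^{\Bp_3}}T^{\Bp_3}$. I do not expect any real obstacle here: all the heavy lifting (the vanishing of higher Tors, Lemma \ref{tensor-same}) has already been done, and what remains is the routine bookkeeping to check that the chain of associativity and change-of-rings identifications is valid at the derived level. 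The only point requiring slight care is to note that the $\Lotimes$-associativity we apply never demands flatness on $\tilde{T}^{\Bp_2}$ beyond that already guaranteed by the Proposition, because every time we introduce a $T^{\Bp_2}\Lotimes_{T^{\Bp_2}}T^{\Bp_2}$ factor it is tautologically the correct derived object.
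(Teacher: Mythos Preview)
Your proposal is correct and is essentially the argument the paper has in mind: the corollary is stated immediately after the Proposition with no proof beyond the sentence ``together with Lemma~\ref{tensor-same}, it shows that,'' and your write-up just spells out the routine associativity and change-of-rings steps. One small remark: your argument does not actually invoke Lemma~\ref{tensor-same} (the identification $M\Lotimes_{\tilde T}W_2 \cong M\Lotimes_{T}(T\Lotimes_{\tilde T}W_2)$ for a right $T$-module $M$ is the standard change-of-rings formula and needs only $M\Lotimes_T T\cong M$, which is tautological); the paper's reference to Lemma~\ref{tensor-same} is best read as the alternative viewpoint that the middle expression, a derived tensor over $T$ of two $T$-modules, automatically agrees with their derived tensor over $\tilde T$, but either route yields the same chain of isomorphisms.
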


Assume that $c_1+c_2+c_3=c$.  We have two different ways of branching
from a strand labeled $c$ to $c_1,c_2,c_3$, depending on whether we
split off the left or right strand first:
\begin{equation}
\tikz[thick,baseline]{\draw[wei] (0,-1.5) -- (0,-.5);\draw[wei]
  (-.5,.5) -- (0,-.5); \draw[wei] (1,1.5) -- (0,-.5);\draw[wei]
  (0,1.5) -- (-.5,.5);\draw[wei] (-1,1.5) -- (-.5,.5);}\qquad
\tikz[thick,baseline]{\draw[wei] (0,-1.5) -- (0,-.5);\draw[wei]
  (.5,.5) -- (0,-.5); \draw[wei] (-1,1.5) -- (0,-.5);\draw[wei]
  (0,1.5) -- (.5,.5);\draw[wei] (1,1.5) -- (.5,.5);}\label{trees}
\end{equation}

\begin{proposition}\label{associative}
  The bimodules attached to the ladders in \eqref{trees}
are canonically isomorphic.  
\end{proposition}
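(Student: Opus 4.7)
The plan is to show that both compositions in \eqref{trees} are cyclically generated (as $\tilde{T}$-bimodules) by a common ``unbroken'' ladder diagram, and that the relations forced on this generator are the same in both cases, yielding the isomorphism that identifies the two generators.

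First I would pin down the canonical generator. For the left composition, let $\xi_L$ be the diagram with no crossings or dots, in which the inner $Y$-box contains $C_{\sR,\sR} \in L_{c_2, c_3}$ and the outer $Y$-box contains $C_{\sR,\sR} \in L_{c_1, c_2+c_3}$. Define $\xi_R$ analogously for the right composition. Using the basis given after Proposition \ref{prop:trivalent-commute} together with the fact that $L_{a,b}$ is generated as a $T^a_{\omega_c-\omega_b}$-module by $C_{\sR,\sR}$ (since it is simple), one checks that $\xi_L$ generates the left composition as a bimodule, and likewise for $\xi_R$.

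The isomorphism is then defined by $\xi_L \mapsto \xi_R$; its well-definedness amounts to showing that the annihilators of $\xi_L$ and $\xi_R$ in the free bimodule on a single generator coincide. These annihilators are generated by three kinds of relations: (i) the intrinsic box relations \eqref{Lbc-relations}--\eqref{eq:4} for each $L_{a,b}$; (ii) the KLR/Stendhal relations on the ambient strands; and (iii) the through-box sign relations (\ref{through-box1})--(\ref{through-box2}) at each trivalent vertex. Relations (i) and (ii) are manifestly the same on both sides. For (iii), the key check is the sign identity
\[
\sigma(m, c_1, c_2+c_3)\,\sigma(m, c_2, c_3) \;=\; \sigma(m, c_1+c_2, c_3)\,\sigma(m, c_1, c_2),
\]
which holds because both sides equal $(-1)^{|\{c_1, c_2, c_3, c\}\cap[0,m)|}$: the contributions of the intermediate value $c_2+c_3$ (resp.\ $c_1+c_2$) appear twice and cancel modulo $2$.

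The main obstacle is controlling the relations that arise when a black strand interacts nontrivially with the strands emerging from the two $L_{a,b}$ boxes as it traverses the compound vertex, rather than passing cleanly around it. Here the tetris-piece computations (\ref{tetris-long})--(\ref{tetris-red2}) from the proof of Proposition \ref{prop:trivalent-commute} should adapt, with careful sign bookkeeping, to show that every correction term produced on the left can be matched with one on the right. Once well-definedness is verified, bijectivity is automatic from dimension comparison (both bimodules have the same basis structure described after Proposition \ref{prop:trivalent-commute}), and canonicity is built into the construction since each of $\xi_L$ and $\xi_R$ is uniquely characterized up to global scalar by being ``unbroken'' and filled with the highest-weight generators $C_{\sR,\sR}$.
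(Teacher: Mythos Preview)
Your approach has a fundamental gap. The generators $\xi_L$ and $\xi_R$ do not lie in the same idempotent block: the top sequence of $\xi_L$ records the content reading words of the $C_{\sR,\sR}$'s for $L_{c_1,c_2}$ and $L_{c_1+c_2,c_3}$, while that of $\xi_R$ records those for $L_{c_1,c_2+c_3}$ and $L_{c_2,c_3}$. These are different sequences, so $e(\text{top of }\xi_L)\cdot\xi_R=0$, and the assignment $a\,\xi_L\mapsto a\,\xi_R$ sends $\xi_L=e(\text{top of }\xi_L)\,\xi_L$ to zero rather than to a generator. Relatedly, your claim that ``relations (i) are manifestly the same on both sides'' is false: the two sides involve \emph{different} modules $L_{a,b}$, with different strand labels emanating from the boxes and different internal relations among them. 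The sign identity you check for the through-box relations is correct and pleasant, but it is far from sufficient; the real content would be showing that the relations generated by one pair of boxes imply those generated by the other pair, and your ``main obstacle'' paragraph acknowledges this without resolving it.

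The paper takes a completely different, indirect route. Using commutation with induction (Proposition~\ref{prop:trivalent-commute}) it reduces to the block with no external black strands, where both bimodules become modules $L_1,L_2$ over $\tilde{T}^{(c_1,c_2,c_3)}$. Both are killed by all restriction functors, and since $\wedgep{c}$ occurs with multiplicity one in $\wedgep{c_1}\otimes\wedgep{c_2}\otimes\wedgep{c_3}$ there is a unique such simple. A one-dimensional count on a well-chosen idempotent $e_{\Bi_k}$ shows each $L_k$ is itself simple, hence $L_1\cong L_2$. The canonical isomorphism is then pinned down by exhibiting an explicit nonzero vector in $e_{\Bi_1}L_2$ (built from a specific non-row-reading tableau on the $c_1\times(c_2+c_3)$ box) matching the straight-line generator of $e_{\Bi_1}L_1$. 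This explicit vector is precisely the missing ingredient that your generators-and-relations approach would need to supply in order to get off the ground.
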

\begin{proof}
  Since the $Y$-ladder is projective on the bottom, both of these ladders are
  attached to an honest bimodule. First we check that these
  bimodules are isomorphic.  Commutation with induction
  functors means that we only need to show this for the block of
  $\tilde{T}^c$ with no black lines. That is, we must show that applying these two
  ladders results in the same module over $\tilde{T}^{(c_1,c_2,c_3)}$. Let $L_1$ be
  the module resulting from the first ladder, and $L_2$ from the second.
  The resulting
  module in both cases is killed by all restriction functors. Since
  $\wedgep{c_1}\otimes \wedgep{c_2}\otimes \wedgep{c_3}$ contains
  $\wedgep{c}$ with multiplicity 1 (as is easily shown, for example,
  with {\it jeu de taquin}), there is a unique such
  simple $L$. Both $L_1$ and $L_2$ must be iterated extensions of
  this simple. Furthermore, if we let $\Bi_1$ be the idempotent for the
  sequence
  \begin{multline*}
    (\omega_{c_1},c_1,c_1+1,\dots, c_1+c_2-1, c_1-1, c_1, \dots, c_1+c_2-2, \dots,
    1,\dots, c_2,\\
 \omega_{c_2},c_1+c_2,c_1+c_2+1,\dots, c-1,
    c_1+c_2-1,\dots, c-2,\dots, 1, \dots, c_3,\omega_{c_3})    
  \end{multline*}

then $\dim e_{\Bi_1}L_1=1$, so $L_1$ must itself be simple.  Similarly,
if $\Bi_2$ corresponds to 
\begin{multline*}
  (\omega_{c_1},c_1,c_1+1,\dots, c-1, c_1-1, c_1, \dots, c-2, \dots,
  1,\dots, c_2+c_3,\\ \omega_{c_2},c_2,c_2+1,\dots, c_2+c_3-1, c_2-1,\dots,
  c_2+c_3-2,\dots, 1, \dots, c_3,\omega_{c_3})
\end{multline*}
then $\dim e_{\Bi_2}L_2=1$, so $L_2$ must be simple as well.  Thus,
these modules are isomorphic.

Thus, we can fix an isomorphism between these modules by producing an
isomorphism between the 1-dimensional spaces $  e_{\Bi_1}L_1$ and
$e_{\Bi_1}L_2$.     The  former space is generated by a straight line
diagram, so we only describe the image of this vector in
$e_{\Bi_1}L_2$.   First, let $v$ be the vector in  $L_{c_1,c_2+c_3}$
attached to the standard tableau on a $c_1\times (c_2+c_3)$ box where
we fill the first $c_2$ boxes in all the rows in order, and then the
last $c_3$.  That is, 
fills $(i,j)$ with $j+c_2(i-1)$ if $j\leq c_2$ and with
$j+(c_1-1)c_2+c_3(i-1)$.  If $c_1=c_3=2$ and $c_2=1$, then this
tableau is \[\tikz[thick,scale=.6,baseline]{\draw (0,0) -- (3,0);\draw (0,1) -- (3,1);\draw (0,2)
  -- (3,2);\draw (0,0) -- (0,2);\draw (1,0) -- (1,2);\draw (2,0) --
  (2,2); \draw (3,0) --
  (3,2);\node at (.5,.5){$1$};\node at (1.5,.5){$3$};\node at
  (.5,1.5){$2$};\node at (1.5,1.5){$5$};\node at
  (2.5,.5){$4$};\node at (2.5,1.5){$6$};}\]

Now apply the second split (that is, tensor with $Y_2$), taking the
straight line diagram in the smaller Y.  Finally, we act on this
diagram by pulling the rightmost $c_1c_3$ strands in the large Y into
the smaller Y without introducing any additional crossings between
black strands.  This has the
correct top, and a minimal number of crossings in order to obtain this
top.  Thus, it defines an isomorphism.  In fact, one can easily check
that the map defined symmetrically through a horizontal axis is
inverse to this one, since in both cases, all crossings introduced are
between labels that commute.  
\end{proof}

For example, if $c_1=c_3=2$ and $c_2=1$, then
$\Bi_1=(\omega_2,2,1,\omega_1, 3,4,2,3,1,2,\omega_2)$.  The image of
the straight-line diagram in $e_{\Bi_1}L_2$ is
\[
\tikz[thick,baseline,xscale=3]{
\draw (0,-.45) to[out=110,in=-70] node
  [above, at end]{$2$} (-.8,1.5);
\draw (0,-.45) to[out=86,in=-70]node
  [above, at end]{$1$} (-.6,1.5);
\draw (0,-.45) to[out=104,in=-95] node
  [above, at end]{$3$} (-.2,1.5);
\draw (0,-.45) to[out=96,in=-100] node
  [above, at end]{$4$} (0,1.5);
\draw (0,-.45) to[out=80,in=-105] node
  [above, at end]{$2$} (.2,1.5);
\draw (0,-.45) to[out=72,in=-110] node
  [above, at end]{$3$} (.4,1.5);
\draw (.3,.5) to[out=90,in=-100] node
  [above, at end]{$1$} (.6,1.5);
\draw (.3,.5) to[out=80,in=-100] node
  [above, at end]{$2$} (.8,1.5);
\draw[wei] (0,-1.5) -- (0,-.5);\draw[wei]
  (.3,.5) -- (0,-.5); \draw[wei] (-1,1.5) -- (0,-.5);\draw[wei]
  (-.4,1.5) -- (.3,.5);\draw[wei] (1,1.5) -- (.3,.5);}\]
One can easily extend this argument to show that any two ladders that
start at a single red line and branch out to the same top have canonically
isomorphic bimodules. By Mac Lane's coherence theorem, it suffices to show that the 
following diagram commutes on the nose:
\[
\tikz[thick,baseline,xscale=3]{
\draw[wei] (0,-0.25) -- (0,0.25); \draw[wei] (0,0.25) -- (-0.45,1); \draw[wei] (-0.15,0.5) -- (0.15,1); \draw[wei] (-0.3,0.75) -- (-0.15,1);\draw[wei] (0,0.25) -- (0.45,1);  
\draw[wei] (1,1.75) -- (1,2.25); \draw[wei] (1,2.25) -- (0.55,3); \draw[wei] (0.85,2.5) -- (1.15,3); \draw[wei] (1,2.75) -- (0.85,3);\draw[wei] (1,2.25) -- (1.45,3);  
\draw[wei] (2.5,1.75) -- (2.5,2.25); \draw[wei] (2.5,2.25) -- (2.05,3); \draw[wei] (2.65,2.5) -- (2.35,3); \draw[wei] (2.5,2.75) -- (2.65,3);\draw[wei] (2.5,2.25) -- (2.95,3);  
\draw[wei] (3.5,-0.25) -- (3.5,0.25); \draw[wei] (3.5,0.25) -- (3.05,1); \draw[wei] (3.65,0.5) -- (3.35,1); \draw[wei] (3.8,0.75) -- (3.65,1);\draw[wei] (3.5,0.25) -- (3.95,1);  
\draw[wei] (1.75,-2.25) -- (1.75,-1.75); \draw[wei] (1.75,-1.75) -- (1.3,-1); \draw[wei] (2.05,-1.25) -- (1.9,-1); \draw[wei] (1.45,-1.25) -- (1.6,-1);\draw[wei] (1.75,-1.75) -- (2.2,-1);  
\draw[->] (0.35,1.45) -- (0.65,2.05); \draw[->] (1.6,2.3) -- (1.9,2.3); \draw[->] (2.85,2.05) -- (3.15,1.45); 
\draw[->] (0.5,-0.85) -- (0.8,-1.2);\draw[->] (2.7,-1.2) -- (3,-0.85);
\node at (0,-0.7) {$\circled{1}$};\node at (1,1.3) {$\circled{2}$};\node at (2.5,1.3) {$\circled{3}$};\node at (3.5,-0.7) {$\circled{4}$};\node at (1.75,-2.7) {$\circled{5}$};
}\]
We number the ladders in this coherence diagram 1 to 5 as indicated.  
Note that in the proof of Proposition~\ref{associative}, there is a canonical degree-zero isomorphism 
$e_{\Bi_1}L_2\cong e_{\Bi_2}L_2$ induced by the permutation diagram with bottom 
labeled $\Bi_1$ and top labeled $\Bi_2$. 
Similarly, all the ladders in the coherence diagram above have a natural idempotent associated 
to their top boundary, denoted $e_{\Bi_1},\ldots,e_{\Bi_5}$. And there is a canonical degree-zero isomorphism $e_{\Bi_1}L_j\cong e_{\Bi_j}L_j$, for every $j=1,\ldots, 5$, where $L_j$ is the simple module associated to the $j$th 
ladder as in the proof of Proposition~\ref{associative}. Furthermore, there are canonical degree-zero isomorphisms 
$e_{\Bi_j}L_{j+1}\cong e_{\Bi_1}L_{j+1}$, for $j=1,\ldots,3$, and $e_{\Bi_5}L_4\cong e_{\Bi_4}L_4$. Using the above canonical isomorphisms, one can easily compute the 
image of the generator of $e_{\Bi_1}L_1$ in $e_{\Bi_1}L_j$, for any $j=2,\ldots,5$: it is always given by the degree-zero 
diagram with a minimal number of crossings whose top is $\Bi_1$ and which at each vertex starts with 
the canonical configuration of black strands. This shows in particular that the two isomorphisms $L_1\to L_4$ in the coherence diagram, the one composed by the isomorphisms over the top and the other composed by the isomorphisms over the bottom, are equal. 

We will use this associativity isomorphism many times over
the course of the paper.  Note that this associativity can be used to
show the invariance of the corresponding bimodule under changing
slicing.

\excise{\begin{proposition}
  ladders behave like you expect under rotation.
\end{proposition}
\begin{proof}
  \bentodo{Still needs a proof.  This is basically still just the fact
    that critical points can cancel (by results from \cite{Webmerged})
    and associativity, though.}
\end{proof}}

For any ladder bimodule $W$, we can consider the bimodule $W^\star$
attached to the ladder given by its reflection through the $x$-axis.
This coincides with the bimodule given by switching the left and right
actions on $W$ via the reflection automorphism on $\tilde{T}$.  Let
$\eta(W)$ be the sum over trivalent vertices in $W$ of the quantity
$\eta$ defined as follows: for $Y_i$ or $Y_i^\star$ labeled $a,b,a+b$, 
let 
$\eta(Y_i)=
-ab
$.
\begin{proposition}\label{self-dual}
  For every ladder bimodule $W$, we have $\RHom_{\tilde{T}^{\Bp}
  }(W,\tilde{T}^{\Bp})\cong W^\star\langle \eta(W)\rangle$.  This defines adjunction
  maps \[\ep_W\colon W\Lotimes_{\tilde{T}^{\Bp}} W^\star\langle \eta(W) \rangle\to R\qquad
  \iota_W\colon R\to  W^\star\Lotimes_{\tilde{T}^{\Bp}} W\langle \eta(W) \rangle.\]
\end{proposition}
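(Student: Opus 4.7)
The plan is to reduce to the case of a single trivalent vertex and then induct on the smaller of the two branching labels. Any ladder is built by vertical concatenation of trivalent vertices, so $W$ is a tensor product (over intermediate $\tilde{T}^{\Bq}$'s) of bimodules $W_{Y_i}$ and $W_{Y_i^\star}$. Biadjunctions compose in the expected way: if $W \cong W_1\otimes W_2$ and each $W_k$ has a biadjoint $W_k^\star\langle \eta_k\rangle$ realized by explicit unit and counit maps, then $W_2^\star\otimes W_1^\star\langle \eta_1+\eta_2\rangle$ is a biadjoint for $W_1\otimes W_2$, obtained by splicing units and counits through the zig-zag. Since $\eta(W)$ is defined as a sum over trivalent vertices, this additivity matches, and the problem reduces to the single trivalent case.

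For a single vertex $W_{Y_i}$ with labels $a,b,c=a+b$, the proof is by induction on $\min(a,b)$. The base case $\min(a,b)=1$ is exactly Lemma~\ref{a-b-1}, with units and counits normalised as in Lemma~\ref{lem:bubble-dot}. For the inductive step, assume without loss of generality that $a\leq b$ and apply Proposition~\ref{associative} to the splitting $c = 1+(a-1)+b$, obtaining a canonical isomorphism
\[W_{Y(1,a-1)}^{L}\otimes_{\tilde{T}^{(a,b)}} W_{Y(a,b)} \;\cong\; W_{Y(a-1,b)}^{R}\otimes_{\tilde{T}^{(1,c-1)}} W_{Y(1,c-1)},\]
where $[\cdot]^L, [\cdot]^R$ indicate on which pair of adjacent strands the trivalent vertex acts. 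Each of the three bimodules on the outside has the desired biadjoint structure: the two with a $1$-label by Lemma~\ref{a-b-1}, and the middle $W_{Y(a-1,b)}$ by the inductive hypothesis, since $\min(a-1,b)<\min(a,b)$.

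To extract the biadjoint of $W_{Y(a,b)}$, tensor both sides on the left with the known biadjoint $W_{Y^\star(1,a-1)}^{L}$. By Proposition~\ref{bigon}, the bigon $W_{Y^\star(1,a-1)}\otimes W_{Y(1,a-1)}$ splits as $\bigoplus_{j}\tilde{T}\langle\cdot\rangle$, so the identity bimodule appears as a canonical graded summand via the unit-counit zig-zag. This realises $W_{Y(a,b)}$ as a graded summand of a tensor product of bimodules whose biadjoints are known, and the biadjoint of $W_{Y(a,b)}$ is the corresponding summand of the biadjoint of that tensor product. The shift $\eta(Y_i)=-ab$ is then checked by an arithmetic accumulation: the shifts from Lemma~\ref{a-b-1} applied to the two $\min=1$ vertices, the inductive shift $-(a-1)b$, and the shift from the bigon decomposition, sum to exactly $-ab$ as required.

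The main obstacle is bookkeeping: verifying that the units and counits extracted from the summand decomposition satisfy the zig-zag identities, and that sign and grading choices from Lemma~\ref{lem:bubble-dot} propagate coherently through the induction. This is controlled by the canonicity of the isomorphisms from Propositions~\ref{associative} and~\ref{bigon}: the extracted unit and counit are determined up to at most a global scalar, which can be pinned down by evaluating on a single element in the Hu–Mathas basis of $L_{a,b}$. Once the case of a single trivalent vertex is established, the reduction of the first paragraph delivers the statement for a general ladder bimodule.
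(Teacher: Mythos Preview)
Your proof is correct and follows essentially the same approach as the paper: reduce to a single trivalent vertex, induct on $\min(a,b)$ with base case Lemma~\ref{a-b-1}, and handle the inductive step by combining associativity (Proposition~\ref{associative}) with the bigon decomposition (Proposition~\ref{bigon}) to exhibit $W_{Y(a,b)}$ as a summand of a composite whose adjoint is already known. The only cosmetic difference is that the paper splits the $a$-strand as $(a-1,1)$ rather than your $(1,a-1)$, and it leaves the grading-shift arithmetic and coherence bookkeeping implicit rather than spelling them out.
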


\begin{proof}
  Of course, it's sufficient to prove this for ladders with one trivalent
  vertex.  We can achieve this by induction on $\min(a,b)$. The case $\min(a,b)=1$ 
was proved in Lemma~\ref{a-b-1}.  For the induction step we can assume
  that $1<a\leq b$. Then we can attach the ladder which splits the first
  strand to arrive at $(a-1,1,b)$.  By Proposition \ref{associative},
  this is the same as starting with $(a-1,b+1)$ and splitting the
  second strand.  By induction, both these ladders have the desired
  adjoints, so the same is true of their composite. This shows that
  the ladder $W'$ given by 
  our original trivalent vertex with a bigon attached to the first
  branch is adjoint to its reflection.  By Proposition \ref{bigon},
  $W'\cong W^{\oplus [a]_q}$, so the same is true of $W$.  
\end{proof}

Note that by symmetry, this means that $W$ and $W^\star$ are biadjoint
up to shift.

\subsection{2-categories}
\label{sec:2-categories}

The material from the previous sections can be encapsulated into the
definition of two 2-categories:
\begin{definition}
  Let $\tWB^n_\ell$ (resp. $\WB^n_\ell$) be the 2-category with:
  \begin{itemize}
  \item objects given by sequences of integers $(p_1,\dots, p_\ell)$, with $p_i\in [0,n]$ for $i\in [1,\ell]$.
  \item morphisms given formal sums of $n$-ladders with $\ell$ uprights.
  \item 2-morphisms $A\to {A'}$ between two ladders is given by the space of 
    morphisms between the ladder bimodules in
    the derived category of bimodules over
    $\tilde{T}^{\Bp'}\operatorname{-}\tilde{T}^{\Bp}$
    (resp. ${T}^{\Bp'}\operatorname{-}{T}^{\Bp}$) that commute with
    the  $\tU_n^-\times \tU_n^-$-action (resp. $\tU_n$-action):
    \begin{align*}
    \Hom_{\tWB}(A,A')&:= \mathbb{R}\Hom_{\tilde{T}^{\Bp'}\otimes
      \tilde{T}^{\Bp}}(W_A,W_{A'})^{\tU_n^-\times \tU_n^-}\\
\Hom_{\WB}(A,A')&:= \mathbb{R}\Hom_{{T}^{\Bp'}\otimes
      {T}^{\Bp}}(W_A\otimes_{\tilde{T}^{\Bp}}{T}^{\Bp},W_{A'}\otimes_{\tilde{T}^{\Bp}}{T}^{\Bp})^{\tU_n}
    \end{align*}
  \end{itemize}
\end{definition}
There is a 2-functor
$\tWB\to \WB$ which is induced by the tensor product functor
$\otimes_{\tilde{T}^{\Bp}}{T}^{\Bp}$. 
Corollary \ref{cor:reduction-tensor} shows the compatibility with
composition of 1-morphisms, and  the discussion before Proposition
\ref{prop:trivalent-commute}, shows that this functor sends natural
transformations commuting with $\tU_n^-\times \tU_n^-$ to those
commuting with $\tU_n$. In particular, it is the
identity on objects and 1-morphisms.  
 Proposition \ref{self-dual}
shows that every 1-morphism in this category has a left and right
adjoint, isomorphic to the reflection of that ladder.   These
2-categories act in an obvious way on the derived categories of modules over
the rings $\tilde{T}^\Bp$ and $T^\Bp$ respectively.  The isomorphisms
of Propositions \ref{bigon} and \ref{associative} can be regarded as
isomorphisms on 1-morphisms in the category $\tWB$ or $\WB$.

\subsection{Grothendieck groups}
\label{sec:grothendieck-groups}

As we mentioned in Section \ref{sec:skew-howe-duality}, we wish to
argue that the 2-categories $\WB$ and $\tWB$ give categorifications of
the category $\mathcal{L}ad^n_\ell/\mathcal{I}^n_\ell$.  One of the
ways we will make this precise is to compare the action of a ladder on
$\iwedge{\Bp}_q \C_q^n$ with that of a ladder bimodule on the
corresponding Grothendieck group.  We'll prove a slightly stronger
statement using the algebra $\tilde{T}$.

\begin{theorem}\label{thm:ladder-GG}
  The induced action of a ladder bimodule on $\bigoplus_{\vert \Bp\vert =p}K^0_q({T}^{\Bp})$ (resp. $\bigoplus_{\vert \Bp\vert =p}K^0_q(\tilde{T}^{\Bp})$)
  agrees with the action of the corresponding ladder on $\iwedge{p}_q(\C_q^\ell\otimes \C_q^n)$
  (resp. $U^-_q\otimes \iwedge{p}_q(\C_q^\ell\otimes \C_q^n)$).
\end{theorem}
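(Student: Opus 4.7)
The plan is to reduce the theorem to checking the statement for the basic ladder bimodules $W_{Y_i}$ and $W_{Y_i^\star}$, and then to identify the induced action on the Grothendieck group with a suitable divided power $F_i^{(c)}$ or $E_i^{(c)}$ acting on the wedge.

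First, every $n$-ladder with $\ell$ uprights is by definition a vertical gluing of the basic pieces $\wE^{(c)}_i 1_{\Bp}$ and $\wF^{(c)}_j 1_{\Bp'}$, and these in turn arise from trivalent $Y$ and $Y^\star$ vertices. Under the dictionary of the paper, vertical composition of ladders corresponds to composition of ladder bimodules (Definition in Section \ref{sec:general-ladders}), and by Corollary \ref{cor:reduction-tensor} the derived tensor product over $\tilde{T}$ descends to a derived tensor product over $T$. Since taking classes in the (topological) Grothendieck group is multiplicative under such compositions, it suffices to check the claim for a single $Y_i$ or $Y_i^\star$ ladder, and then to observe that both sides of the claimed identity are compatible with stacking. (We also use that gluing several $Y$'s branching off the same strand is canonically well-defined by Proposition \ref{associative}, so all reorderings ambiguities in the ladder decomposition are harmless.)

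Next, by Proposition \ref{prop:trivalent-commute}, the bimodules $W_{Y_i}$ and $W_{Y_i^\star}$ commute strongly with the induction functors $\tilde{\fF}_k^*, \tilde{\fF}_k$, and hence their Grothendieck class as a bimodule is determined by its action on a single cyclic vector. More precisely, Proposition \ref{prop:GG-iso} identifies $K^0_q(\tcata^\bla) \cong U_q^-\otimes V_{\la_1}\otimes \cdots \otimes V_{\la_\ell}$, with the two copies of the lower half categorical action matching the left and right actions of $F_i$ on this vector space. Because tensor product with $W_{Y_i}$ commutes with these induction functors, its effect on a class $u\otimes v$ is determined by its effect on the "vacuum" class $1\otimes v_{\text{hw}}$, where $v_{\text{hw}}$ is the highest weight vector in the relevant product of fundamental representations. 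On this vacuum vector, the bimodule $W_{Y_i}$ acts, via Proposition \ref{prop-a-b-only}, as the (class of the) simple module $L_{a,b}$ after inducing up the $\tilde{T}^{a,b}_{\omega_c}$-module structure to the relevant $\tilde{T}^{\Bp'}$-module. The class $[L_{a,b}]$ in $K^0_q(\tilde{T}^{a,b}_{\omega_c})$ is the unique (up to scalar) highest-weight-like vector inside $\wedgepq{a}\otimes \wedgepq{b}$ that is annihilated by all $E_i$, namely the vector produced by the comultiplication $\Delta(v_{[1,c]})$ applied to $v_{[1,a]}\otimes v_{[1,b]}$. This is exactly the image of the generator of $\wedgepq{c}$ under the quantum $\mathfrak{gl}_n$-intertwiner that the $Y$-ladder represents in the CKM picture; this matching is essentially the statement of Theorem \ref{thm:CKMmain} at the decategorified level.

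Finally, one checks the $q$-grading shifts and the case of $W_{Y_i^\star}$: by definition, $W_{Y_i^\star}$ is $W_{Y_i}$ with its actions swapped and internal grading shifted by $\eta(Y_i) = -ab$. This shift is precisely what is needed so that on the Grothendieck group the action of $W_{Y_i^\star}$ equals the quantum $\mathfrak{gl}_n$-morphism $\wedgepq{c}\to \wedgepq{a}\otimes \wedgepq{b}$ multiplied by the correct power of $q$ dictated by CKM's conventions; this follows by invoking the biadjunction of Proposition \ref{self-dual} and comparing with the biadjunction of the quantum group morphisms (which again involves the exponent $\eta$). The reduction to $T^\Bp$-modules then follows from the fact that the quotient map $\tilde{T}^\Bp \to T^\Bp$ on Grothendieck groups corresponds to the projection $U_q^-\otimes V_{\la_1}\otimes\cdots\otimes V_{\la_\ell}\twoheadrightarrow V_{\la_1}\otimes\cdots\otimes V_{\la_\ell}$ sending $u\otimes w\mapsto \varepsilon(u)w$, together with the second half of Proposition \ref{prop:GG-iso}.

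The main technical obstacle is keeping track of the grading shifts and the signs arising in the basis of $L_{a,b}$ and in the relations (\ref{through-box1}--\ref{through-box2}), so as to recover exactly the quantum binomial coefficients $[c]!/[a]![b]!$ appearing in the CKM formulas for $\wE^{(c)}$ and $\wF^{(c)}$. Once this bookkeeping is carried out, every other step reduces either to the construction of the bimodules or to the skew Howe duality isomorphism of Theorem \ref{thm:CKMmain}.
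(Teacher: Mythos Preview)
Your overall strategy matches the paper's: reduce to the trivalent pieces, use commutation with induction (Proposition~\ref{prop:trivalent-commute}) to reduce to the highest weight vacuum, and then identify $[L_{a,b}]$ with the image of $v_{\omega_c}$ under the CKM intertwiner. A few points where the paper is tighter and where your write-up wobbles:

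\textbf{Handling $Y_i^\star$.} The paper disposes of $Y_i^\star$ in one line by adjunction: once you know $W_{Y_i}$ acts as the CKM map $\iwedge{c}\to\iwedge{a}\otimes\iwedge{b}$, its adjoint $W_{Y_i^\star}$ automatically acts as the adjoint intertwiner. You instead invoke the grading shift $\eta(Y_i)=-ab$ and Proposition~\ref{self-dual} directly, which is more bookkeeping than needed. Also, you wrote the direction of $Y_i^\star$ backwards (it goes $\iwedge{a}\otimes\iwedge{b}\to\iwedge{c}$, not the other way).

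\textbf{Pinning down the scalar.} You characterize $[L_{a,b}]$ only as ``the unique (up to scalar) highest-weight-like vector\dots annihilated by all $E_i$'', and your phrase ``the comultiplication $\Delta(v_{[1,c]})$ applied to $v_{[1,a]}\otimes v_{[1,b]}$'' does not parse. The paper nails the normalization cleanly: $L_{a,b}$ is the unique simple quotient of the standard $\nabla_{\omega_c-\omega_b,\omega_b}$, so $[L_{a,b}]=v_{\omega_c-\omega_b}\otimes v_{\omega_b}+\cdots$ with leading coefficient $1$, and this is exactly how the CKM $Y$ acts on $v_{\omega_c}$ (cf.\ \cite[\S3.1]{CKM}). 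That is the missing step in your argument.

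\textbf{The binomial coefficients.} Your closing paragraph about recovering $[c]!/[a]![b]!$ is a red herring. A single trivalent vertex corresponds to a single CKM intertwiner; no quantum binomials enter. You may be conflating this with Proposition~\ref{dividedpowers}, which is a separate statement about $\dF_i^n$ versus $\dF_i^{(n)}$.
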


%\bentodo{The statement and proof don't quite match here}
\begin{proof}
First, by the compatibility of the maps in Proposition
\ref{prop:GG-iso}, is suffices to check this for $\tilde{T}$, since
this will imply the statement for $T^\Bp$.

We also only need to check this in the case of $Y_i$ and
$Y_i^\star$. Furthermore, by adjunction, it suffices to only check
$Y_i$. 
Using induction functors, we can reduce to the case where there are no
strands other than those joining the trivalent vertex.  That is, we
need only check that these maps are correct on the $\omega_c$ weight
space.  

In the case of $Y_i$, the functor sends $
\tilde{T}^{\omega_c}_{\omega_c}\cong \K$ to the
module $L_{a,b}$ over $\tilde{T}^{a,b}$.  Under the identification
$K^0_q({T}^{a,b})\cong \iwedge{a}_q\C^n_q\otimes
\iwedge{b}_q\C^n_q$, the class of the simple $L_{a,b}$ considered as a ${T}^{a,b}$-module and denoted 
$v_{a,b}:=[L_{a,b}]$, spans
the space of highest weight vectors of weight $\omega_c$.  In fact, it
is the unique highest weight vector of the form
$[L_{a,b}]=v_{\omega_c-\omega_b}\otimes v_{\omega_b}+\cdots$, since
$L_{a,b}$ is the unique simple quotient of the standard
$\Delta_{\omega_c-\omega_b,\omega_b}$.  This is also how the ladder $Y_i$ acts on
the vector $v_c$ (see~\cite[Sect. 3.1]{CKM}).

Considered over $\tilde{T}^{a,b}$, the  class of $[L_{a,b}]$
is thus $1\otimes v_{a,b}$, which is, indeed, how the foam $Y_i$ acts on
the vector $1\otimes v_c$.  
\end{proof}
%\bentodo{Should say a bit more here.}
\subsection{Comparison to category \texorpdfstring{$\cO$}{O}}
\label{sec:comp-categ-co}

There is an equivalence of categories $T^{\Bp}_\mu\mmod$ to a block of
parabolic category $\cO^{\Bp}$ for the Lie algebra $\mathfrak{gl}_p$ and the
parabolic $\fp_{\Bp}$ of block upper triangular matrices with blocks
given by $\Bp$, which we denote $\xi\colon T^{\Bp}_\mu\mmod\to \cO^{\Bp}_\mu$.  If we let $\Bp'=(p_1,\dots,
p_j+p_{j+1},\dots,p_\ell)$, then we have an inclusion functor
$I^{\Bp'}_{\Bp}\colon \cO^{\Bp'}\to \cO^{\Bp}$ and its right adjoint, the {\bf Zuckerman
  functor} $Z^{\Bp}_{\Bp'}\colon D^b(\cO^{\Bp'}) \to D^b(\cO^{\Bp})$.  

\begin{proposition}
  We have isomorphisms $\xi_{\Bp}\circ W_{Y_i}\cong I^{\Bp'}_{\Bp}\circ \xi_{\Bp'}$ and
  $\xi_{\Bp'}\circ W_{Y_i^\star}\cong  Z^{\Bp}_{\Bp'}\circ \xi_{\Bp}\langle p_ip_{i+1} \rangle$.
\end{proposition}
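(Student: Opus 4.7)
The plan is to prove the first isomorphism directly and then deduce the second by adjunction.

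For the first isomorphism, both sides are exact functors: the equivalences $\xi$ are exact, the inclusion $I^{\Bp'}_{\Bp}$ is the embedding of a Serre subcategory (being $\fp_{\Bp'}$-integrable is stable under subs and quotients), and tensoring with the ladder bimodule $W_{Y_i}$ is exact because the bimodule is sweet---it is projective on the right by the explicit basis from Section~\ref{sec:case-y-ladders} and, after descent along Corollary~\ref{cor:reduction-tensor}, also on the left. So the isomorphism reduces to a statement about exact functors between abelian categories, which can be checked on a set of generators.

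The cleanest approach is through Soergel's Struktursatz (flagged in the introduction as one of the two category-$\cO$ facts we rely on). Both functors commute with translation functors---for $I^{\Bp'}_{\Bp}$ this is standard (translation preserves parabolic integrability), while for $\xi_{\Bp}\circ W_{Y_i}$ it follows from the $\tU_n^-$-equivariance of $W_{Y_i}$ established in Proposition~\ref{prop:trivalent-commute}, extended to full $\tU_n$-equivariance and transported through the $\fsl_n$-categorification of \cite{Webmerged}. Struktursatz then reduces the claim to agreement on a single antidominant projective of $\cO^{\Bp'}$. The required calculation is that $W_{Y_i}$ applied to the $T^{\Bp'}$-module corresponding under $\xi_{\Bp'}$ to an antidominant projective is identified under $\xi_{\Bp}$ with the same module viewed as an object of $\cO^{\Bp}$ via $I^{\Bp'}_{\Bp}$. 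This comes down to a direct description of $W_{Y_i}$ on a specific projective using the basis and structure developed in Section~\ref{sec:case-y-ladders}.

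The second isomorphism follows by adjunction. By Proposition~\ref{self-dual}, $W_{Y_i^\star}$ is the right adjoint of $W_{Y_i}$ up to the grading shift $\eta(Y_i)=-p_ip_{i+1}$, and $Z^{\Bp}_{\Bp'}$ is by definition the right adjoint of $I^{\Bp'}_{\Bp}$. Taking right adjoints of both sides of the first isomorphism (equivalences being self-adjoint) and moving the grading shift to the right-hand side yields $\xi_{\Bp'}\circ W_{Y_i^\star}\cong Z^{\Bp}_{\Bp'}\circ \xi_{\Bp}\langle p_ip_{i+1}\rangle$.

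The main obstacle is the explicit matching on the antidominant projective. Because the equivalence $\xi$ from \cite[Prop.~9.11]{Webmerged} is not intrinsic but is constructed via a specific matching of generators, the conventions must be tracked with care. A more hands-on alternative---directly identifying $W_{Y_i}$ applied to each standard module of $T^{\Bp'}\mmod$ with the image under $I^{\Bp'}_{\Bp}$ of the corresponding standard in $\cO^{\Bp'}$ (using Section~\ref{sec:standard-modules})---avoids the Struktursatz but requires more bookkeeping, since the standard filtration on $I^{\Bp'}_{\Bp}(\Delta_\bmu)$ reflects the branching of the irreducible $\fp_{\Bp'}$-module into irreducible $\fp_{\Bp}$-modules, and the matching filtration on the $T$-side must be read off the basis of $W_{Y_i}$.
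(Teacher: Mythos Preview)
Your adjunction argument for the second isomorphism is correct and matches what the paper does (though the paper leaves this implicit). The overall architecture for the first isomorphism---reduce via commutation with the $\tU_n$-action, then check on a generator---is also right. But there are two concrete gaps.

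First, the invocation of Struktursatz is not the right reduction. Struktursatz says Soergel's functor $\mathbb{V}$ is fully faithful on projectives in a fixed block; it lets you test whether a given natural transformation between projective endofunctors is an isomorphism by restricting to the antidominant projective. Here you are comparing functors between \emph{different} parabolic categories, and you have not produced any natural transformation to test. Knowing that $F(P_{\mathrm{ad}})\cong G(P_{\mathrm{ad}})$ as objects is not enough: you need an isomorphism compatible with the translation-functor actions on both sides, and you have not indicated where that compatibility comes from. The paper's reduction is different and avoids this problem: it uses commutation with $\tU_n$ \emph{and} with the functor $\fI_{\omega_p}$ of adding a red strand (which you do not mention), reduces to the case $\ell=i+1$, and then invokes the standard stratification to see that every projective is a summand of something obtained by applying $\tU_n$ to $\fI_{\omega_c}(P)$ for $P$ a projective over the smaller algebra. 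This gives a generating class on which the isomorphism can be constructed explicitly and naturally in $P$, and then extended.

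Second, the actual identification on the generator is the content of the proof, and you have not supplied it. The paper's point is that $W_{Y_i}\Lotimes \fI_{\omega_c}(P)$ is the standardization of $P\boxtimes L_{a,b}$, and that $\xi_{(a,b)}(L_{a,b})\cong \C^c$: both are the unique simple killed by every translation to a wall (equivalently, by all $\fE_i$). Combined with the fact that standardization matches parabolic induction under $\xi$, this yields $\xi_{\Bp}(W_{Y_i}\Lotimes \fI_{\omega_c}(P))\cong \Ind_{\fp_{(p-c,c)}}(\xi(P)\boxtimes \C^c)\cong I^{\Bp'}_{\Bp}\xi_{\Bp'}(\fI_{\omega_c}(P))$, natural in $P$. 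Your ``direct description using the basis of Section~\ref{sec:case-y-ladders}'' would have to rediscover exactly this identification, and the characterization of $L_{a,b}$ and $\C^c$ as the unique simples killed by translation to walls is what makes it clean.
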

\begin{proof} By 
\cite[\ref{m-trans-act}]{Webmerged}, the action of $\tU_n$ is
intertwined by $\xi_{\Bp}$ with the action of certain projective functors.
By the commutation of Zuckerman and projective functors, we have that
the functor $\xi_{\Bp}^{-1}\circ Z^{\Bp}_{\Bp'}\circ \xi_{\Bp}$
commutes with the action of $\tU_n$, just as $W_{Y_i}$ does.
Similarly, if we have a larger algebra $\fg$, a Levi $\fl$ for a
parabolic $\fp$, and two
parabolics $\fq'\subset \fq\subset \fl$, then we have that the
Zuckerman functor for the pair $\fq'\subset \fq$ intertwines under
parabolic induction with that of $\fq'+\operatorname{rad}(\fp)\subset
\fq+\operatorname{rad}(\fp)$. The description of standardization given in
\cite[\ref{m-ind-sta}]{Webmerged} shows that $\xi_{\Bp}^{-1}\circ
Z^{\Bp}_{\Bp'}\circ \xi_{\Bp}$ thus commutes with the functor
$\fI_{\omega_p}$ adding a new red strand at the right.  

Thus, if we establish the theorem when $\ell=i+1$, it will follow in
all other cases, since we can build all other projective modules using
$\fI_{\omega_p}$ and the action of $\tU_n$, which commute with both
functors.  Thus, let us restrict to that case.

As usual, we let $a=p_i,b=p_{i+1}, c=a+b$.  One consequence of the
standard stratification shown in \cite[\ref{m-SS}]{Webmerged} is that
every projective is a summand of a module obtained by applying the
action of $\tU_n$ to 
$\Ind_{\fp_{(p-c,c)}}(\xi(P)\boxtimes \C^c)=\xi(\fI_{\omega_c}(P))$,
for  $P$ a projective module in $T^{\Bp'_-}_\mu\mmod$ with $~\Bp'_-=(p_1,\ldots,p_{\ell-1})$. 
Thus, it suffices to construct the isomorphism on $\xi(\fI_{\omega_c}(P))$ and extend it using the
action of $\tU_n$.

As in \cite[\ref{m-trace-standard}]{Webmerged},  for an arbitrary
$T^{\Bp'_-}_\mu$-module, 
we have that $W_{Y_i}\Lotimes \fI_{\omega_c}(P)$ is the
standardization of the $T^{(p_1,\dots, p_{j-1})}\boxtimes
T^{a,b}$-module $P\boxtimes L_{a,b}$.  Furthermore, we have an isomorphism
$\xi_{(a,b)}(L_{a,b})\cong \C^c$ since in both cases, these are the
unique simples killed by translation to any singular central character
(which are intertwined with the functors $\fE_i$ on $T^{(a,b)}$-modules).  By
\cite[\ref{m-ind-sta}]{Webmerged}, we thus have an isomorphism
\[\xi_{\Bp}(W_{Y_i}\Lotimes \fI_{\omega_c}(P))\cong
\Ind_{\fp_{(p-c,c)}}(\xi(P)\boxtimes \C^c)\cong
I^{\Bp'}_{\Bp}\xi_{\Bp'}(\fI_{\omega_c}(P))\] that is natural in $P$. 
This completes the proof.
\end{proof}
Due to the difficulty of calculating in derived categories, we will
now smuggle certain results from category $\cO$ into our picture.  We
apologize to those readers who don't like such techniques, but they
should rest assured that these techniques are used in a minor way,
have a big payoff, and facilitate rather than replace computation.  

We call a complex of projective modules over $\tilde{T}^\bla$ {\bf
  linear} if every homogeneous summand $P$ in homological degree
$j$ is
isomorphic to $P^{\circledast}(-2j)$.  
If we replace $\tilde{T}^\bla$
with a Morita equivalent algebra which is positively graded, then this
condition will be equivalent to requiring that the $j$th grade is
generated in degree $-j$. Thus this definition agrees with the usual one (see,
for example,
\cite{MOS}) for a positively graded algebra.  This more general definition allows us to speak of a
linear complex of projectives in any humorous category in the sense of
\cite{WebCB}.  The algebra $\tilde{T}^\bla$ is called {\bf Koszul} if
every self-dual simple module has a linear resolution.
  Any algebra which is Koszul in this sense satisfies a numerical
  criterion of Koszulity: the matrices whose entries are the graded
  dimension of $\Hom(P_\mu,P_\nu)$ where $P_*$ denote the self-dual
  projectives, and
  the graded Euler characteristic of $\Ext(L_\nu,L_\xi)$  with $L_*$
  the simple quotient of $P_*$ are inverse
  (since their product computes the graded multiplicity of $L_\xi$ in
  the projective resolution of $L_\mu$).  By linearity,
  $\Ext(L_\nu,L_\xi)$ is non-positively graded, so $\Hom(P_\mu,P_\nu)$
  is non-negatively graded.  
Thus, $\Hom(\oplus_\mu P_\mu,\oplus_\mu P_\mu)$ is a Morita equivalent
algebra which is Koszul in the usual sense of \cite{BGS96}.

\begin{theorem}\label{T-Koszul}
The algebras  %$\tilde{T}^{\Bp}$ and 
$T^{\Bp}$ are Koszul.
  Any ladder bimodule preserves the category of linear complexes of
  projectives over %$\tilde{T}^{\Bp}$ or 
$T^{\Bp}$.
\end{theorem}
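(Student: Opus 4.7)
The plan is to bootstrap everything from the corresponding facts about parabolic category $\cO$, exploiting the equivalence $\xi_{\Bp}\colon T^{\Bp}\mmod\cong \cO^{\Bp}_{\mu}$ established in \cite{Webmerged} and recorded in the previous subsection.

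For Koszulity of $T^{\Bp}$, I would invoke the theorem of Backelin \cite{Back99} (extending Beilinson--Ginzburg--Soergel) that every integral block of parabolic category $\cO$ for $\mathfrak{gl}_p$ is Koszul in the classical sense. To transport this along $\xi_{\Bp}$, one must match the gradings. After replacing $T^{\Bp}$ by the Morita-equivalent algebra $\End(\bigoplus_{\bmu} P^{\circledast}_{\bmu})$ of self-dual projectives, the algebra is concentrated in non-negative degrees with semisimple degree-zero part, so our definition of a linear resolution via self-dual projectives becomes the usual one, and Backelin's Koszulity carries over.

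For the statement about ladder bimodules, every $n$-ladder is built by stacking $Y_i$ and $Y_i^{\star}$ trivalent vertices together with identities, and functors sending linear complexes to linear complexes are closed under composition and tensoring with identities. Hence it suffices to show this for $W_{Y_i}$ and $W_{Y_i^{\star}}$ individually. By the preceding proposition, under $\xi$ these functors correspond (up to an overall internal grading shift, which is harmless) to the parabolic inclusion $I^{\Bp'}_{\Bp}\colon \cO^{\Bp'}\hookrightarrow \cO^{\Bp}$ and to the Zuckerman functor $Z^{\Bp}_{\Bp'}\colon D^b(\cO^{\Bp})\to D^b(\cO^{\Bp'})$, respectively.

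The inclusion $I^{\Bp'}_{\Bp}$ is exact on the abelian level, so a linear complex of projectives in $\cO^{\Bp'}$ maps to a complex of modules in $\cO^{\Bp}$ whose linear projective resolution I would extract using Soergel's Struktursatz: the Struktursatz reduces homomorphism spaces between projectives to computations with Soergel bimodules and translation functors out of a regular block, where linearity is automatic from the graded structure of translation. For the Zuckerman functor $Z^{\Bp}_{\Bp'}$, the required $t$-exactness with respect to the linear complex $t$-structure is exactly the content of Ryom-Hansen \cite{RH}: under Koszul duality, Zuckerman functors go over to (exact) translation functors, and linear complexes of projectives go over to the hearts of the standard $t$-structure, so $t$-exactness on one side becomes $t$-exactness on the other. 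The main obstacle is this last step, but with Ryom-Hansen's theorem as a black box the remaining work is only to verify that the overall grading shifts match up so that $W_{Y_i^{\star}}$ genuinely hits a shifted copy of $Z^{\Bp}_{\Bp'}$ rather than some twist thereof; this can be read off directly from the shift $\eta(Y_i)=-p_ip_{i+1}$ appearing in the definition of $W_{Y_i^{\star}}$ and the shift in the proposition above.
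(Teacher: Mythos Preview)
Your overall strategy matches the paper's: transport everything to parabolic category $\cO$ via $\xi_{\Bp}$, deduce Koszulity from known results there, and then argue that the relevant functors are $t$-exact for the linear-complex $t$-structure because they are Koszul dual to exact (projective/translation) functors. For Koszulity the paper simply cites \cite{Webmerged}, which ultimately rests on the same category-$\cO$ input you invoke; and for $W_{Y_i^{\star}}$ your Ryom-Hansen argument is exactly the mechanism the paper uses (it cites \cite[Cor.~36]{MOS} for the Koszul duality between Zuckerman and projective functors).

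The gap is your treatment of the inclusion functor $I^{\Bp'}_{\Bp}$. Exactness on the abelian level tells you nothing here: $I^{\Bp'}_{\Bp}$ does \emph{not} send projectives in $\cO^{\Bp'}$ to projectives in $\cO^{\Bp}$, so a linear complex of projectives is sent to a complex of non-projective modules, and you then need to know that replacing each term by its projective resolution yields something linear. Your appeal to the Struktursatz does not establish this; the Struktursatz controls $\Hom$-spaces between projectives via Soergel bimodules, but it does not by itself produce linear resolutions of arbitrary modules in the image of $I^{\Bp'}_{\Bp}$. The correct fix is the one you already used for Zuckerman: the inclusion functor is \emph{also} Koszul dual to a projective functor (this is the same MOS/Ryom-Hansen package, or follows by adjunction from the Zuckerman case since Koszul duality respects adjunctions and translation functors come in biadjoint pairs), hence is $t$-exact for the linear-complex $t$-structure. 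The paper treats both functors uniformly this way in a single sentence.
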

\begin{proof}%[Proof of Theorem \ref{T-Koszul}]
Koszulity follows from \cite[\ref{m-sln-Koszul}]{Webmerged}.
By \cite[Cor. 36]{MOS}, the
  Zuckerman and inclusion functors are Koszul dual to projective
  functors, which are exact.  Thus the Zuckerman and inclusion
  functors are $t$-exact in the $t$-structure which is the image of
  the usual $t$-structure on the Koszul dual.  This is the
  $t$-structure whose heart is linear complexes.\excise{

  For $\tilde{T}^{\Bp}$, we must bootstrap from the case of
  ${T}^{\Bp}$.  
By Lemma \ref{simple-isomorphism}, for any homological degree $k$ and any
graded degree $r$, we have that $\Ext^k(L,M)_r\cong \Ext^k(L',M')_r$
for $\Bq$ chosen so that $q$ is sufficiently large.  Thus, since
$T^{\Bq\Bp}$ is Koszul, the latter group is only non-zero if $k=-r$,
so the same is true over $\tilde{T}^{\Bp}$, which is thus Koszul.  The
same argument shows that for any ladder bimodule $\Ext^k_{\tilde{T}^{\Bp} }(L,W_\beta
M)_r\cong \Ext^k_{T^{\Bq\Bp} }(L', W_{\Bq\beta} M')_r$.  Since the
internal and homological gradings coincide on the RHS, the same is
true of the LHS as well.  Similarly,
since the ladder
bimodules over $T^{\Bq\Bp}$ preserve linear complexes, this shows that ladder
bimodules over $\tilde T^{\Bp}$.
preserve linear complexes. }
\end{proof}
We also expect that in most cases, the category $\tilde{T}^\Bp$ is
Koszul.  In certain degenerate cases, it seems to be necessary to
consider its quotient by certain central elements.  For example, if
there are no red strands and one black one, the corresponding algebra
is a polynomial ring with a generator in degree 2, which is thus not
Koszul.  However, if we add a red strand with the same label, then the
resulting algebra is easily shown by hand to be Koszul; this is, for
example, a consequence of
Proposition \ref{prop:endomorphisms}.

\excise{Before proving this result, we will need a few lemmata.
For any sequence $\Bq=(q_1,\dots, q_m)$, there
  is a map $d_{\Bq}\colon \tilde{T}^{\Bp}\to T^{\Bq\Bp}$ obtained by placing red strands
  with the labels $\Bq$ at the left of the diagram (and then imposing
  the local relation).  This map is neither injective or
  surjective. However, for any degree $D$, we can choose $\Bq$ so that
  this map in injective when restricted to the degree $D$ summand. More precisely, let $q=\min_{i\in
    [1,n-1]}(\#\{j|q_j=i\})$. The kernel of the
  map $d_{\Bq}$ is generated by elements of the form 

\[
\tikz[thick,baseline,xscale=3]{
\draw[thick] (0.95,0) -- (0.95,2); 
\draw[wei] (1.1,0) -- (1.1,2); \draw[wei] (1.4,0) -- (1.4,2); \draw[wei] (1.9,0) -- (1.9,2);
\node at (0.95,-0.5) {$i_1$}; \node at (1.1,-0.5) {$p_1$}; \node at (1.4,-0.5) {$p_2$}; \node at (1.9,-0.5) {$p_{\ell}$}; \node at (1.65,1) {$\cdots$};\node at (0.95,1) {$\bullet$};\node at (1.03,1.3) {$s$};}
\]
where $s=\#\{j|q_j=i_1\}\geq q$. The diagram has only vertical strands without crossings and we have only drawn 
the left-most black strand. Note that its image is indeed equal to zero: 
\[
\tikz[thick,baseline,xscale=3]{
\draw[wei] (0,0) -- (0,2); \draw[wei] (0.3,0) -- (0.3,2); \draw[wei] (0.8,0) -- (0.8,2);\draw[thick] (0.95,0) -- (0.95,2); 
\draw[wei] (1.1,0) -- (1.1,2); \draw[wei] (1.4,0) -- (1.4,2); \draw[wei] (1.9,0) -- (1.9,2);
\node at (0,-0.5) {$q_1$}; \node at (0.3,-0.5) {$q_2$}; \node at (0.8,-0.5) {$q_m$}; \node at (0.95,-0.5) {$i_1$}; \node at (1.1,-0.5) {$p_1$}; \node at (1.4,-0.5) {$p_2$}; \node at (1.9,-0.5) {$p_{\ell}$}; \node at (0.55,1) {$\cdots$};\node at (1.65,1) {$\cdots$};\node at (0.95,1) {$\bullet$};\node at (1.03,1.3) {$s$};
\draw[wei] (2.5,0) -- (2.5,2); \draw[wei] (2.8,0) -- (2.8,2); \draw[wei] (3.3,0) -- (3.3,2);\draw[thick] (3.45,0) .. controls (2,1) and (2,1) ..  (3.45,2); \draw[wei] (3.6,0) -- (3.6,2); \draw[wei] (3.9,0) -- (3.9,2); \draw[wei] (4.4,0) -- (4.4,2);
\node at (2.5,-0.5) {$q_1$}; \node at (2.8,-0.5) {$q_2$}; \node at (3.3,-0.5) {$q_m$}; \node at (3.45,-0.5) {$i_1$};\node at (3.6,-0.5) {$p_1$}; \node at (3.9,-0.5) {$p_2$}; \node at (4.4,-0.5) {$p_{\ell}$}; \node at (3.05,1) {$\cdots$};\node at (4.15,1) {$\cdots$};
\node at (2.2,1) {$=$};
\node at (4.7,1) {$= 0$};
}
\]}
 \excise{ Let $f$ be the minimal degree
  of any non-zero element of $\tilde{T}^{\Bp}_{\mu}$; note that $f$ is also
  the minimal degree of an ${T}^{\Bp}$
then
  the kernel is spanned by elements of the form
  $abc$ with $b$ one of the generators above.  Since $\deg a,\deg
  c\geq f$ and $\deg b\geq 2q$, we get $\deg(abc)\geq 2q+2f$. 
Thus:
  \begin{lemma}\label{injective-below}
    The map $d_{\Bq}$ is injective in degrees $D< 2q+2f$.\hfill \qed
  \end{lemma}}

\excise{
Now, consider two simple modules $L,M$ over  $\tilde{T}^{\Bp}$.  We
will assume that these have the same weight $\mu$, since otherwise the
statements below hold trivially.
If we choose $\Bq$ such that each fundamental representation shows up a
sufficiently large number of times, these modules will be pullbacks of simples $L',M'$ over
$T^{\Bq\Bp}$.  Since $d_{\Bq}$ is not unital, we have that
$e_{\Bq}:=d_{\Bq}(1)$ is an idempotent and by ``pullback,'' we mean
restriction of the action to $e_{\Bq}L'$.  Let $f$ be the minimal degree
  of any non-zero element of $\tilde{T}^{\Bp}_\mu$; this degree exists because $\tilde{T}^{\Bp}_\mu$ is finite
  rank over the polynomials in the dots.  This is also the minimal
  degree of any of the diagrams spanning
  $\tilde{T}^{\Bq\Bp}_{\omega_{\Bq}+\mu}$.  After all, deleting the
  $\Bq$ strands can only lower the degree of an element.

Note that for any $\Bq$ and each ladder $\beta$ with bottom $\Bp$ and top
$\Bp'$, we have an induced ladder ${\Bq}\beta$ with bottom $\Bq\Bp$ and
top $\Bq\Bp$.  A natural transformation $N\colon W_{\beta_1}\to
W_{\beta_2}$ induces a natural transformation ${}_{\Bq}N\colon W_{\Bq\beta_1}\to
W_{\Bq\beta_2}$.  
\begin{lemma}\label{simple-isomorphism}
  If $L$ and $M$ are simples over $\tilde{T}^\Bp$, then for every
  $k,r\in \Z$, there is a choice of $\Bq$ such that $\Ext^k_{\tilde{T}^{\Bp} }(L,W_\beta
M)_r\cong \Ext^k_{T^{\Bq\Bp} }(L', W_{\Bq\beta} M')_r$ via the natural map.
\end{lemma}
\bentodo{There are serious issues here, and I'm not completely sure
  how to fix them. We may just have to tactically retreat from here.
  The other place we use this is 4.10; that's a real pain, but we have
the fall-back position of just proving the action on $T^\bla\mmod$,
not $\tilde{T}^\bla\mmod$.}
\begin{proof}
By adjunction, we have
that \[\Ext^\bullet(T^{\Bq\Bp}\Lotimes_{\tilde{T}^{\Bp}}L,W_{\Bq\beta} M')\cong \Ext^\bullet(L,W_\beta M).\]  
Unfortunately, $T^{\Bq\Bp}$ is not projective as a
$\tilde{T}^{\Bp}$-module, so the natural map
$T^{\Bq\Bp}\Lotimes_{\tilde{T}^{\Bp}}L\to L'$ induced by adjunction is
not an isomorphism.

Thus, we need to understand what a projective resolution of
$e_{\Bi}T^{\Bq\Bp}e_{\Bq}$ as a right $\tilde{T}^{\Bp}$ module looks
like for any sequence $\Bi$.  This is a cyclic module over
$T^{\Bq\Bp}$, generated the diagram $\gamma$ with no black crossings that pulls
every strand between two red $\Bq$ strands left to the space between
the $\Bp$ and $\Bq$ strands.  Let $\Bi'$ be the sequence at the bottom
of $\gamma$.  We have a surjective map of $\tilde{T}^{\Bp}$-modules
$e_{\Bi'}\tilde{T}^{\Bp}(-\deg(\gamma))\to e_{\Bi}T^{\Bq\Bp}e_{\Bq}$, multiplying on
the left by $\gamma$.  Now, consider an element of the kernel of this
map.  This is given by taking a diagram with a violating strand that
has 
top $\Bi$ and bottom in the image of $d_{\Bq}$, then straightening
using the relations in $\tilde{T}^{\Bq\Bp}$ to obtain an element of
$\tilde{T}^{\Bp}$ times $\gamma$.  Such a diagram can be factored as
the diagram that just pulls the violating strand across all the red
strands for $\Bq$, multiplied on the top and bottom by certain
diagrams.  Since the middle piece has degree $\geq q$, the whole
diagram must have degree $\geq q+2f$ (since the diagrams on top and
bottom have degree $\geq 2f$).  

This shows that the 1st homological degree of the minimal projective
resolution of $T^{\Bq\Bp}$ over $\tilde{T}^{\Bp}$ is concentrated in
degrees $\geq q+2f$.  This further implies that the 2nd homological
degree is generated in degrees $\geq q+2f$, and thus is concentrated in
degrees $\geq q+3f$.  By induction, we see that the $n$th term is
generated in degrees $\geq q+nf$, and concentrated in degrees $\geq
q+(n+1)f$.   

Now, 
Thus, while the map 
\end{proof}
}

\section{The dual categorical action}
\label{sec:dual-categ-acti}

We now wish to show that there is a 2-functor $\tU_\ell\to \tWB$, which
thus induces a categorical action on the module categories
$\tilde{T}^\Bp$.  We first turn to defining this 2-functor on 1-morphisms.

\begin{definition}
  Let $\dF_i^{(n)}$ be the functor of
  tensor product by the ladder bimodule associated to 
$\wF^{(n)}_i$. Let $\dE_i^{(n)}$ be the functor of
  tensor product by the ladder bimodule associated to 
$\wE^{(n)}_i$. 
\end{definition}

\noindent Note that our notation suppresses the labels of the vertical strands, as usual. Only when needed will they 
be specified.

\begin{proposition}
\label{dividedpowers}
We have 
$$\dE_i^n\cong \left(\dE_i^{(n)}\right)^{\oplus [n]_q!}\quad\text{and}\quad \dF_i^n\cong \left(\dF_i^{(n)}\right)^{\oplus [n]_q!}.$$
\end{proposition}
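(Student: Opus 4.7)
My plan is to prove the statement by induction on $n$, using two structural results that have already been established: the associativity of the $Y$-ladder (Proposition \ref{associative}) and the decomposition of the bigon (Proposition \ref{bigon}). I will only work out the case of $\dE_i^n$; the case of $\dF_i^n$ is entirely symmetric.

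The base case $n=1$ is vacuous. For the inductive step, assume $\dE_i^{n-1}\cong (\dE_i^{(n-1)})^{\oplus [n-1]_q!}$. Since composition distributes over direct sum, it suffices to show
\[\dE_i\circ \dE_i^{(n-1)}\cong (\dE_i^{(n)})^{\oplus [n]_q},\]
for then $\dE_i^n=\dE_i\circ\dE_i^{n-1}\cong (\dE_i^{(n)})^{\oplus [n]_q\cdot [n-1]_q!}=(\dE_i^{(n)})^{\oplus [n]_q!}$.

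To produce this isomorphism, I will rewrite the ladder $\wE_i\cdot \wE_i^{(n-1)}$ (the two-rung ladder with a rung of label $n-1$ followed by a rung of label $1$) using Proposition \ref{associative} on both uprights simultaneously. On the $(i{+}1)$st upright, the two successive $Y$'s that split off strands of labels $n-1$ and $1$ from $p_{i+1}$ can be re-associated so that first a single strand of label $n$ splits off, and this then splits into $n-1$ and $1$. The mirror re-association on the $i$th upright groups the two inputs $n-1,1$ into a single incoming strand of label $n$. After this re-association, the region between the two uprights contains a $Y$ followed by a $Y^\star$ on a strand of label $n$, with sides labeled $n-1$ and $1$: that is, a bigon of the type treated in Proposition \ref{bigon}. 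Applying that proposition replaces the bigon by a direct sum of $[n]_q$ copies of the identity, leaving the ladder $\wE_i^{(n)}$ between the uprights; this is precisely the claimed isomorphism.

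The main things that could require care are checking that the re-associations on the two sides are compatible (so that the ``between-uprights'' region is genuinely the bigon of Proposition \ref{bigon} and nothing more), and keeping track of the grading shifts so that the multiplicity comes out to the balanced quantum integer $[n]_q=q^{n-1}+q^{n-3}+\cdots+q^{1-n}$ rather than an unbalanced version. Both are straightforward given the canonicity statement at the end of Section \ref{sec:general-ladders} (all ladders branching from a single red line to the same top are canonically isomorphic), and given that Proposition \ref{bigon} provides exactly the symmetric range of shifts $\langle c-1\rangle,\langle c-3\rangle,\ldots,\langle 1-c\rangle$ with $c=n$. No significant new computation seems to be required.
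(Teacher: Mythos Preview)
Your proof is correct and follows essentially the same approach as the paper: both argue by induction on $n$, use associativity (Proposition~\ref{associative}) to turn the two-rung ladder $\wE_i\wE_i^{(n-1)}$ (resp.\ $\wF_i^{(n-1)}\wF_i$) into a single rung with a bigon, and then invoke Proposition~\ref{bigon} to split off $[n]_q$ copies. The only cosmetic difference is that the paper carries this out for $\dF_i$ and presents the associativity step via a picture rather than words.
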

Here we use $[p]_q=\frac{q^p-q^{-p}}{q-q^{-1}}$ as a multiplicity to indicate the sum of $p$
copies of a module, with grading shifts that match the quantum integer.
\begin{proof}
We prove the result for $\dF_i^{(n)}$, for $\dE_i^{(n)}$ it can be proved similarly. By associativity and Proposition~\ref{bigon}, we have
 \begin{center}
\tikz[xscale=.8, yscale=.6]
{
\draw[wei] (1,0)--(1,4); \draw[wei] (3,0)--(3,4); \draw[wei] (1,1)--(3,2); \draw[wei] (1,2)--(3,3);
\draw (2,0.8) node {$n-1$};
\draw (2,3) node {$1$};
\draw (3.5,2) node {$\cong$}; 
\draw[wei] (4,0)--(4,4); \draw[wei] (6,0)--(6,4); \draw[wei] (4,1.5)--(4.5,1.75);\draw[wei] (5.5,2.25)--(6,2.5);
\draw[wei] (4.5,1.75) .. controls (4.85,2.5) and (4.85,2.5) .. (5.5,2.25);
\draw[wei] (4.5,1.75) .. controls (5.15,1.5) and (5.15,1.5) .. (5.5,2.25);
\draw (5,1.2) node {$n-1$};
\draw (5,3) node {$1$};
\draw (7,2) node {$\cong [n]_q\,\cdot $}; 
\draw[wei] (8,0)--(8,4); \draw[wei] (10,0)--(10,4); \draw[wei] (8,1.5)--(10,2.5);
\draw (9,1.5) node {$n$};
%\draw (9.7,4) node {$\oplus $}; 
}
\end{center}
The result follows by induction on $n\geq 1$.
\end{proof}

The functors $\dE_i$ and $\dF_i$ are biadjoint up to shift. 
By Lemma \ref{a-b-1}, we have that left and right adjoints of $Y_i$
with bottom given by $\Bp$ and top by $(p_1,\dots,
p_{i}-1,1,p_{i+1},\dots, p_\ell)$ are given by $Y_i^L=Y_i^\star\langle
1-p_i\rangle$ and $Y_i^R=Y_i^\star\langle
p_i-1\rangle$.  Similarly, for $Y_{i+1}^\star$ with top given by
$\Bp-\al_i$ and bottom by $(p_1,\dots,
p_{i}-1,1,p_{i+1},\dots, p_\ell)$, the adjoints are $(Y_{i+1}^\star)^L=Y_{i+1}\langle
p_{i+1}\rangle$ and $(Y_{i+1}^\star)^R=Y_{i+1}\langle
-p_{i+1}\rangle$.   For convenience, let $\pi_i=p_i-p_{i+1}-1$.   Thus, the left and right adjoints of $\dF_i$ are
given by \[\dF_i^L=\dE_i\langle
-\pi_i\rangle\qquad\dF_i^R=\dE_i\langle
\pi_i\rangle.\]
We consider the 
adjunctions (co)units for the left and right adjunctions, respectively:
\begin{align*}
{\epsilon}_i&:=\epsilon_{Y_i^\star}\circ\epsilon_{Y_{i+1}}\colon
\dE_i\dF_i\langle -\pi_i\rangle \to
\tilde{T}^{p}& {\iota}_i&:=\iota_{Y_{i}^\star}\circ
\iota_{Y_{i+1}} \colon \tilde{T}^{p'}\to \dF_i \dE_i\langle
-\pi_i \rangle \\
\epsilon_i'&:= (-1)^{p_{i}-1}\epsilon_{Y_{i+1}^\star}\circ\epsilon_{Y_{i}}\colon \dF_i\dE_i\langle \pi_i \rangle \to \tilde{T}^{p'}&\iota_i'&:= (-1)^{p_{i}-1}\iota_{Y_{i+1}^\star}\circ \iota_{Y_i} \colon \tilde{T}^p\to\dE_i \dF_i\langle 
 \pi_i\rangle
\end{align*}

%\marcotodo{Check, double-check and triple-check the shifts.}
%\bentodo{I did double-check them, though you may still want to triple
%  check.}

where $p,p'\in\Gamma^n_\ell$ are arbitrary such that $p'_i=p_i-1$,
$p'_{i+1}=p_{i+1}+1$ and $p_j'=p_j$ for all $j\ne i,i+1$.  Note that
our grading shifts look a little different from those in \cite{KLIII};
in part this is because we have fixed $\Bp$ to be the list of weights
at the bottom of $\dF_i$, not the bottom of the picture, amongst other
differences in convention.
For the reader to "get the picture", we draw the maps needed for
$\epsilon_i'$ and $\iota_i$.  The maps for $\epsilon_i$ and $\iota'_i$
come from the reflection of this diagram through the $y$-axis.
 \begin{center}
\tikz[xscale=.8, yscale=.6]
{
\draw[wei] (1,0)--(1,4); \draw[wei] (3,0)--(3,4); \draw[wei] (1,1.5)--(3,0.5); \draw[wei] (3,3.5)--(1,2.5);
\draw[->](3.5,2.5)--(4.5,2.5);\draw (4,3) node {$\epsilon_{Y_{i}}$};
\draw[<-](3.5,1.5)--(4.5,1.5);\draw (4,1) node {$\iota_{Y_{i}^\star}$};
\draw[wei] (5,0)--(5,4); \draw[wei] (7,0)--(7,4); \draw[wei] (7,0.5) .. controls (6,2) and (6,2) .. (7,3.5);
\draw[->](7.5,2.5)--(8.5,2.5);\draw (8,3) node {$\epsilon_{Y_{i+1}^\star}$};
\draw[<-](7.5,1.5)--(8.5,1.5);\draw (8,1) node {$\iota_{Y_{i+1}}$};
\draw[wei] (9,0)--(9,4); \draw[wei] (11,0)--(11,4);
}
\end{center}

%The adjunctions above are not quite correct to match Khovanov and Lauda's 2-category. We let 
%\[\ep'_i:=(-1)^{p_i}\ep_i \qquad \iota'_i:=(-1)^{p_i+1}\iota_i\qquad \tilde{\ep}_i:=(-1)^{p_i+1}\tilde{\ep}_i\qquad 
%\tilde{\iota}_i:=(-1)^{p_i}\tilde{\iota}_i.\]  
%\marcotodo{Check signs!!!}
%\bentodo{I moved the signs into the definition above.  I've thought
%  pretty carefully about why these are right, as explained in my
%  email. }

\begin{definition}
  Let $y$ be the natural transformation of $\dF_i$  given by $1\otimes
  \ssy_{i+1}\otimes 1$.  Let $\psi\colon \dF_i \dF_j\to \dF_j \dF_i$ be
  the map defined by
  \begin{itemize}
  \item isotopy if $|i-j|>1$
\begin{center}
\tikz[xscale=.8, yscale=.6]
{
\draw[wei] (0,0)--(0,4); \draw[wei] (1.5,0)--(1.5,4); \draw[wei] (3,0)--(3,4);\draw[wei] (4.5,0)--(4.5,4);
\draw[wei] (0,0.5)--(1.5,1.5); \draw[wei] (3,2.5)--(4.5,3.5);
\draw (2.25,2) node {$\cdots$};
\draw (5,2) node {$\cong$};
\draw[wei] (5.5,0)--(5.5,4); \draw[wei] (7,0)--(7,4); \draw[wei] (8.5,0)--(8.5,4);\draw[wei] (10,0)--(10,4);
\draw[wei] (5.5,2.5)--(7,3.5); \draw[wei] (8.5,0.5)--(10,1.5);
\draw (7.75,2) node {$\cdots$};
}
\end{center}
  \item the adjoint of the map of the associativity isomorphism $
    \dF_{i\pm 1} \dE_i\to \dE_i \dF_{i\pm 1} $ if $j=i\pm 1$
\begin{center}
\tikz[xscale=.8, yscale=.6]
{
\draw[wei] (0,0)--(0,4); \draw[wei] (1.5,0)--(1.5,4); \draw[wei] (3,0)--(3,4);
\draw[wei] (0,0.5)--(1.5,1.5); \draw[wei] (1.5,2.5)--(3,3.5);
\draw (3.5,2) node {$\to$};
\draw[wei] (4,0)--(4,4); \draw[wei] (5.5,0)--(5.5,4); \draw[wei] (7,0)--(7,4);
\draw[wei] (4,0.25) -- (5.5,1); \draw[wei] (5.5,1.33)--(7,2.08); \draw[wei] (4,2.5) -- (5.5,1.75);\draw[wei] (4,3) -- (5.5,3.75);
\draw (7.5,2) node {$\cong$};
\draw[wei] (8,0)--(8,4); \draw[wei] (9.5,0)--(9.5,4); \draw[wei] (11,0)--(11,4);
\draw[wei] (8,0.25) -- (9.5,1); \draw[wei] (9.5,1.5)--(8,2.25); \draw[wei] (9.5,1.83)--(11,2.58); \draw[wei] (8,3) -- (9.5,3.75);
\draw (11.5,2) node {$\to$};
\draw[wei] (12,0)--(12,4); \draw[wei] (13.5,0)--(13.5,4); \draw[wei] (15,0)--(15,4);
\draw[wei] (12,2.5)--(13.5,3.5); \draw[wei] (13.5,0.5)--(15,1.5);
}
\end{center}
  \item the natural transformation induced by $\iota_Y\circ
    \ep_{Y^\star}$ after using the associativity isomorphism if $i=j$
 \begin{center}
\tikz[xscale=.8, yscale=.6]
{
\draw[wei] (1,0)--(1,4); \draw[wei] (3,0)--(3,4); \draw[wei] (1,1)--(3,2); \draw[wei] (1,2)--(3,3);
\draw (3.5,2) node {$\cong$}; 
\draw[wei] (4,0)--(4,4); \draw[wei] (6,0)--(6,4); \draw[wei] (4,1.5)--(4.5,1.75);\draw[wei] (5.5,2.25)--(6,2.5);
\draw[wei] (4.5,1.75) .. controls (4.85,2.5) and (4.85,2.5) .. (5.5,2.25);
\draw[wei] (4.5,1.75) .. controls (5.15,1.5) and (5.15,1.5) .. (5.5,2.25);
\draw[->](6.5,2)--(7.5,2);\draw (7,2.5) node {$\epsilon_{Y^{\star}}$};
\draw[wei] (8,0)--(8,4); \draw[wei] (10,0)--(10,4); \draw[wei] (8,1.5)--(10,2.5);
\draw[->](10.5,2)--(11.5,2);\draw (11,2.5) node {$\iota_{Y}$};
\draw[wei] (12,0)--(12,4); \draw[wei] (14,0)--(14,4); \draw[wei] (12,1.5)--(12.5,1.75);\draw[wei] (13.5,2.25)--(14,2.5);
\draw[wei] (12.5,1.75) .. controls (12.85,2.5) and (12.85,2.5) .. (13.5,2.25);
\draw[wei] (12.5,1.75) .. controls (13.15,1.5) and (13.15,1.5) .. (13.5,2.25);
\draw (14.5,2) node {$\cong$}; 
\draw[wei] (15,0)--(15,4); \draw[wei] (17,0)--(17,4); \draw[wei] (15,1)--(17,2); \draw[wei] (15,2)--(17,3);
}
\end{center}
  \end{itemize}
\end{definition}

Our aim is to show that these natural transformations define a
categorical action of the 2-category $\tU_\ell$ sending $\eF_i\to
\dF_i$ and $\eE_i\to \dE_i$.

\begin{remark}\label{dual-grading}
  Note that this functor follows the ``Koszul dual'' grading
  convention. It sends grading shift $(i)$ to the Tate twist $\langle i\rangle=[i](-i)$ and vice versa; in particular, a map of degree $k$ in the 2-category
  $\tU$ is sent to an element of $\Ext^k$ of internal degree $-k$. 
\end{remark}

\subsection{NilHecke relations}
\label{sec:nilhecke-relations}

We wish to establish that we have an action of the nilHecke algebra on
$\dF_i^n$ defined by.  

\begin{lemma}
  On $\dF_i^2$, we have the relation
  \begin{equation}
(y\otimes 1)\psi-\psi(1\otimes
  y)=\psi (y\otimes 1)-(1\otimes y) \psi=1.\label{nilHecke-a}
\end{equation}
\end{lemma}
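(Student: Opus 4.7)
The plan is to reduce the identity to a local computation on a bigon and then extract both sides from the pairing formulas of Lemma~\ref{lem:bubble-dot}. First, by the $\tU_n^-\times\tU_n^-$-equivariance of the ladder bimodules (Proposition~\ref{prop:trivalent-commute}), all of $y\otimes 1$, $\psi$, and $1\otimes y$ commute with adjoining extra black strands via induction. Consequently it suffices to verify (\ref{nilHecke-a}) in the absence of auxiliary black strands, that is, on the simplest projective module over $\tilde{T}^{\Bp}$ where the only strands are those intrinsic to the two rungs of $\dF_i^2$.

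Next, I would apply the associativity isomorphism of Proposition~\ref{associative} to rewrite $W_{\dF_i\dF_i}$: the two consecutive $Y_i$-splits on the $i$-th upright coalesce into an outer split $p_i\to (p_i-2,2)$ followed by an inner split $2\to(1,1)$, and similarly on the $(i+1)$-th upright. This presents $\dF_i^2$ as an outer $Y_{i+1}^\star\!\circ Y_i$ (for labels $(p_i-2,2,p_{i+1}+2)$) wrapped around an intermediate bigon $\beta$ with $a=b=1,\ c=2$. By Proposition~\ref{bigon} and the remark following it, $W_\beta\cong \tilde{T}^{\Bp}\langle 1\rangle\oplus\tilde{T}^{\Bp}\langle-1\rangle$, and $\Ext^{-2}(W_\beta,W_\beta)$ is one-dimensional, spanned by $\iota_Y\circ\ep_{Y^\star}$. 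Under this decomposition, the natural transformation $\psi$ of the lemma is by definition precisely $\iota_Y\circ\ep_{Y^\star}$ applied to the inner bigon (the outer vertices being permuted trivially through the associativity isomorphism).

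Third, I would interpret $y\otimes 1$ and $1\otimes y$ after associativity. The class $\ssy_{i+1}$ acts on the new label-$1$ red strand introduced by each $\dF_i$; under the associativity isomorphism, these two strands become the upper and lower $1$-arcs of the bigon $W_\beta$. Thus $(y\otimes 1)\psi-\psi(1\otimes y)$ reduces, after stripping off the outer $Y$ and $Y^\star$ shared by both terms, to comparing $\iota_Y\circ\ep_{Y^\star}$ with a dot inserted on the upper arc versus the lower arc. The pairings are precisely evaluated by Lemma~\ref{lem:bubble-dot} with $a=b=1$:
\[\ep_{Y^\star}(\ssy_i\otimes 1)\iota_Y=1,\qquad \ep_{Y^\star}(1\otimes\ssy_{i+1})\iota_Y=-1.\]
The sign difference between these two formulas is precisely what makes the two terms combine (rather than cancel) to give $+1$ times the identity endomorphism of $\dF_i^2$, yielding the first relation. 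The second relation $\psi(y\otimes 1)-(1\otimes y)\psi=1$ then follows by reflecting the whole argument through a horizontal axis, which interchanges $\iota\leftrightarrow\ep$ and $Y\leftrightarrow Y^\star$ but preserves the structure of the pairing computation.

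The main obstacle will be the careful bookkeeping: tracking which $1$-arc of the bigon corresponds to which tensor factor in $\dF_i\dF_i$ after associativity, matching the grading shifts ($\eta(Y)=-ab=-1$ for each trivalent vertex), and verifying that the definition of $\psi$ via $\iota_Y\circ\ep_{Y^\star}$ intertwines cleanly with the deformation cocycle representatives of $\ssy_{i+1}$ on the two sides. Once these conventions are fixed consistently, the identity falls out of the sign asymmetry in Lemma~\ref{lem:bubble-dot}; the computation itself is essentially a single pairing evaluation.
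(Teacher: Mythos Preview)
Your overall approach mirrors the paper's: reduce via associativity to the bigon $\beta=Y^\star Y$ with $a=b=1$, recognize $\psi$ there as $\iota_Y\circ\ep_{Y^\star}$, and compute the action of the Hochschild classes $\ssy$ on the two arcs. The paper carries out the same reduction and then works directly with the explicit resolution from Corollary~\ref{W-res}, writing $W_\beta$ as generated (modulo boundaries) by $A\otimes A$ and $\mathsf{k}=A\otimes D+B\otimes B-C\otimes C+D\otimes A$, computing $\ssy_i(\mathsf{k})=A\otimes A$, $\ssy_{i+1}(\mathsf{k})=-A\otimes A$ via Lemma~\ref{Hochschild-action}, and then verifying both equalities on these two generators by a short table. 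Your proposal to quote Lemma~\ref{lem:bubble-dot} instead is natural, since that lemma packages exactly the same deformation computation.

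The gap is in your last step. Lemma~\ref{lem:bubble-dot} hands you the scalars $\ep_{Y^\star}(\ssy_i)\iota_Y=1$ and $\ep_{Y^\star}(\ssy_{i+1})\iota_Y=-1$; these are endomorphisms of the identity, not of $W_\beta$. The relation you want is an equality in the pure degree-zero part of $\End(W_\beta)$, which is two-dimensional because $W_\beta\cong\tilde{T}\langle1\rangle\oplus\tilde{T}\langle-1\rangle$; a single pairing evaluation cannot detect both diagonal components. Concretely, from the pairing you can recover $\ssy_i(\mathsf{k})=A\otimes A$ only after also noting that the homology in that degree is one-dimensional and that $\ep_{Y^\star}(A\otimes A)=1$ (the normalization fixed in the proof of Lemma~\ref{lem:full-length}). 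You then still need the separate facts $\ssy_i(A\otimes A)=\ssy_{i+1}(A\otimes A)=0$ (degree reasons) and $\psi(\mathsf{k})=\iota_Y(\ep_{Y^\star}\iota_Y(1))=0$ (since $\ep_{Y^\star}\iota_Y=0$, again by degree) to finish the check on both generators. Once you supply these, the argument is complete and essentially the paper's; but as written, ``the sign difference makes the two terms combine to give $+1$'' elides the verification on the $A\otimes A$ summand. Your horizontal-reflection argument for the second equality is likewise too quick: it only becomes valid once the full two-generator check for the first equality is in hand.
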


\begin{proof}
In terms of our usual notation for Hochschild cohomology classes, we
must show that   \begin{equation}\label{nilHecke-y}
\ssy_i\psi-\psi \ssy_{i+1}=\psi \ssy_i-\ssy_{i+1}\psi=1.
\end{equation}

%We first need to write down the bigon decomposition for $c=2$ (and $a=b=1$) in Proposition~\ref{bigon} 
%in terms of {\em sweet} resolutions, i.e. resolutions whose terms are
%projective as bimodules. 
Recall that, by associativity, we have 
\begin{center}
\tikz[xscale=.8, yscale=.6]
{
\draw[wei] (1,0)--(1,4); \draw[wei] (3,0)--(3,4); \draw[wei] (1,1)--(3,2); \draw[wei] (1,2)--(3,3);
\draw (3.5,2) node {$\cong$}; 
\draw[wei] (4,0)--(4,4); \draw[wei] (6,0)--(6,4); \draw[wei] (4,1.5)--(4.5,1.75);\draw[wei] (5.5,2.25)--(6,2.5);
\draw[wei] (4.5,1.75) .. controls (4.85,2.5) and (4.85,2.5) .. (5.5,2.25);
\draw[wei] (4.5,1.75) .. controls (5.15,1.5) and (5.15,1.5) .. (5.5,2.25);
}
\end{center}
Therefore, we can prove~\eqref{nilHecke-y} on the bigon web $\beta=Y^\star Y$ (we supress the
subscript $i$ here) with $a=b=1$ and $c=2$.

The associated bimodule resolutions from Corollary \ref{W-res} are
both honest projective modules on the sides where the tensors occur.
Let $A$ and $D$ be the generator of $P_{\emptyset}$ in homological degree $0$ and $-2$, respectively, 
$B$ the generator of $P_{1}$ and $C$ the generator of $P_{2}$. 
Thus, we can realize the tensor product as an $A_\infty$ bimodule,
generated by the tensors $A\otimes A, $ etc.  The
counit $\epsilon_{Y^\star}$ is defined by the pairing
\begin{equation}
  \label{pairing}
    \langle A,D\rangle=\langle B,B\rangle=-\langle C,C\rangle=\langle D,A\rangle=1
\end{equation}
with all other pairings 0. One can easily check that $\langle 
 \partial x,y\rangle=\langle x, \partial y\rangle$.

 The map
$\psi$ sends $A\otimes A$ to the canonical element of the pairing
given in \eqref{pairing}.  That is:
\[ A\otimes A \mapsto \mathsf{k}:=A\otimes D + B\otimes B -C\otimes C+D\otimes
A \]

Technically, we should use the $A_\infty$ tensor product
$\tilde{T}^{\Bp}\overset{\infty}\otimes W_\beta\overset{\infty}\otimes
\tilde{T}^{\Bp}$; however, the map $W_\beta\to \tilde{T}^{\Bp}\overset{\infty}\otimes W_\beta\overset{\infty}\otimes
\tilde{T}^{\Bp} $ sending
$w\mapsto 1\otimes w\otimes 1$ is a quasi-isomorphism of complexes
by so we can always transfer our
maps using this quasi-isomorphism. 

Note that since $W_\beta\cong T^{\Bp}\langle 1\rangle\oplus
T^{\Bp}\langle -1\rangle$ with these copies generated by $A\otimes A$
and $\mathsf{k}$, in order to check an equality of chain maps up to homotopy, it
suffices to check that it holds on $A\otimes A$
and $\mathsf{k}$ modulo boundaries.

Let us note several important equalities.  Using the associated
deformation, we can calculate $\ssy_i(\mathsf{k})$  by using $\ssy_i$ to deform the
differential on the left hand side of the complex (see Lemma~\ref{Hochschild-action} and the text above it). Thus, taking the square of the deformed differential, we have that \[\ssy_i (D\otimes A)= \frac{1}{h}\,\,\tikz[very thick,baseline=-3pt]{\draw (0,-.5) -- (-1,0) -- (0,.5);  \draw[wei]
  (-.5,-.5) -- (-.5,.5);\draw[wei]
  (.5,-.5) -- (.5,.5);}\, \,A\otimes A  -\frac{1}{h}\,\,\tikz[very thick,baseline=-3pt]{\draw (0,-.5) -- (1,0) -- (0,.5);  \draw[wei]
  (-.5,-.5) -- (-.5,.5);\draw[wei]
  (.5,-.5) -- (.5,.5);}\,\, A\otimes A\,=\frac{1}{h} (hA\otimes
A)=A\otimes A.\] Since the other terms in $\mathsf{k}$ are killed, we have
that $\ssy_i (\mathsf{k})=A\otimes A$. Similarly, $\ssy_{i+1}
(\mathsf{k})=-A\otimes A$ since 
\[\ssy_{i+1} (D\otimes A)=  \frac{1}{h}\,\,\tikz[very thick,baseline=-3pt]{\draw (0,-.5) -- (-1,0) -- (0,.5);  \draw[wei]
  (-.5,-.5) -- (-.5,.5);\draw[wei]
  (.5,-.5) -- (.5,.5);}\, \,A\otimes A -\frac{1}{h}\,\,\tikz[very thick,baseline=-3pt]{\draw (0,-.5) -- (1,0) -- (0,.5);  \draw[wei]
  (-.5,-.5) -- (-.5,.5);\draw[wei]
  (.5,-.5) -- (.5,.5);}\,\, A\otimes A\,=-\frac{1}{h} (hA\otimes
A)=-A\otimes A.\]

On the other hand, $\psi(\mathsf{k})$ is a boundary since $\psi^2=0$, and $\ssy_i
(A\otimes A)=\ssy_{i+1} (A\otimes A)=0$ for degree reasons.

Thus, we have that (modulo boundaries)
\begin{align*}
  \mathsf{y}_i\psi(A\otimes A)&= A\otimes A&  \mathsf{y}_{i}(A\otimes A)&=0\\
  \mathsf{y}_{i+1}\psi(A\otimes A)&= -A\otimes A&\psi 
  \mathsf{y}_{i+1}(A\otimes A)&=0\\
 \mathsf{y}_i\psi(\mathsf{k})&=0&\psi \mathsf{y}_{i}(\mathsf{k})&=\mathsf{k}\\
\mathsf{y}_{i+1}\psi(\mathsf{k})&=0&\psi \mathsf{y}_{i+1}(\mathsf{k})&=-\mathsf{k}\\
\end{align*}
This shows that up to homotopy
\begin{equation*}
\mathsf{y}_i\psi-\psi 
  \mathsf{y}_{i+1}=
\psi\mathsf{y}_i- 
  \mathsf{y}_{i+1}\psi=1,
\end{equation*} thus giving the relation \eqref{nilHecke-y}.

\excise{
Let $\tilde{T}:=\tilde{T}^{(1,1)}$ and 
$$\ldots \to \tilde{T}^{\otimes 3}\to \tilde{T}^{\otimes 2}\to \tilde{T}$$
the bar resolution of $\tilde{T}$ by free $\tilde{T}-\tilde{T}$ bimodules. Then we get a sweet resolution of $W_{\beta}=W_{Y^\star} \Lotimes_{\tilde{T}} 
W_Y$ as a $\tilde{T}'-\tilde{T}'$ bimodule, where $\tilde{T}':=\tilde{T}^{(2)}$, by
$$ \ldots\to W_{Y^\star}\otimes \tilde{T}^{\otimes 2} \otimes W_Y \stackrel{\d_2}{\to} W_{Y^\star}\otimes \tilde{T}\otimes W_Y \stackrel{\d_1}{\to} W_{Y^\star}
\otimes W_Y$$
where $\d_1$ and $\d_2$ are given by 
\begin{eqnarray*}
a\otimes x\otimes c&\mapsto &ax\otimes b-a\otimes xb \\
a\otimes x\otimes y\otimes b &\mapsto& 
ax\otimes y\otimes b-a\otimes xy\otimes b+a\otimes x\otimes yb.
\end{eqnarray*}

For $\tilde{T}'=\tilde{T}^{(2)}$ we use the 2-extension associated to its presentation as an algebra in terms of generators and relations.\bentodo{reference?I found "Minimal resolutions of algebras" by M.C.R. Butler and A.D. King on the web, but there might be better refs} In general this is defined as folllows: let $A\cong T(X)/I$, where $T(X)$ is the tensor algebra on the vector space $X$ freely spanned by a finite set of generators and $I$ is the bi-ideal of relations. 
Then we get a 2-extension of $A$ as follows:  
$$0\to I/I^2\stackrel{\d_2'}{\to} A\otimes X\otimes A\stackrel{\d_1'}{\to} A\otimes A\stackrel{\d_0'}{\to} A\to  0$$ 
where $\d_0'$ is defined by multiplication and $\d_1'$ and $\d_2'$ are defined by 
\begin{eqnarray*}
a\otimes x\otimes b&\mapsto& ax\otimes b - a\otimes xb \\
x_1\cdots x_r &\mapsto& \sum_{i=1}^r x_1\cdots x_{i-1}\otimes x_i\otimes x_{i+1}\cdots x_r.
\end{eqnarray*}
Note that $I/I^2$ is not a sweet $A-A$ bimodule, so for studying $\Ext^{\pm p}(\tilde{T}',W_{\beta})$ for $p>2$ 
we would have to resolve $I/I^2$ further, but we are only interested in $p=0,2$.  

In our case, $A=\tilde{T}'$ and $X$ is freely generated by 
\begin{center}
\tikz[xscale=.8, yscale=.6]
{
\draw[thick] (0,0)--(0,1); \draw (0,0.5) node {$\bullet$}; \draw[thick] (0.5,0)--(1,1); \draw[thick] (1,0)--(0.5,1);
\draw[wei] (1.5,0)--(2,1); \draw[thick] (2,0)--(1.5,1);\draw[thick] (2.5,0)--(3,1); \draw[wei] (3,0)--(2.5,1);
\draw (1.5,-0.4) node {$2$}; \draw (3,-0.4) node {$2$}; 
}
\end{center}
where the black strands are labeled $1$ or $2$. The relations which generate $I$ are those from\ref{}. For this 
proof, the ones that matter most are the first and the second relation in \eqref{cost}: for $\lambda=\omega_2$ and $i=2$, which we call $p$ and $q$ respectively, and for $\lambda=\omega_2$ and $i=2$, which we call $p'$ and $q'$ respectively. Note that 
\begin{center}
\tikz[xscale=.8, yscale=.6]
{
\draw (0,0.5) node {$\d_2'(p)=$};
\draw[thick] (1,0)--(1.5,1); \draw[wei] (1.5,0)--(1,1);\draw (2,0.5) node {$\otimes$};
\draw[wei] (2.5,0)--(3,1); \draw[thick] (3,0)--(2.5,1);\draw (3.5,0.5) node {$\otimes\; 1$};
\draw (4.5,0.5) node {$+$};
\draw (5.5,0.5) node {$1\; \otimes$}; \draw[wei] (6,0)--(6.5,1); \draw[thick] (6.5,0)--(6,1);
\draw (7,0.5) node {$\otimes$};\draw[thick] (7.5,0)--(8,1); \draw[wei] (8,0)--(7.5,1);
\draw (8.5,0.5) node {$-$};
\draw (9.5,0.5) node {$1\;\otimes\;$};\draw[wei] (10,0)--(10,1);\draw[thick] (10.5,0)--(10.5,1);\draw (10.5,0.5) node {$\bullet$};
\draw (11,0.5) node {$\;\otimes\; 1$};
\draw (1,-0.4) node {$2$};\draw (1.5,-0.4) node {$2$};\draw (2.5,-0.4) node {$2$};\draw (3,-0.4) node {$2$};
\draw (6,-0.4) node {$2$};\draw (6.5,-0.4) node {$2$};\draw (7.5,-0.4) node {$2$};\draw (8,-0.4) node {$2$};
\draw (10,-0.4) node {$2$};\draw (10.5,-0.4) node {$2$};
}
\end{center}
and similarly one can write out $\d_2'(q)$, $\d_2'(p')$ and $\d_2'(q')$.

First we define the chain map $\pi_1$ for the unshifted summand in Proposition~\ref{bigon}:
$$\pi_1\colon \tilde{T}^{(2)}\to W_{\beta}.$$
On $0$-chains, $\pi_1\colon \tilde{T}'\otimes \tilde{T}'\to W_{Y^\star}\otimes W_Y$ is defined by glueing $Y^\star$ at the bottom of the diagrams in the left tensor factor and $Y$ at the top of the diagrams in the right tensor factor. On $1$-chains, $\pi_1\colon \tilde{T}'\otimes X\otimes \tilde{T}'\to W_{Y^\star}\otimes \tilde{T}\otimes W_Y$ is defined as on $0$-chains for the outer tensor factors and for the middle tensor factor it is defined by the expansion
\begin{center}
\tikz[xscale=.8, yscale=.6]
{
\draw[wei] (0,0)--(0,1); \draw (0,-0.4) node {$2$}; \draw (0.5,0.5) node {$\mapsto$};\draw[wei] (1,0)--(1,1); \draw[thick] (1.5,0)--(1.5,1); \draw[wei] (2,0)--(2,1); \draw (1,-0.4) node {$1$}; \draw (1.5,-0.4) node {$1$}; \draw (2,-0.4) node {$1$}; 
}
\end{center}    
It is easy to check that $\pi_1$ intertwines $\d_1$ and $\d_1'$.

On $2$-chains, for all relations\bentodo{check!} $\pi_1\colon I/I^2\to W_{Y^\star}\otimes \tilde{T}^{\otimes 2}\otimes 
W_Y$ is defined by 
$$x_1\cdots x_r\mapsto \sum_{i=1}^r a_1\cdots a_{i-1}\otimes a_i\otimes a_{i+1}\otimes a_{i+2}\cdots a_r$$
except for $p,q,p',q'$. For those relations the differential of the image would be wrong, so we have to add a term. 
The image of $p$ and $p'$ under $\pi_1$ is defined by 
\begin{center}
\tikz[xscale=.8, yscale=.6]
{
\draw[wei] (0,0.5)--(0,1);\draw[wei] (0,0.5)--(-0.5,0);\draw[wei] (0,0.5)--(0.5,0);\draw[thick] (0,0.5)--(0,0);\draw[thick] (1,0)--(1,1);
\draw (1,-0.5) node {$2$};
\draw (0,1.5) node {$2$};
\draw (1.5,0.5) node {$\otimes$};
\draw (2,0.5) node {$X$};
\draw (2.5,0.5) node {$\otimes$};
\draw[thick] (4.5,0)--(4.5,1);\draw[wei] (3.5,0)--(3.5,0.5);\draw[wei] (3.5,0.5)--(3,1);\draw[wei] (3.5,0.5)--(4,1);\draw[thick] (3.5,0.5)--(3.5,1);
\draw (4.5,-0.5) node {$2$};
\draw (3.5,-0.5) node {$2$};
}
\end{center}
where $X$ is equal to 

\begin{center}
\tikz[xscale=.8, yscale=.6]
{
\draw (9.5,0.5) node {for $p$};
\draw[thick] (1,0)--(2,1); \draw[wei] (1.5,0)--(1,1); \draw[thick] (1.75,0)--(1.25,1); \draw[wei] (2,0)--(1.5,1);
\draw (1,-0.4) node {$2$};
\draw (2.5,0.5) node {$\otimes$}; 
\draw[wei] (3,0)--(3.5,1); \draw[thick] (3.25,0)--(3.75,1); \draw[wei] (3.5,0)--(4,1); \draw[thick] (3,1)--(4,0);
\draw (4,-0.4) node {$2$};
\draw (4.5,0.5) node {$-$};
\draw[thick] (5,0)--(5.5,1); \draw[wei] (5.5,0)--(5,1); \draw[wei] (5.75,0)--(5.75,1); \draw[thick] (6,0)--(6,1);
\draw (6,-0.4) node {$2$};
\draw (6.5,0.5) node {$\otimes$}; 
\draw[wei] (7,0)--(7.5,1); \draw[thick] (7.5,0)--(7,1); \draw[wei] (7.75,0)--(7.75,1); \draw[thick] (8,0)--(8,1);
\draw (8,-0.4) node {$2$};
}
\end{center}

\begin{center}
\tikz[xscale=.8, yscale=.6]
{
\draw (9.5,0.5) node {for $p'$};
\draw[thick] (1,0)--(2,1); \draw[wei] (1.5,0)--(1,1); \draw[thick] (1.75,0)--(1.25,1); \draw[wei] (2,0)--(1.5,1);
\draw (1,-0.5) node {$2$};
\draw (2.5,0.5) node {$\otimes$}; 
\draw[wei] (3,0)--(3.5,1); \draw[thick] (3.25,0)--(3.75,1); \draw[wei] (3.5,0)--(4,1); \draw[thick] (3,1)--(4,0);
\draw (4,-0.5) node {$2$};
\draw (4.5,0.5) node {$-$};
\draw[wei] (5,0)--(5,1); \draw[wei] (5.25,0)--(5.75,1); \draw[thick] (5.75,0)--(5.25,1); \draw[thick] (6,0)--(6,1);
\draw (6,-0.5) node {$2$};
\draw (6.5,0.5) node {$\otimes$}; 
\draw[wei] (7,0)--(7,1); \draw[thick] (7.25,0)--(8,1); \draw[wei] (7.75,0)--(7.25,1); \draw[thick] (8,0)--(7.5,1);
\draw (8,-0.5) node {$2$};
}
\end{center}
and the image of $q$ and $q'$ is obtained by reflection in a vertical axis. All unlabeled strands in these diagrams 
are assumed to be $1$-strands. It is easy to check that the diffentials of $p,p',q,q'$ are correct now. Note that 
the choice of the image of these relations is not unique, because the splitting in Proposition~\ref{bigon} is only 
unique up to homotopy equivalence.  

We also need a second map $\pi_2\colon \tilde{T}'<2> \to W_{\beta}$. Let $r=0,1,2$. For any $r$-chain, consider 
its image under $\pi_1$. Then insert $1\otimes 1$ before, in between and after the tensor factors which live in $\tilde{T}^{\otimes r}$ in any possible way, and sum over all possible ways. After that, replace $1\otimes 1$ by
\begin{center}
\tikz[xscale=.8, yscale=.6]
{
\draw[thick] (0,0)--(0.5,1); \draw[wei] (0.5,0)--(0,1); \draw[wei] (1,0)--(1,1); 
\draw (1.5,0.5) node {$\otimes$};
 \draw[wei] (2,0)--(2.5,1); \draw[thick] (2.5,0)--(2,1); \draw[wei] (3,0)--(3,1); 
\draw (3.5,0.5) node {$-$};
\draw[wei] (4,0)--(4,1); \draw[wei] (4.5,0)--(5,1); \draw[thick] (5,0)--(4.5,1); 
\draw (5.5,0.5) node {$\otimes$};
\draw[wei] (6,0)--(6,1); \draw[thick] (6.5,0)--(7,1); \draw[wei] (7,0)--(6.5,1);
}
\end{center}
where all strands are assumed to be $1$-strands. Add strands (if necessary) and order them such that the top boundary of any tensor factor matches the bottom boundary of the previous tensor factor. This last condition does not prove automatically that $\pi_2$ intertwines the differentials, but is clearly a necessary condition. 
\bentodo{Solve sign problem}
On $0$-chains, this means that $\pi_2$ maps $1\otimes 1$ to 

\begin{center}
\tikz[xscale=.8, yscale=.6]
{
\draw[wei] (0,0.5)--(0,1);\draw[wei] (0,0.5)--(-0.5,0);\draw[wei] (0,0.5)--(0.5,0);\draw[thick] (0,0.5)--(0,0);\draw[thick] (1,0)--(1,1);
\draw (1,-0.5) node {$a$};
\draw (0,1.5) node {$2$};
\draw (1.5,0.5) node {$\otimes$};
\draw[thick] (2,0)--(2.5,1); \draw[wei] (2.5,0)--(2,1); \draw[wei] (3,0)--(3,1); 
\draw (3.5,0.5) node {$\otimes$};
 \draw[wei] (4,0)--(4.5,1); \draw[thick] (4.5,0)--(4,1); \draw[wei] (5,0)--(5,1); 
\draw (5.5,0.5) node {$\otimes$};
\draw[thick] (6,0)--(6,1);\draw[wei] (7,0)--(7,0.5);\draw[wei] (7,0.5)--(6.5,1);\draw[wei] (7,0.5)--(7.5,1);\draw[thick] (7,0.5)--(7,1);
\draw (6,-0.5) node {$a$};
\draw (7,-0.5) node {$2$};
\draw (8,0.5) node {$-$};
\draw[wei] (9,0.5)--(9,1);\draw[wei] (9,0.5)--(8.5,0);\draw[wei] (9,0.5)--(9.5,0);\draw[thick] (9,0.5)--(9,0);\draw[thick] (10,0)--(10,1);
\draw (10,-0.5) node {$a$};
\draw (9,1.5) node {$2$};
\draw (10.5,0.5) node {$\otimes$};
\draw[wei] (11,0)--(11,1); \draw[wei] (11.5,0)--(12,1); \draw[thick] (12,0)--(11.5,1); 
\draw (12.5,0.5) node {$\otimes$};
\draw[wei] (13,0)--(13,1); \draw[thick] (13.5,0)--(14,1); \draw[wei] (14,0)--(13.5,1);
\draw (14.5,0.5) node {$\otimes$};
\draw[thick] (15,0)--(15,1);\draw[wei] (16,0)--(16,0.5);\draw[wei] (16,0.5)--(15.5,1);\draw[wei] (16,0.5)--(16.5,1);\draw[thick] (16,0.5)--(16,1);
\draw (15,-0.5) node {$a$};
\draw (16,-0.5) node {$2$};
}
\end{center}
Applying $\delta_2$ to this image gives 0, due to relation ?. So we see that $\delta_2\pi_2=0$, which shows that 
$\pi_2$ intertwines the differentials for $0$-chains.

On $1$-chains, we only look at one generator explicitly. For the other generators, the story is very similar. 
So, $\pi_2$ maps 
\begin{center}
\tikz[xscale=.8, yscale=.6]
{
\draw (0,0.5) node {$1$};
\draw (0.5,0.5) node {$\otimes$};
\draw[thick] (1,0)--(1.5,1); \draw[wei] (1.5,0)--(1,1); 
\draw (1.5,-0.5) node {$2$};
\draw (1,-0.5) node {$a$};
\draw (2,0.5) node {$\otimes$};
\draw (2.5,0.5) node {$1$};
}
\end{center}
for $a=1,2$, to 

\begin{center}
\tikz[xscale=.8, yscale=.6]
{
\draw[wei] (0,0.5)--(0,1);\draw[wei] (0,0.5)--(-0.5,0);\draw[wei] (0,0.5)--(0.5,0);\draw[thick] (0,0.5)--(0,0);\draw[thick] (1,0)--(1,1);
\draw (1,-0.5) node {$a$};
\draw (0,1.5) node {$2$};
\draw (1.5,0.5) node {$\otimes$};
\draw (2,0.5) node {$X$};
\draw (2.5,0.5) node {$\otimes$};
\draw[thick] (3,0)--(3,1);\draw[wei] (4,0)--(4,0.5);\draw[wei] (4,0.5)--(3.5,1);\draw[wei] (4,0.5)--(4.5,1);\draw[thick] (4,0.5)--(4,1);
\draw (3,-0.5) node {$a$};
\draw (4,-0.5) node {$2$};
}
\end{center}
where $X$ is equal to 
\begin{center}
\tikz[xscale=.8, yscale=.6]
{
\draw(-0.5,0.5) node {$+$};
\draw[thick] (0,0)--(1,1);\draw[wei] (0.5,0)--(0,1);\draw[thick] (0.75,0)--(0.25,1);\draw[wei] (1,0)--(0.5,1);
\draw (0,-0.5) node {$a$};
\draw (1.5,0.5) node {$\otimes$};
\draw[thick] (2,0)--(2,1);\draw[thick] (2.25,0)--(2.75,1);\draw[wei] (2.75,0)--(2.25,1);\draw[wei] (3,0)--(3,1);
\draw (2,-0.5) node {$a$};
\draw (3.5,0.5) node {$\otimes$};
\draw[thick] (4,0)--(4,1);\draw[wei] (4.25,0)--(4.75,1);\draw[thick] (4.75,0)--(4.25,1);\draw[wei] (5,0)--(5,1);
\draw (4,-0.5) node {$a$};
\draw (5.5,0.5) node {$+$};
\draw[thick] (6,0)--(6.5,1);\draw[wei] (6.5,0)--(6,1);\draw[wei] (6.75,0)--(6.75,1);\draw[thick] (7,0)--(7,1);
\draw (7,-0.5) node {$a$};
\draw (7.5,0.5) node {$\otimes$};
\draw[thick] (8,0)--(9,1);\draw[thick] (8.5,0)--(8,1);\draw[wei] (8.75,0)--(8.25,1);\draw[wei] (9,0)--(8.5,1);
\draw (8,-0.5) node {$a$};
\draw (9.5,0.5) node {$\otimes$};
\draw[thick] (10,0)--(10,1);\draw[wei] (10.25,0)--(10.75,1);\draw[thick] (10.75,0)--(10.25,1);\draw[wei] (11,0)--(11,1);
\draw (10,-0.5) node {$a$};
\draw (11.5,0.5) node {$+$};
\draw[thick] (12,0)--(12.5,1);\draw[wei] (12.5,0)--(12,1);\draw[wei] (12.75,0)--(12.75,1);\draw[thick] (13,0)--(13,1);
\draw (13,-0.5) node {$a$};
\draw (13.5,0.5) node {$\otimes$};
\draw[wei] (14,0)--(14.5,1);\draw[thick] (14.5,0)--(14,1);\draw[wei] (14.75,0)--(14.75,1);\draw[thick] (15,0)--(15,1);
\draw (15,-0.5) node {$a$};
\draw (15.5,0.5) node {$\otimes$};
\draw[thick] (16,0)--(17,1);\draw[wei] (16.5,0)--(16,1);\draw[thick] (16.75,0)--(16.25,1);\draw[wei] (17,0)--(16.5,1);
\draw (16,-0.5) node {$a$};
}
\end{center}

\begin{center}
\tikz[xscale=.8, yscale=.6]
{
\draw(-0.5,0.5) node {$-$};
\draw[thick] (0,0)--(1,1);\draw[wei] (0.5,0)--(0,1);\draw[thick] (0.75,0)--(0.25,1);\draw[wei] (1,0)--(0.5,1);
\draw (0,-0.5) node {$a$};
\draw (1.5,0.5) node {$\otimes$};
\draw[thick] (2,0)--(2,1);\draw[wei] (2.25,0)--(2.25,1);\draw[wei] (2.5,0)--(3,1);\draw[thick] (3,0)--(2.5,1);
\draw (2,-0.5) node {$a$};
\draw (3.5,0.5) node {$\otimes$};
\draw[thick] (4,0)--(4,1);\draw[wei] (4.25,0)--(4.25,1);\draw[thick] (4.5,0)--(5,1);\draw[wei] (5,0)--(4.5,1);
\draw (4,-0.5) node {$a$};
\draw (5.5,0.5) node {$-$};
\draw[wei] (6,0)--(6,1);\draw[wei] (6.25,0)--(6.75,1);\draw[thick] (6.75,0)--(6.25,1);\draw[thick] (7,0)--(7,1);
\draw (7,-0.5) node {$a$};
\draw (7.5,0.5) node {$\otimes$};
\draw[thick] (8,0)--(9,1);\draw[wei] (8.5,0)--(8,1);\draw[wei] (8.75,0)--(8.25,1);\draw[thick] (9,0)--(8.5,1);
\draw (8,-0.5) node {$a$};
\draw (9.5,0.5) node {$\otimes$};
\draw[thick] (10,0)--(10,1);\draw[wei] (10.25,0)--(10.25,1);\draw[thick] (10.5,0)--(11,1);\draw[wei] (11,0)--(10.5,1);
\draw (10,-0.5) node {$a$};
\draw (11.5,0.5) node {$-$};
\draw[wei] (12,0)--(12,1);\draw[wei] (12.25,0)--(12.75,1);\draw[thick] (12.75,0)--(12.25,1);\draw[thick] (13,0)--(13,1);
\draw (13,-0.5) node {$a$};
\draw (13.5,0.5) node {$\otimes$};
\draw[wei] (14,0)--(14,1);\draw[thick] (14.25,0)--(14.75,1);\draw[wei] (14.75,0)--(14.25,1);\draw[thick] (15,0)--(15,1);
\draw (15,-0.5) node {$a$};
\draw (15.5,0.5) node {$\otimes$};
\draw[thick] (16,0)--(17,1);\draw[wei] (16.5,0)--(16,1);\draw[thick] (16.75,0)--(16.25,1);\draw[wei] (17,0)--(16.5,1);
\draw (16,-0.5) node {$a$};
}
\end{center}
As before, all unlabeled strands are assumed to be $1$-strands. It is easy to check that $\pi_2\delta_1'=\delta_3\pi_2$ 
holds in this example. 

Finally, let us have a look at the image of the $2$-chains. We look at the case of $p$ in detail. Under $\pi_2$, 
this element is mapped to 
\begin{center}
\tikz[xscale=.8, yscale=.6]
{
\draw[wei] (0,0.5)--(0,1);\draw[wei] (0,0.5)--(-0.5,0);\draw[wei] (0,0.5)--(0.5,0);\draw[thick] (0,0.5)--(0,0);\draw[thick] (1,0)--(1,1);
\draw (1,-0.5) node {$2$};
\draw (0,1.5) node {$2$};
\draw (1.5,0.5) node {$\otimes$};
\draw (2,0.5) node {$X$};
\draw (2.5,0.5) node {$\otimes$};
\draw[thick] (4.5,0)--(4.5,1);\draw[wei] (3.5,0)--(3.5,0.5);\draw[wei] (3.5,0.5)--(3,1);\draw[wei] (3.5,0.5)--(4,1);\draw[thick] (3.5,0.5)--(3.5,1);
\draw (3.5,-0.5) node {$2$};
\draw (4.5,-0.5) node {$2$};
}
\end{center}
where $X$ is equal to 

\begin{center}
\tikz[xscale=.8, yscale=.6]
{
\draw(-0.5,0.5) node {$+$};
\draw[thick] (0,0)--(1,1);\draw[wei] (0.5,0)--(0,1);\draw[thick] (0.75,0)--(0.25,1);\draw[wei] (1,0)--(0.5,1);
\draw (0,-0.5) node {$2$};
\draw (1.5,0.5) node {$\otimes$};
\draw[wei] (2,0)--(2.5,1);\draw[thick] (2.25,0)--(2.75,1);\draw[wei] (2.5,0)--(3,1);\draw[thick] (3,0)--(2,1);
\draw (3,-0.5) node {$2$};
\draw (3.5,0.5) node {$\otimes$};
\draw[thick] (4,0)--(4.5,1);\draw[wei] (4.5,0)--(4,1);\draw[wei] (4.75,0)--(4.75,1);\draw[thick] (5,0)--(5,1);
\draw (5,-0.5) node {$2$};
\draw (5.5,0.5) node {$\otimes$};
\draw[wei] (6,0)--(6.5,1);\draw[thick] (6.5,0)--(6,1);\draw[wei] (6.75,0)--(6.75,1);\draw[thick] (7,0)--(7,1);
\draw (7.5,0.5) node {$-$};
\draw[thick] (8,0)--(9,1);\draw[wei] (8.5,0)--(8,1);\draw[thick] (8.75,0)--(8.25,1);\draw[wei] (9,0)--(8.5,1);
\draw (8,-0.5) node {$2$};
\draw (9.5,0.5) node {$\otimes$};
\draw[wei] (10,0)--(10.5,1);\draw[thick] (10.25,0)--(10.75,1);\draw[wei] (10.5,0)--(11,1);\draw[thick] (11,0)--(10,1);
\draw (11,-0.5) node {$2$};
\draw (11.5,0.5) node {$\otimes$};
\draw[wei] (12,0)--(12,1);\draw[wei] (12.25,0)--(12.75,1);\draw[thick] (12.75,0)--(12.25,1);\draw[thick] (13,0)--(13,1);
\draw (13,-0.5) node {$2$};
\draw (13.5,0.5) node {$\otimes$};
\draw[wei] (14,0)--(14,1);\draw[thick] (14.25,0)--(14.75,1);\draw[wei] (14.75,0)--(14.25,1);\draw[thick] (15,0)--(15,1);
\draw (15,-0.5) node {$2$};
}
\end{center}

\begin{center}
\tikz[xscale=.8, yscale=.6]
{
\draw(-0.5,0.5) node {$+$};
\draw[thick] (0,0)--(1,1);\draw[wei] (0.5,0)--(0,1);\draw[thick] (0.75,0)--(0.25,1);\draw[wei] (1,0)--(0.5,1);
\draw (0,-0.5) node {$2$};
\draw (1.5,0.5) node {$\otimes$};
\draw[thick] (2,0)--(2,1);\draw[thick] (2.25,0)--(2.75,1);\draw[wei] (2.75,0)--(2.25,1);\draw[wei] (3,0)--(3,1);
\draw (2,-0.5) node {$2$};
\draw (3.5,0.5) node {$\otimes$};
\draw[thick] (4,0)--(4.5,1);\draw[wei] (4.25,0)--(4.75,1);\draw[wei] (4.5,0)--(5,1);\draw[thick] (5,0)--(4,1);
\draw (5,-0.5) node {$2$};
\draw (5.5,0.5) node {$\otimes$};
\draw[wei] (6,0)--(6.5,1);\draw[thick] (6.5,0)--(6,1);\draw[wei] (6.75,0)--(6.75,1);\draw[thick] (7,0)--(7,1);
\draw (7,-0.5) node {$2$};
\draw (7.5,0.5) node {$-$};
\draw[thick] (8,0)--(9,1);\draw[wei] (8.5,0)--(8,1);\draw[thick] (8.75,0)--(8.25,1);\draw[wei] (9,0)--(8.5,1);
\draw (8,-0.5) node {$2$};
\draw (9.5,0.5) node {$\otimes$};
\draw[thick] (10,0)--(10,1);\draw[wei] (10.25,0)--(10.25,1);\draw[wei] (10.5,0)--(11,1);\draw[thick] (11,0)--(10.5,1);
\draw (10,-0.5) node {$2$};
\draw (11.5,0.5) node {$\otimes$};
\draw[wei] (12,0)--(12.5,1);\draw[wei] (12.25,0)--(12.75,1);\draw[thick] (12.5,0)--(13,1);\draw[thick] (13,0)--(12,1);
\draw (13,-0.5) node {$2$};
\draw (13.5,0.5) node {$\otimes$};
\draw[wei] (14,0)--(14,1);\draw[thick] (14.25,0)--(14.75,1);\draw[wei] (14.75,0)--(14.25,1);\draw[thick] (15,0)--(15,1);
\draw (15,-0.5) node {$2$};
}
\end{center}

\begin{center}
\tikz[xscale=.8, yscale=.6]
{
\draw(-0.5,0.5) node {$+$};
\draw[thick] (0,0)--(1,1);\draw[wei] (0.5,0)--(0,1);\draw[thick] (0.75,0)--(0.25,1);\draw[wei] (1,0)--(0.5,1);
\draw (0,-0.5) node {$2$};
\draw (1.5,0.5) node {$\otimes$};
\draw[thick] (2,0)--(2,1);\draw[thick] (2.25,0)--(2.75,1);\draw[wei] (2.75,0)--(2.25,1);\draw[wei] (3,0)--(3,1);
\draw (2,-0.5) node {$2$};
\draw (3.5,0.5) node {$\otimes$};
\draw[thick] (4,0)--(4,1);\draw[wei] (4.25,0)--(4.75,1);\draw[thick] (4.75,0)--(4.25,1);\draw[wei] (5,0)--(5,1);
\draw (4,-0.5) node {$2$};
\draw (5.5,0.5) node {$\otimes$};
\draw[wei] (6,0)--(6.5,1);\draw[thick] (6.25,0)--(6.75,1);\draw[wei] (6.5,0)--(7,1);\draw[thick] (7,0)--(6,1);
\draw (7,-0.5) node {$2$};
\draw (7.5,0.5) node {$-$};
\draw[thick] (8,0)--(9,1);\draw[wei] (8.5,0)--(8,1);\draw[thick] (8.75,0)--(8.25,1);\draw[wei] (9,0)--(8.5,1);
\draw (8,-0.5) node {$2$};
\draw (9.5,0.5) node {$\otimes$};
\draw[thick] (10,0)--(10,1);\draw[wei] (10.25,0)--(10.25,1);\draw[wei] (10.5,0)--(11,1);\draw[thick] (11,0)--(10.5,1);
\draw (10,-0.5) node {$2$};
\draw (11.5,0.5) node {$\otimes$};
\draw[thick] (12,0)--(12,1);\draw[wei] (12.25,0)--(12.25,1);\draw[thick] (12.5,0)--(13,1);\draw[wei] (13,0)--(12.5,1);
\draw (12,-0.5) node {$2$};
\draw (13.5,0.5) node {$\otimes$};
\draw[wei] (14,0)--(14.5,1);\draw[thick] (14.25,0)--(14.75,1);\draw[wei] (14.5,0)--(15,1);\draw[thick] (15,0)--(14,1);
\draw (15,-0.5) node {$2$};
}
\end{center}

\begin{center}
\tikz[xscale=.8, yscale=.6]
{
\draw(-0.5,0.5) node {$+$};
\draw[thick] (0,0)--(0.5,1);\draw[wei] (0.5,0)--(0,1);\draw[wei] (0.75,0)--(0.75,1);\draw[thick] (1,0)--(1,1);
\draw (1,-0.5) node {$2$};
\draw (1.5,0.5) node {$\otimes$};
\draw[thick] (2,0)--(3,1);\draw[thick] (2.5,0)--(2,1);\draw[wei] (2.75,0)--(2.25,1);\draw[wei] (3,0)--(2.5,1);
\draw (2,-0.5) node {$2$};
\draw (3.5,0.5) node {$\otimes$};
\draw[thick] (4,0)--(4,1);\draw[wei] (4.25,0)--(4.75,1);\draw[thick] (4.75,0)--(4.25,1);\draw[wei] (5,0)--(5,1);
\draw (4,-0.5) node {$2$};
\draw (5.5,0.5) node {$\otimes$};
\draw[wei] (6,0)--(6.5,1);\draw[thick] (6.25,0)--(6.75,1);\draw[wei] (6.5,0)--(7,1);\draw[thick] (7,0)--(6,1);
\draw (7,-0.5) node {$2$};
\draw (7.5,0.5) node {$-$};
\draw[wei] (8,0)--(8,1);\draw[wei] (8.25,0)--(8.75,1);\draw[thick] (8.75,0)--(8.25,1);\draw[thick] (9,0)--(9,1);
\draw (9,-0.5) node {$2$};
\draw (9.5,0.5) node {$\otimes$};
\draw[thick] (10,0)--(11,1);\draw[wei] (10.5,0)--(10,1);\draw[wei] (10.75,0)--(10.25,1);\draw[thick] (11,0)--(10.5,1);
\draw (10,-0.5) node {$2$};
\draw (11.5,0.5) node {$\otimes$};
\draw[thick] (12,0)--(12,1);\draw[wei] (12.25,0)--(12.25,1);\draw[thick] (12.5,0)--(13,1);\draw[wei] (13,0)--(12.5,1);
\draw (12,-0.5) node {$2$};
\draw (13.5,0.5) node {$\otimes$};
\draw[wei] (14,0)--(14.5,1);\draw[thick] (14.25,0)--(14.75,1);\draw[wei] (14.5,0)--(15,1);\draw[thick] (15,0)--(14,1);
\draw (15,-0.5) node {$2$};
}
\end{center}

\begin{center}
\tikz[xscale=.8, yscale=.6]
{
\draw(-0.5,0.5) node {$+$};
\draw[thick] (0,0)--(0.5,1);\draw[wei] (0.5,0)--(0,1);\draw[wei] (0.75,0)--(0.75,1);\draw[thick] (1,0)--(1,1);
\draw (1,-0.5) node {$2$};
\draw (1.5,0.5) node {$\otimes$};
\draw[wei] (2,0)--(2.5,1);\draw[thick] (2.5,0)--(2,1);\draw[wei] (2.75,0)--(2.75,1);\draw[thick] (3,0)--(3,1);
\draw (3,-0.5) node {$2$};
\draw (3.5,0.5) node {$\otimes$};
\draw[thick] (4,0)--(5,1);\draw[wei] (4.5,0)--(4,1);\draw[thick] (4.75,0)--(4.25,1);\draw[wei] (5,0)--(4.5,1);
\draw (4,-0.5) node {$2$};
\draw (5.5,0.5) node {$\otimes$};
\draw[wei] (6,0)--(6.5,1);\draw[thick] (6.25,0)--(6.75,1);\draw[wei] (6.5,0)--(7,1);\draw[thick] (7,0)--(6,1);
\draw (7,-0.5) node {$2$};
\draw (7.5,0.5) node {$-$};
\draw[wei] (8,0)--(8,1);\draw[wei] (8.25,0)--(8.75,1);\draw[thick] (8.75,0)--(8.25,1);\draw[thick] (9,0)--(9,1);
\draw (9,-0.5) node {$2$};
\draw (9.5,0.5) node {$\otimes$};
\draw[wei] (10,0)--(10,1);\draw[thick] (10.25,0)--(10.75,1);\draw[wei] (10.75,0)--(10.25,1);\draw[thick] (11,0)--(11,1);
\draw (11,-0.5) node {$2$};
\draw (11.5,0.5) node {$\otimes$};
\draw[thick] (12,0)--(13,1);\draw[wei] (12.5,0)--(12,1);\draw[thick] (12.75,0)--(12.25,1);\draw[wei] (13,0)--(12.5,1);
\draw (12,-0.5) node {$2$};
\draw (13.5,0.5) node {$\otimes$};
\draw[wei] (14,0)--(14.5,1);\draw[thick] (14.25,0)--(14.75,1);\draw[wei] (14.5,0)--(15,1);\draw[thick] (15,0)--(14,1);
\draw (15,-0.5) node {$2$};
}
\end{center}

\begin{center}
\tikz[xscale=.8, yscale=.6]
{
\draw(-0.5,0.5) node {$+$};
\draw[thick] (0,0)--(0.5,1);\draw[wei] (0.5,0)--(0,1);\draw[wei] (0.75,0)--(0.75,1);\draw[thick] (1,0)--(1,1);
\draw (1,-0.5) node {$2$};
\draw (1.5,0.5) node {$\otimes$};
\draw[thick] (2,0)--(3,1);\draw[thick] (2.5,0)--(2,1);\draw[wei] (2.75,0)--(2.25,1);\draw[wei] (3,0)--(2.5,1);
\draw (2,-0.5) node {$2$};
\draw (3.5,0.5) node {$\otimes$};
\draw[thick] (4,0)--(4.5,1);\draw[wei] (4.25,0)--(4.75,1);\draw[wei] (4.5,0)--(5,1);\draw[thick] (5,0)--(4,1);
\draw (5,-0.5) node {$2$};
\draw (5.5,0.5) node {$\otimes$};
\draw[wei] (6,0)--(6.5,1);\draw[thick] (6.5,0)--(6,1);\draw[wei] (6.75,0)--(6.75,1);\draw[thick] (7,0)--(7,1);
\draw (7,-0.5) node {$2$};
\draw (7.5,0.5) node {$-$};
\draw[wei] (8,0)--(8,1);\draw[wei] (8.25,0)--(8.75,1);\draw[thick] (8.75,0)--(8.25,1);\draw[thick] (9,0)--(9,1);
\draw (9,-0.5) node {$2$};
\draw (9.5,0.5) node {$\otimes$};
\draw[thick] (10,0)--(11,1);\draw[wei] (10.5,0)--(10,1);\draw[wei] (10.75,0)--(10.25,1);\draw[thick] (11,0)--(10.5,1);
\draw (10,-0.5) node {$2$};
\draw (11.5,0.5) node {$\otimes$};
\draw[wei] (12,0)--(12.5,1);\draw[wei] (12.25,0)--(12.75,1);\draw[thick] (12.5,0)--(13,1);\draw[thick] (13,0)--(12,1);
\draw (13,-0.5) node {$2$};
\draw (13.5,0.5) node {$\otimes$};
\draw[wei] (14,0)--(14,1);\draw[thick] (14.25,0)--(14.75,1);\draw[wei] (14.75,0)--(14.25,1);\draw[thick] (15,0)--(15,1);
\draw (15,-0.5) node {$2$};
}
\end{center}

\begin{center}
\tikz[xscale=.8, yscale=.6]
{
\draw(-0.5,0.5) node {$-\;6$};
\draw[thick] (0,0)--(0.5,1);\draw[wei] (0.5,0)--(0,1);\draw[wei] (0.75,0)--(0.75,1);\draw[thick] (1,0)--(1,1);
\draw (1,-0.5) node {$2$};
\draw (1.5,0.5) node {$\otimes$};
\draw[wei] (2,0)--(2.5,1);\draw[thick] (2.5,0)--(2,1);\draw[wei] (2.75,0)--(2.75,1);\draw[thick] (3,0)--(3,1);
\draw (3,-0.5) node {$2$};
\draw (3.5,0.5) node {$\otimes$};
\draw[thick] (4,0)--(4.5,1);\draw[wei] (4.5,0)--(4,1);\draw[wei] (4.75,0)--(4.75,1);\draw[thick] (5,0)--(5,1);
\draw (5,-0.5) node {$2$};
\draw (5.5,0.5) node {$\otimes$};
\draw[wei] (6,0)--(6.5,1);\draw[thick] (6.5,0)--(6,1);\draw[wei] (6.75,0)--(6.75,1);\draw[thick] (7,0)--(7,1);
\draw (7,-0.5) node {$2$};
\draw (7.5,0.5) node {$+\;6$};
\draw[wei] (8,0)--(8,1);\draw[wei] (8.25,0)--(8.75,1);\draw[thick] (8.75,0)--(8.25,1);\draw[thick] (9,0)--(9,1);
\draw (9,-0.5) node {$2$};
\draw (9.5,0.5) node {$\otimes$};
\draw[wei] (10,0)--(10,1);\draw[thick] (10.25,0)--(10.75,1);\draw[wei] (10.75,0)--(10.25,1);\draw[thick] (11,0)--(11,1);
\draw (11,-0.5) node {$2$};
\draw (11.5,0.5) node {$\otimes$};
\draw[wei] (12,0)--(12,1);\draw[wei] (12.25,0)--(12.75,1);\draw[thick] (12.75,0)--(12.25,1);\draw[thick] (13,0)--(13,1);
\draw (13,-0.5) node {$2$};
\draw (13.5,0.5) node {$\otimes$};
\draw[wei] (14,0)--(14,1);\draw[thick] (14.25,0)--(14.75,1);\draw[wei] (14.75,0)--(14.25,1);\draw[thick] (15,0)--(15,1);
\draw (15,-0.5) node {$2$};
}
\end{center}

As usual, all unlabeled strands are assumed to be $1$-strands. It is easy to check that $\pi_2\delta_2'=\delta_4\pi_2$ 
holds in this example. 

}
\end{proof}

\begin{lemma}
  On $\dF^3_i$, we have the relation
  \begin{equation}
    \label{nilHecke-b}
    (\psi\otimes 1)(1\otimes \psi)(\psi\otimes 1)= (1\otimes \psi) (\psi\otimes 1)(1\otimes \psi)
  \end{equation}
\end{lemma}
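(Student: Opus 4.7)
The plan is to parallel the chain-level argument just used for \eqref{nilHecke-a}. By Proposition \ref{associative}, we can realize $\dF_i^3$ as tensor product with a single ladder bimodule $W_\gamma$ obtained by splitting a red $c$-strand (where $c = p_i$) via three nested trivalent vertices into four strands of labels $p_i - 3, 1, 1, 1$. The three crossings $\psi \otimes 1$, $1 \otimes \psi$, $\psi \otimes 1$ (and likewise on the right-hand side of \eqref{nilHecke-b}) correspond to applying the bigon contraction $\iota_Y \circ \epsilon_{Y^\star}$ in three successive adjacent pairs of $1$-strands.

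First, using Proposition \ref{prop:trivalent-commute}, I would reduce to the case where no black strands besides those generated by $\dF_i^3$ are present, so both sides of \eqref{nilHecke-b} become endomorphisms of the single bimodule $W_\gamma$ living over the $(p_i - 3, 1, 1, 1, p_{i+1})$-block of $\tilde T^\Bp$. I would then compute both sides on the triple tensor product of the two-step resolution $P_\emptyset \to P_{\{1\}} \oplus P_{\{2\}} \to P_\emptyset$ of Corollary \ref{W-res} (with $a = b = 1$). Writing $A, B, C, D$ for the generators in each bigon as in the proof of \eqref{nilHecke-a}, $\psi$ sends $A \otimes A \mapsto \mathsf{k}$, where $\mathsf{k}$ is the canonical pairing element. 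Both sides of \eqref{nilHecke-b} are then computed as the image of $A \otimes A \otimes A \otimes A$ under nested applications of this rule, and one checks that the two orderings produce the same canonical $S_3$-symmetric element of the triple tensor product modulo boundaries.

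The main obstacle will be the chain-level bookkeeping: naively expanding the composites $(\psi \otimes 1)(1 \otimes \psi)(\psi \otimes 1)$ on the full triple resolution produces many terms, and many of them are boundaries that must be identified by hand. To contain this, I would argue more structurally that the space $\operatorname{End}^{-6}_{\tWB}(\dF_i^3)$ of natural transformations of the appropriate internal/homological bidegree (see Remark \ref{dual-grading}) is small: by Theorem \ref{T-Koszul}, ladder bimodules preserve linear complexes, so this Ext-space is controlled by a basis whose cardinality can be matched with the nilHecke algebra itself. Combined with the dot-crossing relation \eqref{nilHecke-a} already established, this forces the difference of the two sides of \eqref{nilHecke-b} to be a multiple of a single basis vector, and comparing the coefficient via the normalization of Lemma \ref{lem:bubble-dot} yields the equality.

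As a safety net, should the direct chain-level verification prove too delicate, the same identity can be obtained via the comparison to category $\cO$: under the equivalence $\xi_\Bp$, the functor $\dF_i$ is intertwined (up to shift) with a Zuckerman functor, and the nilHecke braid relation for iterated Zuckerman functors, in the Koszul-dual guise of translation functors, is known from Ryom-Hansen \cite{RH}. Since $\psi$ is characterized by its explicit description on the bigon and this description is preserved by $\xi_\Bp$, the braid relation in $\cO$ transports back to \eqref{nilHecke-b} in $\tWB$.
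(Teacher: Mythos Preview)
Your approach is more laborious than the paper's, and the chain-level computation you outline would be painful to carry out rigorously. The paper's proof is much shorter and avoids any explicit chain computation.

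Here is what the paper does. Both sides of \eqref{nilHecke-b} are pure $2$-morphisms of bidegree $(-6,6)$. Using associativity (Proposition \ref{associative}) twice, $\dF_i^3$ is rewritten as a ladder with a single rung labeled $3$ carrying a nested pair of bigons (a ``theta'' web). Applying the bigon relation (Proposition \ref{bigon}) twice shows this ladder bimodule decomposes as a direct sum of shifts of the identity bimodule $\tilde T^{\omega_3}$, with the top shift $\langle 6\rangle$ occurring exactly once. Hence the relevant morphism space $\Ext^{-6}(\dF_i^3,\dF_i^3)$ is isomorphic to $Z(\tilde T^{\omega_3})(-6)$, which in the required degree is one-dimensional. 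Both triple products lie in this one-dimensional space, and by the remark immediately following Proposition \ref{bigon} (which identifies $\psi=\iota_Y\circ\epsilon_{Y^\star}$ with the generator $1$ of each $\Ext^{-2}$ piece) both composites correspond to $1$ under the isomorphism. Equality follows, with no chain-level bookkeeping, no appeal to Theorem \ref{T-Koszul}, and no coefficient matching via Lemma \ref{lem:bubble-dot}.

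Your structural paragraph was moving in the right direction, but you reached for Koszulity rather than the bigon decomposition, and you stopped at ``the difference is a multiple of a single basis vector, then compare coefficients.'' The sharper observation is that the target space is already one-dimensional and both sides are visibly the unit, so no further comparison is needed. Your category $\cO$ fallback via \cite{RH} would also work, but it is overkill given how short the direct argument is.
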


\begin{proof}
Note that the bidegree of both maps is $(-6,6)$, where the first entry is the homological degree and the second the internal degree. Note also that, by associativity, we have 
 
 \begin{center}
\tikz[xscale=.8, yscale=.6]
{
\draw[wei] (1,0)--(1,5); \draw[wei] (3,0)--(3,5); \draw[wei] (1,1)--(3,2); \draw[wei] (1,2)--(3,3);\draw[wei] (1,3)--(3,4);
\draw (2,1.9) node {$1$}; \draw (2,2.9) node {$1$}; \draw (2,3.9) node {$1$}; 
\draw (3.5,2) node {$\cong$}; 
\draw[wei] (4,0)--(4,5); \draw[wei] (6,0)--(6,5); \draw[wei] (4,1.5)--(4.5,1.75);\draw[wei] (5.5,2.25)--(6,2.5);
\draw[wei] (4.5,1.75) .. controls (4.85,2.5) and (4.85,2.5) .. (5.5,2.25);
\draw[wei] (4.5,1.75) .. controls (5.15,1.5) and (5.15,1.5) .. (5.5,2.25);
\draw[wei] (4,3)--(6,4);
\draw (5,2.8) node {$1$}; \draw (5,1.2) node {$1$}; \draw (5,3.9) node {$1$}; 
\draw (6.5,2) node {$\cong$};
\draw[wei] (7,0)--(7,5); \draw[wei] (9,0)--(9,5); \draw[wei] (7,2)--(7.25,2.25);\draw[wei] (8.75,2.75)--(9,3);
\draw[wei] (7.25,2.25) .. controls (7.85,3.5) and (7.85,3.5) .. (8.75,2.75);
\draw[wei] (7.25,2.25) .. controls (8.15,1.5) and (8.15,1.5) .. (8.75,2.75);
\draw[wei] (7.5,2) .. controls (8,2.75) and (8,2.75) .. (8.6,2.35);
\draw (8,2.9) node {$1$}; \draw (8,1.2) node {$1$}; \draw (8,3.6) node {$1$}; 
}
\end{center}
and 
 
\begin{center}
\tikz[xscale=.8, yscale=.6]
{
\draw[wei] (1,0)--(1,5); \draw[wei] (3,0)--(3,5); \draw[wei] (1,1)--(3,2); \draw[wei] (1,2)--(3,3);\draw[wei] (1,3)--(3,4);
\draw (2,1.9) node {$1$}; \draw (2,2.9) node {$1$}; \draw (2,3.9) node {$1$}; 
\draw (3.5,2) node {$\cong$}; 
\draw[wei] (4,0)--(4,5); \draw[wei] (6,0)--(6,5); \draw[wei] (4,2.5)--(4.5,2.75);\draw[wei] (5.5,3.25)--(6,3.5);
\draw[wei] (4.5,2.75) .. controls (4.85,3.5) and (4.85,3.5) .. (5.5,3.25);
\draw[wei] (4.5,2.75) .. controls (5.15,2.5) and (5.15,2.5) .. (5.5,3.25);
\draw[wei] (4,1)--(6,2);
\draw (5,0.9) node {$1$}; \draw (5,2.1) node {$1$}; \draw (5,3.9) node {$1$}; 
\draw (6.5,2) node {$\cong$};
\draw[wei] (7,0)--(7,5); \draw[wei] (9,0)--(9,5); \draw[wei] (7,2)--(7.25,2.25);\draw[wei] (8.75,2.75)--(9,3);
\draw[wei] (7.25,2.25) .. controls (7.85,3.5) and (7.85,3.5) .. (8.75,2.75);
\draw[wei] (7.25,2.25) .. controls (8.15,1.5) and (8.15,1.5) .. (8.75,2.75);
\draw[wei] (7.4,2.7) .. controls (8,2.25) and (8,2.25) .. (8.5,3);
\draw (8,1.1) node {$1$}; \draw (8,2) node {$1$}; \draw (8,3.9) node {$1$}; 
}
\end{center}

 Applying Proposition~\ref{bigon} twice gives
 
\begin{center}
\tikz[xscale=.8, yscale=.6]
{
\draw[wei] (1,0)--(1,1); \draw[wei] (1,3)--(1,4); 
\draw[wei] (1,1) .. controls (0,2) and (0,2) .. (1,3);
\draw[wei] (1,1) .. controls (2,2) and (2,2) .. (1,3);
\draw[wei] (0.75,1.25) .. controls (1,2) and (1,2) .. (0.75,2.75);
\draw (0,2) node {$1$}; \draw (1.1,2) node {$1$}; \draw (2,2) node {$1$}; 
\draw (2.5,2) node {$\cong$};
\draw[wei] (4,0)--(4,1); \draw[wei] (4,3)--(4,4); 
\draw[wei] (4,1) .. controls (3,2) and (3,2) .. (4,3);
\draw[wei] (4,1) .. controls (5,2) and (5,2) .. (4,3);
\draw[wei] (4.25,1.25) .. controls (4,2) and (4,2) .. (4.25,2.75);
\draw (3,2) node {$1$}; \draw (3.9,2) node {$1$}; \draw (5,2) node {$1$}; 
\draw (5.5,2) node {$\cong$};
\draw[wei] (6,0)--(6,4); 
\draw (7.7,2) node {$\oplus$};
\draw (8.7,2) node {$\cdots$};
\draw (9.7,2) node {$\oplus$};
\draw[wei] (10.2,0)--(10.2,4); 
\draw (6.7,2) node {$\langle 6\rangle $};
\draw (6,-0.4) node {$3$}; \draw (10.2,-0.4) node {$3$}; 
}
\end{center}

Let $W_1$ and $W_2$ be the bimodules associated to the first and the second web in this picture, respectively.
Then $$\Ext^{-6}(W_1,W_1)\cong Z(\tilde{T}^{\omega_3})(-6)\cong \Ext^{-6}(W_2,W_2)$$
and the composite of the two isomorphisms is equal to the associativity isomorphism in homological degree $-6$. The remarks below Proposition~\ref{bigon} show that both $(\psi\otimes 1)(1\otimes \psi)(\psi\otimes 1)$ 
and $(1\otimes \psi)(\psi\otimes 1)(1\otimes \psi)$ correspond to $1\in Z(\tilde{T}^{\omega_3})(-6)$.
\end{proof}

This action gives us an alternate way of defining the divided power
functor $\dF_i^{(n)}$ as the image of a primitive idempotent in
$NH_n$.

In particular, this shows that:
\begin{corollary}
  The maps $y$ and $\psi$ define an action of the nilHecke algebra
  $NH_n$ on $\dF_i^n$, and thus of the symmetric polynomials
  $\K[y_1,\dots, y_n]^{S_n}\cong Z(NH_n)$ on the summand $\dF_i^{(n)}$.  
\end{corollary}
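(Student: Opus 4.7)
The plan is to deduce this by verifying each defining relation of the nilHecke algebra $NH_n$ on the natural transformations $y_k := 1^{\otimes(k-1)} \otimes y \otimes 1^{\otimes(n-k)}$ and $\psi_j := 1^{\otimes(j-1)} \otimes \psi \otimes 1^{\otimes(n-j-1)}$ acting on $\dF_i^n$. Recall that $NH_n$ has a presentation with generators $y_1,\dots,y_n$ and $\psi_1,\dots,\psi_{n-1}$ subject to: (i) the $y_k$ pairwise commute; (ii) $\psi_j\psi_k=\psi_k\psi_j$ for $|j-k|>1$; (iii) $\psi_j y_k = y_k\psi_j$ for $k\neq j,j+1$; (iv) the dot-slide $\psi_j y_j - y_{j+1}\psi_j = y_j\psi_j - \psi_j y_{j+1} = 1$; (v) the braid relation $\psi_j\psi_{j+1}\psi_j = \psi_{j+1}\psi_j\psi_{j+1}$; and (vi) $\psi_j^2 = 0$.

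Relations (i), (ii), and (iii) are immediate consequences of locality, since the natural transformations involved in each equation are supported on disjoint consecutive tensor factors of $\dF_i^n$. Relations (iv) and (v) were established precisely in the two preceding lemmas (equations \eqref{nilHecke-a} and \eqref{nilHecke-b}), and the local versions on $\dF_i^n$ follow by tensoring with the identity on the remaining factors. So the only nontrivial remaining check is the nilpotency (vi).

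For $\psi^2 = 0$, I would use the definition of $\psi$ as the composition $\iota_{Y} \circ \ep_{Y^\star}$ obtained after applying the associativity isomorphism identifying $\dF_i^2$ with the bigon web. Then $\psi^2$ factors through $\ep_{Y^\star} \circ \iota_Y \colon \mathrm{id} \to \mathrm{id}$, which is an endomorphism of the identity functor of bidegree $(-2,2)$. By Lemma~\ref{lem:bubble-dot} (or directly from Proposition~\ref{bigon}, which identifies the bigon bimodule with $\bigoplus_k \tilde{T}^{\Bp}\langle c-1-2k\rangle$), this bubble-like map vanishes for degree reasons when $b=1$ as in our situation. This is also exactly what was already observed in passing in the proof of \eqref{nilHecke-y}, where we noted that $\psi(\mathsf k)$ is a boundary and $\psi(A\otimes A)$ lands in the $\mathsf k$-summand.

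Having verified all the nilHecke relations, the first claim follows. For the second claim, Proposition~\ref{dividedpowers} expresses $\dF_i^{(n)}$ as the summand cut out by a primitive idempotent $e_n \in NH_n$; then $e_n NH_n e_n$ is canonically isomorphic to $Z(NH_n) = \K[y_1,\dots,y_n]^{S_n}$ (a standard fact about nilHecke algebras), and the resulting action on $\dF_i^{(n)}$ is the desired action of symmetric polynomials. The main (and only real) obstacle is the verification of $\psi^2=0$, which however is forced by the adjunction structure rather than requiring a new diagrammatic calculation.
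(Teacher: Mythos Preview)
Your proof is correct and matches the paper's implicit approach: the corollary is stated without proof after the two lemmas, with the locality relations and $\psi^2=0$ taken as evident from the definitions (indeed, the paper already invokes $\psi^2=0$ inside the proof of \eqref{nilHecke-a}). Your argument for $\psi^2=0$ via the factorization $\psi=\iota_Y\circ\ep_{Y^\star}$ and the vanishing of the undotted bubble $\ep_{Y^\star}\circ\iota_Y$ (Lemma~\ref{lem:bubble-dot} with $q=0<b=1$, or equivalently the summand decomposition of Proposition~\ref{bigon}) is exactly the intended justification.
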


\subsection{Reduction to trees}
\label{sec:reduction-trees}

\begin{definition}
  We call a ladder a {\bf tree} if at $y=0$ there is a single red strand
  (necessarily labeled $p$) and there are no cycles in its underlying
  graph (that is, the underlying graph is a tree in the usual sense).
  We call a ladder a {\bf rootstock} if its reflection is a tree.
\end{definition}

Note that up to isotopy, if we fix the sequence at $y=1$ to be $\Bp$,
there is a single tree with this top, which we denote
$\tau_{\Bp}$. Note that every tree is generated from
$\tau_{(p,0,\dots,0)}$ by applying categorification functors:
\[\tau_{\Bp}\cong
\wF_1^{(p_2)}\circ \wF_2^{(p_3)}\circ \wF_1^{(p_3)}\circ \cdots \circ \wF_{\ell-2}^{(p_{\ell -1})}\circ \cdots \circ  \wF_2^{(p_{\ell -1})}\circ \wF_1^{(p_{\ell -1})}\circ \wF_{\ell-1}^{(p_\ell)}\circ \cdots \circ \wF_{2}^{(p_\ell)}\circ \wF_1^{(p_\ell)}\tau_{(p,0,\dots,0)}.\]

The action of symmetric polynomials on $\dF_1^{p_i}$ as the center of the nilHecke algebra defines an action
of $R_{\Bp}\cong \K[\ssy_1,\dots, \ssy_p]^{S_{p_1}\times \cdots \times
  S_{p_\ell}}$ on $\tau_{\Bp}$.  In terms of ladders, we can think of
this as the branch of the tree with label $p_i$ having an action of
symmetric polynomials on an alphabet of $p_i$ variables.

We
let $P_{\Bp}\subset G:=GL(\C^p)$ be the block upper-triangular matrices
with block sizes $\Bp$.   Note that $H^*(G/P_{\Bp};\C)$ is naturally a
quotient of $R_{\Bp}$ identifying $\ssy_i$ with the Chern classes of
tautological line bundles on the full flag variety; the kernel of this
map is the ideal generated by positive degree elements in the full
ring of $S_p$ invariant polynomials $R_{(p,0,0,\dots)}$.  Throughout what
follows, we'll use $\dagger$ to denote either $\WB$ or $\tWB$ when a
result holds in both.

\begin{proposition}\label{tree-end}
  The map $R_{\Bp}\to \Hom_{\dagger}({\tau_{\Bp}},{\tau_{\Bp}})$ induces an
  isomorphism \[\Hom_{\dagger}({\tau_{\Bp}},{\tau_{\Bp}})\cong H^*(G/P_{\Bp};\C).\]
\end{proposition}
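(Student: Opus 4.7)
The plan is to identify the endomorphism ring with the cohomology of a partial flag variety by passing through parabolic category $\cO$ via the equivalence $\xi_\Bp$.

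First, I would translate the question to category $\cO$. Iterating the isomorphism $\xi_\Bp\circ W_{Y_i}\cong I^{\Bp'}_\Bp\circ \xi_{\Bp'}$ along the slicing of $\tau_\Bp$ into $Y$-ladders, the composite $W_{\tau_\Bp}$ corresponds under $\xi$ to the parabolic inclusion $I^{(p)}_\Bp$ of $\cO^{(p)}_{\omega_p}$ into $\cO^{\Bp}_{\omega_p}$. Since $T^{(p)}_{\omega_p}\cong \K$ is one-dimensional, the space $\Hom_\dagger(\tau_\Bp,\tau_\Bp)$ reduces to the $\tU_n$-equivariant derived endomorphisms of the single $T^\Bp$-module $M:=W_{\tau_\Bp}(\K)$. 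Under $\xi_\Bp$, the module $M$ becomes the parabolic Verma $M^\Bp(\omega_p)$ induced from the dominant irreducible of the Levi at the trivial central character; its class in the Grothendieck group is the unique highest weight vector in the $\wedgep{p}$-component of $\wedgep{p_1}\otimes\cdots\otimes\wedgep{p_\ell}$, matching the computation in Theorem~\ref{thm:ladder-GG}.

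Next, I would compute the derived endomorphism ring of $M$ in $\cO^{\Bp}_{\omega_p}$. Because the central character is maximally singular, $M$ is simultaneously the dominant standard and the projective cover of its simple head in this block, so by Soergel's Endomorphismensatz (in its graded and parabolic form) one has $\End^\bullet_{D^b(\cO^\Bp)}(M)\cong H^*(G/P_\Bp;\C)$, with the higher Ext's vanishing because $M$ is projective in the relevant subcategory. The action of $R_\Bp$ constructed from the nilHecke center on each branch then factors through this isomorphism: the generator $\ssy_k$ corresponds under the categorification to a tautological Chern class on $G/B$, and its image in $H^*(G/P_\Bp)$ is the pullback along $G/P_\Bp\to \operatorname{pt}$ of the $k$th tautological class on $G/P_\Bp$. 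The kernel is precisely the ideal generated by the positive-degree $S_p$-symmetric polynomials in $\ssy_1,\dots,\ssy_p$, since these are the Chern classes of the trivial bundle $\C^p$ on $G/P_\Bp$, which matches the map $R_\Bp \twoheadrightarrow H^*(G/P_\Bp)$ from the statement.

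Finally, one must verify that the $\tU_n$-equivariance requirement in the definition of $\Hom_\dagger$ does not cut this down further, and conversely that every element of $H^*(G/P_\Bp)$ lifts to a genuinely equivariant natural transformation. The key point is that any $\tU_n$-equivariant natural endomorphism of $W_{\tau_\Bp}$ is determined by its value on $M$, since every projective in $T^\Bp\mmod$ is obtained by applying categorification functors to $M$ (indeed, to the dominant Verma, one applies $\fE_i$'s to produce all standards, and then extensions yield all projectives). Conversely, the central action of $R_\Bp$ is $\tU_n$-equivariant by construction (the nilHecke center commutes with the $\tU_n$-action, as it is what defines the divided-power idempotents). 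The main obstacle is controlling the higher Ext's in the derived category: for this I would invoke the Koszulity of $T^\Bp$ and the $t$-exactness of Zuckerman and inclusion functors in the linear-complex $t$-structure (Theorem~\ref{T-Koszul}), which together ensure that the equivariant derived Hom is concentrated in the expected bidegrees and matches the classical computation on the abelian level. For $\tWB$, the analogous argument applies using the right $\tU_n^-$-equivariance together with the fact that $\tilde{T}^{(p)}_{\omega_p}=\K$ still has no black strands on the right.
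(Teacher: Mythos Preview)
There is a genuine error in your identification of $M = W_{\tau_\Bp}(\K)$. The block it lives in is not maximally singular but \emph{regular}: $T^\Bp_{\omega_p}\mmod$ has $\binom{p}{p_1,\dots,p_\ell}=|S_p/S_\Bp|$ simples, the full count for a regular block of $\cO^\Bp$ over $\mathfrak{gl}_p$. And under $\xi_\Bp$, the module $M$ is not the dominant parabolic Verma but the \emph{finite-dimensional simple} $L(0)$ --- the paper states this explicitly in the proof of Lemma~\ref{tree-enough} (``the unique simple module killed by all translation functors to walls''). Soergel's Endomorphismensatz applies to the antidominant projective, not to $L(0)$; and in any case the dominant parabolic Verma $M^\Bp(0)$, being the projective cover of $L(0)$, has $\End=\K$, not $H^*(G/P_\Bp)$. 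The statement $\Ext^\bullet_{\cO^\Bp_0}(L(0),L(0))\cong H^*(G/P_\Bp)$ is correct, but proving it requires first passing through Koszul duality to the dual singular block, where $L(0)$ becomes the self-dual projective, and \emph{then} applying Soergel. With that repair your route can work for $\WB$, though you still owe a check that the nilHecke-center action of $R_\Bp$ on $M$ really hits the tautological Chern classes, and your last line about $\tWB$ is not an argument since no category-$\cO$ equivalence is available there. (Minor: your injectivity step should say ``every projective in $T^{(p)}\mmod$,'' not $T^\Bp\mmod$ --- the generation you need is in the source category of the functor.)

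The paper's own proof avoids category $\cO$ here entirely and is more direct. It uses adjunction to rewrite $\Hom_\dagger(\tau_\Bp,\tau_\Bp)\cong\Hom_\dagger(\id_{(p)},\tau_\Bp^\star\circ\tau_\Bp)$, then iterates the bigon relation (Proposition~\ref{bigon}) to compute $W_{\tau_\Bp^\star}\Lotimes W_{\tau_\Bp}$ as $\binom{p}{p_1,\dots,p_\ell}$ shifted copies of the identity bimodule; together with $\Hom_\dagger(\id_{(p)},\id_{(p)})=\K$ this gives the dimension. To pin down the kernel of $R_\Bp\to\Hom_\dagger$, it first treats $\Bp=(1,\dots,1)$ and observes that the kernel lies inside the kernel of an action of $NH_p$, hence is generated by an ideal of $Z(NH_p)=\K[\ssy]^{S_p}$ and so is contained in $I$; the dimension count then forces equality. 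This argument works uniformly for both $\tWB$ and $\WB$.
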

\begin{proof}
 By Proposition
\ref{self-dual}, we have that \[
\Hom_{\tWB}({\tau_{\Bp}},\tau_{\Bp})\cong \Hom_{\tWB}(\id_{(p)},
{\tau_{\Bp}^\star}\circ {\tau_{\Bp}}).\] Applying the bigon
relation (Proposition \ref{bigon}) shows that 
\begin{align*}
   W_{\tau_{(p_1,\dots,p_\ell)}}^\star\Lotimes_{\tilde{T}^{(p_1,\dots,p_\ell)}}
  W_{\tau_{(p_1,\dots,p_\ell)}}&\cong\frac{(p_1+p_2)!}{p_1!p_2!}\cdot (
  W_{\tau_{(p_1+p_2,\dots,p_\ell)}}^\star\Lotimes_{\tilde{T}^{(p_1+p_2,\dots,p_\ell)}}
  W_{\tau_{(p_1+p_2,\dots,p_\ell)}})\\ &\cong \frac{(p_1+p_2+p_3)!}{p_1!p_2!p_3!}\cdot (
  W_{\tau_{(p_1+p_2+p_3,\dots,p_\ell)}}^\star\Lotimes_{\tilde{T}^{(p_1+p_2+p_3,\dots,p_\ell)}}
  W_{\tau_{(p_1+p_2+p_3,\dots,p_\ell)}})\\ &\cong \cdots \cong
                                             \frac{n!}{p_1!\cdots
                                             p_\ell!}\cdot \tilde{T}^{\Bp}\\
&=\dim H^*(G/P_{\Bp};\C)\cdot \tilde{T}^{\Bp}
\end{align*}
By Corollary~\ref{cor:reduction-tensor}, the same holds over $T^{\Bp}$. Note that we have $\dim \Hom_{\dagger}(\id_{(p)},\id_{(p)})=1$, since
every projective module over $\tilde{T}^{(p)}$ is a summand of an
induction of the simple with no black strands.  Thus, any
endomorphism commuting with induction is a scalar multiplication.
Thus, we have that \[
\dim \Hom_{\dagger}({\tau_{\Bp}},\tau_{\Bp})= \dim
H^*(G/P_{\Bp};\C)\cdot \dim \Hom_{\dagger}(\id_{(p)},\id_{(p)})= \dim
H^*(G/P_{\Bp};\C).\] 
First, we specialize to the case where $\Bp=(1,\dots,1)$.  Note that any element of the kernel of $R_{(1,\dots,1)}$ acting on $W_{\tau_{\Bp}}$ must lie in the kernel of the
action of $NH_p$ on $W_{\tau_{(1,\dots,1)}^\star}\Lotimes_{\tilde{T}^{(1,\dots,1)}}
W_{\tau_{(1,\dots,1)}}$.  Since $NH_p$ is Morita equivalent to $\K[\ssy_1,\dots,
\ssy_p]^{S_p}$, the kernel of any action of $NH_p$ must be generated
by an ideal of $\K[\ssy_1,\dots,
\ssy_p]^{S_p}$.  For any homogeneous action, this kernel must lie in
the ideal $I$ generated by positive degree symmetric functions.  This shows that for general $\Bp$, the kernel is
still contained in $I$.  

On the other hand,  the codimension of the kernel can be no greater
than 
$\frac{n!}{p_1!\cdots p_\ell!}=\dim
R_{\Bp}/I$. This is only possible if the map $R_{\Bp}\to
\Hom_{\dagger}({\tau_{\Bp}},\tau_{\Bp})$ induces an isomorphism
\[\Hom_{\dagger}({\tau_{\Bp}},\tau_{\Bp})\cong R_{\Bp}/I\cong H^*(G/P_{\Bp};\C).\qedhere\]
\end{proof}

Recall that the space of 2-morphisms between two ladders in $\tWB$ or $\WB$
carries both an internal and a homological grading.
We call a 2-morphism $N$ {\bf pure} if its internal and homological gradings sum
to 0.  Note that all
of our 2-morphisms for the proposed dual action of
$\tU_{ \ell}$ are pure by construction.

For technical reasons, we will restrict ourselves for now to
constructing a 2-functor $\tU_\ell\to \WB$; we will turn later to the
stronger statement that we can lift this to a 2-functor $\tU_\ell\to \tWB$.
Thus, until stated otherwise, all ladder bimodules will be cyclotomically reduced.
\begin{lemma}
   Assume $N\colon W_{\beta_1}\to W_{\beta_2}$ is a pure 2-morphism in
   $\WB$ for $\beta_i$ a 1-morphism $\Bp\to {\Bp'}$.  Then $N$ is
  0 if and only if it induces the trivial map on modules whose weight is
  $(1,\dots, 1)$.  
\end{lemma}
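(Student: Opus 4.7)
The plan is to exploit $\tU_n$-equivariance to reduce testing $N$ to a single, combinatorially transparent weight space. By the definition of $\Hom_{\WB}$, the natural transformation $N$ commutes with the action of all the $\fE_i$'s and $\fF_i$'s; in particular, for any $T^{\Bp}$-module $M$, we have $N_{\fE_i M} = \fE_i N_M$ and $N_{\fF_i M} = \fF_i N_M$. Since $\fE_i,\fF_i$ are exact and send projectives to projectives by Proposition~\ref{exactfunctors}, the map $\fE_i N$ is again a pure 2-morphism of the same bidegree as $N$. Thus if $N$ vanishes on some module $M$, it automatically vanishes on every module obtainable from $M$ by a composition of categorification functors and on all its direct summands.

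The second ingredient is a generation statement: every indecomposable projective $P_{\bmu}$ of $T^{\Bp}$ appears, up to direct summand and grading shift, inside $\fE_{i_1}^{(c_1)}\cdots \fE_{i_k}^{(c_k)} P$ for some projective $P$ of weight $(1,\dots,1)$ in a suitable block $T^{\Bp}_{(1,\dots,1)}$ (possibly after first embedding via $\fI$ into a larger algebra to make the weight $(1,\dots,1)$ meaningful when $\sum p_i<n$). This is the categorical counterpart of the fact that the $(1,\dots,1)$-weight space of $\wedgepq{p_1}\otimes\cdots\otimes \wedgepq{p_\ell}$ generates the whole tensor product under the action of the $E_i,F_i$'s. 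Combinatorially, starting from a module in which every strand carries a distinct black label, one "collapses" pairs of strands labeled $i$ and $i+1$ by applying $\fE_i$ to reach any target weight $\bmu$; Hu--Mathas's cellular basis together with the highest-weight structure from Section~\ref{sec:standard-modules} guarantees that the resulting modules carry all $P_{\bmu}$ as summands.

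Combining the two steps: if $N$ vanishes on every weight $(1,\dots,1)$ module, it vanishes on every indecomposable projective of $T^{\Bp}$ by $\tU_n$-equivariance, and hence is the zero map of $T^{\Bp'}\operatorname{-}T^{\Bp}$-bimodules, so $N=0$ in the derived category. The converse direction is trivial. The role of purity is bookkeeping: pure maps form the heart of the linear-complex $t$-structure preserved by ladder bimodules (Theorem~\ref{T-Koszul}), so the whole induction runs inside a single abelian category where a map is determined by its values on projective generators.

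The main obstacle is the combinatorial generation statement; the subtle point is that weight $(1,\dots,1)$ is only literally available when $\sum p_i = n$, so for general $\Bp$ one must either first enlarge via $\fI$-functors or reinterpret "$(1,\dots,1)$" as the regular weight at the appropriate rank. Verifying that the cellular/highest-weight structure of $T^{\Bp}$ indeed allows every $P_{\bmu}$ to be recovered from the regular weight block via categorification functors is the content that makes the reduction work; once that is in hand, the rest follows formally.
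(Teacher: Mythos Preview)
Your overall strategy matches the paper's: use $\tU_n$-equivariance to reduce the vanishing of $N$ on all projectives to its vanishing on those in a single weight block. However, the proposal has a real gap at exactly the point you flag as ``the main obstacle.''

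First, the suggested fix for the case $\sum p_i \neq n$ via $\fI$-functors is wrong. The functor $\fI_\mu$ adds a red strand and hence changes $\Bp$; since $N$ is a map of $T^{\Bp'}\operatorname{-}T^{\Bp}$-bimodules, it does not automatically act on $T^{(\Bp,\mu)}$-modules, and no such compatibility has been established. The paper's maneuver is different and costs nothing: one reinterprets $T^{\Bp}_{\mathbf{m}}$ for $\mathfrak{gl}_n$ as the literally identical algebra for $\mathfrak{gl}_{n+1}$ at weight $(\mathbf{m},0)$, simply by allowing an extra black label that is never used. This lets one assume $p\leq n$, then use Weyl-group equivalences (see below) to push all zeros to the right and drop them, reducing to $p=n$.

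Second, and more seriously, the ``generation statement'' is the entire content of the lemma, and invoking Hu--Mathas or the highest-weight structure does not prove it. The paper's mechanism is concrete: if $\mathbf{m}\neq(1,\dots,1)$ and $p=n$, some entry $m_r>1$ and some entry vanishes; since divided powers $\eF_{q-1}^{(m_{q-1})},\eE_q^{(m_{q+1})}$ are \emph{equivalences} between blocks related by a simple reflection when $m_q=0$, one may arrange $m_{r+1}=0$. Then $\eE_r P=0$ (the target weight has a negative entry), so the categorical $\mathfrak{sl}_2$ commutator gives $\eE_r\eF_r P\cong P^{\oplus m_r}$, exhibiting $P$ as a summand of $\eE_r(\eF_r P)$ with $\eF_r P$ sitting at a weight with strictly fewer zero entries. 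Inducting on the number of zeros terminates at $(1,\dots,1)$. Note in particular that both $\eE$'s and $\eF$'s are used, not just $\eE$'s as in your sketch.

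Finally, your remark about purity is misplaced: purity plays no role in this lemma's proof. It enters only in the subsequent lemma (the reduction to trees), where it is needed to pass through Koszul duality.
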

\begin{proof}
  The algebra $T^\Bp_{\mathbf{m}}$ for any weight
  $\mathbf{m}=(m_1,\dots ,m_n)$ of $\mathfrak{gl}_n$ can be thought of
  as the same algebra for $\mathfrak{gl}_{n+1}$ and the weight
  $(m_1,\dots ,m_n,0)$ where we simply never use the label we have
  added.  By adding additional labels we never use,
\begin{itemize}
\item [(1)] we may assume that $p\leq n$.
\end{itemize}

Assume that $P$ is a projective module with weight $\mathbf{m}\neq (1,\dots,1,0,\dots, 0)$. 
There must be some $m_q$ with
$q\leq p$ such that $m_q=0$.  Note that in this case,
$\eF_{q-1}^{(m_{q-1})}$ and $\eE_{q}^{(m_{q+1})}$ are equivalences of
  categories from $T^\Bp_{\mathbf{m}}\mmod $ to $
  T^\Bp_{s_{q-1}\mathbf{m}}\mmod$ and $T^\Bp_{s_{q}\mathbf{m}}$, respectively. 
Thus, we can move any zero further right in the dimension vector
  without affecting whether $N$ is 0. Thus,
  \begin{itemize}
  \item[(2)] we can assume that the last $n-p$ entries of the weight are 0.
  \end{itemize}
Since we have
  not used the strands with labels $>p$, we can forget them and so
  \begin{itemize}
  \item[(3)] we may assume that $n=p$.
  \end{itemize}
Thus, either the weight is $(1,\dots, 1)$, or we have some largest entry
$m_r>1$.  We can also assume that $m_{r+1}=0$, since at least one
entry must be 0, and we can move it freely using the Weyl group.  In this case, the module $P$ must be highest weight
  for the categorical action of $\eE_r$ and $\eF_r$.  Thus, we have
  that  $\eE_r\eF_rP\cong P^{\oplus m_r}$, by the commutator relation
  in $\mathfrak{sl}_2$.  Thus, it suffices to check the result for
  $\eF_rP$. The resulting weight has fewer entries which are $0$.
  Thus applying this result inductively, we can see that we can
  assume that $P$ has weight $(1,\dots,1)$, and the proof is done.  
\end{proof}
\begin{lemma}\label{tree-enough}
  Assume $N\colon W_{\beta_1}\to W_{\beta_2}$ is a pure 2-morphism in
  $\WB$ as before.  Then $N$ is
  0 if and only if the same is true for the induced morphism
  $N\circ \id_{\tau_{\Bp}}\in \Hom_{\WB}({\beta_1}\circ {\tau_{\Bp}},
  {\beta_2}\circ {\tau_{\Bp}})$ for $\tau_{\Bp}$ a tree. 
\end{lemma}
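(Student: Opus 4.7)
Proof plan. The ``only if'' direction is immediate from functoriality. For the converse, suppose $N\circ \id_{\tau_\Bp}=0$; we wish to show $N=0$. Applying the preceding lemma, it suffices to verify that $N$ acts trivially on every projective module of $T^\Bp$ of weight $(1,\ldots,1)$, where by the reduction argument of that lemma we may further assume $n=p$.

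The hypothesis says precisely that $N|_{W_{\tau_\Bp}(M)}=0$ for every $M\in T^{(p)}\mmod$. Because $N$ is $\tU_n$-equivariant (it belongs to $\Hom_\WB$, which by definition consists of $\tU_n$-invariant bimodule maps), for every 1-morphism $X\in\tU_n$ we obtain
\[
N|_{X\cdot W_{\tau_\Bp}(M)}\;=\;X\cdot N|_{W_{\tau_\Bp}(M)}\;=\;0,
\]
and therefore $N$ vanishes on any direct summand of such a module. The proof thus reduces to the following generation claim: \emph{every indecomposable projective of $T^\Bp$ of weight $(1,\ldots,1)$ is a direct summand of $X\cdot W_{\tau_\Bp}(M)$ for some $X\in\tU_n$ and some $M\in T^{(p)}\mmod$.}

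I expect this generation claim to be the main obstacle, since the $K$-theoretic image of $\tau_\Bp$ alone cuts out only the subrepresentation $\wedge^p\C^n\hookrightarrow \wedge^{p_1}\C^n\otimes\cdots\otimes\wedge^{p_\ell}\C^n$ and the $\tU_n$-orbit does not immediately expand this. The strategy would be to use the standardization description recalled in Section~\ref{sec:standard-modules}: every projective of $T^\Bp$ is a summand of a module obtained by iteratively applying $\fI_{\omega_{p_j}}$ (adding a red strand) starting from a projective of $T^{(p)}\mmod$ and then acting by $\tU_n$. Converting each ``add a red strand'' step $\fI$ into a ``split an existing strand'' step via the trivalent $Y$-ladders of Section~\ref{sec:case-y-ladders} identifies the iterated composition (up to further $\tU_n$-action) with $W_{\tau_\Bp}$.

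Alternatively, and perhaps more cleanly, one invokes the equivalence $\xi_\Bp\colon T^\Bp\mmod\to\cO^\Bp$, under which $W_{\tau_\Bp}$ becomes (a shift of) the parabolic inclusion functor $\cO^{(p)}\hookrightarrow \cO^\Bp$; Soergel's Struktursatz (announced in the introduction as one of the principal inputs from category $\cO$) then asserts that pure $\tU_n$-equivariant natural transformations are faithfully detected on a distinguished ``big'' object in the image of this inclusion. The fact that $\End_\WB(\tau_\Bp)\cong H^*(G/P_\Bp;\C)$ of the expected multinomial dimension, established in Proposition~\ref{tree-end}, is the combinatorial shadow of this sufficiency.
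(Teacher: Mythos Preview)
Your first approach has a genuine gap: the generation claim is false. Under the equivalence with category $\cO$, in weight $(1,\ldots,1)$ the module $W_{\tau_\Bp}$ applied to the unique simple of $T^{(p)}$ is the \emph{finite-dimensional} simple $L\in\cO^\Bp_0$, not a projective or any ``big'' object. Already for $\Bp=(1,1)$ with $n=p=2$ (so $\cO^\Bp_0$ is the principal block of $\cO$ for $\mathfrak{sl}_2$), one has $\theta_s L=0$ for the unique wall-crossing functor $\theta_s$; hence the $\tU_n$-orbit of $L$ under projective functors consists only of sums of copies of $L$, and no indecomposable projective of $\cO_0$ arises as a summand. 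The standardization/$\fI$ argument you sketch does not repair this, because once the black strands are present you are acting by induction functors, not by attaching ladders from the right; the identification of iterated $\fI$'s with $W_{\tau_\Bp}$ does not hold on nontrivial weight spaces.

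Your second approach is close to the paper's, but you are missing the essential step: \emph{Koszul duality}. Struktursatz is a statement about natural transformations of \emph{projective functors} being detected on the \emph{antidominant projective}; in $\cO^\Bp_0$ your distinguished object is a simple and your functors are Zuckerman functors, so Struktursatz does not apply directly. The paper's argument first passes to the Koszul dual (a singular block of ordinary $\cO$), under which the finite-dimensional simple becomes precisely the self-dual projective and the ladder/Zuckerman functors become projective functors. Purity of $N$ is exactly what guarantees that $N$ corresponds to a degree-zero (abelian-category) natural transformation on the dual side. One then uses that every projective in the dual block embeds in a sum of copies of the self-dual projective, so vanishing on that object forces vanishing everywhere. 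Without the dualization, there is no mechanism to upgrade vanishing on a single simple to vanishing on all projectives.
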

\begin{proof}
 By \cite[\ref{m-equiv}]{Webmerged},
  the weight $(1,\dots, 1)$ subcategory is equivalent to a 
  block of parabolic category $\cO$ for the
  parabolic $\Bp$ containing the module corresponding to a tableau
  with entries given by $1,2,3,\dots, n$ under the rule given by
  Brundan and Kleshchev in \cite{BKSch}. Applying their formulas, this
  corresponds to a block containing a finite dimensional
  representation; that is, a regular block $\cO^{\Bp}_0$.  This is thus Koszul dual to singular category $\cO$ for
  the corresponding singularity by the main theorem of \cite{BGS96}.

The equivalence of \cite[\ref{m-equiv}]{Webmerged} sends the tree
bimodule $W_{\tau_\Bp}$ to the unique simple module killed by all translation
functors to walls.  This is, of course, the finite dimensional module
in $\cO^{\Bp}_0$.

  This Koszul duality of \cite{BGS96} by construction sends this
  finite dimensional simple (and thus the tree bimodule $W_{\tau_\Bp}$) to the unique
  self-dual projective of the dual singular block of category
  $\cO$. Furthermore, it sends any ladder
  functor to a translation functor, and a pure natural transformation
  to a natural transformation of projective functors (as functors on
  abelian categories).  A natural transformation between
  projective functors on category $\cO$ is 0 if and only if it is 0 when applied to the
  self-dual projective, since every other projective embeds in the sum
  of some number of these, which shows that $N=0$, as desired.
\end{proof}

\begin{lemma}
  For any tree $\tau_{\Bp}$, we have isomorphisms in the category 
$\dagger$ (that is, in $\tWB$ or $\WB$):
  \[\wE_i\circ {\tau_{\Bp}}\cong
  {\tau_{\Bp+\al_{i}}}^{\oplus [p_{i}]_q}\qquad \wF_i\circ {\tau_{\Bp}}\cong
  {\tau_{\Bp-\al_{i}}}^{\oplus [p_{i+1}]_q}.\]
\end{lemma}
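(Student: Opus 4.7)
The strategy is to reduce the composite $\wF_i\circ\tau_{\Bp}$ to a single $Y$-vertex capped by a bigon, then invoke the bigon relation (Proposition~\ref{bigon}). We treat $\wF_i$; the $\wE_i$ case is parallel.

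First, using the associativity isomorphism (Proposition~\ref{associative}) for repeatedly branching trees, we may arrange $\tau_{\Bp}$ so that the $i$th and $(i+1)$st top uprights emerge from a single internal $Y$-vertex $V$ splitting a strand labeled $p_i+p_{i+1}$ into $(p_i,p_{i+1})$. Let $\tau'$ denote the portion of the tree below $V$ together with the other $\ell-2$ uprights. Because $\wF_i$ interacts only with the $i$th and $(i+1)$st strands, the whole lower tree $\tau'$ is untouched by post-composition with $\wF_i$; it suffices to analyze the configuration near the top.

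Recall that $\wF_i$ consists of a $Y$-vertex $V_L$ splitting $p_i\to(p_i-1,1)$ on the left upright together with a $Y^\star$-vertex $V_R$ merging $(1,p_{i+1})\to p_{i+1}+1$ on the right upright. The three vertices $V$, $V_L$, $V_R$ form a sub-ladder from a single strand labeled $p_i+p_{i+1}$ to the pair $(p_i-1,p_{i+1}+1)$. The two upward-splitting vertices $V$ and $V_L$ jointly split $p_i+p_{i+1}$ into $(p_i-1,1,p_{i+1})$, and by Proposition~\ref{associative} this tri-splitting is canonically isomorphic to the alternative one in which we first split $p_i+p_{i+1}\to(p_i-1,p_{i+1}+1)$ via a single $Y$-vertex $V'$ and then split the right branch $p_{i+1}+1\to(1,p_{i+1})$ via a $Y$-vertex $V''$. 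The merging vertex $V_R$ now immediately caps $V''$, producing a bigon on the $(p_{i+1}+1)$-labeled strand whose sides are labeled $1$ and $p_{i+1}$.

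Proposition~\ref{bigon} applies to this bigon and rewrites it as a direct sum of grading-shifted copies of the identity bimodule on the $(p_{i+1}+1)$-labeled strand, with the graded multiplicity equal to the appropriate quantum integer. Combining this decomposition with the remaining single $Y$-vertex $V'$ (which is exactly the top of the tree $\tau_{\Bp-\al_i}$) and the untouched lower tree $\tau'$ produces the desired direct-sum decomposition of $\wF_i\circ\tau_{\Bp}$ in terms of $\tau_{\Bp-\al_i}$. The $\wE_i$ statement is proved by reflecting the above argument: $\wE_i$ is composed of a $Y^\star$-vertex on the left and a $Y$-vertex on the right, so the same associativity move produces a bigon on the $(p_i+1)$-labeled left strand with sides $1$ and $p_i-1$, whose evaluation by Proposition~\ref{bigon} delivers the factor $[p_i]_q$.

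The main obstacle is not conceptual but bookkeeping: one must track the grading shifts supplied by Proposition~\ref{bigon} together with those implicit in the rung $\wF_i$ and the trees $\tau_{\Bp}$, $\tau_{\Bp\pm\al_i}$, and check that they combine to yield precisely the claimed quantum integer and not one shifted by a power of $q$. Structurally, however, the proof is entirely a combination of the associativity isomorphism and the bigon relation already established.
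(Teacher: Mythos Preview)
Your approach is exactly what the paper's one-line proof (``Clear from Propositions~\ref{bigon} and~\ref{associative}'') abbreviates: rearrange the tree so that uprights $i$ and $i+1$ share a vertex, use associativity to slide that vertex past the first trivalent vertex of the rung, and collapse the resulting bigon.

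One arithmetic slip: in your $\wE_i$ paragraph, the bigon sits on the $(p_i+1)$-labeled strand, so its sides must be $p_i$ and $1$ (they have to sum to $p_i+1$), not $p_i-1$ and $1$. Proposition~\ref{bigon} then gives $[p_i+1]_q$ copies, not $[p_i]_q$. The same point applies to your correctly-identified $\wF_i$ bigon on the $(p_{i+1}+1)$-strand with sides $1$ and $p_{i+1}$: that yields $[p_{i+1}+1]_q$. These are in fact the right multiplicities, consistent with the dimension count coming from Lemma~\ref{lem:bim-iso} (one has $\dim H^*(G/P_{\Bp^+})/\dim H^*(G/P_{\Bp-\al_i}) = (p_{i+1}+1)!/(1!\,p_{i+1}!) = p_{i+1}+1$). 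So your argument is sound; the exponents $[p_i]_q$ and $[p_{i+1}]_q$ in the printed statement are off by one.
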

\begin{proof}
  Clear from Propositions \ref{bigon} and \ref{associative}.
\end{proof}
Fix $i$; for any $\Bp$, we let \[\Bp^-=(p_1,\dots,
p_{i},1,p_{i+1}-1,\dots,p_\ell)\qquad \Bp^+=(p_1,\dots,
p_{i}-1,1,p_{i+1},\dots,p_\ell).\]  Note that $(\Bp\pm\alpha_i)^{\pm}=\Bp^{\mp}$. We have maps 
\[ G/P_{\Bp}\leftarrow G/P_{\Bp^-} \rightarrow
G/P_{\Bp+\al_{i}}\qquad  G/P_{\Bp}\leftarrow G/P_{\Bp^+} \rightarrow
G/P_{\Bp-\al_{i}}.\]
These maps induce a
$H^*(G/P_{\Bp})\operatorname{-}H^*(G/P_{\Bp\pm\al_{i}})$-bimodule
structure on $H^*(G/P_{\Bp^{\mp}})$.  
\begin{lemma}\label{lem:bim-iso}
  We have bimodule isomorphisms
  \[\Hom_{\dagger}({\tau_{\Bp+\al_{i}}},\wE_i\circ {\tau_{\Bp}})\cong
  H^*(G/P_{\Bp^-})\qquad
  \Hom_{\dagger} ({\tau_{\Bp-\al_{i}}},\wF_i\circ {\tau_{\Bp}})\cong
  H^*(G/P_{\Bp^+}).\]  
\end{lemma}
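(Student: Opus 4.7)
The plan is to proceed in parallel to the proof of Proposition \ref{tree-end}, using adjunction plus a single bigon cancellation to reduce the $\Hom$--space to one we already understand. I focus on the first isomorphism; the $\wF_i$--statement follows by the same argument (or by applying Proposition \ref{self-dual} and Remark \ref{dual-grading}). First I would use the biadjunction between $\tau_{\Bp+\al_i}$ and its reflection $\tau_{\Bp+\al_i}^{\star}$ from Proposition \ref{self-dual} to rewrite, up to an overall shift,
\[
\Hom_{\dagger}\bigl(\tau_{\Bp+\al_i},\wE_i\circ\tau_{\Bp}\bigr)\;\cong\;\Hom_{\dagger}\bigl(\id_{(p)},\,\tau_{\Bp+\al_i}^{\star}\circ\wE_i\circ\tau_{\Bp}\bigr),
\]
thereby reducing to the computation of endomorphisms of $\id_{(p)}$ (where $\Hom_{\dagger}(\id_{(p)},\id_{(p)})=\K$).

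Next, I would apply Proposition \ref{associative} to both $\tau_{\Bp}$ and $\tau_{\Bp+\al_i}$ in order to route each through the finer tree $\tau_{\Bp^{-}}$. Concretely, $\tau_{\Bp+\al_i}\cong M\circ\tau_{\Bp^{-}}$, where $M$ is the $Y$-merge at positions $i,i+1$ combining labels $(p_i,1)$ into $p_i+1$; and the definition of $\wE_i$ together with the same associativity identifies $\wE_i\circ\tau_{\Bp}$ with the composite of $\tau_{\Bp^-}$ followed by this same merge $M$. Substituting and cancelling yields
\[
\tau_{\Bp+\al_i}^{\star}\,\wE_i\,\tau_{\Bp}\;\cong\;\tau_{\Bp^{-}}^{\star}\,(M^{\star}M)\,\tau_{\Bp^{-}},
\]
and $M^{\star}M$ is exactly a bigon on positions $(i,i+1)$ with central trunk labeled $p_i+1$ and sides $(p_i,1)$. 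Proposition \ref{bigon} collapses this bigon to a direct sum of grading shifts of the identity bimodule, so the whole composite is a sum of shifts of $\tau_{\Bp^{-}}^{\star}\,\tau_{\Bp^{-}}$. Iterating the bigon-counting argument of Proposition \ref{tree-end} on $\tau_{\Bp^{-}}$ gives exactly $\dim H^{*}(G/P_{\Bp^{-}})$ copies of $\id_{(p)}$, with graded multiplicities matching the graded Betti numbers of $G/P_{\Bp^{-}}$.

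Finally, to upgrade this from a graded vector space isomorphism to a bimodule isomorphism, I would verify that the left and right actions match the pullbacks along $G/P_{\Bp}\leftarrow G/P_{\Bp^{-}}\rightarrow G/P_{\Bp+\al_i}$. Both inclusions of parabolics $P_{\Bp^{-}}\subset P_{\Bp}$ and $P_{\Bp^{-}}\subset P_{\Bp+\al_i}$ follow from the definitions, and under Proposition \ref{tree-end} the action of $R_{\Bp^{-}}$ on $\tau_{\Bp^{-}}$ factors through $H^{*}(G/P_{\Bp^{-}})$. The endomorphism rings $H^{*}(G/P_{\Bp})$ of $\tau_{\Bp}$ and $H^{*}(G/P_{\Bp+\al_i})$ of $\tau_{\Bp+\al_i}$ act by symmetric polynomials on the relevant subset of strands of $\tau_{\Bp^{-}}$, and these actions are visibly compatible with the pullback maps, giving the desired bimodule isomorphism. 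The main technical subtlety I expect is the careful bookkeeping of grading shifts: the factor produced by the bigon in Proposition \ref{bigon} must be reconciled with the graded dimension of the $\mathbb{P}^{p_i}$-fiber of $G/P_{\Bp^{-}}\to G/P_{\Bp+\al_i}$ (respectively the $\mathbb{P}^{p_{i+1}-1}$-fiber over $G/P_{\Bp}$), and here I would appeal to Lemma \ref{tree-enough} to identify the constructed map with an iso by checking on a single test object in weight $(1,\dots,1)$, where the parabolic category $\cO$ translation-functor identification of Proposition \ref{prop:endomorphisms} style makes the comparison transparent.
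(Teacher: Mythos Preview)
Your proposal contains a genuine error in step 2: the claimed isomorphism $\tau_{\Bp+\al_i}\cong M\circ\tau_{\Bp^{-}}$ (with $M$ the merge $Y_i^\star$ combining $(p_i,1)\mapsto p_i+1$) is false. The composite $M\circ\tau_{\Bp^{-}}$ contains a bigon: the tree $\tau_{\Bp^{-}}$ can be rewritten via Proposition~\ref{associative} as $Y_i\circ\tau_{\Bp+\al_i}$ (the split $p_i+1\mapsto(p_i,1)$ applied after $\tau_{\Bp+\al_i}$), so $M\circ\tau_{\Bp^{-}}\cong Y_i^\star Y_i\circ\tau_{\Bp+\al_i}$, which by Proposition~\ref{bigon} is $[p_i+1]_q$ copies of $\tau_{\Bp+\al_i}$, not a single copy. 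This error propagates: your step 3 identity $\tau_{\Bp+\al_i}^{\star}\,\wE_i\,\tau_{\Bp}\cong\tau_{\Bp^{-}}^{\star}\,(M^{\star}M)\,\tau_{\Bp^{-}}$ has left-hand side of total dimension $\dim H^*(G/P_{\Bp^-})$ and right-hand side of dimension $(p_i+1)\cdot\dim H^*(G/P_{\Bp^-})$, so they cannot be isomorphic.

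The paper's proof avoids this by a much more economical adjunction. Your second claim in step 2 \emph{is} correct: $\wE_i\circ\tau_{\Bp}\cong Y_i^\star\circ\tau_{\Bp^-}$, since $\wE_i=Y_i^\star\circ Y_{i+1}$ and $Y_{i+1}\circ\tau_{\Bp}$ is a tree with top $\Bp^-$. Rather than adjointing the entire tree $\tau_{\Bp+\al_i}$, the paper simply adjoins the single $Y_i^\star$ to the other side:
\[
\Hom_\dagger(\tau_{\Bp+\al_i},\,Y_i^\star\circ\tau_{\Bp^-})\;\cong\;\Hom_\dagger(Y_i\circ\tau_{\Bp+\al_i},\,\tau_{\Bp^-}),
\]
and now $Y_i\circ\tau_{\Bp+\al_i}$ is again a tree with top $\Bp^-$, hence isomorphic to $\tau_{\Bp^-}$. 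Proposition~\ref{tree-end} then gives the result immediately, including the bimodule structure (since that proposition already identifies the $R_{\Bp^-}$-action), so your final paragraph on checking compatibility with pullbacks and invoking Lemma~\ref{tree-enough} is unnecessary.
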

\begin{proof}
  We can write $\wE_i\cong Y_i^\star\circ  Y_{i+1}$.  Thus, by adjunction, we
  have that
  \[\Hom_{\dagger}({\tau_{\Bp+\al_{i-1}}},\wE_i\circ {\tau_{\Bp}})\cong
  \Hom_{\dagger}(Y_i\circ {\tau_{\Bp+\al_{i-1}}},Y_{i+1}\circ {\tau_{\Bp}}).\]  Both
  ladders that appear on the RHS
  are trees with top $\Bp^-$.  Thus, the result follows from
  Proposition \ref{tree-end}.  The isomorphism for $\wF_i$ follows by adjunction.
\end{proof}
Since these isomorphisms involve a choice of isomorphism between
$Y_i^\star$ and the adjoint of $Y_i$, they are not canonical.  We are
using the adjunction $\iota_{Y_i}$ chosen in Section
\ref{sec:case-a=1}, so
 that after identifying $\wE_i\circ {\tau_{\Bp}}$ with the tree
$\tau_{\Bp+\al_{i}}$ with a bigon blown up in the $i$th strand, the element $1\in H^*(G/P_{\Bp^{\pm}})$ corresponds to
$\iota_{Y_i}\circ \id_{\tau_{\Bp+\al_i}}$. 
%for the unit we have already fixed
 % $\iota_{Y_i} \colon \id\to Y_i^\star \circ Y_i$.   

%\bentodo{This is where we're going to fix our signs.  Will worry about
%  it later.}

\begin{lemma}
For any two ladders in $\beta_1,\beta_2$,
we have that \[\Hom_{\dagger} ({\tau_{\Bp''}},\beta_1\circ \beta_2 \circ{\tau_{\Bp}})\cong
\Hom_{\dagger}({\tau_{\Bp''}},\beta_1\circ {\tau_{\Bp'}})\otimes_{H^*(G/P_{\Bp'})} \Hom_{\dagger}
({\tau_{\Bp'}},\beta_2 \circ{\tau_{\Bp}}),\] where $\Bp$ is fixed, and
$\Bp',\Bp''$ are the top $\mathfrak{gl}_\ell$ weights of $\beta_2$ and $\beta_1$ respectively.
\end{lemma}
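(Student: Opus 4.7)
The plan is to construct a natural composition map and then establish the isomorphism by reducing the middle ladder $\beta_2\tau_{\Bp}$ to a direct sum of copies of the tree $\tau_{\Bp'}$.

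First, I would define the natural map
\[
\Psi\colon \Hom_{\dagger}(\tau_{\Bp''},\beta_1\tau_{\Bp'})\otimes \Hom_{\dagger}(\tau_{\Bp'},\beta_2\tau_{\Bp})\to \Hom_{\dagger}(\tau_{\Bp''},\beta_1\beta_2\tau_{\Bp})
\]
by $\Psi(g\otimes f)=(\id_{\beta_1}\ast f)\circ g$, where $\ast$ denotes horizontal composition (whiskering). Using Proposition \ref{tree-end} to identify $\End_{\dagger}(\tau_{\Bp'})\cong H^*(G/P_{\Bp'})$, the interchange law gives $(\id_{\beta_1}\ast f)\circ (\id_{\beta_1}\ast\varphi)\circ g=(\id_{\beta_1}\ast(f\circ\varphi))\circ g$ for any $\varphi\in\End_{\dagger}(\tau_{\Bp'})$, so that $\Psi$ descends to a map out of the tensor product over $H^*(G/P_{\Bp'})$.

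The main step is to show that $\beta_2\circ\tau_{\Bp}\cong \tau_{\Bp'}^{\oplus M}$ as $1$-morphisms in $\dagger$, for some graded multiplicity $M\in\N[q,q^{-1}]$. I would prove this by induction on the number of trivalent vertices in $\beta_2$, slicing $\beta_2$ into elementary building blocks $\wE_i^{(c)}$ and $\wF_i^{(c)}$. The base case $\beta_2=\id$ is trivial. For the inductive step, write $\beta_2=\beta_2'\circ\sigma$ where $\sigma$ is one elementary ladder; by induction, $\sigma\circ\tau_{\Bp}\cong\tau_{\Bp^\sharp}^{\oplus M_0}$ for the appropriate weight $\Bp^\sharp$ and graded multiplicity $M_0$, using the lemma stating $\wE_i\tau_{\Bp}\cong\tau_{\Bp+\al_i}^{\oplus[p_i]_q}$ and $\wF_i\tau_{\Bp}\cong\tau_{\Bp-\al_i}^{\oplus[p_{i+1}]_q}$ together with Proposition \ref{dividedpowers} to handle divided powers. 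Then $\beta_2\circ\tau_{\Bp}\cong\beta_2'\circ\tau_{\Bp^\sharp}^{\oplus M_0}\cong \tau_{\Bp'}^{\oplus MM_0}$, again by induction. Propositions \ref{bigon} and \ref{associative} are used implicitly throughout to recognize the relevant bigons and reassemble trees.

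Given this decomposition, the right-hand side of the claimed isomorphism becomes
\[
\Hom_{\dagger}(\tau_{\Bp''},\beta_1\tau_{\Bp'})\otimes_{H^*(G/P_{\Bp'})} H^*(G/P_{\Bp'})^{\oplus M}\cong \Hom_{\dagger}(\tau_{\Bp''},\beta_1\tau_{\Bp'})^{\oplus M},
\]
while the left-hand side becomes $\Hom_{\dagger}(\tau_{\Bp''},\beta_1\tau_{\Bp'}^{\oplus M})\cong\Hom_{\dagger}(\tau_{\Bp''},\beta_1\tau_{\Bp'})^{\oplus M}$. To finish, I would verify that $\Psi$ realizes this matching: under the decomposition, a tensor $g\otimes\iota_j$ (where $\iota_j\colon \tau_{\Bp'}\hookrightarrow \tau_{\Bp'}^{\oplus M}\cong\beta_2\tau_{\Bp}$ is the $j$-th inclusion) maps to $(\id_{\beta_1}\ast\iota_j)\circ g$, which is exactly $g$ placed in the $j$-th coordinate of $\Hom_{\dagger}(\tau_{\Bp''},\beta_1\tau_{\Bp'}^{\oplus M})$.

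The main obstacle will be the reduction step $\beta_2\circ\tau_{\Bp}\cong\tau_{\Bp'}^{\oplus M}$: although it is natural in the Grothendieck group (where $\tau_{\Bp'}$ corresponds to a fixed weight vector), upgrading this to an honest isomorphism of ladder bimodules requires keeping careful track of grading shifts and of the fact that every elementary trivalent composition with a tree is a direct sum of trees, without hidden indecomposable summands. Once this structural fact is in place, the rest of the argument is a bookkeeping exercise comparing both sides factor by factor.
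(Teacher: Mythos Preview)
Your approach is correct and reaches the same conclusion, but the packaging differs from the paper's.  You work concretely: you explicitly decompose $\beta_2\circ\tau_{\Bp}\cong\tau_{\Bp'}^{\oplus M}$ by induction on the number of rungs, then check by hand that the composition map $\Psi$ matches the two sides component by component.  The paper instead abstracts: it introduces the dg-subcategory $\EuScript{T}$ generated by trees, identifies each weight part with dg-modules over $H^*(G/P_{\Bp})$, observes that composition with $Y_i$ or $Y_i^\star$ corresponds to a dg-bimodule isomorphic to $H^*(G/P_{\Bp^\pm_i})$ with trivial differential (this is exactly the ``trees go to sums of trees'' fact you use), notes these bimodules are sweet (free on each side), and concludes that the derived tensor product agrees with the naive one.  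Both arguments rest on the same structural input---Propositions \ref{bigon} and \ref{associative} and the lemma $\wE_i\tau_{\Bp}\cong\tau_{\Bp+\al_i}^{\oplus[p_i]_q}$---but the paper's formulation makes clearer why the tensor product over $H^*(G/P_{\Bp'})$ is the right thing (it is the composition of bimodules realizing the functors), whereas yours has the virtue of producing the explicit map $\Psi$ and verifying it directly without invoking any dg-category language.
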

\begin{proof}
Let $\EuScript{T}$ be the dg-subcategory generated by the trees.  The
weight $\Bp$ part of $\EuScript{T}$ is generated by $\tau_{\Bp}$, and
thus is equivalent to the category of dg-modules over
$H^*(G/P_{\Bp})$.  
Composition with the ladder $Y_i$ or $Y_i^\star$ preserves $\EuScript{T}$ and thus
corresponds to a dg-bimodule over $H^*(G/P_{\Bp})$ and $H^*(G/P_{\Bp^\pm_i})$.  In fact, we know that the corresponding bimodules are
isomorphic to $H^*(G/P_{\Bp^\pm_i})$ with the trivial differential, since a tree is
sent to a sum of shifts of trees (rather than a complicated cone of
these).  These bimodules are free both as left or right modules (i.e.,
they are sweet).  Thus, the result follows from the fact that the dg-tensor product
of any monomial in these bimodules is the same as the naive tensor product.
\end{proof}

What we have now is that sending 
\[\Bp\mapsto \Hom_{\dagger} ({\tau_{\Bp}},{\tau_{\Bp}})\mmod \qquad
\eE_i\mapsto \Hom_{\dagger} ({\tau_{\Bp+\al_{i-1}}},\wE_i\circ {\tau_{\Bp}})\qquad
\eF_i\mapsto \Hom_{\dagger} ({\tau_{\Bp-\al_{i-1}}},\wF_i\circ {\tau_{\Bp}})\]
agrees on the level of 1-morphisms with the ``quiver flag''
construction of Khovanov and Lauda in \cite{KLIII}. More formally:
\begin{corollary}
  If $A$ is any monomial in $\wE_i$'s and $\wF_i$'s, then $\Hom_{\dagger}
({\tau_{\Bp'}},A\circ {\tau_{\Bp}})$ is canonically isomorphic to the
bimodule assigned to this monomial by the action of \cite[\S 6]{KLIII}.  
\end{corollary}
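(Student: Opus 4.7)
The plan is to induct on the length of the monomial $A$, using the composition lemma just proved together with Lemma \ref{lem:bim-iso} to reduce the whole statement to the single-generator case, which was arranged to match Khovanov--Lauda's formulas on the nose.

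First I would unwind the construction of \cite[\S 6]{KLIII}: there, a 2-representation of $\tU_\ell$ on the category of modules over $\bigoplus_{\Bp}H^*(G/P_{\Bp})$ is built by declaring that $\eE_i$ and $\eF_i$ act by tensor product with the bimodules $H^*(G/P_{\Bp^-})$ and $H^*(G/P_{\Bp^+})$ respectively, where $P_{\Bp^{\pm}}$ is the parabolic corresponding to blowing up a bigon in the $i$th strand. A general monomial $A=X_{i_k}\cdots X_{i_1}$ (with each $X_{i_j}\in\{\eE_{i_j},\eF_{i_j}\}$) is then sent to the iterated tensor product of these bimodules over the intermediate $H^*(G/P_{\Bq})$'s. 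Thus, to prove the corollary, I need to identify
\[
\Hom_{\dagger}(\tau_{\Bp'},A\circ\tau_{\Bp})\ \cong\ B_k\otimes_{H^*(G/P_{\Bq_{k-1}})}\cdots \otimes_{H^*(G/P_{\Bq_1})}B_1,
\]
where $\Bq_j$ records the $\mathfrak{gl}_\ell$ weight at the $j$th intermediate level and each $B_j$ is either $H^*(G/P_{\Bq_j^-})$ or $H^*(G/P_{\Bq_j^+})$ according to whether $X_{i_j}$ is an $\wE$ or an $\wF$.

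The key step is the previous composition lemma, which gives
\[
\Hom_{\dagger}(\tau_{\Bp'},A\circ \tau_{\Bp})\cong \Hom_{\dagger}(\tau_{\Bp'},X_{i_k}\circ\tau_{\Bq_{k-1}})\otimes_{H^*(G/P_{\Bq_{k-1}})}\Hom_{\dagger}(\tau_{\Bq_{k-1}},X_{i_{k-1}}\cdots X_{i_1}\circ\tau_{\Bp}),
\]
iterating which produces exactly the tensor product over the intermediate cohomologies of partial flag varieties. Each single factor $\Hom_{\dagger}(\tau_{\Bq_j},X_{i_j}\circ\tau_{\Bq_{j-1}})$ is identified with $H^*(G/P_{\Bq_j^{\mp}})$ as a bimodule over the appropriate pair by Lemma \ref{lem:bim-iso}, and this is precisely the bimodule that Khovanov and Lauda assign to the single generator $X_{i_j}$ at that weight.

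The remaining point, which is where I would be most careful, is the \emph{canonicity} of the identification: the isomorphisms of Lemma \ref{lem:bim-iso} depend on a choice of adjunction data between $Y_i$ and $Y_i^\star$, and different choices would rescale the identification by an invertible element of the corresponding cohomology ring. This is not really an obstacle because the choices have already been pinned down in Section \ref{sec:case-a=1}: the unit $\iota_{Y_i}$ is normalized (via Lemma \ref{lem:bubble-dot}) so that $1\in H^*(G/P_{\Bq_j^{\mp}})$ corresponds to $\iota_{Y_{i_j}}\circ\id_{\tau_{\Bq_{j-1}}}$, and this is exactly the normalization Khovanov--Lauda use for the class of the structure sheaf of the diagonal partial flag correspondence. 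With this matching of normalizations, the iterated composition isomorphism above is canonical, and the proof is complete.
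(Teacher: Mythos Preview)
Your proposal is correct and follows exactly the approach the paper intends. The paper states this corollary without an explicit proof, treating it as an immediate consequence of Proposition~\ref{tree-end}, Lemma~\ref{lem:bim-iso}, and the composition lemma just preceding it; your write-up simply spells out that implicit argument, including the point about the normalizations fixed in Section~\ref{sec:case-a=1}.
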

Now, we need only check that our construction agrees with Khovanov and
Lauda's on the level of 2-morphisms as well.  We'll note an important
relation for us, which is immediate from the choice of adjunctions we
fixed in Section \ref{sec:case-a=1}.
%We'll require one more Lemma before we finish, generalizing the
%relation \eqref{nilHecke-y}.
\excise{
\begin{lemma}\label{lem:bubble-dot}
 If  $p_i=0$, then \[\ep'(\ssy_i^{q}\otimes 1)\iota=
 \begin{cases}
   0& q<p_{i+1}\\
   1&q=p_{i+1}
 \end{cases}
.\]  If
 $p_{i+1}=0$, then \[\ep(\ssy_{i+1}^{q}\otimes 1)\iota'=\begin{cases}
   0& q<p_{i}\\
  1&q=p_{i}
 \end{cases}.\]
\end{lemma}
}
\excise{\begin{proof}
 We'll again apply Lemma \ref{Hochschild-action}.  Assume that $p_i=0$. 
We realize $Y_{i+1}^\star \Lotimes_{\tilde{T}^{\Bp_{i+1}}} Y_{i+1}$ as the tensor product
  of two copies of the resolution Corollary \ref{W-res}.  By general
  results, this is an $A_\infty$-bimodule over $ \tilde{T}^{\Bp'}$;
  the structure of the higher products will be irrelevant for us.

There is one homotopy representative of $\iota$ which
  sends $1\mapsto X_{2p_{i+1}}\otimes X_0+\cdots +X_0\otimes
  X_{2p_{i+1}}$, where $X_0$ is the usual generator of $Q_0$, and
  $X_{2p_{i+1}}$ the usual generator of $Q_{2p_{i+1}}$.

We consider the precomplex over the deformation associated to $\ssy_i$
given by the same diagrams as the complex $Q_{2p_{i+1}}\to \cdots \to
Q_0$.  The differential is no longer 0.  Instead, the map
$\tilde{\partial}^2/h$ induces the map $Q_{j}\to Q_{j-2}$  which acts
by the identity on $P_S$ if $\#S\leq j-2<j\leq 2p_{i+1}-\#S$, and 0 on
all other summands.  Thus, the $p_{i+1}$th power of this chain map is just
the identity map on $Q_{2p_{i+1}}\cong P_\emptyset\cong Q_0$.  Thus,
$(\ssy_i^{p_{i+1}}\otimes 1)\iota$ maps $1\mapsto X_0\otimes X_0 $.
The result then follows from the fact that $\ep'(X_0\otimes X_0)=1$.  

In the case where   $p_{i+1}=0$, the argument is almost the same,
except that the signs in the definition of the complex $Q_\bullet$ are
slightly different, since $x_{S,p_{i}}$ is {\it minus} the
corresponding diagram if $p_i\in S$.  Thus, $\tilde{\partial}^2/h$
gives the map multiplication by $-1$.  This sign
difference is accounted for by the signs in our definition of
$\epsilon'$ and $\iota'$.
\end{proof}}
Let $\psi^\circ\colon \dE_i\dF_j\to
  \dF_j\dE_i$ be the adjoint of the map $\psi\colon \dF_j\dF_i\to
  \dF_i\dF_j$.  
\begin{lemma}
  The action of $y$ on $\dF_i$, of $\psi$ on $\dF_i^2$, of
  $\ep,\iota,\ep',\iota',$ and $\psi^\circ$ agrees with the formulas in \cite{KLIII}.
\end{lemma}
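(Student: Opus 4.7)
The plan is to reduce every claim to a calculation on trees, where the preceding corollary has already identified the Hom spaces with the bimodules defining Khovanov--Lauda's 2-representation on partial flag varieties, and then to pin down each of our 2-morphisms by means of the normalizations established in Lemma~\ref{lem:bubble-dot} and Proposition~\ref{tree-end}. All of the natural transformations in the statement are pure (their internal and homological degrees sum to zero), so by Lemma~\ref{tree-enough} equality of any two such transformations can be checked after composing on the right with $\id_{\tau_{\Bp}}$ for each tree $\tau_{\Bp}$. Under the preceding corollary, both our transformations and the Khovanov--Lauda transformations become bimodule maps between cohomology rings $H^*(G/P_{\Bp'})$, so the statement becomes a finite set of equalities of maps of polynomial algebras.

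I would then go through the generators in turn. For the dot~$y$ on $\dF_i$: under the identification of Lemma~\ref{lem:bim-iso}, the endomorphism $\ssy_{i+1}$ corresponds to multiplication by the tautological Chern class on $H^*(G/P_{\Bp^+})$ indexed by the newly-created branch, which is exactly the element Khovanov--Lauda assign to a dot. For $\psi$ on $\dF_i^2$: we already know that $y$ and $\psi$ satisfy the nilHecke relations~\eqref{nilHecke-a}--\eqref{nilHecke-b}, so $\psi$ acts on the bimodule $H^*(G/P_{\Bp''})$ attached to $\dF_i^2\tau_{\Bp}$ as a degree $-2$ map satisfying these relations; together with the normalization $\psi(\iota\circ\id)=(\text{standard})$ coming from Proposition~\ref{bigon}, this forces $\psi$ to be the Demazure divided difference operator, matching \cite{KLIII}. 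For the four adjunction maps $\iota,\ep,\iota',\ep'$: these have been rigidified in Section~\ref{sec:case-a=1} by the condition that $\ep'_{Y_i^\star}(\ssy_i^{q}\otimes 1)\iota_{Y_i}$ take the values prescribed in Lemma~\ref{lem:bubble-dot}; but this is precisely the bubble inversion formula~\eqref{inv} evaluated in KL's representation (where bubbles with $k$ dots give the $k$-th Chern class of a tautological bundle), so the match is forced. Finally, $\psi^\circ$ is defined from $\psi$ by the bi-adjunction, and Khovanov--Lauda's mixed-color crossings are defined the same way from their same-colored crossings; hence equality of $\psi^\circ$ with the KL formula follows automatically from the equality for $\psi$ and for the units/counits.

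The main obstacle is sign bookkeeping: our definitions of $\iota'$ and $\ep'$ carry factors $(-1)^{p_i-1}$, and the bubble normalizations in Lemma~\ref{lem:bubble-dot} involve $(-1)^{a}$ in the case $b=1$; these must be carefully matched with the signs appearing in Khovanov and Lauda's definition of the 2-representation (and in \cite{Brundandef}'s translation to our conventions for ${\iota'},{\ep'}$). Modulo this, the argument is essentially a dimension-counting uniqueness statement: once the bubble evaluations and the nilHecke action are fixed, the entire 2-representation structure is determined by its restriction to trees, and both our action and Khovanov--Lauda's are recovered by the same formulas on $H^*(G/P_{\Bp})$.
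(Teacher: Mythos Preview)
Your proposal is essentially correct and follows the same overall architecture as the paper: reduce to trees, identify the bimodules with $H^*(G/P_{\Bp'})$, and then match each generator with the Khovanov--Lauda formula. The treatment of $y$ and $\psi$ is the same as the paper's.

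The main difference is in how the adjunction maps $\iota,\ep,\iota',\ep'$ are handled. You argue by uniqueness: the bubble values in Lemma~\ref{lem:bubble-dot} pin down the adjunctions, and these values match Khovanov--Lauda's bubble evaluations. The paper instead argues by reduction to extremal cases. It first treats the case $p_i=0$ (or $p_{i+1}=0$), where $\dF_i\dE_i$ is literally a bigon and $\iota$ is the pullback inclusion $1\mapsto 1$; then Lemma~\ref{lem:bubble-dot} determines $\ep'$ directly, and one compares with the explicit formula \cite[(3.4)]{KLIII}. The zig-zag relations then give $\iota',\ep$ in the cases $p_i=1$ or $p_{i+1}=1$. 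Finally, since $\dE_i=Y_i^\star Y_{i+1}$ and $\dF_i=Y_{i+1}^\star Y_i$ each split off a strand of label $1$, the general $\iota,\ep,\iota',\ep'$ factor as compositions of these special cases---and the same factorization holds on the Khovanov--Lauda side. Your approach is valid but less explicit: you should make clear that Lemma~\ref{lem:bubble-dot} is stated for a single $Y$, not for the composite $\dE_i,\dF_i$, so you still need this decomposition step to get from one to the other.

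For $\psi^\circ$, the paper only treats the case $i\neq j$ and uses a different characterization: $\psi^\circ$ is the unique isomorphism inducing the identity after both sides are identified with $H^*(G/P_{\Bp''})$. Your argument (that $\psi^\circ$ is the adjoint of $\psi$ on both sides, hence agreement for $\psi$ and the units/counits implies agreement for $\psi^\circ$) is also correct, but it relies on the fact that in the Brundan presentation of $\tU_n$ used here the mixed crossing is \emph{defined} via $\psi,\iota,\ep$---in the original \cite{KLIII} generators-and-relations it is an independent generator, and your claim would need the extra check that the two definitions coincide in the flag-variety representation.
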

\begin{proof}
  The agreement of the action of $y$ is essentially the definition
  of the isomorphism of Proposition \ref{tree-end}; in both cases they
  are just multiplication operators.  
Similarly, the agreement of $\psi$ follows from the commutation
relations of \eqref{nilHecke-1}.  

For $j\neq i$, we have that $\dE_i\dF_jW_{\tau_{\Bp}}\cong
  \dF_j\dE_i W_{\tau_{\Bp}}$.  By definition, we have that $\psi^\circ$ is the unique
  isomorphism that induces the identity after we identify
  $\Ext(W_{\tau_{\Bp'}}, \dE_i\dF_jW_{\tau_{\Bp}})$ with
  $H^*(G/P_{\Bp''})$ where $\Bp''= (p_1,\dots,
  p_{i-1},p_i,1,p_{i+1}-1,\dots, p_j-1,1,p_{j+1},\dots,p_\ell)$, 
which
  agrees with Khovanov and Lauda's formulation.

Finally, we need to check that $\iota,\epsilon,\iota',\epsilon'$ all
induce the correct maps.  
We'll need to consider maps
$H^*(G/P_{\Bp^\pm})\otimes_{H^*(G/P_{\Bp\mp
    \al_i})}H^*(G/P_{\Bp^\pm})\longleftrightarrow H^*(G/P_{\Bp})$.
Note that the bimodule on the LHS is generated by $\ssy_i^k\otimes 1
$ for $k=1,\dots,\max(p_i,p_{i+1})$. 

First consider the special case where $p_i=0$.  In this case,
$\dF_i\dE_i$ is simply a bigon, and the sum of $p_{i+1}$ many copies
of the identity functor.  In Khovanov and Lauda's construction, we can
identify the bimodule for this functor with  $H^*(G/P_{\Bp^-})$,
thought of as a bimodule by the inclusion of $H^*(G/P_{\Bp})$ via
pullback.  In Khovanov and Lauda's construction, the map $\iota_i$ is
simply the pullback inclusion, sending $1\mapsto 1$, and the map
$\epsilon'_i$ is integration along the fibers (with appropriately chosen
relative orientation).  Thus, our $\iota_i$, which corresponds to $\iota_{Y_{i+1}}$ in this case, agrees with Khovanov and
Lauda's by the convention we have chosen for the isomorphism of  Lemma~\ref{lem:bim-iso}.
% (see the proof of Lemma~\ref{lem:full-length}).

Furthermore, our map $\epsilon'_i\colon \dF_i\dE_i\to
\id$, which corresponds to $\ep_{Y_{i+1}^\star}$ in this case, is determined by the values of its compositions with
$(\ssy_i^{q}\otimes 1)\iota$ for $q\leq p_{i+1}-1$.
Lemma \ref{lem:bubble-dot} allows us to compute these compositions;
the result is 0 unless $q=p_{i+1}-1$
in which case it is 1.  The same relations hold in Khovanov and
Lauda's case by \cite[(3.4)]{KLIII} and the unnumbered equation below. 

The same argument shows that if $p_{i+1}=0$, we have agreement of
$\iota'_i$ and $\epsilon_i$ with Khovanov and Lauda's, using the other
case of Lemma \ref{lem:bubble-dot}.  By definition,
$\iota'_i=(-1)^{p_i-1}\iota_{Y_{i+1}}$, so Lemma  \ref{lem:bubble-dot}
shows that \[\ep_i(1\otimes \ssy_i^{q})\iota_i'=(-1)^{p_i-1}\ep_{Y_i^\star}(1\otimes \ssy_i^{q})\iota_{Y_i}=
\begin{cases}
  1 & q=p_{i}-1\\
  0 & q< p_i-1.
\end{cases}
\]

\excise{Similarly, consider $\iota\colon \id \to \dF_i\dE_i$; we can check
that two such maps agree if their composition with
$\epsilon',\epsilon'(\ssy_{i+1}\otimes 1), \dots,
\epsilon'(\ssy_{i+1}^{p_{i+1}-1}\otimes 1)$ agree.  As calculated
above, the results for both $\iota$ and Khovanov and Lauda's unit are
$0,\dots, 0,1$, by Lemma \ref{lem:bubble-dot} and \cite[(3.4)]{KLIII}
respectively.}

Now let $p_i=1$. Then $\iota'$ corresponds to $\iota_{Y_{i+1}^\star}$ and $\epsilon$ to $\ep_{Y_{i+1}}$ and the zig-zag relations of units and counits show that both are uniquely determined by $\iota$ and $\ep'$ for $p_i=0$. So also 
in this case, we have agreement between our maps and Khovanov and Lauda's 
$\iota',\epsilon$. Similarly, we have agreement with their maps for $\iota,\epsilon'$ when
$p_{i+1}=1$. 

In terms of ladders, every map of the form
$\iota,\epsilon,\iota',\epsilon'$ can be written as a composition of
these, since $\dE_i$ and $\dF_i$ are both compositions of two ladders
which involve splitting off a strand with label 1, and then joining a
new one on.  Thus, $\iota,\epsilon,\iota',\epsilon'$ can be written as
composition of the special cases discussed above.  Since the same is
true for Khovanov and Lauda's representation on the cohomology of flag
varieties, we are done.  
\end{proof}
By Lemma \ref{tree-enough}, we thus have that:
\begin{corollary}\label{cor:catskewfunc}
There is a 2-functor $\mathbf{a}\colon \tU_\ell\to \WB$ sending $\eE_i\mapsto
\wE_i,\eF_i\mapsto \wF_i$ and 2-morphisms to 2-morphism as indicated above.

Applying the natural representation of $\WB$, we find that  the functors $\dE_i$ and $\dF_i$ define a categorical action of
  $\mathfrak{gl}_\ell$ sending the weight $(p_1,\dots, p_\ell)$ to
  $D^b(T^{(p_1,\dots, p_\ell)}\mmod)$ which commutes with the action
  of $\tU_n$.
\end{corollary}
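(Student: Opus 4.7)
The plan is to build the 2-functor $\mathbf{a}$ in the obvious way: on objects send the $\mathfrak{gl}_\ell$ weight $(p_1,\dots,p_\ell)$ to itself, on 1-morphisms send $\eE_i\mapsto \wE_i$, $\eF_i\mapsto \wF_i$ (and use the isomorphisms of Propositions \ref{bigon} and \ref{associative} to make sense of composition), and on 2-morphisms use the assignments $y\mapsto \ssy_{i+1}$, $\psi\mapsto \psi$, and the adjunction data $\iota_i,\epsilon_i,\iota_i',\epsilon_i'$ defined in Section \ref{sec:dual-categ-acti}. The content of the corollary is that these assignments respect \emph{all} of the defining relations of $\tU_\ell$ listed in Definition 2.1.

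To verify the relations, I would invoke Lemma \ref{tree-enough}: since every relation in $\tU_\ell$ is a pure 2-morphism, it suffices to check that the image of each such relation acts trivially after horizontal composition with the identity of a tree $\tau_{\Bp}$. By the string of identifications culminating in the corollary before Lemma \ref{lem:bubble-dot}, for any word $A$ in the $\wE_i$'s and $\wF_i$'s the space $\Hom_\dagger(\tau_{\Bp'}, A\circ \tau_{\Bp})$ together with the action of our candidate 2-morphisms is canonically isomorphic as a bimodule over $H^*(G/P_{\Bp})$ and $H^*(G/P_{\Bp'})$ to the corresponding bimodule in Khovanov and Lauda's 2-representation on partial flag varieties from \cite[\S 6]{KLIII}. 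The preceding lemma shows that under this identification the generating 2-morphisms $y,\psi,\psi^\circ,\iota,\epsilon,\iota',\epsilon'$ are sent to precisely Khovanov--Lauda's generators. Since Khovanov and Lauda proved that their flag variety construction defines a 2-representation of $\tU_\ell$, every relation of $\tU_\ell$ is sent to zero after composing with $\id_{\tau_{\Bp}}$, and hence by Lemma \ref{tree-enough} already vanishes in $\WB$.

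The only step requiring real care is the compatibility of our adjunction data with Khovanov--Lauda's. This is where Lemma \ref{lem:bubble-dot} does the essential work: it computes the ``bubble evaluations'' $\epsilon_i'(\ssy_i^q\otimes 1)\iota_i$ and $\epsilon_i(1\otimes \ssy_{i+1}^q)\iota_i'$ in the extremal cases $p_i=0$ or $p_{i+1}=0$, which match the values prescribed by \cite[(3.4)]{KLIII}. The general units and counits are recovered from these extremal cases by the zig-zag relations together with the fact that $\dE_i$ and $\dF_i$ each factor as a composition of two $Y$-ladders, one of which splits off a strand labeled $1$; this reduces the matching of $\iota_i,\epsilon_i,\iota_i',\epsilon_i'$ with Khovanov--Lauda's units and counits to the cases already handled. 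The signs introduced in the definitions of $\iota_i'$ and $\epsilon_i'$ are precisely what is needed to make these compositions agree on the nose.

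For the second assertion, $\WB$ acts naturally on $\bigoplus_\Bp D^b(T^\Bp\mmod)$ by tensor product with ladder bimodules, and Proposition \ref{prop:trivalent-commute} (together with the discussion preceding it) ensures this action commutes with the $\tU_n$-action induced by the functors $\fE_k,\fF_k$. Composing with $\mathbf{a}$ yields the desired categorical $\mathfrak{gl}_\ell$-action commuting with $\tU_n$. The main obstacle throughout is really the bookkeeping in the matching with Khovanov--Lauda; but once the identifications of Section \ref{sec:reduction-trees} are in place and Lemma \ref{lem:bubble-dot} is applied, the relations are obtained for free from \cite{KLIII}.
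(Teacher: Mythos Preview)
Your proposal is correct and follows essentially the same approach as the paper: reduce via Lemma \ref{tree-enough} to checking relations on trees, identify the Hom spaces and generating 2-morphisms with Khovanov--Lauda's flag variety 2-representation (this is the content of the lemmas immediately preceding the corollary), and then inherit all relations from \cite{KLIII}. The paper's own proof is a one-line invocation of Lemma \ref{tree-enough}, with all the matching work you describe having been carried out in the preceding lemmas of Section \ref{sec:reduction-trees}.
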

Combining this result with Theorem \ref{thm:ladder-GG} concludes the
proof of Theorem \ref{dualaction}.  

\subsection{Extension to \texorpdfstring{$\tWB$}{LB}}
\label{sec:extension-tildet}

As noted before, we have a natural 2-functor $\tWB\to \WB$.  We wish
to show that the 2-functor $\mathbf{a}\colon \tU_\ell\to \WB$ defined in the previous
section factors through this reduction. In order to show this, first
we'll prove the following lemma:
\begin{lemma}\label{lem:boot-strap}
    Consider $N\in \Hom_{\tWB}(\beta_1,\beta_2)$. Then $N$ is
  an isomorphism 
  if and only if  the induced natural transformation $\overline{N}\in
  \Hom_{\WB}(\beta_1,\beta_2)$ is an isomorphism.
\end{lemma}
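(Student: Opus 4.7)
The forward direction is immediate, since the 2-functor $\tWB \to \WB$ (induced by tensoring with $T^{\Bp'}$ on the left and $T^{\Bp}$ on the right) preserves isomorphisms. The content is in the converse.

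For the converse, my plan is to form the cone $C$ of $N$ in the appropriate derived category of $\tilde{T}^{\Bp'}\otimes(\tilde{T}^{\Bp})^{\op}$-bimodules. Since $N$ is $\tU_n^-\times\tU_n^-$-equivariant, so is $C$, and I need to show $C$ is acyclic. By Corollary~\ref{cor:reduction-tensor} combined with Lemma~\ref{tensor-same} applied to ladder bimodules, the derived tensor product $T\Lotimes_{\tilde{T}}(-)\Lotimes_{\tilde{T}}T$ commutes with the formation of cones on our ladder bimodules, so $T^{\Bp'}\Lotimes_{\tilde{T}^{\Bp'}}C\Lotimes_{\tilde{T}^{\Bp}}T^{\Bp}$ is quasi-isomorphic to the cone of $\overline{N}$, which vanishes by hypothesis. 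The task therefore reduces to showing that a $\tU_n^-\times\tU_n^-$-equivariant bounded complex built from ladder bimodules that becomes acyclic after such reduction is itself acyclic.

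To handle this last step, I would exploit the Grothendieck group identification of Proposition~\ref{prop:GG-iso}, namely $K^0_q(\tcata^\bla)\cong U_q^-\otimes V_{\la_1}\otimes\cdots\otimes V_{\la_\ell}$, with the $U_q^-$-factor realised by the left action of $\tU_n^-$ via the functors $\tilde{\fF}^*_i$ (and analogously on the right). This identification categorifies to the statement that every object of $\tcata^\bla$ is built, under the $\tU_n^-$-action, out of pullbacks from $\cata^\bla$. Dually, any $\tU_n^-\times\tU_n^-$-equivariant map is controlled by its restriction to the pulled-back part, which is exactly the information retained by $\overline{N}$. Hence if $\overline{N}$ is a quasi-isomorphism, so is $N$, concluding the argument.

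The main obstacle is upgrading the Grothendieck-group statement to an honest derived-category statement — that is, verifying that the $\tU_n^-\times\tU_n^-$-equivariant structure on $C$ interacts coherently with the derived tensor products used in reduction, so that the ``generated over $\tU_n^-$'' assertion survives passage to cohomology. This compatibility should follow from the exactness and biadjointness of $\tilde{\fF}_k$, $\tilde{\fF}^*_k$ recorded in Proposition~\ref{exactfunctors}, combined with the standardly stratified structure of $\tilde{T}^\Bp\mmod$ from Section~\ref{sec:standard-modules}, which lets one filter $C$ by standard modules whose heads are exactly the objects pulled back from $\cata^\bla$.
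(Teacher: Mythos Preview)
Your final paragraph lands on the right ingredient --- the standardly stratified structure of $\tilde{T}^\Bp\mmod$ from Section~\ref{sec:standard-modules} --- but you reach it by a circuitous route, and you don't actually carry out the argument. The Grothendieck-group discussion is a red herring: as you yourself note, an isomorphism on $K^0$ proves nothing here, so that paragraph does no work. The cone $C$ is also awkward to handle directly, since the standard filtration is a statement about \emph{modules}, not about bimodule complexes; to use it you would have to tensor $C$ with a projective module first and then filter, which means you might as well skip the cone and work with $N$ directly.

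That is exactly what the paper does. The argument is short: it suffices to check that $N(P)$ is an isomorphism for every projective $\tilde{T}^\Bp$-module $P$. By the standardly stratified structure, $P$ is filtered by standard modules, each of which is (a summand of) $\tilde{\fF}^*_{\Bi}\nabla_{\bmu}$ with $\nabla_{\bmu}$ pulled back from $T^\Bp$. Since $N$ commutes with the left $\tU_n^-$-action, $N(\tilde{\fF}^*_{\Bi}\nabla_{\bmu})\cong \tilde{\fF}^*_{\Bi}\,N(\nabla_{\bmu})$, and $N(\nabla_{\bmu})$ coincides with $\overline{N}(\nabla_{\bmu})$ because $\nabla_{\bmu}$ factors through $T^\Bp$. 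By hypothesis this is an isomorphism; hence $N$ is an isomorphism on $\operatorname{gr}P$ and therefore on $P$.

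One small inaccuracy in your last sentence: the standards are not objects ``whose heads are pullbacks from $\cata^\bla$''; rather, they are themselves left inductions $\tilde{\fF}^*_{\Bi}$ of such pullbacks. That distinction is what makes the equivariance argument go through cleanly.
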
\begin{proof}
  It suffices to check that $N(P)$ is an isomorphism for any
  projective $P$.   By \cite[\ref{m-tilde-SS}]{Webmerged}, any
  projective module over $\tilde{T}^{\Bp} $ is filtered by standard
  modules, which are left inductions of standard modules pulled back
  from the cyclotomic quotients.  
 The natural transformation $N$ on
  these subquotients is an isomorphism by assumption. 
Thus, $P$ has a filtration
  such that $N(\operatorname{gr} P)$ is an isomorphism, which shows
  the same is true for $N(P)$.  
\end{proof}

For each index $i$, we can define $h_{k,i}$ to be the sum of all
diagrams with no crossings and exactly $k$ dots in total on the strands with label $i$ and no
dots on other strands (this is a generalization of the complete
symmetric polynomial).
\begin{lemma}\label{lem:center-polynomial}
  The center of $\tilde{T}^\Bp_\nu$ for $\nu=\sum\om_{p_i}-\sum
  n_j\al_j$ is the polynomial ring $\K[\{h_{k,i}\}_{k\leq n_j}]$.
\end{lemma}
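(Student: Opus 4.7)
The plan is to establish the two containments separately: first, that every $h_{k,i}$ lies in $Z(\tilde{T}^\Bp_\nu)$, and second, that every central element is a polynomial in the $h_{k,i}$.

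For the first containment, I would proceed by a local diagrammatic check that $h_{k,i}$ commutes with each generator of $\tilde{T}^\Bp_\nu$: idempotents $e(\Bi,\bla,\kappa)$, crossings of black strands, crossings of black strands with red strands, and dots. Commutativity with idempotents and dots is automatic. Commutativity with a crossing of two black strands of equal label (both $i$) is essentially the fact that symmetric functions in two variables commute past the divided difference operator, which is exactly the content of the nilHecke relation (\ref{nilHecke-1}); for non-equal labels the crossing relation (\ref{first-QH}) makes it even easier. The relation (\ref{red-dot}) handles passage through red strands: a dot on a black strand labeled $i$ slides through a red strand without cost, so $h_{k,i}$ slides through as well. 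Because the sum defining $h_{k,i}$ is taken over \emph{all} configurations of $k$ dots on $i$-colored strands, any correction terms that appear when sliding one summand past a crossing are canceled by the corresponding correction from another summand, exactly as in the standard computation that symmetric polynomials are central in the KLR algebra.

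For the second containment, I would use the basis theorem \cite[\ref{m-basis}]{Webmerged}, which identifies $\tilde{T}^\Bp_\nu$ as a free module of finite rank over the polynomial subalgebra $\operatorname{Pol} = \K[y_1^{(i)},\ldots,y_{n_i}^{(i)}]_{i}$ of dots on straight-line diagrams summed over all sequences $\Bi$ and shuffles $\kappa$. Fix any ordering on Stendhal triples and consider the leading term of a central element $z$ with respect to the associated filtration by number of crossings: the leading term lies inside a single $e(\Bi,\bla,\kappa)\cdot \operatorname{Pol}\cdot e(\Bi,\bla,\kappa)$, and centrality forces it to be stable under conjugation by any idempotent and by the permutation of equally-labeled dots that is induced by composing with a crossing and then undoing it via the nilHecke-type relations. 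This forces the leading term to be symmetric in the dots of each color, hence a polynomial in the $h_{k,i}$. Subtracting an appropriate polynomial in the $h_{k,i}$ (which we already know is central) lowers the leading term, and by induction on the filtration $z$ itself lies in $\K[\{h_{k,i}\}]$.

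Finally, to see that the map $\K[\{h_{k,i}\}_{k\leq n_i}] \to Z(\tilde{T}^\Bp_\nu)$ is injective—so that the center really is a polynomial ring on these generators—I would use that $\tilde{T}^\Bp_\nu$ acts faithfully on the standard modules collectively by \cite[\ref{m-tilde-SS}]{Webmerged}, and on a generic standard the $h_{k,i}$ act as algebraically independent symmetric functions in the dot-eigenvalues. The main obstacle is the bookkeeping in the ``leading term'' argument of the second step: one must choose the filtration carefully so that the combinatorics of which dots end up on which colored strand are compatible with the permutation action induced by crossings, and so that the correction terms appearing in the KLR and Stendhal relations are all of strictly lower filtration. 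Once the filtration is chosen correctly (e.g.\ lexicographically on the pair (number of crossings, number of dots)), the argument reduces to the analogous well-known computation for the ordinary KLR algebra $R_\nu$.
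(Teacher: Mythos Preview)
Your argument for centrality of the $h_{k,i}$ is the standard one and matches what the paper simply calls ``obvious.'' The real difference is in the reverse inclusion.

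The paper does not run a direct filtration argument. Instead it reduces to the KLR algebra: let $e$ be the idempotent placing all red strands at the far left and all black strands at the far right; the basis theorem gives $e\,\tilde{T}^\Bp_\nu\, e \cong R(\sum n_j\al_j)$, and \cite[2.9]{KLI} already computes the center of that to be the symmetric functions. So for any central $z$, the element $eze$ is a polynomial in the $e h_{k,i} e$, and after subtracting the matching polynomial in the $h_{k,i}$ one may assume $eze=0$. The key step is then that every projective embeds in $\tilde{T}^\Bp_\nu e$ (via the diagram sweeping all strands to the right), so $z$ annihilates every projective and hence $z=0$. Free generation is read off from the basis theorem directly, so no separate faithfulness argument on standard modules is needed.

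Your filtration approach has a gap. The leading (crossingless) part of a central $z$ is a sum $\sum_{(\Bi,\kappa)} p_{(\Bi,\kappa)}\, e(\Bi,\kappa)$, one polynomial in dots for each idempotent. Commuting with same-color crossings forces each $p_{(\Bi,\kappa)}$ to be symmetric in the $i$-colored dots, but you also need all the $p_{(\Bi,\kappa)}$ to be the \emph{same} symmetric polynomial across idempotents, because $h_{k,i}$ is by definition the sum over \emph{all} idempotents with the same dot configuration on each. ``Stability under conjugation by idempotents'' is vacuous here: $e z e' = 0$ for orthogonal $e,e'$ is automatic for any central $z$ and tells you nothing across blocks. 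To force compatibility across $(\Bi,\kappa)$ you must argue with the crossings that actually change the idempotent---far-apart black/black crossings and black/red crossings---and track the correction terms; this is precisely the bookkeeping you flagged as the main obstacle, and you have not carried it out. It can be made to work, but the paper's corner-idempotent reduction bypasses the issue entirely by invoking the KLR result once and finishing with a single faithfulness step.
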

\begin{proof}
  The proof directly follows \cite[2.9]{KLI}.  Obviously, these
  elements are central and  the basis theorem
  \cite[\ref{m-basis}]{Webmerged} for
  $\tilde{T}^\Bp_\nu$ shows that they freely generate a polynomial
  ring.  Let $e$ be
  the sum of straight-line diagrams where the red stands are all at
  the far left and the black strands all at the far right. We have that $e
  \tilde{T}^\Bp_\nu e\cong R(\sum n_i\al_i)$ again by
  \cite[\ref{m-basis}]{Webmerged}, and \cite[2.9]{KLI} shows that the
  center of the latter algebra is also a free polynomial ring $\K[\{eh_{k,i}e\}_{k\leq n_j}]$. Thus, if there is an element of the center of $Z(\tilde{T}^\Bp_\nu
  )$ not in $\K[\{h_{k,i}\}_{k\leq n_j}]$, then there would have to be
  a non-zero central element $z$ such that $eze=0$.  

 On the other hand, we have composition with the diagram sweeping
  all strands to the far right is injective (by \cite[\ref{m-basis}]{Webmerged}), so every projective
  embeds as a submodule of $\tilde{T}^\Bp_\nu e$.  But on any element
  $a\in \tilde{T}^\Bp_\nu e$, we have that $za=zaee=aeze=0$; the
  embedding above shows this is only possible if $z=0$.  Thus, we have
  arrived at a contradiction, and the $h_{k,i}$ with $k\leq n_j$ must
  freely generate the center.
\end{proof}

\begin{theorem}
There is a 2-functor $\tilde{\mathbf{a}}\colon \tU_\ell\to \tWB$ which
lifts $\mathbf{a}$.  That is,
the functors $\dE_i$ and $\dF_i$ define a categorical action of
  $\mathfrak{gl}_\ell$ sending the weight $(p_1,\cdots, p_\ell)$ to
  $D^b(\tilde{T}^{(p_1,\dots, p_\ell)}\mmod)$.
\end{theorem}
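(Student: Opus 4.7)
First I would note that each generating 2-morphism of $\tU_\ell$ already admits a canonical lift to $\tWB$: the units and counits $\iota_i,\iota_i',\epsilon_i,\epsilon_i'$ come from the biadjunctions of ladder bimodules over $\tilde T$ provided by Proposition \ref{self-dual}; the crossings $\psi$ are assembled from the associativity isomorphisms of Proposition \ref{associative}, which already hold in $\tWB$; and the dot $y$ is the Hochschild class $\ssy_{i+1}$ defined on $\tilde T^\Bp$ itself. Since the reduction 2-functor $\tWB\to \WB$ is the identity on objects and 1-morphisms and sends each of these 2-morphisms to its counterpart in $\WB$, we obtain a candidate 2-functor $\tilde{\mathbf a}\colon \tU_\ell\to \tWB$ whose composition with the reduction recovers $\mathbf{a}$ from Corollary~\ref{cor:catskewfunc}.

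It remains to verify the defining relations of $\tU_\ell$ in $\tWB$. These split into invertibility relations (the isomorphisms $\sigma_{\la,i}$ and the oppositely-oriented crossing) and equality relations (KLR, pitchfork, bubble and cyclicity). The invertibility statements transfer from $\WB$ to $\tWB$ via Lemma~\ref{lem:boot-strap}: if the image of a 2-morphism in $\WB$ is an isomorphism, then the 2-morphism is itself an isomorphism in $\tWB$, and we already know each such map is invertible in $\WB$ because $\mathbf{a}$ is a 2-functor.

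For the equality relations, I would prove the following bootstrap: any $N\in \Hom_\tWB(\beta_1,\beta_2)$ whose reduction $\bar N\in \Hom_\WB(\beta_1,\beta_2)$ vanishes is itself zero. Applied to the difference of the two sides of any relation, this yields the relation in $\tWB$. To prove the bootstrap it suffices to check $N(\tilde P)=0$ for every projective $\tilde T^\Bp$-module $\tilde P$. By \cite[\ref{m-tilde-SS}]{Webmerged}, $\tilde P$ carries a finite filtration whose subquotients are summands of left inductions $\tilde{\fF}^*_\Bi\nabla_\bmu$ of standard modules pulled back from the cyclotomic quotient $T^\Bp$. The $\tU_n^-\times \tU_n^-$-equivariance of $N$ (Proposition~\ref{prop:trivalent-commute}) commutes it past these left-induction functors, reducing the check to $N(\nabla_\bmu)=0$; but on a module pulled back from $T^\Bp$ we have the canonical identification
\[
W_\beta\Lotimes_{\tilde T^\Bp}\nabla_\bmu\;\cong\;(W_\beta\Lotimes_{\tilde T^\Bp}T^\Bp)\Lotimes_{T^\Bp}\nabla_\bmu,
\]
so $N(\nabla_\bmu)$ agrees with $\bar N(\nabla_\bmu)$, which is zero by hypothesis. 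A finite induction along the filtration of $\tilde P$ now gives $N(\tilde P)=0$.

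Combining the two cases, every defining relation of $\tU_\ell$ holds in $\tWB$, and $\tilde{\mathbf a}$ is a well-defined 2-functor; its action on module categories supplies the required categorical $\mathfrak{gl}_\ell$-action on $D^b(\tilde T^{(p_1,\dots,p_\ell)}\mmod)$ commuting with the $\tU_n^-\times \tU_n^-$-action. The main obstacle is the equality bootstrap, since a priori the reduction map $\Hom_\tWB(\beta_1,\beta_2)\to \Hom_\WB(\beta_1,\beta_2)$ is very far from being injective; the key point that makes the argument go through is that the standard stratification of $\tilde T^\Bp\mmod$ exhibits every projective as built from modules pulled back from $T^\Bp$ via a functor ($\tilde{\fF}^*_\Bi$) which commutes with every 2-morphism in $\tWB$.
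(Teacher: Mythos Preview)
Your handling of the invertibility relations via Lemma~\ref{lem:boot-strap} is fine, and your observation that the candidate 2-morphisms lift canonically to $\tWB$ is correct. The gap is in the equality bootstrap.

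You claim that $\bar N=0$ implies $N=0$, and argue by checking $N(\tilde P)=0$ on projectives via the standard filtration. Both steps of this argument fail in the derived setting. First, ``$N$ vanishes on each subquotient of a filtration'' does not imply ``$N$ vanishes on the filtered object'': given a triangle $A\to B\to C$ with $N_A=N_C=0$, the map $N_B$ can still be a nonzero composite through the connecting morphism. Second, even if you had $N(\tilde P)=0$ for every projective, this would only say that $N$ is zero after forgetting the right $\tilde T^\Bp$-action; the forgetful functor $D^b(\text{bimod})\to D^b(\text{left mod})$ is not faithful (its failure is measured exactly by Hochschild cohomology, which is nontrivial here---indeed the classes $\ssy_i$ live in $HH^2$). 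So a nonzero bimodule morphism in $\Hom_{\tWB}$ can perfectly well act as zero on every projective. Your final paragraph acknowledges that the reduction map is far from injective, but the standard stratification does not rescue it: the filtration argument works for isomorphisms (via a cone argument, since acyclicity is detected on underlying complexes) but not for vanishing.

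This is precisely why the paper takes a different route. Rather than checking equalities directly, it invokes Cautis's rigidity theorem \cite[2.2]{Caurigid}, whose hypotheses are phrased entirely as \emph{isomorphism} conditions (conditions (ii)--(v) in that paper) together with structural facts about $\End(\mathbf{1}_\la)$ and the support of the weights. The isomorphism conditions lift from $\WB$ to $\tWB$ by Lemma~\ref{lem:boot-strap}, and the remaining conditions are checked directly (condition (i) via Lemma~\ref{lem:center-polynomial}, conditions (vi),(viii) from the characterization of when $\tilde T^\Bp\neq 0$). Cautis's theorem then produces the full $\tU_\ell$-action. The point is that rigidity repackages the equality relations as invertibility statements, for which the bootstrap is valid.
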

\begin{proof}
  We'll use Cautis's rigidity theorem to show that the relations hold
  in this case.  Consider $D^b(\tilde{T}^{(p_1,\dots, p_\ell)}\mmod)$
  as a graded category with the grading shift given by the Tate twist
  $\langle 1\rangle$.  We wish to show that this category carries a
  $(\mathfrak{g},\theta)$ action in the sense of \cite[\S
  2.2]{Caurigid}.  The basic data of such an action is given by the
  functors $\dE_i$ and $\dF_i$, and the map which Cautis denotes by
  $\theta$ is our $\By$; note that since our grading shift is the Tate
  twist, $\End^2(\mathbf{1}_\la)$ (in Cautis's notation) is the degree
  $-2$ part of the Hochschild cohomology
  $H\!H^2(\tilde{T}^{(p_1,\dots, p_\ell)})$, which is the target of
  $\By$.

  Cautis's conditions are all of the form that two functors are
  isomorphic, or that a given map is an isomorphism.  Thus, they are
  left unchanged by taking the associated graded, and will hold for
  $D^b(\tilde{T}^{(p_1,\dots, p_\ell)}\mmod)$ if and only if they hold
  for $D^b({T}^{(p_1,\dots, p_\ell)}\mmod)$.  Let us discuss them in
  more detail.

  Condition (i) is simply the fact that Hochschild cohomology of any algebra is trivial
  in negative degree, and the degree 0 part of the center of
  $\tilde{T}^{(p_1,\dots, p_\ell)}$ is the scalars by Lemma
  \ref{lem:center-polynomial}.  The conditions (ii-v) follow from
  noting that we can construct the desired morphisms using
  combinations of the 2-morphisms in our prospective categorical
  action.  While we cannot check the relations directly, Lemma
  \ref{lem:boot-strap} shows that the fact that they induce
  isomorphisms over $T^\Bp$ (by Theorem \ref{dualaction}) suffices to show the same result for
  $\tilde{T}^\Bp$.  Condition (vii) is vacuous for
  $\mathfrak{gl}_\ell$, and conditions (vi,viii) are direct
  consequences of the fact that $ \tilde{T}^\Bp\neq 0$ if and only if
  we have $0\leq p_i \leq n$.  Thus we satisfy all of Cautis's
  conditions and \cite[2.2]{Caurigid} implies that we have the desired
  categorical action.
\end{proof}

By Proposition \ref{prop:GG-iso} and Theorem \ref{thm:ladder-GG}, this
action categorifies the $\mathfrak{gl}_\ell$-action on $U^-_q\otimes
\iwedge{p}_q(\C_q^\ell\otimes \C_q^n)$ which acts by the identity on the
first factor.

\subsection{Foams}
\label{sec:foams}

Quefellec and Rose \cite{QR} define a foam 2-category $n
  \mathbf{Foam}(p)$, which is arguably a more natural object to
  consider in this case, and contains essentially the same information
  as the dual $\mathfrak{gl}_\ell$ action, without needing to fix a
  value of $\ell$.  The objects of this category are tuples (of any
  length) of integers from $[1,n]$ whose sum is $p$.  The 1-morphisms
  between $(p_1,\dots, p_k)$ and $(p_1',\dots, p_m')$
  are sums of ladders whose top and bottom are labeled by $\Bp'$ and $\Bp$
  respectively. Note that these are the same as the objects and
  1-morphisms of the 2-categories $\tWB,\WB$.  However, 2-morphisms
  are defined quite differently; they are explicitly given by singular cobordisms
  between these ladders modulo relations given in \cite[\S 3.1]{QR}. Recall that the facets of 
these foams can be decorated with symmetric polynomials.

{
\tikzset{external/export=true}

  \begin{proposition}
    There is a 2-functor of the 2-category $n
  \mathbf{Foam}(p)$ to $\tWB$ (and thus to $\WB$) 
which is the identity on objects and 1-morphisms.  This induces a
representation of $n
  \mathbf{Foam}(p)$ 
 which sends the sequence $(p_1,\dots, p_\ell)$ to the
  derived category $D^b(\tilde{T}^{\Bp}\mmod)$, a 1-morphism to the derived
  tensor product with the
  corresponding ladder bimodule, and each foam to a natural
  transformation between these functors.
  \end{proposition}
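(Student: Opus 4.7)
The plan is to factor $\tilde{\mathbf{a}}\colon\tU_\ell\to\tWB$ through the 2-functor that Queffelec and Rose construct from a suitable version of the categorified quantum group to their foam 2-category. By the main categorification theorem of \cite{QR}, there is an essentially surjective, full 2-functor $\Phi\colon\tU_\ell\to n\mathbf{Foam}(p)$ which sends each generating 2-morphism of $\tU_\ell$ to an explicit elementary foam: the dot $\ssy$ to a dotted facet, the crossing $\psi$ to a saddle, and the units/counits $\iota_i,\epsilon_i,\iota_i',\epsilon_i'$ to zip and unzip foams at trivalent vertices. The kernel of $\Phi$ is generated by those 2-morphisms factoring through a weight with some entry outside $[0,n]$, together with the local foam relations of \cite[\S 3.1]{QR}. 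To define the desired 2-functor $n\mathbf{Foam}(p)\to\tWB$, I need to check that $\tilde{\mathbf{a}}$ descends to this quotient.

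The annihilation of illegal-weight 2-morphisms is automatic, since the 2-category $\tWB$ was defined with objects restricted to $\Bp$ with $p_i\in[0,n]$. For each elementary foam $f$ we then assign the natural transformation $\tilde{\mathbf{a}}(\Phi^{-1}(f))$, which is unambiguous: fullness of $\Phi$ together with the 2-functoriality of $\tilde{\mathbf{a}}$ ensures that different preimages differ by elements of the kernel of $\Phi$, which are sent to zero. The real content is then to verify that each foam relation of \cite[\S 3.1]{QR} holds in $\tWB$ after this translation.

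The structural results of Section~\ref{sec:ladder-bimodules} furnish these relations essentially for free. Trivalent associativity is Proposition~\ref{associative}; digon/bigon evaluation is Proposition~\ref{bigon}; sphere and theta-foam evaluations are precisely the scalars computed in Lemma~\ref{lem:bubble-dot}, which is why the adjunction data there were normalized as they were; and dot migration through a trivalent vertex follows from the defining relations (\ref{through-box1}--\ref{through-box2}) of the ladder bimodule $W_{Y_i}$ together with the commutation of dots past KLR crossings. The main obstacle will be the most intricate relations, those involving several trivalent cobordisms simultaneously (for example the square/Matveev--Piergallini relation with arbitrary labels); I would handle these by whiskering with a tree and reducing to identities in $\Hom_{\tWB}(\tau_\Bp,\tau_\Bp)\cong H^*(G/P_\Bp)$ via Proposition~\ref{tree-end} and Lemma~\ref{tree-enough}, at which point they become standard cohomological identities on partial flag varieties, matching Khovanov--Lauda's flag representation on the foam side. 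Once the 2-functor is in hand, the induced 2-representation on $D^b(\tilde{T}^\Bp\mmod)$ is tautologically defined by post-composition with the natural action of $\tWB$ by derived tensor product with ladder bimodules.
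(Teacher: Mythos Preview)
Your strategy is correct in outline but takes a much longer route than the paper, and it has a genuine gap in the $\tWB$ case.

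The paper's proof is essentially one line: by \cite[3.22]{QR}, the foam 2-category $n\mathbf{Foam}(p)$ is the direct limit of the quotients $\tU^{(n)}_{\mathfrak{sl}_\ell}$ as $\ell\to\infty$ (compatibly with adding $0$'s to the weight sequence). Since the 2-functor $\tilde{\mathbf{a}}\colon\tU_\ell\to\tWB$ already exists and kills all weights with an entry outside $[0,n]$, it factors through $\tU^{(n)}_{\mathfrak{sl}_\ell}$, and these factorizations are compatible as $\ell$ grows. The 2-functor from $n\mathbf{Foam}(p)$ is then induced by the universal property of the direct limit, with no foam relation needing to be checked by hand. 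In other words, the content of ``the foam relations hold in $\tWB$'' is already packaged entirely inside the existence of $\tilde{\mathbf{a}}$ together with QR's identification of the foam category.

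Your approach, by contrast, proposes to re-verify each foam relation of \cite[\S 3.1]{QR} directly in $\tWB$. This is not wrong in principle, but it is considerably more work, and it duplicates the effort QR already invested in showing that their foam relations are exactly the $\tU^{(n)}$ relations. More seriously, your proposed mechanism for the hardest relations---whisker with a tree and invoke Lemma~\ref{tree-enough}---is only available in $\WB$, not in $\tWB$: that lemma is stated and proved for the cyclotomic quotient, where the reduction to the weight $(1,\dots,1)$ block and the Soergel-type faithfulness argument apply. The paper's own passage from $\WB$ to $\tWB$ required a separate bootstrapping via Cautis's rigidity (Lemma~\ref{lem:boot-strap} and the theorem following it), not tree reduction. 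So as written, your argument would establish the 2-functor into $\WB$ but not into $\tWB$. A smaller point: the ``dot migration'' foam relation concerns the Hochschild classes $\ssy_i$ passing across a seam, which is not the same as the black-strand relations (\ref{through-box1}--\ref{through-box2}); you would need an argument about deformations of the trivalent bimodule, not about elements of it.
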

These natural transformations are uniquely determined by the fact that
they send the foamation of a 2-morphism in $\tU_{\ell}$ to its dual action as
we have defined it.  In particular:
\begin{itemize}
\item 
if $e_1$ denotes the first elementary symmetric polynomial in $p_i$ variables, then  
$$\begin{tikzpicture}[thick,scale=1.2, line width=.5mm]
\draw[fill=red,opacity=0.3] (0,0) -- (0.7,0.5) -- (0.7,2.5) -- (0,2) -- cycle;
\draw[fill=red,opacity=0.3] (2,0) -- (2.7,0.5) -- (2.7,2.5) -- (2,2) -- cycle;
\draw[fill=red,opacity=0.3] (4,0) -- (4.7,0.5) -- (4.7,2.5) -- (4,2) -- cycle;
\node at (1.3,1) {$\cdots$};
\node at (3.3,1) {$\cdots$};
\node at (2.35,1.25) {$e_1$};
\node at (0,-0.25) {$p_1$};
\node at (2,-0.25) {$p_i$};
\node at (4,-0.25) {$p_\ell$};
\end{tikzpicture}
$$
is sent to the action of the Hochschild
  homology class $\ssy_i$,
\item The singular seams $$\begin{tikzpicture}[thick,scale=1.2, line width=.5mm]
\draw[fill=red, opacity=0.3,use Hobby shortcut] (0,0) .. (0.5,0.15) .. (1.5,0.15) .. (2,0) -- (2,2) .. (1.75,2.2) .. (1.5,2.3) -- (0.5,2.3)..(0.25,2.2)..(0,2)--(0,0);
\draw[fill=red, opacity=0.3,use Hobby shortcut] (-0.1,0.6) .. (0.4,0.45) .. (1.4,0.45) .. (1.9,0.6) -- (1.9,2.6) ..(1.8,2.4).. (1.5,2.3) -- (0.5,2.3)..(0.2,2.33)..(-0.1,2.6)--(-0.1,0.6);
\draw[fill=red, opacity=0.4,use Hobby shortcut] (1.5,2.3) ..(1.25,1.75)..(0.75,1.75)..(0.5,2.3)--(1.5,2.3);
\end{tikzpicture}
\qquad\qquad
\begin{tikzpicture}[thick,scale=1.2,xscale=-1,yscale=-1, line width=.5mm]
\draw[fill=red, opacity=0.3,use Hobby shortcut] (0,0) .. (0.5,0.15) .. (1.5,0.15) .. (2,0) -- (2,2) .. (1.75,2.2) .. (1.5,2.3) -- (0.5,2.3)..(0.25,2.2)..(0,2)--(0,0);
\draw[fill=red, opacity=0.3,use Hobby shortcut] (-0.1,0.6) .. (0.4,0.45) .. (1.4,0.45) .. (1.9,0.6) -- (1.9,2.6) ..(1.8,2.4).. (1.5,2.3) -- (0.5,2.3)..(0.2,2.33)..(-0.1,2.6)--(-0.1,0.6);
\draw[fill=red, opacity=0.4,use Hobby shortcut] (1.5,2.3) ..(1.25,1.75)..(0.75,1.75)..(0.5,2.3)--(1.5,2.3);
\end{tikzpicture}
$$
and
$$\begin{tikzpicture}[thick,scale=1.2, line width=.5mm]
\draw[fill=red, opacity=0.3,use Hobby shortcut] (0,0) -- (2,0) -- (2,2) -- (1.5,2) .. (1.25,1.55) .. (0.75,1.55) .. (0.5,2) -- (0,2)--(0,0);
\draw[fill=red, opacity=0.2,use Hobby shortcut] (1.5,2) -- (1.5,1.9).. (1.25,1.55) .. (0.75,1.55) .. (0.5,1.9)--(0.5,2) .. (0.75,1.9)..(1.25,1.9)..(1.5,2);
\draw[fill=red, opacity=0.4,use Hobby shortcut] (1.5,2) -- (1.5,1.9).. (1.25,1.55) .. (0.75,1.55) .. (0.5,1.9)--(0.5,2) .. (0.75,2.1)..(1.25,2.1)..(1.5,2);
\end{tikzpicture}
\qquad\qquad
\begin{tikzpicture}[thick,scale=1.2, yscale=-1, line width=.5mm]
\draw[fill=red, opacity=0.3,use Hobby shortcut] (0,0) -- (2,0) -- (2,2) -- (1.5,2) .. (1.25,1.55) .. (0.75,1.55) .. (0.5,2) -- (0,2)--(0,0);
\draw[fill=red, opacity=0.2,use Hobby shortcut] (1.5,2) -- (1.5,1.9).. (1.25,1.55) .. (0.75,1.55) .. (0.5,1.9)--(0.5,2) .. (0.75,1.9)..(1.25,1.9)..(1.5,2);
\draw[fill=red, opacity=0.4,use Hobby shortcut] (1.5,2) -- (1.5,1.9).. (1.25,1.55) .. (0.75,1.55) .. (0.5,1.9)--(0.5,2) .. (0.75,2.1)..(1.25,2.1)..(1.5,2);
\end{tikzpicture}
$$
are sent to the appropriate units
  and counits of adjunctions,
\item 
$$\begin{tikzpicture}[thick,scale=1.2, line width=.5mm]
\draw[fill=red, opacity=0.2,use Hobby shortcut] (0,0)..(0.25,0.25) --(0.3,0.3)..(0.35,0.5)..(0.8,1.5) ..(1,2.5)--(0.95,2.46) .. (0.5,2.1)..(0.05,2.02)--(0,2) --(0,0);
\draw[fill=red, opacity=0.2,use Hobby shortcut] (-0.25,0.5).. (0.25,0.25) --(0.3,0.3)..(0.35,0.5)..(0.8,1.5) ..(1,2.5)--(0.95,2.57) .. (0.5,2.95)..(0.25,3)--(-0.25,2.75) --(-0.25,0.5);
\draw[fill=red, opacity=0.2,use Hobby shortcut] (-0.5,1)..(1,0.7).. (1.2,0.55)--(1.25,0.5)..(1.2,1)..(0.3,2.5)..(0.3,2.95)--(0.25,3)..(-0.5,3.5)--(-0.5,1);
\draw[fill=red, opacity=0.2,use Hobby shortcut] (2,0.5)--(1.25,0.5)..(0.75,0.3)..(0.25,0.25)--(0.3,0.3)..(0.35,0.5)..(0.8,1.5) ..(1,2.5)--(2,2.5)--(2,0.5);
\end{tikzpicture}
$$
is sent to the associativity map of Proposition \ref{associative}.
\end{itemize}
}
\begin{proof}
First, we note that the functor $\tU_{\ell}\to \tWB$ kills any weight space corresponding to a sequence
$(p_1,\dots, p_\ell)$ with $p_i>n$ or $p_i<0$.  Thus, this action
factors through the quotient (following Queffelec and Rose's notation)
$\tU^{(n)}$.  Note that as we increase $\ell$,
these actions are compatible with the inclusion that adds $0$'s at the
start or end of the sequence.
 We can now use \cite[3.22]{QR} which writes the foam
  category as a direct limit of 
  $\tU_{\mathfrak{sl}_\ell}^{(n)}$ as $\ell\to \infty$, and thus shows
  that the desired 2-functor exists.
\end{proof}

\section{Knot homology}
\label{sec:knot-homology}

\nc{\tT}{\tilde{T}}
\nc{\tbra}{\tilde{\bra}}

In \cite{Webmerged}, the second author defined
equivalences of derived categories which correspond to the braid
action on tensor products of representations of quantum groups.  These
equivalences are induced by derived tensor product with a bimodule
$\tbra_i$ over $\tT^{\Bp}$ and $\tT^{s_i\Bp}$.

The reader can refer to \cite[\ref{m-bra-def}]{Webmerged}; the
bimodule is spanned by diagrams like Stendhal diagrams, but with a
single crossing between the $i$th and $i+1$st red strands, which satisfies
the relations: \begin{equation*}\label{side-dumb}\subeqn
    \begin{tikzpicture}
      [very thick,scale=1,baseline] \usetikzlibrary{decorations.pathreplacing}
      \draw[wei] (1,-1) -- (-1,1) node[at start,below]{$\la_{i+1}$}
      node[at end,above]{$\la_{i+1}$} node[midway,fill=white,circle]{};
      \draw[wei] (-1,-1) -- (1,1) node[at start,below]{$\la_{i}$}
      node[at end,above]{$\la_{i}$}; \draw (0,-1)
      to[out=135,in=-135] (0,1); \node at (3,0){=}; \draw[wei] (7,-1)
      -- (5,1) node[at start,below]{$\la_{i+1}$} node[at
      end,above]{$\la_{i+1}$} node[midway,fill=white,circle]{}; \draw[wei] (5,-1)
      -- (7,1) node[at start,below]{$\la_{i}$} node[at
      end,above]{$\la_{i}$}; \draw (6,-1) to[out=45,in=-45] (6,1);
    \end{tikzpicture}.
  \end{equation*}
  \begin{equation*}\label{top-dumb}\subeqn
    \begin{tikzpicture}
      [very thick,scale=1,baseline] \usetikzlibrary{decorations.pathreplacing}
      \draw[wei] (1,-1) -- (-1,1) node[at start,below]{$\la_{i+1}$}
      node[at end,above]{$\la_{i+1}$} node[midway,fill=white,circle]{};
      \draw[wei] (-1,-1) -- (1,1) node[at start,below]{$\la_{i}$}
      node[at end,above]{$\la_{i}$}; \draw (1.8,-1)
      to[out=145,in=-20] (-1,-.2) to[out=160,in=-80] (-1.8,1); \node
      at (3,0){=}; \draw[wei] (7,-1) -- (5,1) node[at
      start,below]{$\la_{i+1}$} node[at end,above]{$\la_{i+1}$}
      node[midway,fill=white,circle]{}; \draw[wei] (5,-1) -- (7,1) node[at
      start,below]{$\la_{i}$} node[at end,above]{$\la_{i}$}; \draw
      (7.8,-1) to[out=100,in=-20] (7,.2) to[out=160,in=-35] (4.2,1);
    \end{tikzpicture}.
  \end{equation*}
We grade this bimodule by giving this crossing the degree $-\langle
\la_{i+1},\la_{i}\rangle$.  Note that this is not an integer valued
grading, but rather a $\nicefrac{1}{n}\Z$-valued one.  However, for
fixed weights, the degrees of
all elements lie in the same class in $\nicefrac{1}{n}\Z/\Z$.  Let
$\xi(a,b)=(n-\max(a,b))\min(a,b)/n$.  Note
that $\langle
\omega_a,\omega_b\rangle=\xi(a,b)$. 

These functors induce an action of the braid group on the derived
categories $D^b(\tT^{\Bp})$, i.e. they satisfy
\begin{equation*}\label{Reidemeister2}\subeqn
    \begin{tikzpicture}
      [very thick,scale=1,baseline] \usetikzlibrary{decorations.pathreplacing}
      \draw[wei] (0,-1) .. controls (1,0) .. (0,1) node[at start,below]{$\la_{i}$}
      node[at end,above]{$\la_{i}$};
\draw[white,line width=7pt] (1,-1) .. controls (0,0) .. (1,1);
 \draw[wei] (1,-1) .. controls (0,0) .. (1,1) node[at start,below]{$\la_{i+1}$}
      node[at end,above]{$\la_{i+1}$};
\node at (2,0) {$\cong$};
 \draw[wei] (4,-1) .. controls (3,0) .. (4,1) node[at start,below]{$\la_{i+1}$}
      node[at end,above]{$\la_{i+1}$};
\draw[white,line width=7pt] (3,-1) .. controls (4,0) .. (3,1);
\draw[wei] (3,-1) .. controls (4,0) .. (3,1) node[at start,below]{$\la_{i}$}
      node[at end,above]{$\la_{i}$};
\node at (5,0) {$\cong$};
\draw[wei] (6,-1) -- (6,1)node[at start,below]{$\la_{i}$}
      node[at end,above]{$\la_{k-1}$}; 
\draw[wei] (7,-1) -- (7,1) node[at start,below]{$\la_{i+1}$}
      node[at end,above]{$\la_{i+1}$};
    \end{tikzpicture}
  \end{equation*} 
and
\begin{equation*}\label{triple-point-red}\subeqn
    \begin{tikzpicture}
      [very thick,scale=1,baseline] \usetikzlibrary{decorations.pathreplacing}
      \draw[wei] (1,-1) -- (-1,1) node[at start,below]{$\la_{i+2}$}
      node[at end,above]{$\la_{i+2}$};
 \draw[white,line width=7pt] (0,-1) .. controls (1,0) .. (0,1);
 \draw[wei] (0,-1) .. controls (1,0) .. (0,1) node[at start,below]{$\la_{i+1}$}
      node[at end,above]{$\la_{i+1}$};
  \draw[white,line width=7pt] (-1,-1) -- (1,1);
      \draw[wei] (-1,-1) -- (1,1) node[at start,below]{$\la_{i}$}
      node[at end,above]{$\la_{i}$}; 
\node
      at (3,0){$\cong$};       \draw[wei] (7,-1) -- (5,1) node[at start,below]{$\la_{i+2}$}
      node[at end,above]{$\la_{i+2}$};
 \draw[white,line width=7pt] (6,-1) .. controls (5,0) .. (6,1);
 \draw[wei] (6,-1) .. controls (5,0) .. (6,1) node[at start,below]{$\la_{i+1}$}
      node[at end,above]{$\la_{i+1}$};
  \draw[white,line width=7pt] (5,-1) -- (7,1);
      \draw[wei] (5,-1) -- (7,1) node[at start,below]{$\la_{i}$}
      node[at end,above]{$\la_{i}$}; 
    \end{tikzpicture}.
  \end{equation*}

Therefore, they lead to the construction of a knot
invariant. Now let us turn to comparing this action with the 
dual action of $\tU_{\ell}$.

\begin{proposition}\label{split-commute}
  We have an isomorphism $Y_{i+1}\Lotimes \tbra_i (-\xi(p_1,p_2))\cong \tbra_{i}\Lotimes
  \tbra_{i+1}\Lotimes Y_{i}(-\xi(a,p_2)-\xi(b,p_2))$.  In terms of ladders, we have that  
\[\tikz[baseline]{  
 \draw[wei] (1,-1) -- (-1,1) node[above, at end]{$p_2$} node[below, at start]{$p_2$}; 
\draw[white,line width=7pt] (-1,-1) -- (.2,.2);
      \draw[wei] (-1,-1) -- (.2,.2) node[below, at start]{$p_1$};  \draw[wei] (.5,1) -- (.2,.2)
      node[above right, at start]{$a$};  \draw[wei] (1,.5) -- (.2,.2)
      node[above right, at start]{$b$};  }(-\xi(p_1,p_2))\hspace{1mm}=\hspace{1mm}\tikz[baseline]{  
 \draw[wei] (1,-1) -- (-1,1) node[above, at end]{$p_2$} node[below, at start]{$p_2$}; 
\draw[white,line width=7pt] (.5,1) to[out=-135,in=90] (-.4,-.4);
      \draw[wei] (-1,-1) -- (-.4,-.4) node[below, at start]{$p_1$}; 
\draw[white,line width=7pt] (1,.5) to[out=-135,in=0] (-.4,-.4);
\draw[wei] (.5,1) to[out=-135,in=90]  node[above right, at start]{$a$} (-.4,-.4);  \draw[wei] (1,.5) to[out=-135,in=0]node[above right, at start]{$b$} (-.4,-.4);  }(-\xi(a,p_2)-\xi(b,p_2))\]
\end{proposition}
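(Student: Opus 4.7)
The plan is to prove this local isomorphism by combining three ingredients: a reduction to the case with no external strands using equivariance, an explicit diagrammatic isomorphism obtained by sliding the trivalent vertex past the red--red crossing, and a separate verification of the grading shift.

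First I would reduce to the setting in which the only red strands are those labelled $p_1, p_2$ at the bottom (and correspondingly $p_2, a, b$ at the top), with no further black strands. Both sides of the claimed isomorphism commute with the induction functors $\tilde{\fF}_k$ and $\tilde{\fF}_k^*$: this is Proposition \ref{prop:trivalent-commute} for the $Y$-factors, and for $\tbra_i$ the analogous strong equivariance is built into its definition (black strands slide freely past a red--red crossing by the relations (\ref{side-dumb}--\ref{top-dumb})). Thus both bimodules are obtained from their restrictions to the $\omega_{p_1}+\omega_{p_2}$ weight block by applying the same sequence of induction functors, and it suffices to construct the isomorphism there.

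Next, in the reduced setting I would exhibit a candidate isomorphism by the isotopy that drags the $Y$-vertex past the red crossing. Concretely, a generating ladder-with-crossing diagram of the LHS has a red crossing on top of a $Y$-box, while the corresponding diagram on the RHS has a $Y$-box sitting on top of two consecutive red crossings, one for each of the branches labelled $a$ and $b$. The candidate map sends the first to the second, where any black strand crossing the red $p_2$-strand once on the LHS is sent to the diagram in which it crosses it once on the RHS (in the appropriate one of the two crossings, depending on which branch of the $Y$ it emerges from). Well-definedness amounts to checking compatibility with the relations governing $W_Y$ and $\tbra_i$. The delicate point is the interaction of (\ref{through-box1}--\ref{through-box2}), with their signs $\sigma(m,a,b)$, and the relations (\ref{side-dumb}--\ref{top-dumb}) for the red crossing; this is a case analysis by the position of $m$ relative to $\{a,b,c\}$ of the same flavour as the computation in the proof of Proposition \ref{prop:trivalent-commute} (in particular the tetris-piece analysis). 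The inverse isomorphism is given by the reverse isotopy.

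The grading shift is a separate accounting. Every black strand labelled $m$ that crosses a red strand labelled $p$ contributes $-\langle\alpha_m,\omega_p\rangle$ to the degree via (\ref{cost}); summing over the $p_1$ strands implicit in the red $p_1$-crossing with the red $p_2$-strand on the LHS gives $-\xi(p_1,p_2)$, while on the RHS the $a$ strands go through the first red crossing and the $b$ strands through the second, contributing $-\xi(a,p_2)-\xi(b,p_2)$; the stated shifts are arranged exactly to make these match. Since both sides of the displayed isomorphism are ladder bimodules (hence projective on the right by Proposition \ref{self-dual} and the general structure theory), comparing ranks after extending back by induction functors finishes the argument.

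The main obstacle I expect is step three, the sign bookkeeping in verifying well-definedness: the $\sigma(m,a,b)$ signs from the box and the signs implicit in the definition of $\tbra_i$ must cancel exactly, and this requires treating separately the ranges $m<\min(a,b)$, $\min(a,b)\leq m<\max(a,b)$, $\max(a,b)\leq m<c$, and $m\geq c$, as well as the degenerate labels where a black strand participates in a bigon with a red crossing. Once the signs are pinned down, invertibility and the shift calculation are straightforward.
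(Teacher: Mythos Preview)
Your proposal correctly identifies the underived diagrammatic isomorphism, and the paper agrees that this part is essentially immediate: both sides are spanned by the same Stendhal-type diagrams with a trivalent vertex and the requisite red/red crossings, and the map is just the isotopy. The paper does not even bother with the sign case-analysis you worry about; it simply asserts this.

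The genuine gap is that the statement is about \emph{derived} tensor products, and the substance of the proof is showing that higher Tors vanish so that the derived and underived tensor products agree. Your attempt to close this by saying ``both sides of the displayed isomorphism are ladder bimodules (hence projective on the right by Proposition \ref{self-dual})'' fails on two counts: $\tbra_i$ is the braiding bimodule, not a ladder bimodule, and Proposition \ref{self-dual} is about $\RHom$-duality, not projectivity. On the left-hand side Tor vanishing is easy because $Y_{i+1}$ is projective on the right, but on the right-hand side $\tbra_i\Lotimes\tbra_{i+1}\Lotimes Y_i$ one must actually work: the paper reduces via the bigon and associativity relations to the case $a=1$ or $b=1$, then uses the standard filtration of projectives over $\tilde T^\Bp$ to reduce to checking Tor vanishing on $\nabla_{(\mu,\omega_{p_2})}$, which in turn reduces to tensoring with the explicit projective resolution $Q_\bullet$ of $L_{a,b}$ from Corollary \ref{W-res} and decomposing basis elements of $\tbra_{\sigma_i\sigma_{i+1}}\otimes\fI_{\omega_{p_2}}\fF_{\omega_1-\mu}Q_\bullet$ to see the complex is $Q_\bullet$ tensored with a fixed vector space. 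Without this, you have only established the isomorphism of $H^0$'s, not of the full derived tensor products.

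A secondary issue: your grading-shift explanation is off. The shifts $-\xi(p_1,p_2)$ and $-\xi(a,p_2)-\xi(b,p_2)$ arise because the red/red crossing in $\tbra_i$ is \emph{defined} to have degree $-\langle\omega_{p_{i+1}},\omega_{p_i}\rangle=-\xi(p_i,p_{i+1})$; they do not come from summing black/red crossing degrees.
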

Note that while this is one isomorphism of bimodules, applying the
(anti-)automorphisms flipping diagrams through the $x$ or $y$ axes,
together with taking adjoints of functors shows that this relation
holds no matter how the diagram is rotated, and also if the positive
crossing is replaced with a negative one.  Thus, we can apply this
relation locally, no matter where in the diagram we find it.

\begin{proof}
If we consider these as underived tensor products then the
isomorphism between these spaces is clear: both spaces are spanned by
Stendhal diagrams with the red/red crossing and triple point added,
and the isomorphism simply shifts the triple point from one side of
the red strand to the other.  Thus, the result will follow if we can
show that any higher Tor vanishes in both cases.

  The tensor product $Y_{i+1}\Lotimes \tbra_i$ has no higher Tors since $Y_{i+1}$ is projective on
  the right. 

  We wish to also show that $\tbra_{i}\Lotimes
  \tbra_{i+1}\Lotimes Y_{i}$ has no higher Tors.  We can
  reduce to the case where there are no extraneous red strands
  (i.e. $\ell=2$ on the right and $\ell=3$ on the left).  Furthermore,
  using the bigon relation Proposition \ref{bigon} and the
  associativity Proposition \ref{associative}, we can reduce to the
  case where the sequence $(p_2,a,b)$ at the left has either $a=1$ or
  $b=1$.  Every projective module is filtered by modules which are
  left and right inductions of standard modules
  $\nabla_{(\mu,\omega_{p_2})}$ for various weights $\mu$. 
Thus, we need only check that the
  higher homology groups of $ \tbra_{i}\Lotimes \tbra_{i+1}\Lotimes
  Y_{i}\Lotimes\nabla_{(\mu,\omega_{p_2})}$ vanish.

The module $Y_{i}\Lotimes\nabla_{(\mu,\omega_{p_2})}\cong Y_{i}\otimes\nabla_{(\mu,\omega_{p_2})}$ is given by
$\fI_{\omega_{p_2}}\fF_{\omega_1-\mu}L_{a,b}$. Thus, if we replace
$L_{a,b}$ with its projective resolution $Q_\bullet$, then $\fI_{\omega_{p_2}}\fF_{\omega_1-\mu}Q_\bullet$ 
is a projective resolution of
$\fI_{\omega_{p_2}}\fF_{\omega_1-\mu}L_{a,b}$, because
$\fI_{\omega_{p_2}}$ and $\fF_{\omega_1-\mu}$ are both exact and send
projectives to projectives by Prop \ref{exactfunctors}. 

Recall that $\tbra_i\otimes\tbra_{i+1}\cong \tbra_{\sigma_i\sigma_{i+1}}$ as derived functors \cite[Lem. 6.11]{Webmerged}, so $\tbra_{i}\Lotimes \tbra_{i+1}\Lotimes  Y_{i}\otimes\nabla_{(\mu,\omega_{p_2})}$ is given by 
the cohomology of the complex 
$\tbra_{\sigma_i\sigma_{i+1}}\otimes \fI_{\omega_{p_2}}\fF_{\omega_1-\mu}Q_\bullet$.  
We claim that this complex only has non-zero cohomology in degree 0.   
If we use the basis for $\tbra_{\sigma_i\sigma_{i+1}}$ given in \cite[\ref{m-B-basis}]{Webmerged}, then we can
  write an element of the basis of any term in the resolution of 
 $\tbra_{\sigma_i\sigma_{i+1}}\otimes \fI_{\omega_{p_2}}\fF_{\omega_1-\mu}Q_\bullet$
uniquely as a sum of the products of:
  \begin{itemize}
  \item basis diagrams where all crossings and dots are on the strands
    coming from the terms in $Q_\bullet$, with all other strands
    remaining in place, times 
\item basis diagrams where the strands from $Q_\bullet$ have no crossings between them or dots (but they can
    cross the other strands).  This piece includes the red/red crossings.
  \end{itemize}
The differential in $Q_\bullet$ only changes the first type of diagram, so as a
complex of vector spaces, our resolution is isomorphic to $Q_\bullet$ tensored with a fixed vector space. Since $Q_\bullet$ only has cohomology in degree 0, the same is
true of this tensor product. This completes the proof. 
\end{proof}

Chuang and Rouquier \cite[\S 6.1]{CR04} define a complex of 1-functors called the {\bf
  Rickard complex} in the 2-quantum
group $\tU_{\mathfrak{sl}_2}$ which we'll denote $\Theta$.  This is
given by the sum of complexes of 1-morphisms:
\begin{align}
\label{eq:cpx'} \Theta \mathbf{1}_n &:= \left[ 
\mathsf{E}^{(-n)} \rightarrow \dots   \rightarrow  \mathsf{E}^{(-n+s)}
\mathsf{F}^{(s)} \langle -n +s \rangle \rightarrow \dots \right ]
\mathbf{1}_n \quad (n\leq 0)\\
\label{eq:cpx} \Theta \mathbf{1}_n &:= \left[ \mathsf{F}^{(n)} \rightarrow \dots \rightarrow \mathsf{F}^{(n+s)} \mathsf{E}^{(s)} \langle -n+s \rangle  \rightarrow\dots \right] \mathbf{1}_n \quad (n\geq 0)
\end{align}
\begin{remark}
  As often happens in mathematics, different papers use different
  conventions for these due to incompatible choices in the past.  Our
  conventions match those of Queffelec and Rose \cite[(2.42-43)]{QR},
  and are the opposite of those in Cautis \cite[(3-4)]{Cauclasp},
  which uses the adjoint of these complexes:
\begin{align}
\label{eq:cpxi'} \Theta^{-1} \mathbf{1}_n &:= \left[ \dots \rightarrow
  \mathsf{E}^{(-n+s)} \mathsf{F}^{(s)} \langle n -s \rangle
  \rightarrow \dots \rightarrow \mathsf{E}^{(-n)} \right]
\mathbf{1}_n \quad (n\leq 0)\\
\label{eq:cpxi} \Theta^{-1} \mathbf{1}_n &:= \left[ \dots \rightarrow \mathsf{F}^{(n+s)} \mathsf{E}^{(s)} \langle n-s \rangle \rightarrow \dots \rightarrow \mathsf{F}^{(n)} \right] \mathbf{1}_n \quad (n\geq 0)
\end{align}
\end{remark}
This
categorifies the quantum Weyl group inside $U_q(\mathfrak{sl}_2)$.  For any
higher rank group, we thus have a Rickard complex $\Theta_i$ for each
simple root given by the image of $\Theta$ under the associated root
embedding of $\tU_{\mathfrak{sl}_2}$.  Note that according to this definition, the
ladder bimodule $W_{|p_i-p_{i+1}|}$ is the degree 0 part of 
of $\Theta$ acting on $\tT^{\Bp}$-modules.

\begin{theorem}\label{same-crossing}
  There exists a quasi-isomorphism between the image of the
 Rickard complex $\Theta_i$ under the action on the derived
  categories $D^b(\tT^\Bp)$ and the braiding functor
  $\mathbb{B}_i(-\xi(p_i,p_{i+1}))$ switching the $i$th
  and $i+1$st strands.
\end{theorem}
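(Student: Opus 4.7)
My plan is to identify the two functors by reducing to a minimal base case where both sides can be computed explicitly, then extending the isomorphism by equivariance. Both $\Theta_i$ and $\mathbb{B}_i\langle -\xi(p_i,p_{i+1})\rangle$ are derived equivalences $D^b(\tT^\Bp\mmod) \to D^b(\tT^{s_i\Bp}\mmod)$, and the strategy is to show they agree by exhibiting enough compatibility with the existing actions that the isomorphism is determined by its restriction to a small class of objects.

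First I would verify that both functors commute, up to the indicated shifts, with the induction functors $\tilde{\fF}_k^*$ (adding a black strand on the left) and $\tilde{\fI}_\mu$ (adding a red strand on the right). For $\mathbb{B}_i$ these commutations are local in the diagrammatic description of $\tbra_i$, and they appear in \cite{Webmerged}. For $\Theta_i$ they follow from the construction of the dual $\mathfrak{gl}_\ell$-action in Theorem \ref{dualaction}: the functors $\dE_i, \dF_i$ commute with the left $\tU_n^-$-action, and the Rickard complex $\Theta_i$ depends only on the entries $p_i, p_{i+1}$, so commutation with $\tilde{\fI}_\mu$ for an additional red strand on the right is automatic. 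Combined with the standardly stratified structure described in Section \ref{sec:standard-modules}, which expresses every standard $\tT^\Bp$-module as $\tilde{\fF}_\Bi^* \nabla_\bmu$ for some $\nabla_\bmu$ obtained from simples $L_{a,b}$ by iterated $\tilde{\fI}_\mu$'s, this reduces the problem to checking the isomorphism on $L_{a,b}$ in the two-red-strand case.

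In this base case, I would compute $\Theta_i L_{a,b}$ directly. The module $L_{a,b}$ is controlled by Proposition \ref{prop-a-b-only}, and applying the Rickard complex, via the action of the ladder bimodules corresponding to $\wE_i^{(s)}, \wF_i^{(s)}$, forces many terms to vanish because $L_{a,b}$ is killed by $\eE_i$ in each cyclotomic factor. A single surviving term remains up to quasi-isomorphism, and using Proposition \ref{prop:Lab-basis} and the evident $a\leftrightarrow b$ symmetry of the tableau basis, this term is identified with $L_{b,a}$ placed in the correct homological and internal degrees. The corresponding computation $\mathbb{B}_i L_{a,b}\cong L_{b,a}\langle -\xi(a,b)\rangle$ is available from \cite[\S 6]{Webmerged}. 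Proposition \ref{split-commute} provides the bridge needed to propagate the base-case isomorphism through tree decompositions, ensuring consistency of the local isomorphism with all $Y$-ladders appearing implicitly when one unpacks $\tbra_i$ against the standardly stratified structure.

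The main obstacle will be the careful matching of grading shifts, since $\xi(a,b)=(n-\max(a,b))\min(a,b)/n$ is in general a non-integer rational number while the shifts in the Rickard complex are integers in the internal and homological gradings. This matching relies on Remark \ref{dual-grading}, which interprets the internal grading shift in the dual $\mathfrak{gl}_\ell$-action as the Tate twist on derived categories, so that the combination of $(s)$ and $[s]$ in $\Theta_i$ produces the integer combinations that pair with the denominators in $\xi$. The verification reduces to an internal/homological bookkeeping in the base case, together with the combinatorial calculation on the tableau basis for $L_{a,b}$ described above.
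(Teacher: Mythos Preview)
Your proposal has a structural gap: you never construct a map between $\Theta_i$ and $\mathbb{B}_i(-\xi(p_i,p_{i+1}))$. Verifying that two derived equivalences send each object in some generating class to isomorphic objects does not produce a natural transformation, let alone an isomorphism of functors. The paper works at the level of bimodules precisely to avoid this: in the base case $p_i=p_{i+1}=1$ it exhibits a short exact sequence $0\to\tbra_i\to\tT^{\Bp}\to W_{\beta_i}\to 0$, which gives an honest quasi-isomorphism $\tbra_i\to\mathsf{Cone}(\tT^{\Bp}\to W_{\beta_i})$ of bimodules and hence of the functors they represent. Your reduction via the standardly stratified structure and commutation with $\tilde{\fF}^*,\tilde{\fI}$ would at best show the two functors have the same effect on Grothendieck groups.

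There is also a confusion of the two actions in your base computation. You write that terms of the Rickard complex applied to $L_{a,b}$ vanish ``because $L_{a,b}$ is killed by $\eE_i$ in each cyclotomic factor.'' But $\eE_i$ is the $\mathfrak{gl}_n$-functor, while the Rickard complex $\Theta_i$ is built from the dual $\mathfrak{gl}_\ell$-functors $\dE_i,\dF_i$. These are unrelated; $L_{a,b}$ being $\mathfrak{gl}_n$-highest weight says nothing about $\dE_i L_{a,b}$, and indeed $\dE_i L_{a,b}$ is nonzero whenever $b>0$ (on Grothendieck groups it sends $v_{a,b}$ to a nonzero multiple of $v_{a+1,b-1}$). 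So the claimed collapse of $\Theta_i L_{a,b}$ to a single term is unjustified.

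The paper's route is to settle $p_i=p_{i+1}=1$ by the explicit exact sequence above, and then induct on the labels: multiply both sides by $[p_i]_q$ via the bigon relation (Proposition~\ref{bigon}), use Proposition~\ref{split-commute} on the $\mathbb{B}$ side and Cautis's companion relation \cite[5.2]{Cauclasp} on the $\Theta$ side to push the split below the crossing, and apply the inductive hypothesis to crossings with smaller labels. This produces isomorphisms of functors at every stage rather than merely matching values on objects.
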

\begin{proof}
First, we establish this in the case where $p_i=p_{i+1}=1$.  In this case, the complex $\Theta$ has two terms:
\[\tikz[baseline,xscale=.6]{\draw[wei](-1,-1) --(-1,1);
  \draw[wei](1,-1) --(1,1);}\hspace{1mm}\longrightarrow
\hspace{1mm}\tikz[baseline,xscale=.6]{\draw[wei](-1,-1) --(0,-.3);
  \draw[wei](1,-1) --(0,-.3);\draw[wei](-1,1) --(0,.3);
  \draw[wei](1,1) --(0,.3);\draw[wei](0,-.3) --(0,.3);}\]
The map is the unit of the adjunction between $Y_i$ and $Y_i^*$.  
Since this map commutes with the action of any element of the algebra,
we only need to consider its effect on an idempotent; in fact, every
diagram factors through idempotents where between two consecutive
red strands labeled 1, there is either a single black strand labeled 1 or no
black strands at all. The action of the map on these diagrams is given by
\[ \tikz[baseline,xscale=.6,very thick]{\draw[wei](-1,-1) -- node[below, at start]{$1$} (-1,1); \draw[wei](1,-1)
  -- node[below, at start]{$1$} (1,1);} \longmapsto 0\qquad\qquad \tikz[baseline,xscale=.6,very thick]{\draw[wei](-1,-1) -- node[below, at start]{$1$} (-1,1); 
  \draw (0,-1) -- node[below, at start]{$1$} (0,1); \draw[wei](1,-1)
  -- node[below, at start]{$1$} (1,1);} \longmapsto
\tikz[baseline,xscale=.6,very thick]{\draw[wei](-1,-1) -- node[below, at start]{$1$} (0,-.3); \draw (0,-1) -- node[below, at start]{$1$} (0,-.3);  \draw (0,1) -- (0,.3);
  \draw[wei](1,-1) --node[below, at start]{$1$} (0,-.3);\draw[wei](-1,1) --(0,.3);
  \draw[wei](1,1) --(0,.3);\draw[wei](0,-.3) --(0,.3);}.\]
This map is clearly surjective and we wish to show that its kernel is
$\tbra_i$. Here we include $\tbra_i\to \tT^{\Bp}$ sending 
\[   \begin{tikzpicture}
      [very thick,scale=1,baseline] \usetikzlibrary{decorations.pathreplacing}
      \draw[wei] (1,-1) -- (-1,1) node[at start,below]{$1$}
      node[at end,above]{$1$} node[midway,fill=white,circle]{};
      \draw[wei] (-1,-1) -- (1,1) node[at start,below]{$1$}
      node[at end,above]{$1$};
    \end{tikzpicture}\longmapsto  \begin{tikzpicture}
      [very thick,scale=1,baseline] \usetikzlibrary{decorations.pathreplacing}
      \draw[wei] (1,-1) -- (1,1) node[at start,below]{$1$}
      node[at end,above]{$1$}; 
      \draw[wei] (-1,-1) -- (-1,1) node[at start,below]{$1$}
      node[at end,above]{$1$};
    \end{tikzpicture}.\] Note that this map increases degree by
    $\xi(1,1)=(n-1)/n$, so if it is an isomorphism, the theorem will
    be established with the correct shift.  

We only need to check this after it is
    applied to an arbitrary idempotent, i.e. sequence of black strands.  As argued above, using commutation with induction, we can reduce 
to proving this when the two $i$-labeled red strands are the only red strands and there are no black
strands, or one with label 1.  The exactness of the sequence $0\to
\tbra_i\to \tT^{\Bp}\to W_{\beta_i}$ is clear in the former case, and
proven in the latter case in \cite[4.11]{WebTGK}. 

\nc{\Cone}{\mathsf{Cone}}

The above short exact sequence gives rise to a quasi-isomorphism of complexes 
$$
\tbra_i \to \Cone(\tT^{\Bp}\to W_{\beta_i}).  
$$
This proves that in the case $p_i=p_{i+1}=1$, the derived tensor product with $\tbra_i$ and $\Cone(\tT^{\Bp}\to W_{\beta_i})$ 
gives rise to functors on $D^b(\tT^\Bp)$ which are isomorphic up to the shift indicated in the theorem.%, which in this case is equal to $(1-n)/n$.

Now we turn to the general case. To make the pictures easier to draw, we first prove the case with $p_i$ arbitrary and $p_{i+1}=1$, by induction on $p_i$. So let $p_i=a>1$. Since we aim to just prove the existence of an isomorphism, it suffices to prove this result after replacing both sides by the same number of summands of each functor. Thus, we multiply both functors by $[a]$. By the bigon relation Proposition \ref{bigon} this is the same as creating an $(a-1,1)$-bigon on the red $a$-strand at the bottom of the braid diagram. Then we use Proposition~\ref{split-commute} and its analogue for $\Theta_i$, given by
\cite[5.2]{Cauclasp}, 
\[ [a]  \begin{tikzpicture}
      [very thick,scale=1,baseline] \usetikzlibrary{decorations.pathreplacing}
      \draw[wei] (1,-1) -- (-1,1) node[at start,below]{$1$}
      node[at end,above]{$1$} node[midway,fill=white,circle]{};
      \draw[wei] (-1,-1) -- (1,1) node[at start,below]{$a$}
      node[at end,above]{$a$};
    \end{tikzpicture}\cong  
\begin{tikzpicture}
      [very thick,scale=1,baseline] \usetikzlibrary{decorations.pathreplacing}
      \draw[wei] (1,-1) -- (-1,1) node[at start,below]{$1$}
      node[at end,above]{$1$} node[midway,fill=white,circle]{};
      \draw[wei] (-1,-1) -- (-0.75,-0.75) node[at start,below]{$a$};
\draw[wei] (-0.25,-0.25) -- (1,1) node[at end,above]{$a$};
\draw[wei] (-0.75,-0.75) .. controls (-0.75,-0.1) .. (-0.25,-0.25);
\draw[wei] (-0.75,-0.75) .. controls (-0.25,-0.9) .. (-0.25,-0.25);
\node at (-0.75,0.1) {$a-1$};
\node at (-0.25,-1.1) {$1$};
    \end{tikzpicture}
\cong  
\begin{tikzpicture}
      [very thick,scale=1,baseline] \usetikzlibrary{decorations.pathreplacing}
      \draw[wei] (1,-1) -- (-1,1) node[at start,below]{$1$}
      node[at end,above]{$1$}; 
\draw[white,line width=5pt] (-0.4,0.4) -- (-0.5,0.5);
\draw[white,line width=5pt] (-0.25,0.25) -- (-0.35,0.35);
\draw[white,line width=5pt] (0.4,-0.4) -- (0.5,-0.5);
\draw[white,line width=5pt] (0.25,-0.25) -- (0.35,-0.35);
      \draw[wei] (-1,-1) -- (-0.5,-0.5) node[at start,below]{$a$};
\draw[wei] (0.5,0.5) -- (1,1) node[at end,above]{$a$};
\draw[wei] (-0.5,-0.5) .. controls (-0.5,0.5) .. (0.5,0.5);
\draw[wei] (-0.5,-0.5) .. controls (0.5,-0.5) .. (0.5,0.5);
\node at (0.1,0.9) {$a-1$};
\node at (0.9,0) {$1$};
    \end{tikzpicture}
\] 
and use induction. The general case now follows from this one by induction on $p_{i+1}$, using similar arguments with a 
bigon on the other red strand.

\excise{
Thus,
we multiply each of these functors by $[p_i]_q![p_{i+1}]_q!$. By the bigon
relation Proposition \ref{bigon} this is the same as ``puffing'' each strand with label $r$ to $r$ strands with label 1
at the bottom of the diagram.  Using Proposition
\ref{split-commute} and its analogue for $\Theta_i$, given by
\cite[5.2]{Cauclasp}, we can isotope so this diagram so that the
crossings all lie between the ``puffed'' part of each strand. The shift in Proposition
\ref{split-commute} shows that we now desire an isomorphism where we
shift the LHS by $p_ip_{i+1}\xi(1,1)$. Furthermore, we now have only
crossings where both labels are 1.  Thus, we can use the
$p_i=p_{i+1}=1$ case to complete the proof.
\bentodo{Worth making a picture?  I think the idea is clear to us, but
maybe it wouldn't be to a reader?}}

\end{proof}
\begin{remark}\label{hmmm}
  We should note a slightly confusing point here.  The functor $\mathbb{B}_i$ is
  right exact, and thus preserves the bounded {\it above} derived category.
  The functor $\Theta_i$ is conventionally presented as a bounded
  {\it below} complex of 1-morphisms.  But remember that the dual
  $\mathfrak{gl}_\ell$-action is very much not by exact functors: it
  instead is compatible with the $t$-structure whose heart is linear
  complexes of projectives.
\end{remark}

It's worth noting, though, that the braiding functor $\mathcal{T}_i =\Theta_i (\min(a,b))$
considered in \cite{Cauclasp,QR} is a grading shift of $\Theta_i$.  Let
\[\xi'(a,b)=\frac{\min(a,b)\cdot \max(a,b)}{n}.\]  Note that $\xi'(a,b)=\min(a,b)-\xi(a,b)$.
Accounting for the grading shift in \cite[(4.1-2)]{QR}, we have that: 
\begin{corollary}
  The complex of functors $\mathcal{T}_i=\Theta_i(\min(a,b))$ acts on $\tT^{\Bp}\mmod$
  by $\mathbb{B}_i(\xi'(a,b))$.  
\end{corollary}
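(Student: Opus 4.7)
The plan is to derive this immediately from Theorem \ref{same-crossing}, treating the corollary as a bookkeeping exercise in grading shifts. By Theorem \ref{same-crossing}, the complex of functors $\Theta_i$ acts on $D^b(\tT^{\Bp}\mmod)$ by the shifted braiding $\mathbb{B}_i(-\xi(a,b))$, where $a=p_i$ and $b=p_{i+1}$. Since $\mathcal{T}_i = \Theta_i(\min(a,b))$ by definition, and the grading shift $(m)$ on our 2-categories is compatible with the action (it commutes with tensoring by bimodules), applying the additional shift gives that $\mathcal{T}_i$ acts as $\mathbb{B}_i(-\xi(a,b) + \min(a,b))$.

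The remaining step is purely arithmetic: I verify that $\xi'(a,b) = \min(a,b) - \xi(a,b)$. Indeed,
\[
\min(a,b) - \xi(a,b) = \min(a,b) - \tfrac{(n-\max(a,b))\min(a,b)}{n} = \tfrac{\min(a,b)\cdot \max(a,b)}{n} = \xi'(a,b),
\]
which yields the stated formula $\mathcal{T}_i \cong \mathbb{B}_i(\xi'(a,b))$.

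There is no substantial obstacle here; the real work has already been carried out in Theorem \ref{same-crossing}. The only subtlety worth flagging is the sign/convention mismatch noted in the remark between our Rickard complex conventions and those of \cite{Cauclasp}, but since we have aligned our $\Theta_i$ with the conventions of \cite{QR} (as stated in the earlier remark) and the shift $\min(a,b)$ in the definition of $\mathcal{T}_i$ matches \cite[(4.1-2)]{QR}, the statement holds with the shifts as written. Thus the proof reduces to citing Theorem \ref{same-crossing} and performing the shift computation above.
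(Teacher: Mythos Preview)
Your proposal is correct and takes essentially the same approach as the paper: the paper states the identity $\xi'(a,b)=\min(a,b)-\xi(a,b)$ immediately before the corollary and treats the result as an immediate consequence of Theorem~\ref{same-crossing} together with the grading shift in the definition of $\mathcal{T}_i$, which is exactly what you do.
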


There is another comparison result we'll require. In \cite[\S
5,\S 11]{Cauclasp}, Cautis constructs idempotent complexes of functors
$\mathsf{P}^{\pm}$.   These complexes are constructed as a limit of powers of
Rickard complexes.  The element $(\Theta_1\cdots
\Theta_{\ell-1})^{\ell-1}$ corresponds to the action of the full twist
braid in the quantum Weyl group action;  Cautis shows, building on
ideas of Rozansky \cite{Roztorus}, that the functors $(\Theta_1\cdots
\Theta_{\ell-1})^{\pm p(\ell-1)}$ stabilize  to an
idempotent functor $\mathsf{P}^{\pm}$ in appropriate sense as $p\to \infty$.

We wish to understand how this complex acts in the
dual action we have constructed.  As in \cite{WebTGK}, we consider the
subcategory $J_{\Bp}^\pm$ of $D^\pm(\tilde{T}^{\Bp}\mmod)$ generated by projectives $\tilde{T}^{\Bp} e_{\Bi,\kappa}$ with
$\kappa$ constant.  That is, in terms of the sequence of red and black
terminals, we have a group of black terminals, a group consisting of
all red terminals, and then another black group.  Let $\tilde P^0$ be
the sum of $\tilde{T}^{\Bp} e_{\Bi,\kappa}$ over all such $\kappa$.  By \cite[\ref{m-split-strands}]{Webmerged}, we
have an isomorphism $\tilde{T}^{\la_{\Bp}}\cong \End(\tilde P^0)$
where $\la_{\Bp}=\sum\omega_{p_i}$. Thus, we have an
equivalence $J_{\Bp}^\pm\cong D^\pm (\tilde{T}^{\la_{\Bp}}\mmod)$.  The
inclusion $\iota^\pm\colon J_{\Bp}^\pm\to D^\pm (\tilde{T}^{\Bp}\mmod)$ has an exact
right adjoint $\iota_*$,
which we can interpret as the functor $\Hom(\tilde P^0,-)$, which satisfies
$\iota_*^\pm\iota ^\pm\cong \operatorname{id}_{J_{\Bp}}$.

\begin{proposition}\label{projection}
  The complex $\mathsf{P}^{\pm}$ is well-defined on the category
  $D^{\pm}(\tilde {T}^{\Bp}\mmod)$ and acts in the dual action as
  the projection $\iota ^\pm\iota_*^\pm$.
\end{proposition}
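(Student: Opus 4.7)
The plan is to combine Theorem \ref{same-crossing} with a uniqueness characterization of $\iota^\pm\iota_*^\pm$ as an idempotent endofunctor. By Theorem \ref{same-crossing}, the Rickard complex $\Theta_i$ acts on $D^\pm(\tilde T^\Bp\mmod)$ as the braiding bimodule $\tbra_i$ up to an explicit grading shift, so the full twist $(\Theta_1\cdots\Theta_{\ell-1})^{\ell-1}$ acts as the corresponding full twist bimodule with a global shift. Once we know that $\mathsf{P}^\pm$ is well-defined on $D^\pm(\tilde T^\Bp\mmod)$, it is therefore the stable limit of powers of this full twist, and we need only identify that stable limit.

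I would then split the analysis into two cases according to whether a projective generator lies in $J_\Bp^\pm$ or not. On $J_\Bp^\pm$: all red strands are consecutive, so the Morita equivalence $\tilde T^{\la_\Bp}\cong\End(\tilde P^0)$ reduces the full twist to a single braiding of the red strand labeled $\la_\Bp$ with itself, which is a shift of the identity bimodule. Hence, with Cautis's normalization, $(\Theta_1\cdots\Theta_{\ell-1})^{\pm p(\ell-1)}$ is the identity on $J_\Bp^\pm$ for every $p$, and so is its stable limit $\mathsf{P}^\pm$. Outside $J_\Bp^\pm$: a projective $\tilde T^\Bp e_{\Bi,\kappa}$ with $\kappa$ non-constant has a black strand trapped between two red strands, and the cost relation \eqref{cost} combined with the diagrammatic description of $\tbra_i$ from \cite{Webmerged} shows that each iteration of the full twist pushes the lowest nonzero bidegree of the output strictly upward (resp.\ downward). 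This is the key degree estimate; it both guarantees the stabilization of the truncations in the bounded below (resp.\ above) derived category and forces the stable limit to land in $J_\Bp^\pm$, indeed to vanish on any such generator.

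The two computations together show that $\mathsf{P}^\pm$ is an idempotent endofunctor with image $J_\Bp^\pm$ on which it acts as the identity. Since $\iota^\pm$ is fully faithful with right adjoint $\iota_*^\pm$ and $\iota_*^\pm\iota^\pm\cong\mathrm{id}_{J_\Bp^\pm}$ (as recorded just before the statement), the functor $\iota^\pm\iota_*^\pm$ is uniquely characterized by these two properties, yielding $\mathsf{P}^\pm\cong\iota^\pm\iota_*^\pm$. The main obstacle will be the degree estimate for projectives not in $J_\Bp^\pm$: one needs to verify that every iteration of the full twist bimodule $\tbra_{w_0^2}$ strictly shifts the bidegree of the lowest nonzero term of $\tbra_{w_0^2}^p(\tilde T^\Bp e_{\Bi,\kappa})$ in a single direction, a claim that must be extracted from the basis theorem for braiding bimodules together with repeated application of~\eqref{cost} to the trapped black strand. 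Once this estimate is established, both the well-definedness of $\mathsf{P}^\pm$ on $D^\pm(\tilde T^\Bp\mmod)$ and the identification $\mathsf{P}^\pm\cong\iota^\pm\iota_*^\pm$ follow immediately from the characterization above.
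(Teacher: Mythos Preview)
Your case split is on the wrong objects. The orthogonal complement of $J_\Bp^\pm$ (with respect to the adjunction $(\iota^\pm,\iota_*^\pm)$) is \emph{not} generated by projectives $\tilde T^\Bp e_{\Bi,\kappa}$ with $\kappa$ non-constant; it is generated by the simple modules whose projective cover is not a summand of $\tilde P^0$. Concretely, for a projective $P=\tilde T^\Bp e_{\Bi,\kappa}$ with $\kappa$ non-constant one has $\iota_*^\pm(P)=\Hom(\tilde P^0,P)=e_0\tilde T^\Bp e_{\Bi,\kappa}\neq 0$ (there are obvious diagrams moving the trapped black strand out), so $\iota^\pm\iota_*^\pm(P)\neq 0$. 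Thus the claim that $\mathsf{P}^\pm$ should vanish on such projectives is simply false, and no degree estimate of the kind you describe can hold: by Theorem~\ref{same-crossing}, $\Theta_i\cong\mathbb{B}_i(-\xi)$, and the braiding bimodule tensored with a projective is concentrated in homological degree~$0$, so iterating the full twist on a projective never leaves degree~$0$ homologically.

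The paper's proof takes the correct orthogonal decomposition and shows vanishing on \emph{simples} $L$ not quotients of $\tilde P^0$. The key structural input you are missing is that the half-twist bimodule $\tbra_\tau$ is tilting as a one-sided module \cite[\ref{m-tilting}]{Webmerged}; combined with the highest weight structure (projective covers of costandards are summands of $P^0$), this forces $\Hom(\tbra_\tau,L)=0$, i.e.\ $\mathbb{B}_\tau^{-1}(L)$ is concentrated in homological degrees~$\geq 1$. One then checks that $\mathbb{B}_\tau^{-1}$ preserves $\ker\iota_*^-$ (via $\R\Hom(P^0,\mathbb{B}_\tau^{-1}L)\cong\R\Hom(\mathbb{B}_\tau P^0,L)\cong\R\Hom(P^0,L)=0$), so the degree shift iterates and the limit is zero. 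A separate induction on the number of black strands, using $\tilde\fF_i^*$, handles the simples over $\tilde T^\Bp$ that do not factor through $T^\Bp$. Your cost-relation heuristic does not substitute for this tilting argument; the strict homological shift genuinely occurs for simples in $\ker\iota_*^-$, not for arbitrary projectives with a trapped black strand.
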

%Note the mismatch of signs between our notation and Cautis's; this is
%explained by Remark \ref{hmmm}.
\begin{proof}
First, we note that $\mathsf{P}^{\pm}$ preserves the category
$D^{\pm}(\tilde {T}^{\Bp}\mmod)$, since the functors
$\Theta_i=\mathbb{B}_i(-\xi(a,b))$ (resp. $\Theta_i^{-1}=\mathbb{B}_i^{-1}(\xi(a,b))$) are
right (resp. left) exact.

  Let us specialize to the case of $\mathsf{P}^-$; the case of
  $\mathsf{P}^+$ is proved similarly.   Note that we have an
  orthogonal decomposition of the category into $J_{\Bp}^-$ and the
  kernel of $\iota_*^-$, which is generated by the simple modules whose projective
  cover is not a summand of $P_0$. 
Thus, to establish the equality $\iota ^-\iota_*^-\cong \mathsf{P}^{-}$, we must show that $\mathsf{P}^-$ acts by the
  identity on $J_{\Bp}^-$ and kills any simple module whose projective
  cover is not a summand of $P_0$. 

Proving the first fact is easy. From the
  commutation of braiding and induction functors, it suffices to check
  that $\mathsf{P}^-$ acts by the identity on $\tT^{\Bp}e_{\emptyset}$,
  the subalgebra with no black strands. This is clear from the fact
  that a full twist does so.

For the second fact, let us first consider the case where $L$ is a simple factoring through $
  {T}^{\Bp}$; it then suffices to check this result using braiding
  functors over ${T}^{\Bp}$.  If we let $\tau$ denote a half twist,
  then the bimodule associated to the bimodule $\tbra_\tau$ is tilting as a left or right
  module by \cite[\ref{m-tilting}]{Webmerged}.  In particular, for a
  simple $L$, we have that $\Hom(\tbra_\tau,L)=0$ unless $L$ is a
  quotient of a tilting module.  Put differently, the module
  $\mathbb{B}_\tau^{-1} (L)$ has cohomology concentrated in degrees
  $\geq 1$ unless $L$ is a quotient of a tilting
  module. Since any
  tilting has a costandard filtration, any tilting receives a
  surjective map from the projective covers of some set of costandard
  modules. By 
  \cite[\ref{m-self-dual}]{Webmerged}, the projective cover of any
  costandard (which is dual to the injective hull of a standard) is
  injective and thus a
  sum of summands of $P^0$.  

That is, the module $\mathbb{B}_\tau^{-1} (L)$ has cohomology concentrated in degrees
  $\geq 1$ unless $L$ is a quotient of $P^0$.  

Note that \[\R\Hom(P^0,\mathbb{B}_\tau^{-1}(L))\cong
\R\Hom(\mathbb{B}_\tau (P^0),L)\cong \R\Hom(P^0,L)=0;\] here we have
abused notation and used $P^0$ to denote the corresponding module both
over $\tT^{\Bp}$ and $\tT^{\tau\cdot \Bp}$. This shows that
$\mathbb{B}_\tau^{-1} $ preserves the
kernel of $\iota_*^-$.  
Thus, none of its cohomology groups have composition factors whose projective
  cover is a summand of $P_0$. It follows that
  $\mathbb{B}_\tau^{-2}(L)$ must be concentrated in degrees $\geq 2$,
  and have the same restriction on its composition factors.
 
Applying this argument inductively, the $n$-fold full twist $\mathbb{B}_\tau^{-2n}(L)$
  is concentrated in degrees $\geq 2n$, so this complex vanishes in
  the limit.  

Every other simple of $\tilde {T}^{\Bp}$ which is not a quotient of $\tilde P^0$ is
a quotient of $\tilde\fF_i^*L'$ where $L'$ is another simple which is not
a quotient of $\tilde P^0$.  By induction on the number of black
strands, we can assume that $\mathsf{P}^-$ kills $L'$, and thus
$\tilde\fF_i^*L'$.  The exactness of $\iota_*^-$ shows that it also
kills $L$.  
\end{proof}
Using induction, we can extend this result further.  Let
$\bla=(\la_1,\dots, \la_m)$ be a sequence of dominant weights for
$\mathfrak{gl}_{n}$ of total level $2w$.  Then, we can choose a
sequence of fundamental weights $(p_1,\dots,p_{2w})$ such that the
first $w_1$ add to $\la_1$, the next $w_2$ add to $\la_2$, etc.  We
can define $\mathsf{P}^-_\bla$ to be the idempotent sequence of
functors given by horizontally composing $\mathsf{P}^-$ acting on the
first $w_1$ red strands, then on the next $w_2$, etc.  Just as before,
we can define $J_{\bla,\Bp}$ to be the subcategory generated by
projectives with $\kappa$ constant on $[1,w_1]$, on $[w_1+1,w_1+w_2]$,
etc. and $\iota^{\bla},\iota^{\bla}_*$ the corresponding inclusion and
its right adjoint.  As before, we have an equivalence
$J_{\bla,\Bp}\cong \tilde{T}^\bla\mmod$ by   \cite[\ref{m-split-strands}]{Webmerged}.

For any braid $\sigma$ on $m$ strands, we can arrive at an associated
braid $\sigma_w$ on
$2w$ strands by taking the cabling with the blackboard framing where
the $i$th strand from the left at the top of the braid into $w_i$
strands.   The arguments of Theorem \ref{projection} show that: 
\begin{corollary}
  The idempotent $\mathsf{P}^\pm_\bla$ is isomorphic to
  $\iota^{\bla,\pm}\iota^{\bla,\pm}_*$. \hfill\qed
\end{corollary}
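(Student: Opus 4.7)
The plan is to adapt the proof of Proposition \ref{projection} to the multi-group setting. I would first observe that the idempotent functors $\mathsf{P}^\pm$ acting on disjoint groups of red strands commute with each other pairwise. This is because each $\mathsf{P}^\pm$ is constructed as a limit of powers of Rickard complexes $\Theta_i$ involving only braidings of red strands within its own group, and braidings of disjoint groups of red strands commute up to isotopy (and hence so do their associated bimodule functors). Consequently, the horizontal composition $\mathsf{P}^\pm_\bla$ is a well-defined idempotent endofunctor of $D^\pm(\tilde{T}^\Bp\mmod)$.

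Next, I would verify that $\mathsf{P}^\pm_\bla$ acts as the identity on $J_{\bla,\Bp}$. Under the equivalence $J_{\bla,\Bp}\cong D^\pm(\tilde{T}^\bla\mmod)$ of \cite[\ref{m-split-strands}]{Webmerged}, every object in $J_{\bla,\Bp}$ is generated from modules where each group of red strands with fundamental labels summing to $\la_k$ is effectively fused into a single red strand labelled $\la_k$. By Proposition \ref{projection} applied within each group separately (treating the other groups of red strands, and all black strands, as ambient data inducted over), each individual factor of the horizontal composition acts by the identity on such modules. Hence so does $\mathsf{P}^\pm_\bla$.

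For the opposite direction, I would show that $\mathsf{P}^\pm_\bla$ kills every simple module $L$ whose projective cover is not a summand of the distinguished projective defining $J_{\bla,\Bp}$. For such a simple, the associated sequence $\kappa$ fails to be constant on at least one subinterval, say the $k$-th one. The $k$-th factor $\mathsf{P}^\pm$ in the horizontal composition then annihilates $L$ by the same argument as in Proposition \ref{projection}: one uses the tilting property of $\tbra_\tau$ from \cite[\ref{m-tilting}]{Webmerged} restricted to the $k$-th group, together with the self-duality of the relevant projectives from \cite[\ref{m-self-dual}]{Webmerged}, to show that successive powers of the full twist on this group push the cohomology of $L$ to arbitrarily high homological degree, yielding zero in the limit. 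Since the factors commute and $\mathsf{P}^\pm_\bla$ factors through $\mathsf{P}^\pm$ for the $k$-th group, we get $\mathsf{P}^\pm_\bla(L)=0$.

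The main obstacle will be a careful bookkeeping check that the machinery from the proof of Proposition \ref{projection}, especially the use of tilting modules and the decomposition into $J^\pm$ and its orthogonal complement, genuinely localizes to a single group of red strands in the presence of other red-strand groups. The point is that for this localization the ``other'' red strands should play exactly the same inert role as extra black strands did in the single-group argument; this is plausible because Proposition \ref{split-commute} (and the associated commutation results for the Rickard complex from \cite[5.2]{Cauclasp}) show that braidings interact with red strands at other positions only through predictable grading shifts and associativity isomorphisms, neither of which affects the eventual vanishing estimate on iterated full twists.
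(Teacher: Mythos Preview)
Your proposal is correct and takes essentially the same approach as the paper. The paper's own proof is simply the sentence ``The arguments of Theorem~\ref{projection} show that:'' followed by a \qed, and your write-up is exactly the spelling-out of how those arguments adapt to the multi-group case: show the composite acts as the identity on $J_{\bla,\Bp}$, then show it kills each simple outside by applying the single-group vanishing argument to whichever subinterval witnesses the failure of $\kappa$ to be constant.
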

Furthermore, applying the argument of
\cite[\ref{m-split-strands}]{Webmerged} to the braiding bimodules
shows that:
\begin{corollary}\label{cable-braiding}
  The action of the cabled braiding
  functor
  $\mathsf{P}^{\pm}_{\sigma\cdot\bla}\mathbb{B}_{\sigma_w}\mathsf{P}^{\pm}_\bla$
  matches that of $\mathbb{B}_{\sigma}$ under the equivalence
  $J_{\bla,\Bp}^{\mp}\cong D^{\mp}(\tilde{T}^\bla\mmod)$. \hfill\qed
\end{corollary}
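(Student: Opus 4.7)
The plan is to reduce the statement to the case of a single elementary crossing and then invoke the splitting of red strands. First I would use the fact that the braid group is generated by the Artin generators $\sigma_i$, and that $\sigma \mapsto \mathbb{B}_\sigma$ is well-defined (a genuine functorial action on the derived category by the results recalled at the beginning of this section). Since cabling a composition of braids is the composition of cablings, and $\mathsf{P}^{\pm}_\bla$ is idempotent with $\iota^{\bla,\pm}\iota^{\bla,\pm}_*\cong \mathsf{P}^{\pm}_\bla$ by the previous corollary, applying $\mathsf{P}^{\pm}$ between the cabled factors (which is the identity on $J^{\mp}_{\bla,\Bp}$) is harmless. Thus one is reduced to checking the statement when $\sigma=\sigma_i$ is a single elementary crossing between the $i$th and $i+1$st red strands of $\bla$.

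For this case I would argue as follows. The cabled braid $\sigma_{i,w}$ is the positive half-twist of the block of $w_i$ strands with the block of $w_{i+1}$ strands, lying next to each other. The associated braiding bimodule $\tbra_{\sigma_{i,w}}$ over $\tilde{T}^{\Bp}$ is built by composing elementary red/red crossings as in Section \ref{sec:knot-homology}, and Proposition \ref{split-commute} (together with its reflected and inverted variants) lets me slide all the ``splitting'' of the block $\la_i$ (resp. $\la_{i+1}$) into $w_i$ (resp. $w_{i+1}$) fundamental weights past the crossings. After doing so, the crossings all take place between fundamental red strands grouped into two packets, and one can recognize the resulting bimodule as $\fI$-induction from the corresponding elementary braiding bimodule $\tbra_{\sigma_i}$ on $\tilde{T}^\bla$. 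This is exactly the argument used in \cite[\ref{m-split-strands}]{Webmerged} to split red strands, applied now to the braiding bimodules rather than to $\tilde{T}$ itself.

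More precisely, I would form the map of bimodules \[\tbra_{\sigma_i}\otimes_{\tilde{T}^\bla} \End(\tilde{P}^0)\longrightarrow \iota^{\bla,\pm}_*\bigl(\tbra_{\sigma_{i,w}}\otimes_{\tilde{T}^\Bp}\tilde{P}^0\bigr)\] obtained by using the identification $\tilde{T}^\bla \cong \End(\tilde{P}^0)$ from \cite[\ref{m-split-strands}]{Webmerged} and the explicit diagrams above, and show using the basis theorem for braiding bimodules \cite[\ref{m-B-basis}]{Webmerged} that it is an isomorphism; the vanishing of higher Tor's needed to treat this as a derived statement is proven as in Proposition \ref{split-commute}. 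Combined with Theorem \ref{same-crossing}, which identifies $\mathbb{B}_{\sigma_i}$ with $\Theta_i$ (up to the appropriate shift), this shows that the image of $\mathsf{P}^{\pm}_{\sigma_i\cdot\bla}\mathbb{B}_{\sigma_{i,w}}\mathsf{P}^{\pm}_{\bla}$ on $J^{\mp}_{\bla,\Bp}$ matches $\mathbb{B}_{\sigma_i}$ on $D^{\mp}(\tilde{T}^\bla\mmod)$. The main obstacle is the bookkeeping in sliding trivalent vertices past a packet of red/red crossings while keeping track of the grading shifts coming from $\xi$; once this is organized using Proposition \ref{split-commute} iteratively, the resulting isomorphism on bimodules is essentially forced by the basis theorem.
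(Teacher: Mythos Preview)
Your core approach is correct and matches the paper's (very brief) proof: the key point is exactly what you identify in the middle, namely that the argument of \cite[\ref{m-split-strands}]{Webmerged} for $\tilde T$ goes through verbatim for the braiding bimodules $\tbra_\sigma$, using the basis theorem \cite[\ref{m-B-basis}]{Webmerged}. The reduction to a single Artin generator via idempotency of $\mathsf{P}^\pm_\bla$ is also right.

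Two detours in your sketch are unnecessary and slightly misleading. First, Proposition~\ref{split-commute} is about sliding a trivalent vertex $Y$ past a red/red crossing; but the equivalence $J^{\mp}_{\bla,\Bp}\cong D^{\mp}(\tilde T^\bla\mmod)$ is defined purely via idempotents (the isomorphism $\tilde T^{\bla}\cong \End(\tilde P^0)$), not via ladder bimodules, so there are no $Y$'s to slide. The comparison is between $e\,\tbra_{\sigma_{i,w}}\,e'$ (for the appropriate idempotents $e,e'$ picking out $J_{\bla,\Bp}$) and $\tbra_{\sigma_i}$ over $\tilde T^\bla$, and this is exactly the \cite[\ref{m-split-strands}]{Webmerged} argument. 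Second, Theorem~\ref{same-crossing} compares $\Theta_i$ with $\mathbb B_i$; the present corollary compares $\mathbb B$ at two levels of cabling and does not pass through the Rickard complex at all, so that theorem plays no role here.
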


While we have proved the theorems above for the category
$\tilde{T}^{\Bp}$-mod, the same results all hold for the category
of $T^\Bp$-mod embedded via pullback.  In particular, Theorem
\ref{same-crossing} and Corollary \ref{cable-braiding} still hold for
the braiding bimodule $\bra_{i}\cong T^\Bp\otimes_{\tT^\Bp}\tbra_i$
and the image of the Rouquier complex under the action $\mathbf{a}$.  

\begin{remark}
  Just so the reader does not think we are leaving a stone unturned:
  one can carry out the knot homology construction of
  \cite[Sec.~\ref{m-sec:invariants}]{Webmerged} 
  over $\tilde{T}$.  However, it does not result in anything new from
  the perspective of knot homology: the functors attached to a tangle
  using $T$ are those constructed using $\tilde{T}$ when restricted to
  the pullback of $T$-modules.  In particular, associated to a closed
  link, we have the functor of tensor product with a vector space,
  which remains tensor product with the same vector space when
  restricted to $T$-modules. 
\end{remark}

This allows us to complete the proof of Theorem \ref{thmA}.   We wish
to consider link homologies attached to a link labeled with a
representation of $\mathfrak{sl}_n$. 
 Each such representation has a level, given by $\sum m_i$ if
 $\la=\sum m_i\omega_i$.  For a given labeled link projection, let
 $\ell=\ell(L)$ be the sum of the levels of the representations
 labeling the minima of the projection.

In \cite{Cauclasp}, Cautis defines a knot homology constructed in any
additive categorification of the representation of
$\mathfrak{sl}_{2\ell}$ with highest weight $n\omega_{\ell}$.  These
categorifications can be classified in terms by the data of the
highest weight category which is additively generated by a unique
indecomposable object $C$, and an action of a polynomial ring
$\K[e_1,\dots, e_{n}]$ on this object, given by the fake bubbles in
the endomorphisms of the identity 1-morphism on $n\omega_{\ell}$.  Up to equivalence, this is
controlled by the ring $\End(C)$ and the associated homomorphism
$S=\K[e_1,\dots, e_{n}] \to \End(C)$.  This polynomial ring is graded
with $\deg (e_i)=2i$.  The most important example of such a
categorification is given by the cyclotomic quotient for
$n\omega_{\ell}$, which corresponds to the case where $\End(C)\cong
\K$ with the map sending $e_i\mapsto 0$. 

In the construction of \cite{Cauclasp}, each
link $L$ labeled with representations of $\mathfrak{sl}_n$ is translated
into a complex $\mathcal{Z}(L)$ of
1-morphisms in $\tU_{\mathfrak{sl}_{2\ell}}$.
\begin{itemize}
\item For cups and caps, this follows the ladder and ladder formalism
  introduced earlier in this paper.  
\item A crossing is associated to the
  corresponding shifted Rickard complex $\mathcal{T}_i^{-1}=\Theta_i^{-1}(-\min(a,b))$
\item Finally, non-fundamental weights are
  dealt with using cabling and inserted a copy of the projector $
  \mathsf{P}^+$.
\end{itemize}
Queffelec and Rose follow the same recipe in the language of foams;
their homology and ours differ from Cautis's by taking the mirror.
The result is a complex in the category of graded vector spaces.  We
think of this as a bigraded vector space $C=\oplus_{i,j}C^{i,j}$ where $i$ is the homological
grading, and $j$ the internal grading.

If one has a categorical module over $\mathfrak{sl}_{2\ell}$,
and a highest weight object $C$ of weight $n\omega_{\ell}$, the result
$\Hom(C,\mathcal{Z}(L)C)$ is an invariant of the labeled link $L$, and
the $S$-algebra $\End(C)$.  This allows one to identify knot homology
theories categorifying Reshetikhin-Turaev invariants with seemingly
disparate origins.

Let $\EuScript{W}^{i,j}$ be the  knot invariants attached to labeled
knots by the second author in
type A in \cite{Webmerged}, $\EuScript{QR}^{i,j}$ those produced by
Queffelec and Rose \cite{QR}, and $\EuScript{C}^{i,j}_{\pm}$ those produced
by Cautis.  Consider an oriented framed link, and let the {\bf weighted writhe}
$\operatorname{wwr}(L)$ be the sum over the postive/negative crossings
in any diagram of $L$ of $\pm 
\xi'(a,b)$ where $a$ and $b$ are the labels on the crossing.   
\begin{theorem}\label{compare1}
 \[\EuScript{W}^{i,j+\operatorname{wwr}(L)}(L)\cong \EuScript{QR}^{i+j,-j}(L)\cong \EuScript{C}^{i+j,-j}_{+}(L^!) \cong \EuScript{C}^{-i-j,+j}_{-}(L)\]
\end{theorem}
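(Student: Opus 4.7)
The plan is to realize all three invariants as instances of Cautis's universal construction \cite{Cauclasp} applied to different, but equivalent, categorifications of the $\mathfrak{sl}_{2\ell}$-representation $\bigwedge\nolimits^{n\omega_\ell}$, and then match the grading conventions. The Queffelec--Rose invariant $\EuScript{QR}$ and the Cautis invariants $\EuScript{C}_\pm$ were already shown in \cite{QR,Cauclasp} to agree (after mirroring and regrading) with Khovanov--Rozansky homology in the case where the highest weight object $C$ satisfies $\End(C)\cong\K$ with trivial $S$-action, so the content of the theorem is to fit $\EuScript{W}$ into the same framework.

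First, I would fix a link $L$, and, as in Cautis's recipe, cable each strand labelled $\la=\sum m_i\omega_i$ into a bundle of fundamental strands so that all cups, caps and crossings have fundamental labels, inserting the projector $\mathsf{P}^+$ at each cabled group. On the Webster side this corresponds exactly to the equivalence $J^+_{\bla,\Bp}\simeq D^+(\tilde T^{\bla}\mmod)$ together with Corollary~\ref{cable-braiding}, which identifies the cabled braiding functor $\mathsf{P}^+_{\sigma\cdot\bla}\mathbb{B}_{\sigma_w}\mathsf{P}^+_{\bla}$ with the braiding $\mathbb{B}_\sigma$ in the non-cabled (i.e.\ $\bla$-labelled) picture. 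Combined with Corollary~\ref{cor:catskewfunc}, which supplies the 2-functor $\mathbf{a}\colon \tU_\ell\to\WB$ and hence a categorical $\mathfrak{gl}_{2\ell}$-action on $\bigoplus_{\Bp}D^b(T^{\Bp}\mmod)$ categorifying the skew Howe dual module, this shows that Cautis's recipe applied to the dual action $\mathbf{a}$ produces exactly the complex of functors used to build $\EuScript{W}(L)$, applied to the tree object $\tau_{(p,0,\dots,0)}$ which plays the role of the highest weight generator $C$. By Proposition~\ref{tree-end}, $\End(C)\cong H^*(G/P)$ with $P$ a maximal parabolic, and the $S$-action factors through the standard cohomology presentation; this matches the $S$-algebra data used by Cautis on the $\EuScript{C}_\pm$ side.

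The next step is to identify the elementary crossings. Theorem~\ref{same-crossing} and its immediate corollary show that the Rickard complex $\Theta_i$ acts on $D^b(\tilde T^{\Bp}\mmod)$ as the Webster braiding $\mathbb{B}_i(-\xi(a,b))$, and equivalently that Cautis's shifted crossing $\mathcal{T}_i=\Theta_i(\min(a,b))$ acts as $\mathbb{B}_i(\xi'(a,b))$, with $\xi'(a,b)=\min(a,b)\max(a,b)/n$. Summing these local shifts over the crossings of $L$ contributes exactly the global shift $\operatorname{wwr}(L)$. Proposition~\ref{projection} simultaneously identifies $\mathsf{P}^+$ with the projection $\iota^+\iota_*^+$ onto the block where no strand violates, which is precisely the operation built into the definition of $\EuScript{W}$ through the passage from $\tilde T^\bla$ to $T^\bla$. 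Assembling these pieces, evaluation of the Cautis complex on $C$ produces a bigraded complex naturally isomorphic, up to the overall shift $(\operatorname{wwr}(L))$, to the complex defining $\EuScript{W}(L)$.

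The final step is purely bookkeeping on the two gradings. As emphasised in Remark~\ref{dual-grading}, the 2-functor $\mathbf{a}$ sends the internal grading shift $(j)$ in $\tU_\ell$ to the Tate twist $\langle j\rangle=[j](-j)$ on the Webster side; this is exactly the reindexing $(i,j)\mapsto(i+j,-j)$ appearing in the statement. Combining this with the shift of $(\operatorname{wwr}(L))$ absorbed above yields the first isomorphism $\EuScript{W}^{i,j+\operatorname{wwr}(L)}(L)\cong\EuScript{QR}^{i+j,-j}(L)$. The two remaining identifications with $\EuScript{C}^{\pm}_{\pm}$ are the mirror/inverse comparison already discussed after Theorem~\ref{same-crossing}: the complexes $\mathsf{P}^+$ and $\mathsf{P}^-$ are swapped by taking the mirror $L^!$, and the convention $\Theta^{-1}$ in \cite{Cauclasp} versus $\Theta$ in \cite{QR} introduces the sign change of the homological grading.

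The main obstacle is the careful reconciliation of the four different grading conventions in play (Webster's internal/homological bigrading, the Koszul-dual Tate-twist convention of $\mathbf{a}$, the Queffelec--Rose convention, and Cautis's $\Theta^{\pm 1}$-versus-$\mathcal{T}^{\pm 1}$ convention), particularly verifying that the cumulative shift coming from $\xi'$ at each crossing assembles into precisely $\operatorname{wwr}(L)$, and that the directionality warning in Remark~\ref{hmmm} (bounded-above versus bounded-below derived categories, which is where $\mathsf{P}^+$ versus $\mathsf{P}^-$ enters) is resolved consistently throughout the construction.
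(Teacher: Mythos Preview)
Your overall strategy matches the paper's: use Proposition~\ref{projection}, Corollary~\ref{cable-braiding}, and Theorem~\ref{same-crossing} to identify the Cautis/Queffelec--Rose complex of functors with the Webster braiding, then sort out the bigrading via Remark~\ref{dual-grading}. Where you diverge is in how you pin down the shift $\operatorname{wwr}(L)$.

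You attempt to read off the shift directly by summing the local contribution $\xi'(a,b)$ at each crossing. The gap is that you never verify that cups, caps, and projectors contribute no additional net shift. In the Webster construction the cup and cap functors carry their own nontrivial grading shifts (see \cite[\S\ref{m-sec:invariants}]{Webmerged}), and you would need to match these against the corresponding $\tU_{2\ell}$ 1-morphisms in the Queffelec--Rose recipe. You flag this as ``the main obstacle'' at the end, but you do not resolve it, and it is a genuine obstacle: without it your argument only shows $\EuScript{W}^{i,j+r(L)}(L)\cong\EuScript{QR}^{i+j,-j}(L)$ for \emph{some} invariant $r(L)$ that changes by $\pm 2\xi'(a,b)$ under crossing switches.

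The paper closes this gap differently and more cheaply. Rather than matching cup/cap shifts directly, it observes that $r(L)-\operatorname{wwr}(L)$ is invariant under crossing changes, hence depends only on the underlying framed unlink; it then computes both sides explicitly on a $0$-framed unknot (both give $\binom{n}{p}_q$) and handles nonzero framing by noting that a curl is a shift functor and that an even number of curls can be traded back for crossing changes. This reduction to unknots is short and avoids any bookkeeping on cups and caps; you should adopt it.

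A minor point: your description of the highest weight object is muddled. The tree $\tau_{(p,0,\dots,0)}$ is a 1-morphism, not an object, and the relevant parabolic $P_{(p,0,\dots,0)}$ is the full group $G$, not a maximal parabolic; the correct statement is simply that the highest weight category is $T^{(n,\dots,n,0,\dots,0)}\mmod\cong\K\mmod$, so $\End(C)\cong\K$, which is exactly the $S$-algebra datum needed to invoke the comparison with $\EuScript{QR}$ and $\EuScript{C}_\pm$.
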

\begin{proof}
  It's clear from Proposition \ref{projection} and Corollary
  \ref{cable-braiding} that our functors coincide with Cautis's for
  the mirror up to grading shift, and the reindex of the grading.
  That is, for some invariant $r(L)$ of a framed link,
  $\EuScript{W}^{i,j+r(L)}(L)\cong \EuScript{QR}^{i+j,-j}(L)$.
  
  Note that by Theorem \ref{same-crossing}, when we switch a crossing
  from positive to negative, $r(L)$ must decrease by $2\xi(a,b)$.
  Since $\operatorname{wwr}(L)$ has the same property, 
  this allows us to reduce to the case of an unknot. 

If this unknot is 0-framed, then the theorem simply asserts that the
homologies are the same.  Indeed,
\[\sum_{i,j}t^iq^j\dim \EuScript{QR}^{i,j}(L)=\binom{n}{p}_q\qquad \sum_{i,j}t^iq^j\dim \EuScript{W}^{i+j,-j}(L)=\binom{n}{p}_q
\]
so the result follows in this case.

 Now, note that
a strand with a curl in it must correspond to a shift functor, so, we
need only check that adding some nonzero power of it gives the same
shift on both sides.  However, if we do an even number of curls, we
can get back to the 0-framed unknot by reversing crossings, so this
follows from our reduction to the unknot.  
\end{proof}
For completeness, let us note that these theories are already known to
coincide with the other known examples of categorifications of the
$\mathfrak{sl}_n$ Reshetikhin-Turaev invariants.
\begin{theorem}
  The following link homology theories for a labelling by
  representations of
  $\mathfrak{sl}_n$ are given by
  $\Hom(C,\mathcal{Z}(L)C)$ for a categorical action with $C$ a highest
  weight object of weight $n\omega_{\ell}$ with endomorphism $S$-algebra $\End(C)\cong
\K$:
\begin{enumerate}
\item Khovanov-Rozansky homology \cite{KR04} (and its colored
  generalization \cite{Wu,Yon}),
\item Cautis-Kamnitzer homology \cite{CKII},
\item the foam-based invariants of Queffelec and Rose \cite{QR},
\item the Mazorchuk-Stroppel-Sussan homology constructed from category
  $\cO$ \cite{MS09,Sussan2007} and
\item the invariants $\mathcal{K}$ constructed in \cite{Webmerged}.
\end{enumerate}
Thus, all these knot homologies coincide.
\end{theorem}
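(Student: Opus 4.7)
The plan is to show that each of the five theories arises from Cautis's universal construction applied to some categorical $\mathfrak{sl}_{2\ell}$-action together with a distinguished highest-weight object $C$ of weight $n\omega_\ell$ for which $\End(C)\cong \K$ and the map from $S=\K[e_1,\dots,e_n]$ factors as $e_i\mapsto 0$. Once this normalization is verified, Cautis's rigidity result forces $\Hom(C,\mathcal{Z}(L)C)$ to agree across the five categorifications, yielding the claimed coincidence. Note that Theorem \ref{compare1} has already handled (3), (5), and the Cautis invariants $\EuScript{C}_\pm$ themselves, so only (1), (2), and (4) still require comment.

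For (1), I would invoke the identification of Khovanov--Rozansky homology with the foam invariant: the vector-representation case is due to Mackaay--Sto\v{s}i\'{c}--Vaz \cite{MSVsln} building on Khovanov's $\mathfrak{sl}_3$ foams, and the extension to arbitrary fundamentals is \cite[\S 4]{QR}. Together, these give (1)$=$(3), which is already covered. For (2), I would apply Cautis's own analysis in \cite[\S 9]{Cauclasp}, where the categorical action on the derived category of coherent sheaves on the relevant convolution variety is shown to match his universal setup, with $C$ the structure sheaf of a chosen convolution variety, whose endomorphisms after the cyclotomic condition collapse to $\K$.

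For (4), there are two routes, and I would take whichever is cleanest. The direct route uses the equivalence $T^\bla\mmod \cong \cO^\bla$ of \cite[\ref{m-equiv}]{Webmerged} to identify Mazorchuk--Stroppel--Sussan homology with the invariant $\mathcal{K}$ of (5), since under this equivalence the translation functors implementing the categorical $\mathfrak{gl}_n$-action on both sides match, and the braiding/Rickard complexes on parabolic $\cO$ correspond to the derived shuffling complexes on $T^\bla$-mod, which in turn compute the braiding bimodules by Theorem \ref{same-crossing}. Alternatively, one observes directly that projective functors on parabolic $\cO^{\Bp}_{\mu}$ define a categorical $\mathfrak{sl}_{2\ell}$-action with $C$ the unique self-dual projective in a singular block of the cabled picture, with $\End(C)\cong \K$ following from Soergel's Endomorphismensatz.

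The only real potential obstacle is bookkeeping of grading and homological shifts between the various sign and convention choices (particularly between $\Theta_i$ and $\mathcal{T}_i=\Theta_i(\min(a,b))$, between $\mathsf{P}^+$ and $\mathsf{P}^-$, and between left and right adjoints of Rickard functors, cf.\ Remark \ref{hmmm}). These are handled as in the proof of Theorem \ref{compare1}: the weighted writhe absorbs any discrepancy from a crossing-change, so the entire comparison reduces to an unknot computation, and the required agreement on an unknot follows in each case from the fact that the $0$-framed unknot is evaluated to a quantum binomial $\binom{n}{p}_q$ on both sides. Combining these comparisons with Theorem \ref{compare1} delivers the mutual isomorphism of all five invariants.
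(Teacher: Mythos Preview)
Your proposal is correct and follows essentially the same approach as the paper: reduce each theory to Cautis's universal recipe, citing external comparison results for (1)--(3) and using the equivalence with category $\cO$ from \cite{Webmerged} for (4), with (5) handled by Theorem~\ref{compare1}. The only minor discrepancies are in the citations: for (1) the paper invokes the categorical action on matrix factorizations from \cite{MY} rather than \cite{MSVsln}, and for (2) the paper points specifically to the strictification result \cite[14.9]{Caurigid} (noting that verifying the 2-morphism relations directly in \cite{CKII} was problematic), rather than \cite[\S 9]{Cauclasp}.
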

The items (1-3) are discussed in \cite{LQR,QR,Caurigid}; the new
contribution of this paper is to add items (4-5) to this list.
\begin{proof}\mbox{}
  \begin{enumerate}
  \item For Khovanov-Rozansky homology, this is based on the
    construction of a categorical action constructed in \cite{MY} on
    matrix factorizations.  This comparison is discussed in greater
    detail in \cite[\S 4]{QR}.
\item For  Cautis-Kamnitzer homology, this is essentially by the
  original definition given in \cite{CKII}; however, since it proved
  difficult to confirm all the relations between 2-morphisms,
this required a strictification result \cite[14.9]{Caurigid}.
\item By construction, Queffelec and Rose's invariant is the image
  under the foamation 2-functor \cite[\S 3.2]{QR} of $\mathcal{Z}(L)$.
\item The papers \cite{MS09} and \cite{Sussan2007} use different sides
  of Koszul duality; in the former
  case, the categorical action by translation functors is used, and in
  the latter, that by Zuckerman functors.  While the comparison
  between the  complex $\mathcal{Z}(L)$ and the appropriate functors
  on category $\cO$'s can be done directly, we will take the short-cut
  of noting that we have an equivalence between the category $\cO$
  picture and that of (5) by \cite[\ref{m-MSS-compare}]{Webmerged}.
\item This follows from Theorem \ref{compare1}.
  \end{enumerate}
This completes the proof of Theorem \ref{thmA}.
\end{proof}

 \bibliography{../gen}
\bibliographystyle{amsalpha}

\end{document}